\numberwithin{equation}{section}
\g@addto@macro\th@plain{\thm@headpunct{}}
\numberwithin{equation}{section}
\newtheorem{thm}{Theorem}[section]
\newtheorem*{thm*}{Theorem}
\newtheorem{lemma}[thm]{Lemma}
\newtheorem{Corollary}[thm]{Corollary}
\newtheorem{proposition}[thm]{Proposition}
\theoremstyle{definition}
\newtheorem{defin}[thm]{Definition}
\newtheorem{remark}[thm]{Remark}
\newtheorem{example}[thm]{Example}
\def \C {{\mathbb C}}
\def \E {{\mathbb E}}
\def \P {{\mathbb P}}
\def \R {{\mathbb R}}
\def \eps {\varepsilon}
\def \A {{\mathbb A}}
\def \B {{\mathbb B}}
\def \X {{\mathbb X}}
\def \Y {{\mathbb Y}}
\def \Ab {{\mathbf A}}
\def \Bb {{\mathbf B}}
\def \Xb {{\mathbf X}}
\newcommand{\dd}{\mathrm{d}}
\newcommand{\cA}{ {\mathcal A} }
\newcommand{\tcA}{\tilde{\mathcal{A}}}
\newcommand{\NC}{ {\mathrm{NC}}}
\newcommand{\Kr}{ {\mathrm{Kr}}}
\title[Free Perpetuities I]{Free Perpetuities I: Existence, Subordination\\and Tail Asymptotics}
\author[S. Belinschi]{Serban Belinschi}
\author[B. Ko\l{}odziejek]{Bartosz Ko\l{}odziejek}
\email{bartosz.kolodziejek@pw.edu.pl}
\address{Faculty of Mathematics and Information Sciences, Warsaw University of Technology, Koszykowa 75, \mbox{00-662} Warsaw, Poland}
\author[K. Szpojankowski]{Kamil Szpojankowski}
\email{kamil.szpojankowski@pw.edu.pl}
\address{Faculty of Mathematics and Information Sciences, Warsaw University of Technology, Koszykowa 75, \mbox{00-662} Warsaw, Poland}
\thanks{KSz: This research was funded in part by National Science Centre, Poland WEAVE-UNISONO grant BOOMER 2022/04/Y/ST1/00008.\\
BK: This research was funded in part by National Science Centre, Poland, 2023/51/B/ST1/01535.
\\ For the purpose of Open Access, the authors have applied a CC-BY public copyright licence to any Author Accepted Manuscript (AAM) version arising from this submission.}
\subjclass[2020]{Primary 46L54; Secondary 60E05.}
\begin{document}

\begin{abstract}
        We study the free analogue of the classical affine fixed-point (or perpetuity) equation
        \[
        \X \stackrel{d}{=} \A^{1/2}\X\,\A^{1/2} + \B,
        \]
        where $\X$ is assumed to be $*$-free from the pair $(\A,\B)$, with $\A\ge 0$ and $\B=\B^*$. Our analysis covers both the subcritical regime, where $\tau(\A)<1$, and the critical case $\tau(\A)=1$, in which the solution $\X$ is necessarily unbounded. When $\tau(\A)=1$, we prove that the series defining $\X$ converges bilaterally almost uniformly (and almost uniformly under additional tail assumptions), while the perpetuity fails to have higher moments even if all moments of $\A$ and $\B$ exist.
       
        Our approach relies on a detailed study of the asymptotic behavior of moments under free multiplicative convolution, which reveals a markedly different behavior from the classical setting. By employing subordination techniques for non-commutative random variables, we derive precise asymptotic estimates for the tail of the distributions of $\X$ in both one-sided and symmetric cases. Interestingly, in the critical case, the free perpetuity exhibits a power-law tail behavior that mirrors the phenomenon observed in the celebrated Kesten’s theorem.
\end{abstract}
\maketitle

\section{Introduction}
\subsection{Problem description and motivation}

Let $(\A,\B)$ be a pair of self-adjoint random variables, where $\A$ is positive. In this paper we study the solution of the free analogue of the classical affine fixed-point equation and investigate properties of the distribution of $\X$ satisfying
\begin{align}\label{eqn:free_perp}
\X\stackrel{d}{=}\A^{1/2}\X\A^{1/2}+\B,  
\end{align}
where $\X$ is assumed to be $*$-free from the pair $(\A,\B)$. When $\X$ satisfies \eqref{eqn:free_perp}, we refer to it as a  free perpetuity.

In the classical setting, given a pair of random variables $(\Ab,\underline{B})$, with $\Ab$ an $N\times N$ matrix, $\underline{B}\in\mathbb{R}^N$,
a perpetuity is defined as a solution to
\begin{align}\label{eqn:classical_perp}
\underline{X} \stackrel{d}{=} \Ab\,\underline{X} + \underline{B},
\end{align}
where $\underline{X}$ and the pair $(\Ab,\underline{B})$ are independent.

Perpetuities have long been an active area of research in probability theory, with applications ranging from queueing theory to financial and actuarial mathematics. Once the existence of $\underline{X}$ is established, one typically investigates how its distribution depends on that of $(\Ab,\underline{B})$---with particular emphasis on the asymptotic tail behavior of $\underline{X}$, see below.

In the classical setting, the usual strategy to prove the existence of $\underline{X}$ is to iterate \eqref{eqn:classical_perp}. Let $\{(\Ab_n,\underline{B}_n)\}_{n\ge1}$ be an i.i.d.\ sequence of copies of $(\Ab,\underline{B})$ and define
\[
\Pi_{0}=1,\quad \Pi_n=\Ab_1\cdots \Ab_n,\quad n\ge1.
\]
Let $|\cdot|$ denote any norm on $\R^N$ and $\|\cdot\|$ the corresponding operator norm.
Under the conditions 
\[
-\infty\le \E[\log\|\Ab\|] < 0,\quad \E[\log^+|\underline{B}|]<+\infty
\]
one can show that the series
\[
\underline{S} = \sum_{n=1}^{+\infty }\Pi_{n-1} \underline{B}_n,
\]
converges almost surely, and the unique solution of \eqref{eqn:classical_perp} is given by $\underline{X}\stackrel{d}{=}\underline{S}$ (see, e.g., \cite{BDM}).

One of the most celebrated results in the theory of perpetuities originates from Kesten’s seminal work \cite{Kes73} and its subsequent refinements (see, e.g., \cite[Section 4.4]{BDM} for an extensive discussion of variants of Kesten’s theorem). The key assumption in these studies is the existence of a positive parameter $\alpha>0$ satisfying
\[
\inf_{n\in\mathbb{N}}\E[ \|\Pi_n\|^\alpha ]^{1/n} = 1.
\]
Under this condition---and additional technical assumptions on the distribution of $(\Ab,\underline{B})$---one can show that there exists a non-null Radon measure $\mu$ such that the solution to \eqref{eqn:classical_perp} fulfills
\[
t^{\alpha}\,\P\left(t^{-1}\underline{X}\in \cdot\right) \stackrel{v}{\longrightarrow}\mu,\qquad t\to+\infty,
\]
where $\stackrel{v}{\longrightarrow}$ denotes vague convergence of measures.

In the univariate case, the parameter $\alpha>0$ is determined by the equation $\E[|A|^\alpha]=1$. Moreover, under additional assumptions, Kesten's theorem guarantees that $\lim_{t\to+\infty}t^\alpha \P(X>t)$ exists and is strictly positive. Interestingly, as we explain below, free perpetuity $\X$ exhibits a similar power-law tail behavior under the condition that $\A$ has unit mean.

Recent advances in machine learning theory have demonstrated that the heavy-tail behavior observed in Stochastic Gradient Descent can be analyzed within a probabilistic framework using stochastic recursions of the form
\[
\underline{X}_k = \Ab_k \,\underline{X}_{k-1}+\underline{B}_k,
\] see \cite{ML1, ML2, DM24}. Notably, \eqref{eqn:classical_perp} characterizes the stationary distribution of the resulting Markov chain.

Our primary motivation for studying \eqref{eqn:free_perp} arises from the matricial case:
\[
\Xb \stackrel{d}{=} \Ab\Xb\Ab^\top + \Bb,\quad(\Ab,\Bb)\mbox{ and }\Xb\mbox{ are independent,}
\]
where $\Ab$, $\Bb$ and $\Xb$ are $N\times N$ random matrix, $\Bb^\top=\Bb$ a.s.
Although one may vectorize this equation and apply classical (vector) perpetuity theory, we propose an approach that preserves the matrix structure. This framework paves the way for studying properties such as the asymptotic behavior of the empirical spectral distribution of $\Xb$ as $N\to+\infty$ --- a topic that, to the best of our knowledge, has not been previously explored.

Before turning to matricial perpetuities, our first goal is to develop the corresponding theory for free random variables. Since many families of independent random matrices are asymptotically free as the dimension of matrices tend to $+\infty$, the free perpetuity naturally emerges as a limiting object. In a forthcoming work, we will investigate the connection between matricial and free perpetuities.

\begin{remark}
        The assumption $\A\ge 0$ in \eqref{eqn:free_perp} is made without loss of generality. Indeed, starting from the more general equation
        \[
        \X \stackrel{d}{=} \A\,\X\,\A^* + \B,
        \]
and writing the polar decomposition $\A=U|\A|$ (with $U$ unitary), we obtain
        \[
        \X \stackrel{d}{=} U\Bigl(|\A|\X|\A|+U^*\B\,U\Bigr)U^*
        \stackrel{d}{=} |\A|\X|\A|+U^*\B\,U,
        \]
        where the pair $(|\A|,U^*\B\,U)$ is $*$-free from $\X$. Thus, we may assume $\A$ is positive and redefine $(|\A|, U^* \B U)\mapsto(\A^{1/2},\B)$ to obtain \eqref{eqn:free_perp}.
\end{remark}

\subsection{Free multiplicative convolution powers and existence of free perpetuity}

Our strategy for proving the existence of the free perpetuity mirrors the classical approach: we analyze the convergence of the series
\begin{align}\label{intro:perpseries}
\mathbb{S}=\sum_{n=1}^{+\infty} \A_1^{1/2}\cdots \A_{n-1}^{1/2}\B_n\,\A_{n-1}^{1/2}\cdots \A_1^{1/2},
\end{align}
where $\{(\A_n,\B_n)\}_{n\geq 1}$ is an appropriate sequence of free copies.

In our setting, it suffices to establish convergence in distribution; in fact, we prove stronger notions of convergences. Assume that $\cA$ 
is a von Neumann algebra and $\tau\colon\cA \to \C$ is a faithful, normal, tracial state. By $\tcA$ we denote the algebra of unbounded operators affiliated with $\cA$.

In the particular case when $\A = \alpha\cdot 1_{\cA}$ for some $\alpha\in(0,1)$, the series \eqref{intro:perpseries} is a sum of free random variables. In this situation, results from \cite{Ber05} imply that the series converges almost uniformly (a.u.) provided that
\[
\tau\bigl(\log^+|\B|\bigr) < +\infty.
\]
Almost uniform convergence is the non-commutative analogue of almost sure convergence.
More generally, if $\tau(\A) < 1$, one may show that the series converges in norm when $\A,\B\in\cA$, and in $L^1$ if $\A,\B$ are unbounded operators affiliated with $(\cA,\tau)$ (with $\B\geq 0$ and $\tau(\B)<+\infty$). If $\tau(\A) > 1$, the perpetuity fails to exist. Thus, the most challenging--and interesting--case is when $\tau(\A)=1$. Then, $\X$ necessarily must be unbounded.

A key component in our analysis of the critical case $\tau(\A)=1$ is understanding the asymptotic behavior of moments under free multiplicative convolution. In particular, we prove the following theorem (see Theorems \ref{thm:28} and \ref{lem:1} for complete statements and proofs). Here, we write $f(x)\sim g(x)$ if the quotient
$f(x)/g(x)$ goes to $1$ as $x$ tends to $+\infty$. We also denote $m_\alpha(\mu) = \int_{\R} t^\alpha\,\mu(\dd t)$.

\begin{thm}
        Assume that $\mu$ is a probability measure on $[0,+\infty)$, which is non-Dirac and has unit mean. Then:
        \begin{enumerate}
    \item[(i)] If for $p\in\mathbb{N}$ we have $m_p(\mu)<+\infty$, then
                \[
    m_p(\mu^{\boxtimes n})\sim c_1 \,n^{p-1}.
                \]
                \item[(ii)] If $m_2(\mu)<+\infty$, then for any $\gamma\in(0,1)$,
                \[
                m_\gamma(\mu^{\boxtimes n})  \sim c_2\,n^{\gamma-1 }.
                \]
                \item[(iii)] If $\mu\big((t,+\infty)\big)\sim c\, t^{-\alpha}$ for some $\alpha\in(1,2)$ and $c>0$, then $m_2(\mu)=+\infty$ and, for any $\gamma\in(0,1)$,
                \[
                 m_\gamma(\mu^{\boxtimes n})  \sim c_3\, n^{\frac{\gamma-1}{\alpha-1} }.
                \]
        \end{enumerate}
The constants $c_1$, $c_2$ and $c_3$ are explicit.
\end{thm}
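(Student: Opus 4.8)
\noindent The plan is to reduce everything to the multiplicativity of the $S$-transform together with a precise description of $S_\mu$ near the origin. Recall $\psi_\mu(z)=\int\frac{tz}{1-tz}\,\mu(\dd t)=\sum_{k\ge1}m_k(\mu)z^k$, its one-sided inverse $\chi_\mu=\psi_\mu^{-1}$ at $0$, and $S_\mu(z)=\frac{1+z}{z}\chi_\mu(z)$, so that $S_{\mu^{\boxtimes n}}=(S_\mu)^n$ and $\chi_{\mu^{\boxtimes n}}(z)=\frac{z}{1+z}(S_\mu(z))^n$. The hypothesis $m_1(\mu)=1$ forces $S_\mu(0)=1$, and since $\log S$ linearizes $\boxtimes$, what governs the asymptotics is the expansion of $\log S_\mu$ at $0$. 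When $m_2(\mu)<+\infty$ — which in case (i) is automatic for $p\ge2$, since $m_p(\mu)<+\infty$ forces $m_j(\mu)<+\infty$ for all $j\le p$ — one gets $S_\mu(z)=1-\sigma^2 z+o(z)$, where $\sigma^2:=m_2(\mu)-1=\mathrm{Var}(\mu)>0$ (strictly, as $\mu$ is non-Dirac). Under the tail hypothesis of (iii) one instead gets $S_\mu(z)=1+\kappa\,(-z)^{\alpha-1}\bigl(1+o(1)\bigr)$ as $z\to0^-$ for an explicit $\kappa>0$: apply Karamata's theorem to $\psi_\mu(z)-z=\int\frac{(tz)^2}{1-tz}\,\mu(\dd t)$, obtaining $\psi_\mu(-u)+u\sim\kappa u^\alpha$ as $u\to0^+$, and invert this expansion to pass to $\chi_\mu$ and then to $S_\mu$; that $m_2(\mu)=+\infty$ here is immediate from $\alpha<2$.

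\noindent For (i) I would apply the Lagrange--B\"urmann formula to $\chi_{\mu^{\boxtimes n}}(z)=z\big/\phi_n(z)$ with $\phi_n(z)=\frac{1+z}{(S_\mu(z))^n}$, $\phi_n(0)=1$, which yields the purely algebraic identity
\[
m_p(\mu^{\boxtimes n})=[z^p]\,\psi_{\mu^{\boxtimes n}}(z)=\tfrac1p\,[z^{p-1}]\bigl((1+z)^p(S_\mu(z))^{-np}\bigr),
\]
valid modulo $z^p$ and hence depending only on the finite quantities $m_1(\mu),\dots,m_p(\mu)$ (in particular $m_p(\mu^{\boxtimes n})<+\infty$). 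Expanding $(S_\mu(z))^{-np}=\exp\bigl(np\sigma^2 z+O(npz^2)\bigr)$ shows that this coefficient is a polynomial in $n$ of degree $p-1$ whose top-degree term is contributed solely by $\tfrac{1}{(p-1)!}(np\sigma^2 z)^{p-1}$; reading it off gives $m_p(\mu^{\boxtimes n})=\frac{(p\sigma^2)^{p-1}}{p!}\,n^{p-1}+O(n^{p-2})$, i.e.\ $c_1=\frac{p^{\,p-2}\sigma^{2(p-1)}}{(p-1)!}$.

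\noindent For (ii) and (iii) I would use, for $\gamma\in(0,1)$, the representation $t^\gamma=\frac{\sin\pi\gamma}{\pi}\int_0^\infty\frac{t\,s^{\gamma-1}}{s+t}\,\dd s$; since $\int\frac{t}{s+t}\,\nu(\dd t)=-\psi_\nu(-1/s)$, Fubini and $u=1/s$ give
\[
m_\gamma(\mu^{\boxtimes n})=\frac{\sin\pi\gamma}{\pi}\int_0^\infty u^{-1-\gamma}\bigl(-\psi_{\mu^{\boxtimes n}}(-u)\bigr)\,\dd u .
\]
Then change variables $\eta=\psi_{\mu^{\boxtimes n}}(-u)\in(-1,0)$, so $u=\frac{-\eta}{1+\eta}(S_\mu(\eta))^n$, and rescale $\eta=-v\,n^{-1/(\alpha-1)}$ (take $\alpha=2$ in case (ii)). Cut the integral at the value $U_n$ corresponding to $\eta=-\delta$ for a small fixed $\delta>0$: the part over $u\ge U_n$ is at most $\frac{\sin\pi\gamma}{\pi\gamma}U_n^{-\gamma}$, and $U_n\to+\infty$ exponentially in $n$, so it is negligible against every power of $n$; on $u\in(0,U_n)$, i.e.\ $v\in(0,\delta n^{1/(\alpha-1)})$, the expansion of $S_\mu$ gives $(S_\mu(\eta))^n\to e^{\kappa v^{\alpha-1}}$ (with $\kappa=\sigma^2$ in case (ii)), hence $u\sim v\,n^{-1/(\alpha-1)}e^{\kappa v^{\alpha-1}}$ and, after differentiating this asymptotic via Karamata's monotone density theorem (using that $u\mapsto\psi_\mu'(-u)=\int\frac{t}{(1+tu)^2}\,\mu(\dd t)$ is monotone), $\dd u\sim n^{-1/(\alpha-1)}e^{\kappa v^{\alpha-1}}\bigl(1+\kappa(\alpha-1)v^{\alpha-1}\bigr)\,\dd v$. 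Collecting the powers of $n$ factors out $n^{(\gamma-1)/(\alpha-1)}$, and dominated convergence — the dominating function coming from two-sided bounds on $S_\mu$ on $(-\delta,0)$ implied by its asymptotics there — yields
\[
m_\gamma(\mu^{\boxtimes n})\sim\frac{\sin\pi\gamma}{\pi}\,n^{\frac{\gamma-1}{\alpha-1}}\int_0^\infty v^{-\gamma}\bigl(1+\kappa(\alpha-1)v^{\alpha-1}\bigr)e^{-\gamma\kappa v^{\alpha-1}}\,\dd v ,
\]
the last integral being an explicit Gamma-function expression; this identifies $c_2$ (case $\alpha=2$, $\kappa=\sigma^2$) and $c_3$.

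\noindent The main obstacle is the case (iii) input: one must extract from the bare tail $\mu\big((t,+\infty)\big)\sim c\,t^{-\alpha}$ not merely the leading behaviour of $S_\mu$ at $0^-$ but a differentiated version of it, accurate enough to control the Jacobian of the change of variables, and then make the domination in the limit passage uniform in $n$ over the growing window $v\in(0,\delta n^{1/(\alpha-1)})$. In cases (i)--(ii) this step is routine, because $m_2(\mu)<+\infty$ makes $S_\mu$ continuously differentiable at $0$, and in case (i) the whole argument is purely algebraic.
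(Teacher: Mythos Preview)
Your approach to (i) via Lagrange--B\"urmann is correct and genuinely different from the paper's. The paper proceeds combinatorially: it proves by induction on $p$ that $\kappa_p(\Pi_n)=n^{p-1}(\sigma^2 p)^{p-1}/p!+O(n^{p-2})$ by expanding the increment $\kappa_m(\Pi_k)-\kappa_m(\Pi_{k-1})=\sum_{\pi\in\NC(m)}\kappa_\pi(\A)\kappa_{\Kr(\pi)}(\Pi_{k-1})$, isolating the terms with $|\Kr(\pi)|=2$, and closing with an Abel-type binomial identity. Your generating-function argument reaches the same leading constant in one line; the paper's route exposes the cumulant structure and delivers the sharper $O(n^{p-2})$ remainder, at the cost of an auxiliary combinatorial lemma.

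For (ii)--(iii) your strategy agrees with the paper's in outline, but your choice of representation creates an avoidable difficulty in (iii). Substituting $\eta=\psi_{\mu^{\boxtimes n}}(-u)$ into $\int u^{-1-\gamma}(-\psi_\nu(-u))\,\dd u$ brings in the Jacobian $\chi_{\mu^{\boxtimes n}}'(\eta)$, which contains $nS_\mu(\eta)^{n-1}S_\mu'(\eta)$; your phrase ``differentiating this asymptotic via Karamata's monotone density theorem (using that $u\mapsto\psi_\mu'(-u)$ is monotone)'' is not a valid step as written---you are trying to differentiate an asymptotic in $n$, and the monotone function you name lives in a different variable. A correct repair exists (monotone density on $u\mapsto u+\psi_\mu(-u)$ gives $1-\psi_\mu'(-u)\sim\alpha\kappa u^{\alpha-1}$, then push through $\chi_\mu'=(\psi_\mu'\circ\chi_\mu)^{-1}$ to $S_\mu'$), but it is extra work and you still owe the uniform domination. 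The paper sidesteps all of this by integrating by parts \emph{before} substituting: Lemma~\ref{lem:fract_mom} gives $m_\gamma(\nu)=\tfrac{\sin\pi\gamma}{\pi\gamma}\int_0^{1-\delta}(-\chi_\nu(-t))^{-\gamma}\,\dd t$, so the change of variables now has trivial Jacobian. A further substitution $t=-\psi_\mu(-y)$ reduces everything to the asymptotics of the Cauchy--Stieltjes-kernel mean $m(y)=\tfrac{-\psi_\mu(-y)/y}{1+\psi_\mu(-y)}$, established directly from the tail hypothesis (Lemma~\ref{lem:CS}), with domination handled by the monotonicity of $m(y)$ rather than any derivative information.
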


It is worth noting that while the classical multiplicative convolution $\circledast$ satisfies
\[
m_\gamma(\mu^{\circledast n}) = \bigl(m_\gamma(\mu)\bigr)^n,
\]
the free multiplicative convolution exhibits a markedly different asymptotic behavior.

The above theorem allows us to establish existence of free perpetuity in the critical case $\tau(\A)=1$.
\begin{thm}\label{intro:existence}
        Assume that $\A\ge0$ is non-Dirac, $\B$ is bounded and that $\tau(\A)=1$. Then the series \eqref{intro:perpseries}
        converges bilaterally almost uniformly. Moreover, if the tail of $\mu_\A$ satisfies
        \[
        \mu_\A\big((t,+\infty)\big)\sim c\,t^{-\alpha}\quad\text{for some } c>0 \text{ and } \alpha\in(1,2),
        \]
        then the series converges almost uniformly.
\end{thm}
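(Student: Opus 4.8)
\emph{Plan.} I would first reduce to a positive perpetuity. Write $P_0=1$, $P_k=\A_1^{1/2}\cdots\A_k^{1/2}$, $T_n=P_{n-1}\B_n P_{n-1}^*$, $S_N=\sum_{n=1}^N T_n$. From $-|\B_n|\le\B_n\le|\B_n|$, conjugating by $P_{n-1}$ and summing over $M<n\le N$ yields $-(V_N-V_M)\le S_N-S_M\le V_N-V_M$ with $V_N:=\sum_{n\le N}P_{n-1}|\B_n|P_{n-1}^*\ge0$, so $\|p(S_N-S_M)p\|\le\|p(V_N-V_M)p\|$ for every projection $p$. Hence bilateral a.u.\ convergence of the positive perpetuity $V_N$, combined with completeness of $\tcA$ for bilaterally-a.u.-Cauchy sequences, transfers to $S_N$; so I may assume $\B\ge0$, in which case $S_N$ is nondecreasing and positive. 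Granting --- this will be the crux --- that $\{S_N\}$ is bounded in measure, the supremum $S:=\sup_N S_N$ exists in $\tcA$ and $S_N\to S$ in measure; letting $N\to\infty$ in the operator identity $S_N=\B_1+\A_1^{1/2}\widetilde S_{N-1}\A_1^{1/2}$ (where $\widetilde S_{N-1}\stackrel{d}{=}S_{N-1}$ is $*$-free from $(\A_1,\B_1)$) and using joint continuity in measure of the operations on $\tcA$ shows $S$ solves \eqref{eqn:free_perp}. So it remains to (a) upgrade to bilateral a.u.\ convergence, (b) prove boundedness in measure, and (c), under the tail hypothesis, obtain one-sided a.u.\ convergence.

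For (a), I would run a Borel--Cantelli argument on $X_N:=S-S_N\ge0$, which decreases to $0$ in measure. Pick $N_1<N_2<\cdots$ with $\tau\big(\mathbf{1}_{(2^{-k},\infty)}(X_{N_k})\big)<2^{-k}$, set $e_k=\mathbf{1}_{[0,2^{-k}]}(X_{N_k})$ and $p_m=\bigwedge_{k\ge m}e_k$, so $\tau(1-p_m)\le2^{1-m}\to0$; then for $k\ge m$ and $N\ge N_k$ one has $0\le X_N\le X_{N_k}$ and $p_m\le e_k$, hence $p_mX_Np_m\le p_mX_{N_k}p_m\le2^{-k}$, giving $\|p_m(S-S_N)p_m\|\to0$. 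This is bilateral a.u.\ convergence of $S_N$, and the squeeze above carries it to the general $\B$.

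The hard part is (b): $\sup_N\mu_{S_N}\big((t,+\infty)\big)\to0$ as $t\to+\infty$. A moment bound will not close, because free multiplicative convolution by a mean-one law preserves the first moment, $m_1(\mu_\A\boxtimes\nu)=m_1(\nu)$, so there is no geometric estimate for $\tau(S_N^\gamma)$; indeed subadditivity of $\tau(\cdot^\gamma)$ ($\gamma\in(0,1)$) together with $0\le P_{n-1}\B P_{n-1}^*\le\|\B\|\,P_{n-1}P_{n-1}^*$ gives only $\tau(S_N^\gamma)\le\|\B\|^\gamma\sum_{n<N}m_\gamma(\mu_\A^{\boxtimes n})$, which grows like a power of $N$ once the asymptotics of the preceding theorem are inserted. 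Instead I would exploit the self-similar identity $S_N\stackrel{d}{=}\B+\A^{1/2}S_{N-1}\A^{1/2}$ ($S_{N-1}$ $*$-free from $(\A,\B)$) together with the subordination description of the map $\nu\mapsto$ law of $\A^{1/2}X\A^{1/2}+\B$ for $X$ $*$-free from $(\A,\B)$ and the moment asymptotics of $\mu_\A^{\boxtimes n}$ from the preceding theorem, propagating a tail estimate through the recursion uniformly in $N$. This is the step I expect to be the genuine obstacle, since it is exactly where the critical, heavy-tailed nature of the perpetuity must be quantified.

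Finally, under $\mu_\A\big((t,+\infty)\big)\sim c\,t^{-\alpha}$, $\alpha\in(1,2)$, part (iii) of the preceding theorem gives $m_\gamma(\mu_\A^{\boxtimes n})\sim c_3\,n^{-\beta}$ with $\beta=\tfrac{1-\gamma}{\alpha-1}$, summable precisely for $\gamma<2-\alpha$; fixing $\gamma\in(0,\,2/\alpha-1)\subset(0,2-\alpha)$ one has $\beta>1+\gamma$. Then $b_n:=\|\B\|^\gamma m_\gamma(\mu_\A^{\boxtimes(n-1)})=O(n^{-\beta})$, and subadditivity of $\tau(\cdot^\gamma)$ with $0\le P_{n-1}|\B|P_{n-1}^*\le\|\B\|\,P_{n-1}P_{n-1}^*$ gives $\sup_N\tau(V_N^\gamma)\le\sum_n b_n<\infty$; thus $\{V_N\}$ is bounded in measure, whence (as in the first two paragraphs, applied to $V_N$, and then via the squeeze) $S_N\to S$ both in measure and bilaterally almost uniformly. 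For (c) I would run a second Borel--Cantelli argument directly on $S_N$: a Hölder estimate gives $\tau(|T_n|^\gamma)\le\|\B\|^\gamma m_\gamma(\mu_\A^{\boxtimes(n-1)})=b_n$; choosing $\lambda_n=n^{-\theta}$ with $\theta\in\big(1,(\beta-1)/\gamma\big)$, the spectral projections $e_n=\mathbf{1}_{(\lambda_n,\infty)}(|T_n|)$ satisfy $\sum_n\tau(e_n)\le\sum_n\lambda_n^{-\gamma}b_n<\infty$ while $\|T_n(1-e_n)\|\le\lambda_n$ with $\sum_n\lambda_n<\infty$. Then $q_m:=\bigwedge_{n\ge m}(1-e_n)$ has $\tau(1-q_m)\to0$ and $\|T_nq_m\|\le\lambda_n$ for $n\ge m$, so $\|(S_N-S_M)q_m\|\le\sum_{n>M}\lambda_n\to0$; hence $(S_Nq_m)_N$ is $\|\cdot\|$-Cauchy in $\cA$, its limit equals $Sq_m$ by the convergence in measure, and $\|(S_N-S)q_m\|\to0$. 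The point of the tail hypothesis is to provide \emph{polynomial} decay of $m_\gamma(\mu_\A^{\boxtimes n})$, which is what lets the thresholds $\lambda_n$ be taken summable.
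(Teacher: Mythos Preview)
Your reduction to $\B\ge0$ via the squeeze, and your almost-uniform argument under the tail hypothesis (part (c)), are both correct and essentially coincide with the paper's proof of the second assertion of Theorem~\ref{thm:1}: the paper reduces to $\B=\|\B\|\cdot1$ and then runs exactly the Borel--Cantelli/Markov argument on spectral projections of $\Pi_k$, using the estimate $m_\gamma(\mu_\A^{\boxtimes k})\le Ck^{-(1-\gamma)/(\alpha-1)}$ from Theorem~\ref{lem:1}(iii). Your parametrisation $(\theta,\gamma)$ is a cosmetic variant of the paper's $(\beta,\gamma)$.

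The genuine gap is in the b.a.u.\ part. You correctly observe that no \emph{fixed} fractional moment closes the argument, since $m_\gamma(\mu_\A^{\boxtimes n})\asymp n^{\gamma-1}$ is never summable; and you then propose to obtain uniform-in-$N$ tail bounds on $S_N$ by ``propagating a tail estimate through the recursion $S_N\stackrel{d}{=}\B+\A^{1/2}S_{N-1}\A^{1/2}$ via subordination''. This is precisely where the proof fails to be a proof: the subordination machinery of Section~\ref{sec:subordination} describes the Cauchy transform of $\A^{1/2}X\A^{1/2}+\B$ implicitly (as a fixed point), and extracting from it a \emph{quantitative, uniform} tail bound along the iteration is not at all straightforward --- the tail analysis in Section~\ref{sec:tails} works only for the \emph{limit} $\X$, exploiting the fixed-point relation $\beta(z)=\psi_\X(z^{-1})$ in an essential way. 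You have not indicated what invariant tail bound you would propagate, and I do not see how to make one work.

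The paper's route is different and avoids this entirely. It does not try to bound $S_N$ in measure. Instead it exploits that $(\Pi_n^\uparrow)_n$ is a nonnegative martingale and applies Cuculescu's maximal inequality (Lemma~\ref{lem:mart}) along a carefully chosen subsequence $n_k=k^2$: for each $k$ there is a projection cutting $\Pi_{n_k}^\uparrow$ below a level $f_k$, and the maximal inequality then controls $\|q_k\Pi_n^\uparrow q_k\|$ for \emph{all} $n\ge n_k$. The second key input --- the one you are missing --- is the \emph{varying-exponent} estimate of Theorem~\ref{lem:1}(ii),
\[
\tau\big(\Pi_n^{1/\log n}\big)\ \lesssim\ \frac{\log n}{n},
\]
which, via Markov's inequality, makes both $\sum_k\tau(1-q_k)$ and $\sum_k\mu_{\Pi_{n_k}}\big((f_k,\infty)\big)$ summable for the choice $f_k=(\log n_k/n_k)^{\log n_k}n_k^{h_k}$ with $h_k=\tfrac32\log k$ (Lemma~\ref{lem:2}). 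One then checks by elementary calculus that $\sum_k(n_{k+1}-n_k)f_k<\infty$, so the partial sums of $\sum_n\Pi_n^\uparrow$ are bilaterally a.u.\ Cauchy. No a priori boundedness of $S_N$ is needed; completeness of $\tcA$ for b.a.u.\ Cauchy sequences finishes. In short: the missing idea is ``martingale maximal inequality $+$ fractional moments at the moving exponent $\gamma_n=1/\log n$'', not subordination.
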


\subsection{Subordination}
Subordination is a powerful tool in free probability. In its simplest form, if $\X$ and $\Y$ are free self-adjoint random variables, then their Cauchy transforms satisfy
\[
G_{\X+\Y}(z)=G_\X\bigl(\omega(z)\bigr),
\]
for some analytic self-map $\omega$ of $\C^+$. By combining the linearization trick  with Voiculescu's operator-valued subordination results (see \cite{BMS17}), similar subordination principles hold for general non-commutative polynomials in free variables.

In this work we apply subordination to analyze the tails of the free perpetuity, focusing on the polynomial
\[
\A\X\A+\B.
\]
The following theorem generalizes results from \cite{LS19} by permitting unbounded operators and removing the assumption that $\A$ is a function of $\B$.

\begin{thm}%\label{fixp}
Let $\X$, $\A$, $\B\in\tilde{\mathcal A}$ be three selfadjoint operators affiliated with the tracial $W^*$-probability
space $(\mathcal A,\tau)$, with $\A, \X\not\in\mathbb C\cdot1_{\cA}$.
Then there exists a pair of analytic self-maps of $\mathbb C^+$, denoted $(f,\mathsf f)$, such that
$$
\tau\left((\mathsf f(z)-\mathbb X)^{-1}\right)=\frac{1}{\mathsf f(z)+f(z)}
=\tau\left(\mathbb A(z-\mathbb B+f(z)\mathbb A^2)^{-1}\mathbb A\right),\quad z\in\mathbb C^+.
$$
In addition, the point $(f(z),\mathsf f(z))\in\mathbb C^+\times\mathbb C^+$ is the unique attracting fixed point of the map
$$
\mathscr F_z\colon\mathbb C^+\times\mathbb C^+\to\mathbb C^+\times\mathbb C^+,\quad
\mathscr F_z\begin{pmatrix} w_1\\ w_2\end{pmatrix}=
\begin{pmatrix}
\frac{1}{\tau\left((w_2-\mathbb X)^{-1}\right)}-w_2\\
\frac{1}{\tau\left(\mathbb A(z-\mathbb B+w_1\mathbb A^2)^{-1}\mathbb A\right)}-w_1
\end{pmatrix}.
$$
Moreover, if $\X$ is *-free form $(\A,\B)$, then the functions $f(z),\mathsf f(z)$ satisfy
\begin{gather*}
    \tau\left((\mathsf f(z)-\mathbb X)^{-1}\right)  =  \tau\left(\mathbb A(z-\mathbb B-\mathbb{AXA})^{-1}\mathbb A\right),\\
\tau\left((z-\mathbb B+f(z)\mathbb A^2)^{-1}\right)  =  \tau\left((z-\mathbb B-\mathbb{AXA})^{-1}\right).
\end{gather*}
\end{thm}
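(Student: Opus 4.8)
The plan is to combine the linearization trick with Voiculescu's operator-valued subordination: I will reduce the quadratic expression $\A\X\A+\B$ to a linear pencil over a $3\times 3$ matrix algebra, so that the problem becomes a free \emph{sum} over $M_3(\C)$, and then read off the scalar functions $f,\mathsf f$ from the (matrix-valued) subordination functions. A useful preliminary remark is that $\mathscr F_z$ is built only from $\tau((w_2-\X)^{-1})$ and $\tau(\A(z-\B+w_1\A^2)^{-1}\A)$, hence depends only on the law of $\X$ and the joint law of $(\A,\B)$, never on their joint law; this will let me deduce the freeness-free part of the statement from the freeness case.

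First I would realize $\A\X\A+\B$ through a self-adjoint linear pencil such as
$$N=\begin{pmatrix}\B&\A&0\\ \A&0&1\\ 0&1&\X\end{pmatrix}\in M_3(\tcA),\qquad L(z)=z\,e_{11}-N,$$
for which a Schur-complement computation gives $[L(z)^{-1}]_{11}=(z-\B-\A\X\A)^{-1}$, while other entries of $L(z)^{-1}$ encode $\A(z-\B-\A\X\A)^{-1}\A$ and $(z-\B-\A\X\A)^{-1}$. This device, or the analogous $2\times 2$ realization of $\A(z-\B+w\A^2)^{-1}\A$ via the base point $\mathrm{diag}(-1/w,z)$, also shows that these operators are bounded for $w,z\in\C^+$, so that $h(w):=\tau(\A(z-\B+w\A^2)^{-1}\A)$ and the map $\mathscr F_z$ are well defined. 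I then decompose $N=P+Q$, where $P=P(\A,\B)$ collects the entries built from $\A,\B$ together with the scalar entries, and $Q=Q(\X)=e_{33}\otimes\X$.

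Assuming $\X$ is $*$-free from $(\A,\B)$, the matrices $P$ and $Q$ are free with amalgamation over $M_3(\C)$ with respect to $E=\mathrm{id}\otimes\tau$. I would then apply the operator-valued subordination theorem of \cite{BMS17}: there exist analytic self-maps $\omega_1,\omega_2$ of the matricial upper half-plane $\mathbb H^+(M_3(\C))$ such that $E[(b-P-Q)^{-1}]=E[(\omega_1(b)-P)^{-1}]=E[(\omega_2(b)-Q)^{-1}]$ and $\omega_1(b)+\omega_2(b)-b=E[(b-P-Q)^{-1}]^{-1}$, with $(\omega_1(b),\omega_2(b))$ the unique attracting fixed point of the associated Belinschi--Mai--Speicher iteration. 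Two technical points arise: the operators are unbounded, so the argument is first run for the bounded spectral truncations $\A_R,\B_R,\X_R$ and then $R\to\infty$ is taken using the uniform resolvent bounds above; and the base point $z\,e_{11}$ lies on $\partial\mathbb H^+(M_3(\C))$, so the identities are first evaluated at $z\,e_{11}+i\varepsilon\,1$ and then $\varepsilon\downarrow 0$ is taken, again via resolvent bounds. The block structure of the pencil ($Q$ lives only in the $(3,3)$-corner, and $P$ is scalar there) forces $\omega_1(z\,e_{11})$ and $\omega_2(z\,e_{11})$ to collapse to a single complex parameter each — call them $f(z)$ and $\mathsf f(z)$: effectively $\omega_1$ replaces $\A\X\A$ inside the resolvent by $-f(z)\,\A^2$, while $\omega_2$ replaces the pair $(\A,\B)$ by the scalar point $\mathsf f(z)$. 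Tracking the displayed identities through the relevant matrix entries then produces $\tau((z-\B+f(z)\A^2)^{-1})=\tau((z-\B-\A\X\A)^{-1})$ and $\tau((\mathsf f(z)-\X)^{-1})=\tau(\A(z-\B-\A\X\A)^{-1}\A)$; the relation $\omega_1+\omega_2-b=(\cdots)^{-1}$ collapses to $\frac1{\mathsf f(z)+f(z)}$ being the common value of $\tau((\mathsf f(z)-\X)^{-1})$ and $\tau(\A(z-\B+f(z)\A^2)^{-1}\A)$; and the Belinschi--Mai--Speicher fixed-point characterization collapses to the statement that $(f(z),\mathsf f(z))$ is the unique attracting fixed point of $\mathscr F_z$. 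Analyticity in $z$ is inherited from that of $\omega_1,\omega_2$.

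For the first assertion \emph{without} any freeness hypothesis, I would take a free-product realization of random variables $\X'\stackrel{d}{=}\X$ and $(\A',\B')\stackrel{d}{=}(\A,\B)$ (jointly), with $\X'$ now $*$-free from $(\A',\B')$; since $\mathscr F_z$ is insensitive to the joint law of $\X$ with $(\A,\B)$, we have $\mathscr F_z^{(\X,\A,\B)}=\mathscr F_z^{(\X',\A',\B')}$, so the freeness case applied to the primed triple yields the unique attracting fixed point $(f(z),\mathsf f(z))$ and the first two identities in general. The step I expect to be the main obstacle is the subordination argument itself: reconciling operator-valued free probability (set up for bounded operators) with the affiliated operators $\A,\B,\X$, evaluating subordination functions at a base point on the boundary of $\mathbb H^+(M_3(\C))$, and — the genuinely delicate bookkeeping — verifying that the matrix-valued subordination functions really do collapse to the scalar pair $(f(z),\mathsf f(z))$ of $\mathscr F_z$. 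The hypotheses $\A,\X\notin\C\cdot 1_{\cA}$ serve to keep this reduction non-degenerate, ensuring that $f,\mathsf f$ are genuine self-maps of $\C^+$ and that the fixed point is attracting.
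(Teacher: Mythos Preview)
Your outline is sound and close in spirit to what the paper does: both routes use a self-adjoint linearization together with Voiculescu/BMS operator-valued subordination, then painstakingly extract scalar subordination functions from the matrix-valued ones, with approximation arguments to pass from bounded to affiliated operators and from the strict matricial upper half-plane to the boundary point $z\,e_{11}$.

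There are two genuine differences worth noting. First, the paper works with $2\times2$ linearizations rather than your $3\times3$ pencil --- in fact with two of them: the pencil $\begin{bmatrix}1&\A^{-2}\\0&\A^{-1}(z-\B)\A^{-1}\end{bmatrix}+\begin{bmatrix}0&0\\0&-\X\end{bmatrix}$ (for invertible $\A$) to produce $f$, and later $\begin{bmatrix}\B&-\A\\-\A&0\end{bmatrix}+\begin{bmatrix}0&0\\0&-\X^{-1}\end{bmatrix}$ (for invertible $\X$ with $\tau(\A^2+\X^2)<\infty$) to produce $\mathsf f$. In each case the matrix subordination function does \emph{not} collapse to a single scalar in the way you suggest: for instance $\mathsf f(z)$ emerges as $-\omega_{11}(z)/\det\omega(z)$ after several pages of entrywise computation, and the verification that $\alpha(ze_{11})=\mathrm{diag}(z,-1/f(z))$ requires a separate argument. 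Your $3\times3$ version would face the same bookkeeping, only larger; the ``collapse'' you anticipate is the hard step, not a formality.

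Second, and more interestingly, the paper does \emph{not} obtain the fixed-point characterization by collapsing the BMS matrix iteration. Instead it proves this part directly and in full generality (no freeness, no moment conditions, no boundedness) via one-variable Denjoy--Wolff theory: one checks that $w\mapsto 1/\tau(\A(z-\B+w\A^2)^{-1}\A)$ and $w\mapsto 1/\tau((w-\X)^{-1})$ are Nevanlinna maps that strictly increase imaginary parts (here $\A,\X\notin\C\cdot1$ is used), composes them into a single self-map $g_z\colon\C^+\to\C^+$, and shows its Denjoy--Wolff point lies in $\C^+$ (not on $\R$, not at $\infty$). This yields $(f(z),\mathsf f(z))$ with no approximation at all; subordination is then invoked only to establish the two identities involving $\A\X\A+\B$ under the freeness hypothesis. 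Your free-product trick for the freeness-free part is the same observation the paper makes, but the paper's direct Denjoy--Wolff argument renders even that unnecessary for the fixed-point claim.
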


The subordination functions $\mathsf f$ and $f$ associated with the polynomial $\A\X\A+\B$ play a central role in analyzing the asymptotic behavior of the tails of free perpetuities.

\subsection{Tails of free perpetuities}

Once the existence of the free perpetuity in the critical case $\tau(\A)=1$ is established, it is natural to investigate its tail behavior. In this regime the perpetuity necessarily has unbounded support, and subordination becomes an essential tool.
By exploiting the asymptotic properties of the subordination functions, we obtain the following tail estimates.

\begin{thm}\label{intro:tails+}
        Assume $\tau(\A)=1$, where $\A\in\tcA$ is non-Dirac and $\B\in\cA$, with $\B\geq 0$ and
$\B\neq 0$. If $\A, \X\not\in\mathbb C\cdot1_{\cA}$, then the unique solution $\X$ to
\eqref{eqn:free_perp} exists, and moreover:
 \begin{enumerate}
     \item[(i)] if $\tau(\A^2)<+\infty$, then
         \begin{align*}
        \lim_{t\to+\infty}t^{1/2}\mu_{\X}\big((t,+\infty)\big) = \frac{2\sqrt{2}}{\pi}\sqrt{\frac{\tau(\B)}{\mathrm{Var}(\A)}}.
        \end{align*}
     \item[(ii)] if $\mu_\A\big((t,+\infty)\big)\sim c\, t^{-\alpha}$ for $\alpha\in(1,2)$ and $c>0$, then
\[
 \lim_{t\to+\infty}t^{1/\alpha}\mu_{\X}\big((t,+\infty)\big) = \frac{\sin(\pi/\alpha)}{\pi/\alpha}\left( \frac{-\sin(\pi\alpha)}{\pi\,c}\tau(\B)\right)^{1/\alpha}.
\]
 \end{enumerate}
 \end{thm}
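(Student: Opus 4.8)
The plan is to route everything through the subordination pair from the theorem displayed above, applied to the self-adjoint operator $\sqrt{\A}\,\X\,\sqrt{\A}+\B$, which equals $\X$ in distribution by \eqref{eqn:free_perp}, i.e. with the substitution $(\X,\A,\B)\mapsto(\X,\sqrt{\A},\B)$. Using traciality, the $*$-freeness of $\X$ from $(\A,\B)$, and $\sqrt{\A}\,\X\,\sqrt{\A}+\B\stackrel{d}{=}\X$, the theorem produces functions $f,\mathsf f$ with
\begin{gather*}
G_\X(z)=\tau\bigl((z-\B+f(z)\A)^{-1}\bigr),\qquad
G_\X(\mathsf f(z))=\tau\bigl(\A(z-\B+f(z)\A)^{-1}\bigr)=\frac{1}{f(z)+\mathsf f(z)},
\end{gather*}
equivalently $\mathsf f(z)=\bigl[\tau(\A(z-\B+f(z)\A)^{-1})\bigr]^{-1}-f(z)$ and $f(z)=1/G_\X(\mathsf f(z))-\mathsf f(z)$. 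Since $\A,\B\ge0$ and $\X\ge0$ (the series \eqref{intro:perpseries} is a sum of positive operators and it converges, by Theorem \ref{intro:existence}), $G_\X$ is analytic on $\C\setminus[0,\infty)$, while $f,\mathsf f$ extend by reflection and take negative real values on $(-\infty,0)$; I will therefore put $z=-s$, $s\to+\infty$, write $u=f(-s)<0$ and $\mathsf f(-s)=-\sigma$ with $\sigma>0$, and study the nonnegative decreasing quantity $r(s):=G_\X(-s)+\tfrac1s=\int_0^\infty\frac{t}{s(s+t)}\,\mu_\X(\dd t)$, whose rate of decay to $0$ determines the tail of $\mu_\X$.

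The heart of the argument is a self-consistent asymptotic bootstrap for $r$. From $f(z)=1/G_\X(\mathsf f(z))-\mathsf f(z)$ and the elementary expansion $1/G_\X(-\sigma)=-\sigma-\sigma^2r(\sigma)-\sigma^3r(\sigma)^2-\cdots$ (valid since $\sigma r(\sigma)\in(0,1)$) one gets $|u|=\sigma^2r(\sigma)+O(1)$, hence, once $\sigma$ is located, $|u|=s^2r(s)+O(1)$ with the $O(1)$ term determined precisely. That term is supplied by the other identity: expanding $\tau(\A(s+\B+|u|\A)^{-1})$ by peeling off the bounded $\B$ and treating the $\A$–part through $G_\A$ evaluated at $-s/|u|\to-\infty$ — in case (i) via the Taylor expansion of $G_\A$ through its $\tau(\A^2)$–coefficient, in case (ii) via the Tauberian expansion $G_\A(-x)=-\tfrac1x+\tfrac1{x^2}+\tfrac{c\alpha\pi}{\sin\pi\alpha}x^{-\alpha-1}+o(x^{-\alpha-1})$, which I derive from $\mu_\A\big((t,\infty)\big)\sim c\,t^{-\alpha}$ together with $\sin\pi(\alpha-1)=-\sin\pi\alpha$ — yields $\sigma=s+\mathrm{Var}(\A)\,|u|+O(1)$ in case (i) (resp. $\sigma=s+K|u|^{\alpha-1}s^{2-\alpha}+\cdots$ with $K:=-c\alpha\pi/\sin\pi\alpha>0$ in case (ii)). The same expansion of $\tau((s+\B+|u|\A)^{-1})$ gives $r(s)=\frac{|u|+\tau(\B)}{s^2}-\tau(\A^2)\,s\,r(s)^2+o(s^{-2})$ in case (i) (resp. $r(s)=\frac{|u|+\tau(\B)}{s^2}-K\,\frac{|u|^\alpha}{s^{\alpha+1}}+\cdots$ in case (ii)). Substituting $|u|=s^2r(s)+O(1)$ cancels the leading $r(s)$, and the residual equation reads $\tau(\A^2)\,s^3r(s)^2=\tau(\B)+(\text{the }O(1)\text{ correction})+o(1)$ (resp. $K\,s^{\alpha+1}r(s)^\alpha=\tau(\B)+(\cdots)+o(1)$), forcing $r(s)\sim c's^{-3/2}$ (resp. $r(s)\sim c's^{-1-1/\alpha}$). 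Carrying the $O(1)$ corrections through the cancellation leaves the factor $\tau(\A^2)-\tfrac{\mathrm{Var}(\A)+2}{2}=\tfrac{\mathrm{Var}(\A)}{2}$ in case (i), giving $c'=\sqrt{2\tau(\B)/\mathrm{Var}(\A)}$, and the factor $(1-1/\alpha)K$ in case (ii), giving $c'^{\alpha}=\dfrac{-\sin(\pi\alpha)}{\pi c}\,\tau(\B)$.

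It remains to convert $r(s)\sim c's^{-\beta-1}$ into the tail of $\mu_\X$ via a Tauberian theorem for Cauchy transforms: for $\mu$ on $[0,\infty)$ and $\beta\in(0,1)$, $G_\mu(-s)+\tfrac1s\sim\ell\,s^{-\beta-1}$ as $s\to+\infty$ if and only if $\mu\big((t,\infty)\big)\sim\frac{\ell\sin(\pi\beta)}{\beta\pi}\,t^{-\beta}$ (via comparison with the Laplace transform and Karamata's theorem, the relevant constant being $\int_0^\infty\frac{v^{-\beta}}{1+v}\,\dd v=\pi/\sin\pi\beta$). With $\beta=\tfrac12$ this gives $\lim_{t\to\infty} t^{1/2}\mu_\X\big((t,\infty)\big)=\frac{2c'}{\pi}=\frac{2\sqrt2}{\pi}\sqrt{\tau(\B)/\mathrm{Var}(\A)}$, and with $\beta=\tfrac1\alpha$ it gives $\lim_{t\to\infty} t^{1/\alpha}\mu_\X\big((t,\infty)\big)=\frac{\sin(\pi/\alpha)}{\pi/\alpha}\,c'=\frac{\sin(\pi/\alpha)}{\pi/\alpha}\big(\tfrac{-\sin(\pi\alpha)}{\pi c}\tau(\B)\big)^{1/\alpha}$. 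The main obstacle is making the bootstrap rigorous: before the expansions may be used one needs a priori two-sided bounds $r(s)\asymp s^{-\beta-1}$ (equivalently $|u|\asymp s^{1-\beta}$ and $\sigma-s=o(s)$), presumably obtained from monotonicity of $r$, the attracting–fixed–point characterization of $(f,\mathsf f)$, and the moment asymptotics of $\mu_\A^{\boxtimes n}$ (Theorems \ref{thm:28}--\ref{lem:1}) as they feed into \eqref{intro:perpseries}; one then shows $r$ is regularly varying and passes to $\limsup$/$\liminf$ of $s^{\beta+1}r(s)$ in the residual equation to pin the constant. A further delicacy is that $\tau(\A^2)=\infty$ in case (ii) and $\tau(\A^3)$ may be $\infty$ in case (i), so several steps must route the $\A$–resolvent through $G_\A$ rather than a formal Taylor series, and the $O(1)$ remainders — which genuinely contribute to $c'$ — must be controlled uniformly in $s$.
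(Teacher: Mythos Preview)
Your framework---subordination pair $(f,\mathsf f)$ applied to the fixed-point polynomial, expansion of the resolvents, Tauberian conversion---is the same as the paper's, and your heuristic derivation of the constants is on the right track. But the gap you flag yourself (``before the expansions may be used one needs a priori two-sided bounds $r(s)\asymp s^{-\beta-1}$'') is exactly where the paper's argument differs from yours, and your proposed fixes (attracting fixed point, moment asymptotics of $\mu_\A^{\boxtimes n}$, ``show $r$ is regularly varying'') do not lead there. There is no direct route from the series \eqref{intro:perpseries} to two-sided control on $G_\X(-s)$, and regular variation of $r$ is not established anywhere---it is the conclusion, not an input.

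The paper bypasses this entirely. Writing $\delta(z)=-f(z)$, $h(z)=\mathsf f(z)+f(z)$ and $\beta(z)=\psi_\X(1/z)$, the perpetuity equation $\X\stackrel d=\A^{1/2}\X\A^{1/2}+\B$ yields the exact composition identity
\[
\beta\bigl(h(z)+\delta(z)\bigr)=\frac{\delta(z)}{h(z)},
\]
which you do not use. This lets one define a sequence $z_n=h(z_{n-1})+\delta(z_{n-1})$; from the expansions (Lemmas \ref{lem:L1B>0}--\ref{lem:L2B>0}) one shows $(z_n)$ is strictly decreasing to $-\infty$ and $z_n\sim z_{n-1}$. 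Along this sequence the two expansions become a two-term recursion for $(\delta_n/z_n)$, and the Stolz--Ces\`aro theorem applied to $\dfrac{(\delta_n/z_n)^2-(\delta_{n-1}/z_{n-1})^2}{1/z_n-1/z_{n-1}}$ (case~(i)) or its $\alpha$-th power analogue (case~(ii)) gives the limit of $\delta_n^2/z_n$ directly---no a~priori bounds, no regular variation assumption. Finally, monotonicity of $\delta$ on $(-\infty,0)$ (Lemma~\ref{lem:deltaProp}) sandwiches $\delta(z)$ between consecutive $\delta_n$'s, upgrading the discrete limit to the continuous one. Your $\limsup/\liminf$ plan on the ``residual equation'' is not workable as stated, because that equation is derived only after using the very asymptotics you want to prove; several of your $O(1)$ remainders (e.g.\ $|u|=\sigma^2r(\sigma)+O(1)$) are circular without the a~priori bound.
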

Note that condition $\tau(\A)=1$ plays a similar role to Kesten's condition for determining the parameter $\alpha$.

        A surprising phenomenon emerges from these results. In case (i) the tail of $\mu_\X$ decays as $t^{-1/2}$ (so that $m_\gamma(\X)<+\infty$ for $\gamma\in(0,1/2)$), whereas in case (ii) when $\A$ has a regularly varying tail the decay is $t^{-1/\alpha}$, implying $m_\gamma(\X)<+\infty$ for $\gamma\in(0,1/\alpha)$. Since $\alpha\in(1,2)$, in the second case the free perpetuity $\X$ possesses more finite moments even though $\A$ itself has fewer moments. A similar phenomenon occurs in the next theorem, which treats the case when $\B$ is symmetric.

\begin{thm}%\label{thm:tailsS}
        Assume $\tau(\A)=1$, where $\A\in\tcA$ is non-Dirac, and $(\A,\B)\stackrel{d}{=}(\A,-\B)$ with
$\B\in\cA$. If $\A, \X\not\in\mathbb C\cdot1_{\cA}$, then the unique solution $\X$ to
\eqref{eqn:free_perp} exists, and moreover:
 \begin{enumerate}
     \item[(i)] if $\tau(\A^2)<+\infty$, then
        \[
        \lim_{t\to+\infty}t\,\mu_{|\X|}\big((t,+\infty)\big) = \frac{2}{\pi}\sqrt{\frac{\tau(\B^2)}{\mathrm{Var}(\A)}}.
        \]
 \item[(ii)] if $\mu_\A\big((t,+\infty)\big)\sim c\, t^{-\alpha}$ for $\alpha\in(1,2)$ and $c>0$, then
         \begin{align*}
 \lim_{t\to+\infty}t^{2/\alpha}\,\mu_{|\X|}\big((t,+\infty)\big) = \frac{\sin(\pi/\alpha)}{\pi/\alpha}
\left( \frac{-\sin(\pi\alpha)}{2\pi c}\tau(\B^2)\right)^{1/\alpha}.
        \end{align*}
 \end{enumerate}
 \end{thm}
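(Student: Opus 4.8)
The plan is to run, for a centred $\B$, the same subordination machinery as in Theorem~\ref{intro:tails+}, and to read the tail of $\mu_\X$ off the behaviour of the subordination functions near infinity. First I would note that $\X$ exists (and is unique) by the results established above, so that $\X$ is represented by the convergent series \eqref{intro:perpseries}; the hypothesis $(\A,\B)\stackrel{d}{=}(\A,-\B)$ makes that series change sign under the law-preserving substitution $(\A_n,\B_n)\mapsto(\A_n,-\B_n)$, whence $\X\stackrel{d}{=}-\X$. Thus $\mu_\X$ is symmetric, $\mu_{|\X|}\big((t,+\infty)\big)=2\,\mu_\X\big((t,+\infty)\big)$, and in particular $\tau(\A\B)=\tau\big(\A(-\B)\big)=-\tau(\A\B)=0$, and every joint $\tau$-moment of $\A,\B$ with an odd total power of $\B$ vanishes; the problem is reduced to the right tail of the symmetric measure $\mu_\X$.

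Next I would apply the subordination theorem stated above with $(\A^{1/2},\X,\B)$ in place of $(\A,\X,\B)$ --- admissible since $\A^{1/2},\X\notin\mathbb C\cdot1_{\cA}$ and $\X$ is $*$-free from $(\A^{1/2},\B)$ --- to obtain self-maps $f,\mathsf f$ of $\mathbb C^{+}$. Inserting the perpetuity identity $\B+\A^{1/2}\X\A^{1/2}\stackrel{d}{=}\X$ into the relation $\tau\big((z-\B+f(z)\A)^{-1}\big)=\tau\big((z-\B-\A^{1/2}\X\A^{1/2})^{-1}\big)$, and using traciality to turn $\tau\big(\A^{1/2}\,\cdot\,\A^{1/2}\big)$ into $\tau(\A\,\cdot\,)$, the subordination relations collapse into the closed scalar system
\[
G_\X(z)=\tau\big((z-\B+f(z)\A)^{-1}\big),\qquad
G_\X(\mathsf f(z))=\frac{1}{\mathsf f(z)+f(z)}=\tau\big(\A(z-\B+f(z)\A)^{-1}\big),
\]
with $(f(z),\mathsf f(z))$ the attracting fixed point of $\mathscr F_z$.

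I would then study this system as $z\to\infty$ nontangentially. Expanding the resolvents and using $\tau(\A)=1$ gives $\mathsf f(z)\sim z$, while the leading correction to $G_\X(z)$ past $1/z$ is governed by $-f(z)/z^{2}$; crucially, because $\tau(\B)=0$ and $\tau(\A\B)=0$, the $\B$-contribution enters only at the next order, through $\tau\big((\B-f(z)\A)^{2}\big)\ni\tau(\B^{2})$, so that the self-consistency equation for $f$ is effectively quadratic in the tail constant --- this is why a square root appears in case~(i) and a power $1/\alpha$ in case~(ii). Matching the two expansions of $G_\X(\mathsf f(z))$ against the intrinsic expansion of $G_\X$ (whose leading correction past $1/z$ encodes the symmetric tail of $\mu_\X$), one finds: if $\tau(\A^{2})<+\infty$, then $f(z)$ converges along $i\mathbb R^{+}$ to the purely imaginary constant $i\sqrt{\tau(\B^{2})/\mathrm{Var}(\A)}$, which via a Karamata-type Tauberian theorem yields $\mu_\X\big((t,+\infty)\big)\sim\tfrac1\pi\sqrt{\tau(\B^{2})/\mathrm{Var}(\A)}\,t^{-1}$; if instead $\mu_\A\big((t,+\infty)\big)\sim c\,t^{-\alpha}$, then $f(z)\sim\mathrm{const}\cdot z^{\,1-2/\alpha}$, the exponent being forced by $\tau(\A)=1$ together with $\alpha$, and the constant by the heavy-tail asymptotics of $\tau\big(\A(z-\B+f(z)\A)^{-1}\big)$ --- which produces a term of order $c\,f(z)^{\alpha-1}z^{-\alpha}$ carrying a trigonometric factor involving $\sin(\pi\alpha)$ (a standard Stieltjes-transform asymptotic for regularly varying measures), while passing from the regularly varying behaviour of $f$ back to the tail of $\mu_\X$ contributes the factor $\sin(\pi/\alpha)\big/(\pi/\alpha)$. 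Multiplying the resulting tail of $\mu_\X$ by $2$ gives the stated limits for $\mu_{|\X|}\big((t,+\infty)\big)$.

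The main obstacle, exactly as in Theorem~\ref{intro:tails+}, is to upgrade this formal asymptotic matching to genuine asymptotics, i.e.\ to prove that $f$ is \emph{precisely} regularly varying with the asserted index and constant, rather than merely compatible with it. I expect this to come from the Nevanlinna--Pick nature of $f$ and $\mathsf f$ (monotonicity of $\mathrm{Im}$ and boundary control along $i\mathbb R^{+}$) together with the fact that $\mathscr F_z$ is a strict contraction for the hyperbolic metric on $\mathbb C^{+}\times\mathbb C^{+}$: the contraction gives uniform control of the remainder terms as $z\to\infty$, which is what pins down the otherwise free integration constant. In case~(ii) there is the additional difficulty of an interaction between two heavy tails --- the prescribed one of $\mu_\A$ and the emergent one of $\mu_\X$ --- so one has to determine carefully which regularly varying contribution dominates in $\tau\big(\A(z-\B+f(z)\A)^{-1}\big)$ and maintain uniform error estimates throughout that regime.
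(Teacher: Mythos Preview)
Your setup is correct and matches the paper: symmetry of $\X$, the subordination pair $(f,\mathsf f)$, the closed scalar system coming from $\X\stackrel{d}{=}\B+\A^{1/2}\X\A^{1/2}$, and the Tauberian step at the end. The point where your proposal diverges from the paper is exactly the one you flag as the main obstacle, and your proposed resolution is not the one that works.

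You hope that the hyperbolic contraction of $\mathscr F_z$ will give uniform control of the remainders and thereby pin down the regular variation of $f$. The paper does not use this at all, and it is not clear how a contraction estimate on $\mathscr F_z$ (which acts in the $w$-variable for fixed $z$) would yield the needed asymptotics of $f(z)$ as $z\to\infty$. What the paper exploits instead is a \emph{self-composition} structure hidden in the perpetuity equation: writing $\beta(z)=\psi_\X(z^{-1})$ and $\delta=-f$, the identity $\psi_\X\big((h(z)+\delta(z))^{-1}\big)=\delta(z)/h(z)$ together with $\beta=\psi_\X(\cdot^{-1})$ gives $\beta\big(h(z)+\delta(z)\big)=\delta(z)/h(z)$. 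One then defines a deterministic sequence $z_n$ on the imaginary axis by $z_n=h(iz_{n-1})/i+\Delta(z_{n-1})$ (with $\Delta(t)=\delta(it)/i$), so that the functional equation becomes a two-term recursion linking $\Delta_n/z_n$ to $\Delta_{n-1}/z_{n-1}$. Two preparatory lemmas give the first-order increments $z_{n-1}-z_n$ and $\Delta_n/z_n-\Delta_{n-1}/z_{n-1}$ explicitly (this is where $\tau(\B^2)$ and, respectively, $\mathrm{Var}(\A)$ or the regular-variation index $\alpha$ enter), and then the Stolz--Ces\`aro theorem converts these increment asymptotics into the precise limit of $\Delta_n^2$ (case (i)) or $z_n^2(\Delta_n/(-z_n))^\alpha$ (case (ii)). Monotonicity of $\delta$ along $i(-\infty,0)$ lets one sandwich $\Delta(z)$ between consecutive $\Delta_n$'s and pass from the sequence to the full limit. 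This discrete-iteration plus Stolz--Ces\`aro argument is the missing idea in your proposal; without it the ``asymptotic matching'' remains formal.
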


\subsection{An explicit example of a perpetuity}
Explicit examples of perpetuities—cases in which the density can be expressed in closed form—are rare in both the classical and free settings. To illustrate our theory, we consider an example based on the free beta prime distribution. Following \cite{Yoshida}, define the free beta prime law by

Explicit examples of perpetuities (i.e., cases where the distribution can be written in closed form) are rare in both the classical and free settings. To illustrate our theory, we consider an example based on the free beta prime distribution. Following Yoshida \cite{Yoshida}, define the free beta prime law by
\[
f\mathcal{B}'_{a,b}(\dd x) = f_{a,b}(x)\dd x+\max\{1-a,0\}\delta_0(\dd x),
\]
where
\begin{align*}
        f_{a,b}(x)=\frac{(b-1)\sqrt{(\gamma_+-x)(x-\gamma_-)}}{2\pi x(1+x)}I_{[\gamma_-,\gamma_+]}(x)
\end{align*}
with
\[
\gamma_\pm = \left(\frac{\sqrt{ab}\pm\sqrt{a+b-1}}{b-1}\right)^2.
\]
This formula extends to the case $b=1$, in which case
\[
f\mathcal{B}'_{a,1}(\dd x) = \frac{\sqrt{4a x-(a-1)^2}}{2\pi x(1+x)}I_{[(a-1)^2/(4 a),+\infty)}(x)\dd x+\max\{1-a,0\}\delta_0(\dd x).
\]
A direct calculation (e.g., via the corresponding $S$-transforms) shows that for $a>0$, $b\ge1$, if $\A\sim f\mathcal{B}'_{a,a+b}$ and $\X\sim f\mathcal{B}'_{a,b}$ (with $\A$ and $\X$ free), then
\begin{align*}
        \X \stackrel{d}{=} \A^{1/2}\X\A^{1/2}+\A.
\end{align*}
Therefore, $\X$ is an example of a free perpetuity. Note that while the existence of perpetuity is verified here by a direct calculation, it is nevertheless guaranteed by Theorem \ref{intro:existence}.

Since $\tau(\A) = a/(a+b-1)<1$ if and only if $b>1$, in which case the support of $\X$ is compact. When $\tau(\A)=1$, then necessarily $b=1$, and thus $\X\sim f\mathcal{B}'_{a,1}$. In this critical case, one obtains
\[
S_\X(-1/x)=\frac{1}{ax-1}\sim \frac{1}{a}\,x^{-1}\quad\text{as } x\to+\infty,
\]
and by \cite[Theorem 4.5]{KK22} (with $\alpha=1/2$) it follows that
\[
\lim_{t\to+\infty} t^{1/2}\,\mu_{\X}\big((t,+\infty)\big) = \frac{2\sqrt{a}}{\pi}.
\]
A direct computation shows that this expression can be rewritten as
\[
\frac{2\sqrt{2}}{\pi}\sqrt{\frac{\tau(\A)}{\mathrm{Var}(\A)}},
\]
which is in full agreement with Theorem~\ref{intro:tails+}.

Another explicit example comes from \cite{KSzMY}. For a special choice of parameters in the free GIG  distribution  (see subsection \ref{sec:ex1} for detailed discussion), suppose $\X$ and $\Y$ are free, where $\X$ follows a GIG distribution and $\Y$ follows a free Poisson distribution. Define $\mathbb{V}=\X^{-1}-(\X+\Y)^{-1}$. Then, one obtains $\mathbb{V}\stackrel{d}{=}\Y$. Moreover, Hua's identity implies
\begin{align*}
\Y^{-1}\stackrel{d}{=} \X \Y^{-1}\X+\X,
\end{align*}
which is an affine fixed-point equation.

In \cite[Theorem 4.1]{Hasebe}, the free GIG distribution is characterized via a continued fraction fixed-point equation:
\[
\X\stackrel{d}{=}(\X+\Y)^{-1},
\]
where $\X$ and $\Y$ are free and $\Y$ has a free Poisson distribution. The method of the proof of \cite[Theorem 4.1]{Hasebe}
heavily relied on subordination techniques. In our work, we also employ subordination; however, we do not use it to establish the existence of a perpetuity.

\subsection{Organization of the paper}

The paper is organized as follows. In the next section, we introduce the necessary background and notation. We review several topics, including various modes of convergence in von Neumann algebras, the essential free probability transforms, and key Tauberian theorems. This material provides the theoretical framework required for the later sections.

In Section \ref{sec:3}, we present results on the free multiplicative random walk, a central object in the study of free perpetuities. We analyze the asymptotic behavior of moments and fractional moments of free multiplicative convolutions. We also analyze the conditions for series convergence in the critical case when $\tau(\A)=1$, laying the groundwork for our fixed point analysis.

Section \ref{sec:4} focuses on the affine fixed point equation. We illustrate these concepts with two examples. In addition, we address the existence and uniqueness of a free perpetuity and study their moments.

In Section \ref{sec:subordination}, we focus on the subordination phenomenon in the context of the affine transformation $\A\X\A+\B$. This section provides key insights into how subordination techniques can be applied to better understand the underlying structure of our model.

Finally, Section \ref{sec:tails} investigates the tail behavior of free perpetuities in the critical case, revisiting our two running examples.

An Appendix is included, which presents new results on the limit theorem from \cite{SY13} that are not directly related to free perpetuities.

\section{Background and notation}

Throughout this paper, we consider the framework of a tracial $W^{*}$ non-commutative probability space, by which we mean the a pair
$\left(\cA,\tau\right)$ where $\cA$ is a von Neumann algebra and $\tau\colon\cA \to \C$ is a faithful, normal, tracial state. By $\tcA$ we will denote the algebra of unbounded operators affiliated with $\cA$.

\subsection{Modes of convergence in von Neumann algebras}
One of the central themes of this work is the convergence of sequences of non-commutative random variables. In this subsection, we review the essential definitions and results that underpin our analysis.

In this work we deal with freeness of unbounded operators, for definition of freeness in this case we refer to \cite{BV93} or \cite[Chapter 8]{MS17}.
%Fix a non-commutative probability space $(\cA,\tau)$.
\begin{defin}\
\begin{enumerate}
    \item  A sequence $(x_n)_n$ in $\tcA$ converges almost uniformly to $x\in\tcA$ if, for any $\varepsilon>0$, there is a projection $p$ in $\cA$ such that $\tau(1-p)<\varepsilon$ and
    \[
    \lim_{n\to+\infty} \|(x_n-x)p\|=0.
    \]
    See \cite{Jajte91} for details.
\item A sequence $(x_n)_n$ in $\tcA$  converges bilaterally almost uniformly to $x\in\tcA$ if, for any $\varepsilon>0$, there is a projection $p$ in $\cA$ such that $\tau(1-p)<\varepsilon$ and
     \[
    \lim_{n\to+\infty} \|p(x_n-x)p\|=0.
    \]

\item
We say that $(x_n)_n$ is Cauchy bilaterally almost uniformly if for any $\eps, \eps'>0$ there are a projection $p$ with $\tau(1-p)\leq\eps$ and a positive integer $N$ such that
\[
\|p(x_n-x_m)p\|\leq \eps'\qquad\mbox{for all }n,m\geq N.
\]

\item Let $p\in(0,+\infty)$ and $\|x\|_p=\tau(|x^*x|^{p/2})^{1/p},x\in\mathcal A.$ It is known (and can easily be verified) that $x\mapsto\|x\|_p$ is a norm on $\mathcal A$
if $p\ge1$ and a quasinorm is $p\in(0,1)$. Thus, it generates a topological vector space structure on $\mathcal A$. We define $L^{p}(\mathcal A,\tau)$ to be the completion
of $\mathcal A$ in the topology generated by $\|\cdot\|_p$. If $p\ge1$, then $L^{p}(\mathcal A,\tau)$ is a Banach space (for more details, see \cite[Chapter IX]{Takesaki2}).

\item If $\mathcal A$ is a von Neumann algebra acting on the Hilbert space $\mathcal H$,
the strong operator (so) topology is the locally convex topology on $\mathcal A$
generated by the family of seminorms $\mathcal A\ni x\mapsto\|x\xi\|_2$, $\xi\in\mathcal H$,
and the weak operator (wo) topology is the locally convex topology on $\mathcal A$
generated by the family of seminorms $\mathcal A\ni x\mapsto|\langle x\xi,\eta\rangle|$, $\xi,\eta
\in\mathcal H$ (see \cite{Takesaki}).

\end{enumerate}
\end{defin}

It is known that $\tcA$ is complete with respect to a.u. and b.a.u. convergences, see
\cite[Theorem 2.3]{CLS05} and \cite[Theorem 2.2]{Lit24}. We note that the notion of ‘almost uniform convergence’ is stronger than ‘bilateral almost uniform convergence', but they coincide in the commutative case. By \cite{Cu71}, any noncommutative $L^1$-bounded martingale converges b.a.u., and any $L^2$-bounded martingale converges a.u. A recent article \cite{BAD} shows that, in contrast to the classical case, almost uniform convergence can fail for noncommutative $L^p$-bounded martingales when $1\leq p<2$.

\subsection{Free probability transforms}\label{sec:tr}
Let $\mathcal{M}$ and $\mathcal{M}_+$ denote the sets of probability measures on $\R$ and $[0,+\infty)$, respectively. Let $\mathcal{M}_S$ denote the set of symmetric probability measures on $\R$.

The moments and the variance of a measure $\mu$ are given by
\[
m_\alpha(\mu)=\int_{\R} t^\alpha\mu(\dd t)\quad\mbox{and}\quad \mathrm{Var}(\mu)=m_2(\mu)-m_1(\mu)^2.
\]

The moment transform of $\mu\in\mathcal{M}$ is defined by
        \[
        \psi_\mu(z)= \int_{[0,+\infty)} \frac{z t}{1-zt}\mu(\dd t),\qquad 1/z\in\mathbb{C}\setminus\mathrm{supp}(\mu).
        \]
 
Let $\delta=\mu(\{0\})<1$. If $\mu\in\mathcal{M}_+$, the function $\psi_\mu\colon (-\infty,0)\to (\delta-1,0)$ is invertible, and we denote its inverse by $\chi_\mu$.
 If $\mu\in\mathcal{M}_S$, then the function
 $\psi_\mu\colon i(-\infty,0)\to (\delta-1,0)$ is invertible, and its inverse is also denoted by $\chi_\mu$.
 
The $S$-transform of $\mu\in\mathcal{M}_+\cup\mathcal{M}_S$ is defined by 
        \[
        S_\mu(z) = \frac{z+1}{z} \chi_\mu(z),\qquad z\in(\delta-1,0).
        \]
The $S$-transform is multiplicative under the free multiplicative convolution $\boxtimes$, that is,
\[
S_{\mu\boxtimes \nu} = S_\mu \, S_\nu,
\]
where $\mu \in \mathcal{M}_+$ and $\nu \in \mathcal{M}_+\cup\mathcal{M}_S$, \cite[Theorem 7]{APA09}.

If $m_2(\mu)<+\infty$, then as $z\to 0-$,
\begin{align}\label{eq:S-2-terms}
S_\mu(z)= \frac{1}{m_1(\mu)} - \frac{\mathrm{Var}(\mu)}{m_1(\mu)^3} z + o(z).
\end{align}

 If $\mu$ is the distribution of $\X$ with respect to $\tau$, then we write $\psi_\mu=\psi_\X$,
$\chi_\mu=\chi_\X$ and $S_\mu=S_\X$.

\subsection{Tauberian theorems}
For real functions $f$, $g$ defined on a neighbourhood of infinity, we write $f(t)\sim g(t)$ if $f(t)/g(t)\to 1$ as $t\to+\infty$.
A positive measurable function $L$ defined in a neighbourhood of infinity is called slowly varying if $L(\lambda t)\sim L(t)$ for each $\lambda>0$.

\begin{thm}\label{thm:taub}
Let $\mu\in\mathcal{M}_+$ and let $L$ be a slowly varying function.
\begin{enumerate}
    \item[(i)] If $\alpha\in(0,1)$, then the following two conditions are equivalent: %as $t\to+\infty$,
\[
\mu\big( (t,+\infty)\big) \sim \frac{L(t)}{t^\alpha} \qquad\iff\qquad -\psi_\mu\left( -\frac1t\right)\sim \frac{\pi\alpha}{\sin(\pi\alpha)}\frac{L(t)}{t^\alpha}.
\]
    \item[(ii)] If $\alpha\in(1,2)$, then the following two conditions are equivalent: %as $t\to+\infty$,
\[
\mu\big( (t,+\infty)\big) \sim \frac{L(t)}{t^\alpha} \quad\iff\quad -\psi_\mu\left( -\frac1t\right)= \frac{m_1(\mu)}{t}+ \frac{\pi\alpha}{\sin(\pi\alpha)}\frac{L(t)}{t^\alpha}(1+o(1)).
\]
\end{enumerate}
\end{thm}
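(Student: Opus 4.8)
The plan is to reduce both equivalences to Karamata's Tauberian theorem and its de Haan refinements for Mellin‑convolution transforms. The starting point is the elementary identity, valid for every $t>0$,
\[
-\psi_\mu\!\left(-\tfrac1t\right)=\int_{[0,+\infty)}\frac{u}{t+u}\,\mu(\dd u)=\int_0^{+\infty}\mu\big((u,+\infty)\big)\,\frac{t}{(t+u)^2}\,\dd u,
\]
the first equality being the definition of $\psi_\mu$ and the second following by integration by parts (using that $\mu$ is a probability measure, so $\mu((u,+\infty))\to0$ as $u\to+\infty$). Writing $H(u)=\mu((u,+\infty))$ and substituting $u=tv$ turns the right‑hand side into a Mellin convolution $(H\star k)(t):=\int_0^{+\infty}H(tv)\,k(v)\,\dd v$ with kernel $k(v)=(1+v)^{-2}$. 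Since $H$ is non‑increasing, monotonicity supplies the Tauberian side condition automatically, and one reads off both implications of (i) from the Abelian and Tauberian halves of the Mellin‑convolution theory.

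For part (i), with $\alpha\in(0,1)$: in the Abelian direction ($\Rightarrow$) one inserts $H(u)\sim L(u)u^{-\alpha}$ and passes to the limit by dominated convergence — the uniform convergence theorem for regularly varying functions gives $H(tv)/(L(t)t^{-\alpha})\to v^{-\alpha}$ pointwise, while Potter's bounds dominate this ratio by $C\max(v^{\alpha-\eps},v^{\alpha+\eps})$ for $t$ large, which is $k$‑integrable once $\eps$ is small enough that $0<\alpha-\eps$ and $\alpha+\eps<1$. The limit constant is the Beta integral $\int_0^{+\infty}v^{-\alpha}(1+v)^{-2}\,\dd v=B(1-\alpha,1+\alpha)=\Gamma(1-\alpha)\Gamma(1+\alpha)=\pi\alpha/\sin(\pi\alpha)$, which is the claimed asymptotics. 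For the converse, the Mellin transform $\check k(z)=\int_0^{+\infty}v^{z-1}(1+v)^{-2}\,\dd v=(1-z)\pi/\sin(\pi z)$ is finite and nonvanishing at $z=1-\alpha$ (in fact on the whole line $\mathrm{Re}\,z=1-\alpha$), so by the Mellin‑convolution Tauberian theorem, regular variation of index $-\alpha$ of $t\mapsto-\psi_\mu(-1/t)$ together with monotonicity of $H$ forces $H(t)\sim L(t)t^{-\alpha}$, the constant recovered by dividing out $\check k(1-\alpha)=\pi\alpha/\sin(\pi\alpha)$.

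For part (ii), with $\alpha\in(1,2)$, the Tauberian theorem cannot be applied to $-\psi_\mu(-1/t)$ directly, since its leading term is now the mean rather than the heavy‑tail term. One first notes $m_1(\mu)=\int_0^{+\infty}H(u)\,\dd u<+\infty$ (forced by monotone convergence, since $t\cdot(-\psi_\mu(-1/t))=\int\tfrac{tu}{t+u}\,\mu(\dd u)\uparrow m_1(\mu)$), peels it off via $\tfrac{u}{t+u}=\tfrac ut-\tfrac{u^2}{t(t+u)}$ to obtain
\[
-\psi_\mu\!\left(-\tfrac1t\right)=\frac{m_1(\mu)}{t}-\frac{1}{t}\,R(t),\qquad R(t):=\int_{[0,+\infty)}\frac{u^2}{t+u}\,\mu(\dd u),
\]
and then, after another integration by parts and the substitution $u=tv$, writes $R(t)/t=(H\star g)(t)$ with $g(v)=v(2+v)(1+v)^{-2}$. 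Here $g$ vanishes linearly at $0$ and tends to $1$ at infinity, so $H\star g$ converges because $H$ is integrable, while the Mellin transform $\check g(z)=\int_0^{+\infty}v^{z-1}g(v)\,\dd v=-(1-z)\pi/\sin(\pi z)$ converges on $\mathrm{Re}\,z\in(-1,0)$ — which contains $z=1-\alpha$ exactly when $\alpha\in(1,2)$ — and is nonvanishing there, with $\check g(1-\alpha)=-\pi\alpha/\sin(\pi\alpha)>0$ since $\sin(\pi\alpha)<0$ on $(1,2)$. Running the same Abelian/Tauberian dichotomy for this kernel shows that $H(t)\sim L(t)t^{-\alpha}$ is equivalent to $R(t)\sim-\tfrac{\pi\alpha}{\sin(\pi\alpha)}L(t)t^{1-\alpha}$, which, reinserted above, is exactly $-\psi_\mu(-1/t)=\tfrac{m_1(\mu)}{t}+\tfrac{\pi\alpha}{\sin(\pi\alpha)}\tfrac{L(t)}{t^\alpha}(1+o(1))$.

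I expect the main obstacle to be the Tauberian ($\Leftarrow$) half: the Abelian computations are routine once the two integral representations are in place, but recovering the monotone tail $H$ from transform asymptotics requires invoking a genuine Tauberian theorem and checking its hypotheses — that monotonicity of $H$ is the only regularity needed, that the kernel's Mellin transform is nonvanishing at (and near) the critical exponent $z=1-\alpha$, and — crucially in case (ii) — that subtracting the mean produces a kernel whose Mellin transform is still \emph{convergent} at $z=1-\alpha$, which is precisely where the restriction $\alpha<2$ enters.
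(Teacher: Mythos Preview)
Your argument is correct and lands on the same Karamata--Tauberian machinery as the paper, but through a different intermediate representation. The paper works with the truncated moments $U_n(t)=\int_{[0,t]}x^n\,\mu(\dd x)$, writing $-\psi_\mu(-1/t)=\int\frac{\dd U_1(x)}{t+x}$ in case~(i) and $-\psi_\mu(-1/t)=\frac{m_1(\mu)}{t}-\frac1t\int\frac{\dd U_2(x)}{t+x}$ in case~(ii), and then chains two citable results from Bingham--Goldie--Teugels: Theorem~1.6.4 (tail $\iff$ truncated moment) and Theorem~1.7.4 (truncated moment $\iff$ its Stieltjes transform $\int\frac{\dd U_n}{t+x}$). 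You instead integrate by parts once more, expressing everything as a Mellin convolution of the tail $H(u)=\mu((u,\infty))$ itself against the kernels $(1+v)^{-2}$ and $g(v)=v(2+v)(1+v)^{-2}$. The paper's route has the advantage that both cases fit a single template (Stieltjes transform of a monotone increasing $U_n$), so one pair of standard theorems dispatches both directions in both cases. Your route is pleasantly direct in~(i), but in~(ii) the kernel $g$ is not integrable at infinity, so the Tauberian half needs a less elementary result (Drasin--Shea--Jordan type) --- whereas your $R(t)=\int\frac{u^2}{t+u}\,\mu(\dd u)$ is \emph{already} $\int\frac{\dd U_2(u)}{t+u}$ before any integration by parts, which is exactly the paper's starting point. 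A cosmetic slip: your Potter bound should read $C\max(v^{-\alpha-\eps},v^{-\alpha+\eps})$; the relevant integrability constraint is $\alpha+\eps<1$.
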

\begin{proof}
    Let $U_n(t)= \int_{[0,t]} x^n\mu(\dd x)$, $n=1,2$.

For $\alpha\in(0,1)$, we write
\begin{align*}
-\psi_\mu\left( -\frac1t\right) = \int_{[0,+\infty)} \frac{x}{t+x}\mu(\dd x) = \int_{[0,+\infty)} \frac{ \dd U_1(x)}{t+x}.
\end{align*}
If $\alpha\in(1,2)$, then  $m_1(\mu)<+\infty$ and we express $\psi_\mu$ as
\[
-\psi_\mu\left( -\frac1t\right) = \frac{m_1(\mu)}{t}-  \frac{1}{t} \int_{[0,+\infty)} \frac{\dd U_2(x)}{t+x}.
\]
By \cite[Theorem 1.6.4]{BGT89}, for $0<\alpha<n$, as $t\to+\infty$,
\[
\mu\big( (t,+\infty)\big) \sim \frac{L(t)}{t^\alpha}  \qquad\iff\qquad U_n(t)\sim \frac{\alpha}{n-\alpha} t^{n-\alpha}L(t).
\]   
Moreover, by \cite[Theorem 1.7.4]{BGT89} (with $\rho=1$ and $\sigma=n-\alpha$), we have as $t\to+\infty$,
\[
U_n(t)\sim \frac{\alpha}{n-\alpha} t^{n-\alpha}L(t)\qquad\iff\qquad \int_{[0,+\infty)} \frac{\dd U_n(x)}{t+x}\sim c_{n,\alpha} t^{n-\alpha-1} L(t),
\]
where by Euler's reflection formula, we have
\[
c_{n,\alpha} = \frac{\alpha}{n-\alpha} \Gamma(1-(n-\alpha)) \Gamma(n-\alpha+1) = \frac{\pi\alpha}{\sin(\pi(n-\alpha))} = (-1)^{n+1} \frac{\pi\alpha}{\sin(\pi\alpha)}.
\]
\end{proof}

\begin{thm}\label{thm:taub2}
Let $\mu\in\mathcal{M}_S$ and let $L$ be a slowly varying function.
 For $\alpha\in(0,2)$, the following two conditions are equivalent: as $t\to+\infty$,
\[
\mu\big( (t,+\infty)\big) \sim \frac{L(t)}{2\,t^\alpha} \qquad\iff\qquad -\psi_\mu\left( -\frac1{it}\right)\sim \frac{\pi\alpha/2}{\sin(\pi\alpha/2)}\frac{L(t)}{t^\alpha}.
\]
\end{thm}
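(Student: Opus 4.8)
The plan is to reduce Theorem~\ref{thm:taub2} to part (i) of Theorem~\ref{thm:taub} by passing from the symmetric measure $\mu$ to its pushforward under the squaring map. Concretely, I would let $\nu\in\mathcal{M}_+$ be the law of $t\mapsto t^2$ under $\mu$ (equivalently, if $\mu$ is the distribution of a selfadjoint $\X$, then $\nu$ is the distribution of $\X^2$). Since $\mu$ is symmetric, a possible atom at $0$ is irrelevant for the tail and one has $\nu\big((u,+\infty)\big)=\mu\big(\{t:\,t^2>u\}\big)=2\,\mu\big((\sqrt u,+\infty)\big)$ for every $u>0$. The first step is to record the matching identity for the moment transforms: pairing $t$ with $-t$ in the integral defining $\psi_\mu$ and using symmetry gives
\[
\psi_\mu(z)=\int_{\R}\frac{z^2t^2}{1-z^2t^2}\,\mu(\dd t)=\psi_\nu(z^2)
\]
on the relevant domain; in particular $\psi_\mu\!\left(-\frac{1}{it}\right)=\psi_\nu\!\left(-\frac1{t^2}\right)$ for $t>0$. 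I note that this identity also shows $\psi_\mu$ is even along the imaginary axis, so the left-hand side of the asserted equivalence is a well-defined real negative quantity and there is no branch ambiguity in $\psi_\mu(-1/(it))$.

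The second step is a change of variables at the level of slowly varying functions. I would set $\beta=\alpha/2\in(0,1)$ and $\tilde L(u)=L(\sqrt u)$; then $\tilde L$ is slowly varying, since $\tilde L(\lambda u)=L(\sqrt\lambda\,\sqrt u)\sim L(\sqrt u)=\tilde L(u)$, and $\tilde L(t^2)=L(t)$ for $t>0$. Using the tail identity above, the condition $\mu\big((t,+\infty)\big)\sim L(t)/(2t^\alpha)$ is equivalent, after substituting $u=t^2$, to $\nu\big((u,+\infty)\big)\sim \tilde L(u)/u^\beta$. Likewise, using $\psi_\mu(-1/(it))=\psi_\nu(-1/t^2)$ together with $\pi\beta/\sin(\pi\beta)=(\pi\alpha/2)/\sin(\pi\alpha/2)$ and $\tilde L(t^2)=L(t)$, the condition $-\psi_\mu\!\left(-\frac1{it}\right)\sim \frac{\pi\alpha/2}{\sin(\pi\alpha/2)}\frac{L(t)}{t^\alpha}$ is equivalent, again after substituting $u=t^2$, to $-\psi_\nu\!\left(-\frac1u\right)\sim \frac{\pi\beta}{\sin(\pi\beta)}\frac{\tilde L(u)}{u^\beta}$. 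The final step is then to invoke Theorem~\ref{thm:taub}(i) for $\nu\in\mathcal{M}_+$ with exponent $\beta\in(0,1)$ and slowly varying function $\tilde L$, which yields exactly $\nu\big((u,+\infty)\big)\sim \tilde L(u)/u^\beta\iff -\psi_\nu(-1/u)\sim \frac{\pi\beta}{\sin(\pi\beta)}\tilde L(u)/u^\beta$; chaining this with the two equivalences just described gives the assertion.

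I do not expect a genuine obstacle here: the analytic content is entirely carried by Theorem~\ref{thm:taub}(i), and the steps that need care are all routine — the irrelevance of an atom at $0$ when comparing the tails of $\mu$ and $\nu$, the elementary verification that $u\mapsto L(\sqrt u)$ is slowly varying and that composing with $u=t^2$ preserves the relevant $\sim$-asymptotics (since $u\mapsto \tilde L(u)/u^\beta$ is regularly varying and $u=t^2$ sweeps out all large values), and the observation that no first-moment correction term (as in Theorem~\ref{thm:taub}(ii)) appears, because $\psi_\mu$ is being evaluated at purely imaginary points where the linear term $m_1(\mu)\,z$ vanishes by symmetry — equivalently because $\beta=\alpha/2<1$ throughout the whole range $\alpha\in(0,2)$. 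If anything requires slightly more attention, it is making the domain statement in the definition of $\chi_\mu$ for symmetric $\mu$ consistent with the identity $\psi_\mu(iy)=\psi_\nu(-y^2)$, but this is immediate once that identity is in hand.
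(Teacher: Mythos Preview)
Your proposal is correct and is essentially identical to the paper's own proof: both push $\mu$ forward by $t\mapsto t^2$, use the identity $\psi_\mu(-1/(it))=\psi_{\mu^2}(-1/t^2)$ coming from symmetry, translate the tail condition on $\mu$ into one on $\mu^2$ with exponent $\alpha/2\in(0,1)$ and slowly varying function $L(\sqrt{\cdot})$, and then invoke Theorem~\ref{thm:taub}(i).
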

\begin{proof}
    Let $\mu^2\in\mathcal{M}_+$ be the pushforward measure of $\mu$ by the mapping $x\mapsto x^2$. Then,
\[
\mu\big( (t,+\infty)\big) \sim \frac{L(t)}{2\,t^\alpha} \qquad\iff\qquad \mu^2\big( (t,+\infty)\big) \sim \frac{L(t^{1/2})}{t^{\alpha/2}}.    
 \]
Since $t\mapsto L(t^{1/2})$ is slowly varying and $\alpha/2\in(0,1)$, by Theorem \ref{thm:taub} (i), we have the equivalence as $t\to+\infty$,
\[
\mu^2\big( (t,+\infty)\big) \sim \frac{L(t^{1/2})}{t^{\alpha/2}}\qquad\iff\qquad -\psi_{\mu^2}\left( -\frac1t\right)\sim \frac{\pi\alpha/2}{\sin(\pi\alpha/2)}\frac{L(t^{1/2})}{t^{\alpha/2}}.
\]
Finally, since $\mu\in\mathcal{M}_S$, we have
\[
-\psi_\mu\left( -\frac1{it}\right) = -\psi_{\mu^2}\left( -\frac1{t^2}\right),
\]
which proves the assertion.
\end{proof}

\section{Free multiplicative random walk}\label{sec:3}

Assume that $\A=\A^\ast\in\tcA$ and $\A\geq 0$. Let $\mu$ be the distribution of $\A$ and let $(\A_n)_n$ be a sequence of freely independent copies of $\A$. Define
\begin{align*}
\Pi_n^{\uparrow} &= \A_1^{1/2}\cdots\A_{n-1}^{1/2}\A_n \A_{n-1}^{1/2}\cdots\A_1^{1/2},
\end{align*}
For each $n\in\mathbb{N}$, we have $%\Pi_n^{\downarrow}\stackrel{d}{=}
\Pi_n^{\uparrow}\sim \mu^{\boxtimes n}$.
We will denote by $\Pi_n$ a generic element with distribution $\mu^{\boxtimes n}$.
We note that if $m_1(\mu)=1$, then $(\Pi_n^\uparrow)_{n\in\mathbb{N}}$ is a non-commutative martingale, \cite{Cu71}.

\subsection{Free multiplicative law of large numbers}\label{sec:LLN}
In the following section, we present several results that describe the asymptotic behavior of the distribution of $\Pi_n$.
\begin{thm}[]
    Suppose $\mu\in\mathcal{M}_+$ has a compact support.
    Let $L_n =\sup\left(\mathrm{supp}(\mu^{\boxtimes n})\right)$ be the right endpoint of the support of $\Pi_n$.
    Then,
    \[
    \lim_{n\to+\infty} \frac{L_n}{n\, m_1(\mu)^n} = e \frac{\mathrm{Var}(\mu)}{m_1(\mu)^2}.
    \]
\end{thm}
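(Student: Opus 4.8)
\emph{Proof strategy.} By replacing $\mu$ with its pushforward under $t\mapsto t/m_1(\mu)$ we may assume $m_1(\mu)=1$: this replaces $L_n$ by $L_n/m_1(\mu)^n$ and $\mathrm{Var}(\mu)$ by $\mathrm{Var}(\mu)/m_1(\mu)^2$, so the general statement follows from the normalized one. The Dirac case being trivial (both sides vanish), assume $\mu$ non-Dirac, so $\sigma^2:=\mathrm{Var}(\mu)>0$. Write $\nu=\mu^{\boxtimes n}$, $L_n=\sup\mathrm{supp}(\nu)\in(0,+\infty)$ and $\eta_n=\lim_{z\uparrow 1/L_n}\psi_\nu(z)\in(0,+\infty]$, and likewise $L_1,\eta_1$ for $\mu$. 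Since $\mu$ has compact support, $\psi_\mu$ is analytic on a neighbourhood of $(0,1/L_1)$, with $\psi_\mu'>0$ on $(0,1/L_1)$ and $\psi_\mu'(0)=1$; hence $\psi_\mu$ is injective on $(-\infty,1/L_1)$, so $\chi_\mu$, and with it $S_\mu(z)=\tfrac{z+1}{z}\chi_\mu(z)$, extends analytically across $0$ onto a neighbourhood of $(0,\eta_1)$. Propagating the identity $S_{\mu_1\boxtimes\mu_2}=S_{\mu_1}S_{\mu_2}$ (valid near $0^-$) by the identity theorem gives
\[
\chi_\nu(u)=\frac{u}{u+1}\,S_\nu(u)=\frac{u}{u+1}\,S_\mu(u)^n,
\]
valid on $(0,\min(\eta_n,\eta_1))$, the right-hand side being in fact analytic on all of $(0,\eta_1)$. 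The plan is to recover $L_n$ from $1/L_n=\chi_\nu(\eta_n)$ by a Laplace-type analysis of this formula as $n\to\infty$.

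First I would locate the critical points of $g(u):=\tfrac{u}{u+1}S_\mu(u)^n$ on $(0,\eta_1)$. From
\[
\frac{g'(u)}{g(u)}=\frac{1}{u(u+1)}+n\,(\log S_\mu)'(u),
\]
together with $S_\mu(0)=1$, $(\log S_\mu)'(0)=S_\mu'(0)/S_\mu(0)=-\sigma^2$ (by \eqref{eq:S-2-terms}), and the continuity of $(\log S_\mu)'$ at $0$, an elementary estimate shows that for all large $n$ the quantity $g'/g$ is strictly positive on a left-neighbourhood of $0$ and has a unique zero $u_n$ there, lying in $\big(\tfrac{1}{2n\sigma^2},\tfrac{2}{n\sigma^2}\big)$; thus $u_n$ is the smallest critical point of $g$ in $(0,\eta_1)$ and $u_n\sim 1/(n\sigma^2)$.

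Next I would identify $\eta_n$ with $u_n$. On $(0,\eta_n)$ the function $\chi_\nu$ inverts $\psi_\nu|_{(0,1/L_n)}$, which is strictly increasing (indeed $\psi_\nu'>0$ there, the coefficient $m_1(\nu)=1$ not vanishing), so $\chi_\nu'>0$ on $(0,\eta_n)$ and $\chi_\nu(\eta_n^-)=1/L_n$. Since $\chi_\nu=g$ near $0$, while $g$ has a critical point at the finite value $u_n$ and $\chi_\nu$ has none in $(0,\eta_n)$, we get $\eta_n\le u_n<+\infty$ (in particular $\eta_n\le\eta_1$). On the other hand $1/L_n$ is the radius of convergence of $\psi_\nu(z)=\sum_p m_p(\nu)z^p$ (as $m_p(\nu)^{1/p}\to L_n$), and its coefficients are non-negative, so by Pringsheim's theorem $z=1/L_n$ is a singular point of $\psi_\nu$; hence $\psi_\nu$ admits no analytic continuation to a neighbourhood of $1/L_n$. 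If $g'(\eta_n)\ne 0$, then $g$ is locally biholomorphic at $\eta_n$, and its local inverse --- which coincides with $\psi_\nu$ on a left-neighbourhood of $1/L_n$ --- would provide exactly such a continuation, a contradiction. Therefore $\eta_n$ is a critical point of $g$, and combined with $\eta_n\le u_n$ and the minimality of $u_n$ we conclude $\eta_n=u_n\sim 1/(n\sigma^2)$.

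Finally, plugging this back,
\[
\log\frac{1}{L_n}=\log\chi_\nu(\eta_n)=\log\eta_n-\log(1+\eta_n)+n\log S_\mu(\eta_n),
\]
and using $\eta_n\to 0$, $n\eta_n\to 1/\sigma^2$, and $\log S_\mu(\eta_n)=-\sigma^2\eta_n+o(\eta_n)$ (again \eqref{eq:S-2-terms}), the right-hand side equals $-\log(n\sigma^2)-1+o(1)$. Hence $L_n=e\,n\,\sigma^2\,(1+o(1))$, i.e.\ $L_n/n\to e\,\mathrm{Var}(\mu)$, and undoing the normalization yields $\lim_{n\to+\infty}L_n/\big(n\,m_1(\mu)^n\big)=e\,\mathrm{Var}(\mu)/m_1(\mu)^2$. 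The step demanding the most care is the identification $\eta_n=u_n$: it combines Pringsheim's theorem with the analytic-continuation obstruction and requires careful bookkeeping of the domains on which $\chi_\mu$, $S_\mu$ and the multiplicativity $S_{\mu^{\boxtimes n}}=S_\mu^n$ remain analytic once one crosses the origin; the remaining Laplace estimate is routine.
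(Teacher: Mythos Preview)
Your proposal is correct. The paper's own proof is much shorter: it performs the same dilation to reduce to $m_1(\mu)=1$ and then simply invokes \cite[Theorem~1]{Kargin08} for the normalized case, without reproving it. What you have done is reconstruct (essentially) Kargin's argument --- recovering $1/L_n=\chi_\nu(\eta_n)$ by identifying $\eta_n$ with the first positive critical point of $u\mapsto \tfrac{u}{u+1}S_\mu(u)^n$, via the Pringsheim obstruction --- and this is a legitimate self-contained proof. Your version gains independence from the external reference; the paper's version is terse but treats the $m_1(\mu)=1$ case as a black box. One small point: in the step ``$\eta_n\le u_n$ (in particular $\eta_n\le\eta_1$)'' you are implicitly using that $u_n<\eta_1$ for large $n$, so that $u_n$ sits in the interval on which you know $\chi_\nu=g$; this follows from your bound $u_n\le 2/(n\sigma^2)\to0$, but is worth making explicit to avoid any appearance of circularity.
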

\begin{proof}
 By \cite[Theorem 1]{Kargin08}, the claim holds for the case $m_1(\mu)=1$. The general case can be deduced by applying this result to the dilated measure defined by $\tilde{\mu}(B) =  \mu(m_1(\mu) B)$ for any Borel set $B$. Clearly, $\tilde{\mu}$ has a unit mean, $\mathrm{Var}(\tilde{\mu}) = \mathrm{Var}(\mu)/m_1(\mu)^2$ and $L_n = m_1(\mu)^n \tilde{L}_n$, where $\tilde L_n =\sup\left(\mathrm{supp}(\tilde\mu^{\boxtimes n})\right)$.
\end{proof}
\begin{remark}\label{rem:m<1}
    If $\mu\in\mathcal{M}_+$ has a compact support and $m_1(\mu)<1$, then $L_n\to 0$ and therefore
    \[
    \lim_{n\to+\infty}\| \Pi_n \| = 0.
    \]
    Moreover, in this case, we see that the series
    $\sum_{n=1}^{+\infty} \Pi_n$
    converges in norm. Let $\X_n = \sum_{k=1}^n  \Pi_k$; one has $\| \Pi_k \|\sim c \,k\,m_1(\mu)^k$. Therefore, for $n>m\geq N$,
    \[
    \| \X_n-\X_m\| \leq \sum_{k=N}^{+\infty} \| \Pi_k \| <\eps
    \]
    for sufficiently large $N$.
\end{remark}
The following law of large numbers for free multiplicative convolutions points out a fundamental difference from the usual LLN, where one has almost sure convergence to a constant. Here, the limiting distribution is nontrivial unless $\mu$ is Dirac. 
\begin{thm}[Theorem 2 in \cite{HaM13}]
Let $\mu\in\mathcal{M}_+$ with $\delta=\mu(\{0\})$. Then,
\[
\mu_{\Pi_n^{1/n}} \stackrel{w}{\longrightarrow} \nu,
\]
where $\nu(\{0\}) = \delta$ and for $t\in(\delta,1)$,
\[
\nu\left( \left[0, \frac{1}{S_\mu(t-1)}\right] \right) = t.
\]
\end{thm}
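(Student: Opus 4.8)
\textit{Proof proposal.} The plan is to reduce the claim to convergence of distribution functions and to extract the limiting distribution function from the $S$-transform identity $S_{\mu^{\boxtimes n}}=S_\mu^{\,n}$. Write $\rho_n$ for the law of $\Pi_n^{1/n}$, so that $\rho_n\big((a,+\infty)\big)=\mu^{\boxtimes n}\big((a^n,+\infty)\big)$ for $a\geq 0$. First I would check that the stated formula does define a probability measure. For non-Dirac $\mu\in\mathcal{M}_+$ one has $\psi_\mu'(z)=\int t(1-zt)^{-2}\,\mu(\dd t)>0$ on $(-\infty,0)$, so $\chi_\mu=\psi_\mu^{-1}$, and hence $S_\mu(u)=\tfrac{u+1}{u}\chi_\mu(u)$, is real-analytic on $(\delta-1,0)$; invoking the (known) strict monotonicity of $S_\mu$ there, the map $t\mapsto x(t):=1/S_\mu(t-1)$ is a continuous strictly increasing bijection of $(\delta,1)$ onto an interval $(\ell,m_1(\mu))$ with $\ell\geq 0$ and $m_1(\mu)\in(0,+\infty]$ (the right endpoint because $S_\mu(0-)=1/m_1(\mu)$). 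Thus $\nu$ is the sum of the atom $\delta\,\delta_0$ and a continuous measure of mass $1-\delta$ on $(\ell,m_1(\mu))$, it has a continuous distribution function on $(0,+\infty)$, and $\nu\big((x(t),+\infty)\big)=1-t$; so it suffices to prove $\rho_n\big((a,+\infty)\big)\to\nu\big((a,+\infty)\big)$ for every $a>0$.

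Next I would establish the exact identity underlying everything. From $S_{\mu^{\boxtimes n}}=S_\mu^{\,n}$ and $S_\sigma(u)=\tfrac{u+1}{u}\chi_\sigma(u)$ we get $\chi_{\mu^{\boxtimes n}}(t-1)=\tfrac{t-1}{t}S_\mu(t-1)^n$ for $t\in(\delta,1)$; equivalently, setting $y_n(t):=\tfrac{t}{1-t}\,x(t)^n>0$, the point $-1/y_n(t)$ is mapped by $\psi_{\mu^{\boxtimes n}}$ to $t-1$, which after unwinding the definitions reads
\[
\int_{[0,+\infty)}\frac{s}{y_n(t)+s}\,\mu^{\boxtimes n}(\dd s)=1-t,\qquad t\in(\delta,1),\ n\geq 1.
\]
Combining this with the elementary two-sided bound, valid for any $\sigma\in\mathcal{M}_+$ and $\varepsilon\in(0,1)$,
\[
(1-\varepsilon)\,\sigma\big((y/\varepsilon,+\infty)\big)\ \leq\ \int_{[0,+\infty)}\frac{s}{y+s}\,\sigma(\dd s)\ \leq\ \varepsilon+\sigma\big((\varepsilon y,+\infty)\big),
\]
I obtain, for every $n$ and $t\in(\delta,1)$, the inequalities $\mu^{\boxtimes n}\big((\varepsilon y_n(t),+\infty)\big)\geq 1-t-\varepsilon$ and $\mu^{\boxtimes n}\big((y_n(t)/\varepsilon,+\infty)\big)\leq \tfrac{1-t}{1-\varepsilon}$.

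The last step is the squeeze. Fix $a>0$ and $\varepsilon\in(0,1)$. Choosing $t$ with $x(t)>a$ (possible whenever $a<m_1(\mu)$, and for $a\le\ell$ one may take $t$ arbitrarily close to $\delta$) gives $a^n=o\big(x(t)^n\big)=o(y_n(t))$, so $a^n<\varepsilon y_n(t)$ for all large $n$, whence $\liminf_n\mu^{\boxtimes n}\big((a^n,+\infty)\big)\geq 1-t-\varepsilon$; letting $t$ decrease to $x^{-1}(a)$ (or $t\to\delta^+$ when $a\le\ell$) and $\varepsilon\to 0$ yields $\liminf_n\rho_n\big((a,+\infty)\big)\geq\nu\big((a,+\infty)\big)$, where in the borderline range $a<\ell$, $\delta>0$ this reads $\geq 1-\delta$, matched from above by the trivial bound $\mu^{\boxtimes n}(\{0\})\geq\mu(\{0\})=\delta$ (since $\ker\Pi_n^{\uparrow}\supseteq\ker\A_1$). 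Symmetrically, choosing $t$ with $x(t)<a$ (or any $t$ if $a\geq m_1(\mu)$) gives $a^n/y_n(t)\to+\infty$, hence $a^n>y_n(t)/\varepsilon$ eventually, hence $\limsup_n\mu^{\boxtimes n}\big((a^n,+\infty)\big)\leq\tfrac{1-t}{1-\varepsilon}$, and letting $t\uparrow x^{-1}(a)$ (resp. $t\uparrow 1$) and $\varepsilon\to 0$ gives $\limsup_n\rho_n\big((a,+\infty)\big)\leq\nu\big((a,+\infty)\big)$. Hence $\rho_n\big((a,+\infty)\big)\to\nu\big((a,+\infty)\big)$ at every $a>0$, i.e. $\rho_n\stackrel{w}{\longrightarrow}\nu$; letting $a\downarrow 0$ also forces $\mu^{\boxtimes n}(\{0\})\to\delta=\nu(\{0\})$, so the atom is correct.

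The main obstacle is the structural input that $S_\mu$ is (continuous and) strictly decreasing on $(\delta-1,0)$ for non-Dirac $\mu\in\mathcal{M}_+$: this is exactly what makes $\nu$ well defined and what guarantees that the two families of admissible parameters $t$ in the squeeze close in on the value $\nu\big((a,+\infty)\big)$ from both sides. I would either cite it as a known property of $S$-transforms of positive measures or derive it directly from $\psi_\mu(\chi_\mu(u))=u$ together with $\psi_\mu'>0$. Everything else is soft: the endpoint analysis $t\to\delta^+$ and $t\to 1^-$ that identifies $\ell$ and the right endpoint $m_1(\mu)$ (including the case $m_1(\mu)=+\infty$), and the elementary sandwich inequality displayed above.
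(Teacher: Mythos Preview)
The paper does not give its own proof of this result; it is quoted as Theorem~2 of \cite{HaM13} and used as input, so there is no in-paper argument to compare against.

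Your proposal is a correct self-contained proof along the lines of the original source. The key identity $\int_{[0,\infty)} \tfrac{s}{y_n(t)+s}\,\mu^{\boxtimes n}(\dd s)=1-t$ follows exactly as you say from $S_{\mu^{\boxtimes n}}=S_\mu^{\,n}$, the elementary two-sided bound is valid, and the squeeze closes correctly once one knows that $S_\mu$ is strictly decreasing on $(\delta-1,0)$ for non-Dirac $\mu\in\mathcal M_+$. You flagged that monotonicity as the only structural input, which is right; but your fallback suggestion to ``derive it directly from $\psi_\mu(\chi_\mu(u))=u$ together with $\psi_\mu'>0$'' is too optimistic: those two facts yield only $\chi_\mu'>0$, and since $S_\mu(u)=\tfrac{u+1}{u}\chi_\mu(u)$ is the product of a negative increasing function and a negative decreasing function, the sign of $S_\mu'$ is not immediate from that alone. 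The monotonicity is standard---it is equivalent, via the change of variables $u=\psi_\mu(-y)$, to the strict decrease of $y\mapsto m_1(\mu_y)$ proved in Lemma~\ref{lem:CS}(i) by Jensen's inequality---so you should cite it rather than claim a one-line derivation. Two cosmetic points: the degenerate Dirac case $\mu=\delta_c$ is not covered (the formula for $\nu$ is vacuous there, but the conclusion is trivial), and the final clause ``letting $a\downarrow0$ also forces $\mu^{\boxtimes n}(\{0\})\to\delta$'' is phrased loosely, though the intended squeeze between $\rho_n(\{0\})\ge\delta$ and $\liminf_n\rho_n\big((a,\infty)\big)\ge 1-\delta$ for small $a>0$ is correct.
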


\begin{remark}
    The support of $\nu$ is the closure of the interval
\begin{align*}
\left( \frac{1}{m_{-1}(\mu)} , m_1(\mu) \right).
\end{align*}
Since $S_\mu$ is analytic, $\nu$ has no atoms except possibly at $0$.
\end{remark}

\begin{Corollary}
If $\mu\in\mathcal{M}_+$ and $m_1(\mu)=1$, then
\[
\mu_{\Pi_n}=\mu^{\boxtimes n}\stackrel{w}{\longrightarrow}\delta_0.
\]
\end{Corollary}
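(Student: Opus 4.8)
The plan is to deduce this from the free multiplicative law of large numbers stated just above, which identifies the weak limit $\nu$ of the rescaled laws $\mu_{\Pi_n^{1/n}}$, together with the description of $\mathrm{supp}(\nu)$ from the subsequent Remark. We use throughout that $\mu$ is non-Dirac (equivalently $\mu\neq\delta_1$, since $m_1(\mu)=1$), as in that Remark; the measure $\delta_1$ is degenerate for the statement and is tacitly excluded. Under this assumption the Remark gives $\mathrm{supp}(\nu)=\overline{(1/m_{-1}(\mu),\,m_1(\mu))}=\overline{(1/m_{-1}(\mu),\,1)}\subseteq[0,1]$, and since $\nu$ carries no atom other than possibly at $0$, we obtain $\nu\big([1,+\infty)\big)=\nu(\{1\})=0$.

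The only genuine step is a change of scale. Since $\Pi_n\geq 0$, the operator $\Pi_n^{1/n}$ is its positive $n$-th root, so $\mu_{\Pi_n^{1/n}}$ is the pushforward of $\mu^{\boxtimes n}$ under $x\mapsto x^{1/n}$; in particular, for every $\eps>0$ and $n\geq 1$,
\[
\mu^{\boxtimes n}\big((\eps,+\infty)\big)=\mu_{\Pi_n^{1/n}}\big((\eps^{1/n},+\infty)\big).
\]
Fix $\eps\in(0,1)$ and $\delta\in(0,1)$. Since $\eps^{1/n}\to 1$, we have $\eps^{1/n}>1-\delta$ for all large $n$, hence $\mu_{\Pi_n^{1/n}}\big((\eps^{1/n},+\infty)\big)\leq\mu_{\Pi_n^{1/n}}\big([1-\delta,+\infty)\big)$ eventually; applying the Portmanteau theorem to the closed set $[1-\delta,+\infty)$ and using $\mu_{\Pi_n^{1/n}}\stackrel{w}{\longrightarrow}\nu$ yields
\[
\limsup_{n\to+\infty}\mu^{\boxtimes n}\big((\eps,+\infty)\big)\;\leq\;\nu\big([1-\delta,+\infty)\big).
\]
Letting $\delta\downarrow 0$ and invoking $\nu\big([1,+\infty)\big)=0$ gives $\mu^{\boxtimes n}\big((\eps,+\infty)\big)\to 0$ for every $\eps\in(0,1)$, and trivially for $\eps\geq 1$; by the Portmanteau theorem again this is precisely $\mu^{\boxtimes n}\stackrel{w}{\longrightarrow}\delta_0$.

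I do not expect a real obstacle, as everything rests on the already-established law of large numbers. The points needing a little care are: the passage between $\Pi_n$ and its $n$-th root $\Pi_n^{1/n}$ via the elementary limit $\eps^{1/n}\to 1$; checking that $\nu$ places no mass at the right endpoint $1$ of its support, so the $\limsup$ estimate does not acquire a spurious positive term; and the harmless exclusion of $\delta_1$.
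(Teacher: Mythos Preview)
Your proof is correct and follows essentially the same route as the paper: both invoke the free multiplicative LLN, use the pushforward relation $\mu^{\boxtimes n}\big((\eps,+\infty)\big)=\mu_{\Pi_n^{1/n}}\big((\eps^{1/n},+\infty)\big)$ together with $\eps^{1/n}\to 1$, and then exploit that $\nu$ puts no mass on $[1,+\infty)$. The only cosmetic difference is that the paper bounds by $\mu_{\Pi_n^{1/n}}\big((s,+\infty)\big)$ for a fixed $s\in(0,1)$ (a continuity point of $\nu$) and lets $s\to 1^-$, whereas you use closed sets and the Portmanteau $\limsup$ inequality; your explicit handling of the degenerate case $\mu=\delta_1$ is a welcome clarification.
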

Indeed, by the previous remark, we have $\nu\big( [0,1)\big)=1$.
Therefore, for any $s,t\in(0,1)$ and $n\geq \log(t)/\log(s)$, we obtain
\[
\mu_{\Pi_n}\big((t,+\infty)\big) = \mu_{\Pi_n^{1/n}}\big((t^{1/n},+\infty)\big)\leq\mu_{\Pi_n^{1/n}}\big((s,+\infty)\big) \to \nu\big((s,+\infty)\big) = \nu\big((s,1)\big).
\]
By letting $s\to1-$, the result follows.

\subsection{Moments}

In this subsection we study the asymptotic behavior of free moments of the $n$-fold free multiplicative convolution of a measure $\mu$ which has itself $p$-th moment finite. The next subsection studies the analogous problem for fractional moments. The main result of this subsection is the following.
\begin{thm}\label{thm:28} Assume that $\mu\in\mathcal{M}_+$ is non-Dirac. For any $p\in\mathbb{N}$ such that $m_p(\mu)<+\infty$, we have
    \[
  \lim_{n\to+\infty} n^{1-p} \frac{m_p(\mu^{\boxtimes n})}{m_1(\mu)^{n p}} = \frac{(\eta\,  p)^{p-1}}{p!},
    \]
    where $\eta = \mathrm{Var}(\mu)/m_1(\mu)^2$.
\end{thm}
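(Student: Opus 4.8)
The plan is to reduce everything to the combinatorics of the $S$-transform, exploiting multiplicativity $S_{\mu^{\boxtimes n}} = S_\mu^n$. Recall that the moment generating data can be recovered from $S_\mu$ through the relation $\chi_\mu(z) = \frac{z}{z+1} S_\mu(z)$ and $\psi_\mu(\chi_\mu(z)) = z$. Write $\nu = \mu^{\boxtimes n}$ and set $z = z_n$ to be chosen along a sequence tending to $0-$; the moments $m_1(\nu), \dots, m_p(\nu)$ are encoded in the Taylor expansion of $\psi_\nu$ at $0$, equivalently in the expansion of $\chi_\nu$ near $0$. The key structural input is \eqref{eq:S-2-terms}: whenever $m_2(\mu) < +\infty$ one has $S_\mu(z) = m_1(\mu)^{-1} - \mathrm{Var}(\mu) m_1(\mu)^{-3} z + o(z)$, and more generally, if $m_p(\mu) < +\infty$ then $S_\mu$ admits a Taylor expansion to order $p-1$ at $0$ with coefficients that are polynomial in the moments $m_1(\mu), \dots, m_p(\mu)$. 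Consequently $S_\mu(z)^n = m_1(\mu)^{-n}\bigl(1 - \eta\, z + O(z^2)\bigr)^n$ where $\eta = \mathrm{Var}(\mu)/m_1(\mu)^2$, and the natural scaling that keeps this bounded is $z = w/n$ with $w$ fixed, giving $S_\mu(w/n)^n \to m_1(\mu)^{-n} e^{-\eta w}$ after factoring out the $m_1(\mu)^{-n}$.

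First I would make the normalization $m_1(\mu) = 1$ (as in the cited theorems, pass to the dilation $\tilde\mu(B) = \mu(m_1(\mu)B)$, under which $m_p(\mu^{\boxtimes n}) = m_1(\mu)^{np} m_p(\tilde\mu^{\boxtimes n})$ and $\eta$ is unchanged), so that the target becomes $n^{1-p} m_p(\mu^{\boxtimes n}) \to (\eta p)^{p-1}/p!$. Next I would express $m_p(\nu)$ as a contour-type or real-variable extraction from $\psi_\nu$: since $\psi_\nu(z) = \sum_{k\ge 1} m_k(\nu) z^k$ for $z$ near $0$, and $\psi_\nu$ is the inverse of $\chi_\nu(z) = \frac{z}{z+1} S_\mu(z)^n$, I would invert the series. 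Writing $\chi_\nu(z) = \frac{1}{m_1(\nu)} z + \cdots$ with $m_1(\nu) = 1/S_\mu(0-)^n\big|_{\text{normalized}} = 1$ (the martingale property), the leading behavior of $m_p(\nu)$ is governed by how sharply $\chi_\nu$ bends near $0$, which is exactly the $n$-th power structure. The cleanest route: substitute $z = w/n$ into $\chi_\nu$, obtaining $\chi_\nu(w/n) = \frac{w/n}{1+w/n} e^{-\eta w + o(1)}$ uniformly for $w$ in compacts of a left neighborhood of $0$ (here $o(1) \to 0$ as $n\to\infty$, using that $\log S_\mu(w/n) = -\eta w/n + O(w^2/n^2)$ and summing $n$ copies). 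Therefore $n\,\chi_\nu(w/n) \to w\, e^{-\eta w}$, i.e. the inverse function $n\,\psi_\nu$ evaluated at scale $w\,e^{-\eta w}/(\text{const})$ converges to the inverse of $w \mapsto w e^{-\eta w}$. Extracting the coefficient of $w^p$ from the functional inverse of $w \mapsto w e^{-\eta w}$ via Lagrange inversion gives precisely $\frac{1}{p}[w^{p-1}]\, e^{\eta p w} = \frac{(\eta p)^{p-1}}{p\,(p-1)!} = \frac{(\eta p)^{p-1}}{p!}$, which is the claimed constant. Matching the scaling in $z$ versus $w$ then produces the factor $n^{p-1}$.

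The main obstacle is making the convergence $n\,\psi_{\mu^{\boxtimes n}}(\zeta/n) \to (\text{inverse of } w e^{-\eta w})(\zeta)$ rigorous and, crucially, transferring it from a statement about analytic functions near $0$ to a statement about the individual Taylor coefficient $m_p$. Uniform convergence of analytic functions on a complex neighborhood would let me pass to coefficients by Cauchy's estimates; the subtlety is that $\mu^{\boxtimes n}$ may have unbounded support, so $\psi_\nu$ need not be analytic in a fixed complex disk around $0$ --- only on $(-\infty, 0)$, or on $i(-\infty,0)$ in the symmetric case. The fix is to work with the genuinely analytic $\chi_\nu$ (defined and analytic on $(\delta - 1, 0)$, with $\delta = \nu(\{0\})$) and control its derivatives at a point $z_n \to 0-$ chosen as $z_n = w_0/n$, then use the fact that the first $p$ moments $m_1(\nu), \dots, m_p(\nu)$ are determined by $\chi_\nu$ and its first $p$ derivatives at such a point through the Faà di Bruno / Lagrange-inversion formulae, which are continuous in the data. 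Equivalently, one can bound $m_p(\nu)$ from above and below by the moments of truncations and use the Tauberian machinery of Theorem \ref{thm:taub} together with the observation that only the values of $\psi_\nu$ near $0-$ matter. Handling the error term $o(z^k)$ in the expansion of $S_\mu$ uniformly after raising to the $n$-th power --- i.e., controlling $\bigl(1 + a_1 z + \cdots + a_{p-1}z^{p-1} + o(z^{p-1})\bigr)^n$ at $z = w/n$ to the precision needed for the $p$-th coefficient --- is the one genuinely technical estimate, and I expect it to occupy the bulk of the proof.
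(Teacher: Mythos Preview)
Your approach is correct and genuinely different from the paper's. The paper proceeds by induction on $p$ at the level of free cumulants: using the product formula $\kappa_m(\Pi_k)=\sum_{\pi\in\NC(m)}\kappa_\pi(\A)\kappa_{\Kr(\pi)}(\Pi_{k-1})$, it isolates the dominant contribution from partitions with $|\Kr(\pi)|=2$, sums the resulting telescoping recursion in $k$, and closes the induction via an Abel-type identity (Lemma~\ref{lem:formula}, essentially Cayley's formula). Your route --- multiplicativity $S_{\mu^{\boxtimes n}}=S_\mu^n$, rescale $z=w/n$ so that $n\chi_{\mu^{\boxtimes n}}(w/n)\to w e^{-\eta w}$, then Lagrange-invert --- lands on the \emph{same} tree-function coefficients $(\eta p)^{p-1}/p!$ with no induction and no auxiliary combinatorial lemma; in effect, Cayley's formula enters through Lagrange inversion rather than through the cumulant recursion. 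The paper's method has the advantage of yielding the sharper cumulant asymptotics of Proposition~\ref{prop:Kamil} (with the explicit $O(1/n)$ error) as a by-product.

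Your one over-complication is treating the passage from functional convergence to coefficient convergence as an analytic problem. It is purely algebraic: since $m_p(\mu)<+\infty$, the $p$-jet of $S_\mu$ at $0$ is well-defined, the coefficient $[z^j]S_\mu(z)^n$ is a \emph{polynomial} in $n$ of degree $j$ with leading term $\binom{n}{j}(-\eta)^j$, and Lagrange inversion expresses $m_p(\mu^{\boxtimes n})=\tfrac1p[w^{p-1}]\bigl((1+w)S_\mu(w)^{-n}\bigr)^p$ as a polynomial in $n$ of degree $p-1$. Extracting the top coefficient requires only the multinomial theorem, not Cauchy estimates or uniform bounds; the ``bulk of the proof'' you anticipate for error control evaporates once you work with finite Taylor jets rather than analytic functions.
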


In order to prove the above result, we will use a combinatorial description of the free multiplicative convolution. For a detailed discussion, we refer to \cite{NS06}. Additionally, we note that when $\mu$ has compact support, the asymptotics bounds for $m_p(\mu^{\boxtimes n})$ can be derived from the results of \cite{AV12}.

 A partition $\pi$ of $[n]:=\{1,\ldots,n\}$ is a set of subsets of $[n]$ of the form $\pi=\{V_1,\ldots,V_k\}$, where $V_i\cap V_j=\emptyset$ for $i\neq j$ and $V_1\cup\ldots \cup V_k=[n]$. The subsets $V_1,\ldots,V_k$ are called blocks of $\pi$ and we write $V_i\in \pi$. Let $|\pi|$ denote the number of blocks of $\pi$ and $|V_i|$ the size of the $i$-th block.

We say that $\pi$ is a non-crossing partition of $[n]$ if for $i_1, i_3\in V$ and $i_2, i_4\in V'$ with  $V,V'\in \pi$ condition $i_1<i_2<i_3<i_4$ implies $V=V'$. The set of non-crossing partitions on $[n]$ is denoted by $\NC(n)$. For $\pi,\rho\in \NC(n)$, we write $\pi\leq \rho$ if every block of $\pi$ is contained in some block of $\rho$. The partial order $\leq$ is called the reversed refinement order and it defines a lattice structure on $\NC(n)$. We denote by $0_n$ and $1_n$ respectively the minimal and maximal elements of $\NC(n)$ with respect to $\leq$.

For $\pi\in \NC(n)$, we define its Kreweras complement $\Kr(\pi)\in \NC(n)$ as follows:consider set $\{1,\overline{1},2,
\overline{2},\ldots,n,\overline{n}\}$, place $\pi$ on $\{1,\ldots,n\}$, and then $\Kr(\pi)$ is the maximal partition on $\{\overline{1},\ldots,\overline{n}\}$ such that $\pi\cup \Kr(\pi)$ is a non-crossing partition on $\{1,\overline{1},\ldots,n,\overline{n}\}$.

Let $\kappa_p$ be the free cumulant of order $p\in\mathbb{N}$.   For $\pi\in \NC(n)$, we denote
\begin{align*}
    \kappa_{\pi}(\A_1,\ldots,\A_n)&=\prod_{V\in\pi}  \kappa_{|V|}(\A_1,\ldots,\A_n|V),\\
    \tau_{\pi}(\A_1,\ldots,\A_n)&=\prod_{V\in\pi} \tau_{|V|}(\A_1,\ldots,\A_n|V).
\end{align*}
where for $V=\{i_1,\ldots,i_k\}$ with $i_1<i_2<\ldots<i_k$, we have $\kappa_{|V|}(\A_1,\ldots,\A_n|V)=\kappa_k(\A_{i_1},\ldots,\A_{i_k})$, and $\tau_{|V|}(\A_1,\ldots,\A_n|V)=\tau(\A_{i_1}\ldots\A_{i_k})$. If $\A_1=\ldots=\A_n=\A$  will use a short hand notation $\kappa_\pi(\A)$ instead of $\kappa_{\pi}(\A,\ldots,\A)$.

We will use the following two formulas, see Theorem 14.4 in \cite{NS06}. Let $\{\A_1,\ldots,\A_n\}$ and $\{\B_1,\ldots,\B_n\}$ be free families. Then,
\begin{align} \label{eqn:cumprod}
\begin{split}
\kappa_n(\A_1\B_1,\ldots,\A_n\B_n)=&\sum_{\pi\in \NC(n)}\kappa_\pi(\A_1,\ldots,\A_n)\kappa_{\Kr(\pi)}(\B_1,\ldots,\B_n),
\\
\tau(\A_1\B_1\ldots\A_n\B_n)=&\sum_{\pi\in \NC(n)}\kappa_\pi(\A_1,\ldots,\A_n)\tau_{\Kr(\pi)}(\B_1,\ldots,\B_n).
\end{split}
\end{align}

We will need a combinatorial formula, which must have been noticed before, however we did not find a good reference and we present it with an easy proof.

\begin{lemma}\label{lem:formula}
       \begin{align*}
        \sum_{r=1}^{p-1} \frac{r^{r-1}}{r!}\frac{(p-r)^{p-r}}{(p-r)!}=(p-1)\frac{p^{p-1}}{p!},\qquad p\in\mathbb{N}.
    \end{align*}
    \end{lemma}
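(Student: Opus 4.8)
The plan is to prove this identity by recognizing both sides as coefficients in an Abel-type polynomial identity, or equivalently via a generating-function argument built on the tree function (the Cayley/Lambert-type series). Recall that the exponential generating function $T(x)=\sum_{r\ge1}\frac{r^{r-1}}{r!}x^r$ satisfies $T(x)=x e^{T(x)}$; this is the EGF whose coefficients count rooted labeled trees. The left-hand side of the claimed identity is, up to the factor $p!$, the coefficient of $x^p$ in a product closely related to $T(x)^2$ or $T(x)\cdot$ (something), while the right-hand side $(p-1)\frac{p^{p-1}}{p!}$ relates to the coefficient of $x^p$ in $x T'(x)$ or in $T(x)$ itself. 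So the first step I would take is to rewrite everything in terms of $T$.

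Concretely, set $a_r=\frac{r^{r-1}}{r!}$ so that $T(x)=\sum_r a_r x^r$. The sum $\sum_{r=1}^{p-1}a_r a_{p-r}$ is exactly $[x^p]\,T(x)^2$. Now from $T=xe^T$ one differentiates to get $T'=e^T+xe^T T'=\frac{T}{x}+TT'$, hence $T'(1-T)=T/x$, i.e. $xT'=\frac{T}{1-T}$. On the other hand, a standard manipulation (Lagrange inversion, or simply squaring the functional equation) gives $T^2 = x^2 e^{2T}$, and one can extract $[x^p]T^2$ via Lagrange inversion: $[x^p]T(x)^k = \frac{k}{p}[y^{p-k}]e^{py}=\frac{k}{p}\cdot\frac{p^{p-k}}{(p-k)!}$. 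Taking $k=2$ gives $[x^p]T^2=\frac{2}{p}\cdot\frac{p^{p-2}}{(p-2)!}=\frac{2p^{p-2}}{(p-2)!}$. Then I just need to check the elementary identity $\frac{2p^{p-2}}{(p-2)!}=(p-1)\frac{p^{p-1}}{p!}$, which is immediate since $(p-1)\frac{p^{p-1}}{p!}=\frac{(p-1)p^{p-1}}{p!}=\frac{p^{p-1}}{p(p-2)!}=\frac{p^{p-2}}{(p-2)!}$ — wait, that is off by a factor of $2$, so I would recheck: $\frac{p!}{(p-2)!}=p(p-1)$, so $(p-1)\frac{p^{p-1}}{p!}=\frac{(p-1)p^{p-1}}{p(p-1)(p-2)!}=\frac{p^{p-2}}{(p-2)!}$. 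Hmm, that disagrees with $\frac{2p^{p-2}}{(p-2)!}$ by a factor $2$; so either the Lagrange extraction should be checked with care (the correct Lagrange–Bürmann formula for $T=xe^T$ gives $[x^p]T^k=\frac{k}{p}\frac{p^{p-k}}{(p-k)!}$ only after verifying the convention), or the sum $\sum_{r=1}^{p-1}a_ra_{p-r}$ is not quite $[x^p]T^2$ because of the endpoint terms $r=1$ and $r=p-1$ — I would pin this down by testing $p=2,3$ by hand before committing.

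An alternative, fully self-contained route that avoids any generating-function bookkeeping is a direct combinatorial bijection using Abel's binomial theorem: Abel's identity states $\sum_{r=0}^{p}\binom{p}{r}r^{r-1}(p-r)^{p-r}=\text{(a known closed form)}$; more precisely one uses $\sum_{r=1}^{p-1}\binom{p}{r}r^{r-1}(p-r)^{p-r-1}=2(p-1)p^{p-2}$, which after dividing by $p!$ rearranges into the stated formula. The cleanest write-up is probably: multiply both sides by $p!$, so the claim becomes $\sum_{r=1}^{p-1}\binom{p}{r}r^{r-1}(p-r)^{p-r}\cdot\frac{1}{p-r}\cdot\frac{1}{?}$ — this is getting fiddly, so in practice I would state it as counting pairs (rooted forest on a linearly split vertex set) or simply cite the Abel identity $\sum_{r}\binom{p}{r}r^{r-1}(p-r)^{p-r}$ from a standard reference and reduce. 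The main obstacle, then, is not conceptual but purely bookkeeping: getting the normalization constants and the binomial weights exactly right, and making sure the index range $r=1,\dots,p-1$ matches the generating-function coefficient extraction (equivalently, that the degenerate trees/forests are counted with the right multiplicity). I expect the proof to be three or four lines once the correct form of the Abel identity is invoked, with all the real work being the verification of constants on small cases $p=1,2,3$.
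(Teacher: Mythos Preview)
Your generating-function approach contains a concrete misidentification that explains your factor-of-$2$ discrepancy. You set $a_r=\frac{r^{r-1}}{r!}$ and claim the left-hand side is $\sum_{r=1}^{p-1}a_r a_{p-r}=[x^p]T(x)^2$. But look again at the summand: the second factor is $\frac{(p-r)^{p-r}}{(p-r)!}$, \emph{not} $\frac{(p-r)^{p-r-1}}{(p-r)!}=a_{p-r}$. So the relevant second series is $\sum_{s\ge1}\frac{s^s}{s!}x^s=xT'(x)$, and the left-hand side is $[x^p]\bigl(T(x)\cdot xT'(x)\bigr)$. Now $T\cdot T'=\tfrac12(T^2)'$, so
\[
[x^p]\bigl(T\cdot xT'\bigr)=[x^{p-1}]\bigl(TT'\bigr)=\tfrac12\,p\,[x^p]T^2
=\tfrac12\,p\cdot\frac{2}{p}\cdot\frac{p^{p-2}}{(p-2)!}=\frac{p^{p-2}}{(p-2)!},
\]
which matches $(p-1)\frac{p^{p-1}}{p!}$ exactly, as you computed. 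So your route does work once the summand is read correctly; the issue is neither the Lagrange convention nor the endpoints.

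The paper takes the Abel route you gesture at but do not complete: it specializes Abel's binomial theorem
\[
\sum_{r=0}^{p}\binom{p}{r}(w+r)^{r-1}(z+w+p)^{p-r}=w^{-1}(z+w+p)^p
\]
to $z=0$, strips off the $r=0$ and $r=p$ terms, and lets $w\to0$ (the left side is a polynomial in $w$, the right becomes a derivative). This is shorter on the page, but your corrected tree-function argument is equally valid and arguably more transparent about \emph{why} the identity holds: it is literally the coefficient extraction for $T\cdot xT'=\tfrac12x(T^2)'$, with Lagrange inversion doing the work that Cayley's formula does combinatorially.
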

\begin{proof}
Recall Abel's binomial formula
    \begin{align*}
        \sum_{r=0}^{p} \binom{p}{r} (w+r)^{r-1}(z+w+p)^{p-r}=w^{-1}(z+w+p)^p
    \end{align*}
which holds for $w\neq 0$. Set $z=0$. Separating the terms for $r=0$ and $r=p$, we obtain
    \begin{align*}
        \sum_{r=1}^{p-1} \binom{p}{r} (w+r)^{r-1}(p-r)^{p-r}=\frac{(w+p)^{p}-(p)^{p}}{w}-(w+p)^{p-1}.
    \end{align*}
    Observe that the left-hand side above is a polynomial in $w$. Letting $w\to0$, we derive the assertion. Alternatively this formula can be proved by a combinatorial argument which goes by using Cayley's formula, saying that $p^{p-1}$ represents the number of rooted labeled trees on $p$ vertices.
\end{proof}

The following result is easy to establish, e.g., by comparing the coefficients of $z$ in the expansions of $S_{\mu^{\boxtimes n}}(z)$ and $S_\mu^n(z)$ (recall \eqref{eq:S-2-terms}).
\begin{Corollary}\label{lem:secondCumulant}
Let $\mu\in\mathcal{M}_+$ with $m_2(\mu)<+\infty$. Then,
%\[
%\mathrm{Var}(\Pi_n)=n\, \mathrm{Var}(\A)\tau(\A)^{2(n-1)},
%\]
\[
\mathrm{Var}(\mu^{\boxtimes n}) = n\, \mathrm{Var}(\mu) \,m_1(\mu)^{2(n-1)},\qquad n\in\mathbb{N}.
\]
\end{Corollary}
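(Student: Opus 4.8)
The plan is to use multiplicativity of the $S$-transform, $S_{\mu^{\boxtimes n}}=S_\mu^{\,n}$, together with the second-order expansion \eqref{eq:S-2-terms}, matching the coefficients of $1$ and of $z$ in the one-sided expansions at $0^-$. We may assume $\mu\neq\delta_0$ (otherwise both sides of the claimed identity vanish), so that $m_1(\mu)>0$ and $\mu(\{0\})<1$. Before invoking \eqref{eq:S-2-terms} for $\mu^{\boxtimes n}$ one must know $m_2(\mu^{\boxtimes n})<+\infty$, which I would establish by induction on $n$. Realizing $\mu^{\boxtimes n}=\mu\boxtimes\mu^{\boxtimes(n-1)}$ as the distribution of $\A^{1/2}\Pi_{n-1}\A^{1/2}$ with $\A\sim\mu$, $\A\ge 0$, free from $\Pi_{n-1}\sim\mu^{\boxtimes(n-1)}$, traciality gives $m_2(\mu^{\boxtimes n})=\tau\big((\A^{1/2}\Pi_{n-1}\A^{1/2})^2\big)=\tau(\A\Pi_{n-1}\A\Pi_{n-1})$, and the $n=2$ instance of the second identity in \eqref{eqn:cumprod}, together with $m_1(\mu^{\boxtimes(n-1)})=m_1(\mu)^{n-1}$, yields
\[
m_2(\mu^{\boxtimes n}) = m_1(\mu)^2\,m_2(\mu^{\boxtimes(n-1)}) + m_2(\mu)\,m_1(\mu)^{2(n-1)} - m_1(\mu)^{2n},
\]
which is finite; the base case $n=1$ is the hypothesis.

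With finiteness in hand, comparing the constant terms in \eqref{eq:S-2-terms} for $S_{\mu^{\boxtimes n}}$ and for $S_\mu^{\,n}$ gives $m_1(\mu^{\boxtimes n})=m_1(\mu)^n$. Next I would raise $S_\mu(z)=m_1(\mu)^{-1}-\mathrm{Var}(\mu)\,m_1(\mu)^{-3}z+o(z)$ to the $n$-th power; since the constant term $m_1(\mu)^{-1}$ is nonzero, the remainder survives exponentiation and the coefficient of $z$ in $S_\mu(z)^n$ equals $n\,m_1(\mu)^{-(n-1)}\cdot\big(-\mathrm{Var}(\mu)\,m_1(\mu)^{-3}\big)=-\,n\,\mathrm{Var}(\mu)\,m_1(\mu)^{-(n+2)}$. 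Equating this with the coefficient of $z$ in \eqref{eq:S-2-terms} applied to $\mu^{\boxtimes n}$, namely $-\,\mathrm{Var}(\mu^{\boxtimes n})\,m_1(\mu^{\boxtimes n})^{-3}=-\,\mathrm{Var}(\mu^{\boxtimes n})\,m_1(\mu)^{-3n}$, and solving gives $\mathrm{Var}(\mu^{\boxtimes n})=n\,\mathrm{Var}(\mu)\,m_1(\mu)^{3n-(n+2)}=n\,\mathrm{Var}(\mu)\,m_1(\mu)^{2(n-1)}$, as asserted.

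The only genuine (and mild) obstacle is the analytic bookkeeping: one must check that the one-sided expansion \eqref{eq:S-2-terms} may legitimately be composed with the power map and that coefficients of $1$ and $z$ in two such expansions may be compared. This is routine, since $S_\mu$ is real-analytic on $(\mu(\{0\})-1,0)$ and its leading coefficient at $0^-$ does not vanish. If one prefers to sidestep the $S$-transform entirely, the very same $n=2$ case of \eqref{eqn:cumprod} yields the variance recursion $\mathrm{Var}(\mu\boxtimes\nu)=m_1(\nu)^2\,\mathrm{Var}(\mu)+m_1(\mu)^2\,\mathrm{Var}(\nu)$; specializing $\nu=\mu^{\boxtimes(n-1)}$, so that $m_1(\nu)=m_1(\mu)^{n-1}$, and inducting on $n$ proves the identity directly using nothing beyond the moment formula already recorded in the paper. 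I would include this as an alternative argument, as it is cleaner and completely elementary.
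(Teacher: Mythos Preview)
Your proof is correct and follows exactly the approach the paper sketches: compare coefficients of $z$ in $S_{\mu^{\boxtimes n}}$ and $S_\mu^{\,n}$ via \eqref{eq:S-2-terms}. You supply more detail than the paper (the finiteness of $m_2(\mu^{\boxtimes n})$ and the alternative cumulant-based recursion), but the core argument is identical.
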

%\begin{proof}
%    We prove this formula by induction, the base case $n=1$ is clear. Assume that this formula holds for $n=k-1$, then we have
%    \begin{align*}
 %       \kappa_2(\Pi_k)=\kappa_2(\A_k \Pi_{k-1})=\kappa_2(\A_k)\tau(\Pi_{k-1})^{2}+\tau(\A_k)^2\kappa_2(\Pi_{k-1}).
 %   \end{align*}
 %   where in the first equality we used traciality of $\tau$ and in the second equality we applied the formula for cumulants with products as entries with two free variables $\A_k$ and $\Pi_{k-1}$. By the definition of $\A_i$ we see that
%    \[\tau(\Pi_{k-1})=\tau(\A_1)^{2(k-1)}.\]
%    From the induction hypothesis we have
%    \[\kappa_2(\Pi_{k-1})=(k-1) \kappa_2(\A_1)\tau(\A_1)^{2(k-2)}.\]
%\end{proof}

\begin{proposition}\label{prop:Kamil}
    Let $\A_1,\A_2\ldots$ be freely independent copies of $\A$ and assume $\A\geq 0$. For any $p\in\mathbb{N}$ such that $\tau(\A^p)<+\infty$, we have \begin{align}\label{eqn:cumulantAsymp}
    \frac{\kappa_p(\Pi_n)}{n^{p-1} \tau(\A)^{p\, n}} = \frac{(\eta\,  p)^{p-1}}{p!}+O\left(\frac{1}{n}\right),
    \end{align}
    where $\eta=\mathrm{Var}(\A)/\tau(\A)^2$.
\end{proposition}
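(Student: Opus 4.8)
The plan is to reduce to the normalized case $\tau(\A)=1$ and then run an induction on $p$, the recursion being furnished by the product formula \eqref{eqn:cumprod}. By the dilation $\mu\mapsto\tilde\mu$, $\tilde\mu(B)=\mu(m_1(\mu)B)$, one has $\tilde\mu^{\boxtimes n}(B)=\mu^{\boxtimes n}(m_1(\mu)^nB)$, hence $\kappa_p(\mu^{\boxtimes n})=m_1(\mu)^{pn}\kappa_p(\tilde\mu^{\boxtimes n})$, while $\eta$ is scale invariant; so it suffices to treat $\tau(\A)=1$, where the statement reads $a_{n,p}:=\kappa_p(\Pi_n)=\frac{(\eta p)^{p-1}}{p!}n^{p-1}+O(n^{p-2})$ and $\eta=\mathrm{Var}(\A)=\kappa_2(\A)$ (using $\kappa_1(\A)=1$). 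The base cases hold: $a_{n,1}=\tau(\Pi_n)=1$, and $a_{n,2}=\mathrm{Var}(\mu^{\boxtimes n})=n\eta$ by Corollary \ref{lem:secondCumulant}, both matching the formula including the error term. I then argue by strong induction on $p$, with inductive hypothesis $a_{m,q}=\frac{(\eta q)^{q-1}}{q!}m^{q-1}+O(m^{q-2})$ for every $q<p$; in particular $a_{m,q}=O(m^{q-1})$. (Throughout one uses that $\boxtimes$ preserves finiteness of integer moments — itself immediate from \eqref{eqn:cumprod} — so that $m_p(\mu^{\boxtimes n})<+\infty$ and all cumulants below are well defined.)

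For the inductive step, realize $\Pi_n\stackrel{d}{=}\A^{1/2}Y\A^{1/2}$ with $Y\stackrel{d}{=}\Pi_{n-1}$ free from $\A$. Since $\tau\bigl((\A^{1/2}Y\A^{1/2})^k\bigr)=\tau\bigl((Y\A)^k\bigr)$ for all $k$, the elements $\Pi_n$ and $Y\A$ share the same moments, hence the same free cumulants, and the first identity in \eqref{eqn:cumprod} with all left factors equal to $Y$ and all right factors equal to $\A$ gives
\[
a_{n,p}=\sum_{\pi\in\NC(p)}\kappa_\pi(\Pi_{n-1})\,\kappa_{\Kr(\pi)}(\A).
\]
By the inductive hypothesis $\kappa_\pi(\Pi_{n-1})=\prod_{V\in\pi}a_{n-1,|V|}=O(n^{p-|\pi|})$, so the partitions with $|\pi|\ge3$ contribute only $O(n^{p-3})$; the partition $\pi=1_p$ has $\Kr(1_p)=0_p$, hence contributes exactly $\kappa_p(\Pi_{n-1})\,\kappa_1(\A)^p=a_{n-1,p}$. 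For $|\pi|=2$ the key observation is that $\Kr(\pi)$ then has $p-1$ blocks with sizes summing to $p$, i.e.\ one block of size $2$ and $p-2$ singletons, so $\kappa_{\Kr(\pi)}(\A)=\kappa_2(\A)=\eta$ uniformly in such $\pi$. Therefore
\[
a_{n,p}=a_{n-1,p}+\eta\sum_{\substack{\pi\in\NC(p)\\ |\pi|=2}}a_{n-1,|V_1(\pi)|}\,a_{n-1,|V_2(\pi)|}+O(n^{p-3}).
\]

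It remains to evaluate the middle sum. A two-block non-crossing partition of $[p]$ is $\{V,[p]\setminus V\}$ with $1\in V$, and the non-crossing condition forces $[p]\setminus V$ to be an interval, so $V=[1,a]\cup[b+1,p]$ with $a\ge1$, $b\le p$ and $b-a=p-|V|$; for a prescribed size $|V|=r\in\{1,\dots,p-1\}$ this leaves exactly the choices $a\in\{1,\dots,r\}$, so there are exactly $r$ such partitions. Hence the sum equals $\sum_{r=1}^{p-1}r\,a_{n-1,r}\,a_{n-1,p-r}$, and inserting $a_{n-1,r}=\frac{(\eta r)^{r-1}}{r!}n^{r-1}+O(n^{r-2})$ yields
\[
a_{n,p}-a_{n-1,p}=\eta^{p-1}n^{p-2}\sum_{r=1}^{p-1}\frac{r^{r}(p-r)^{p-r-1}}{r!\,(p-r)!}+O(n^{p-3}).
\]
The substitution $r\mapsto p-r$ rewrites the sum as $\sum_{r=1}^{p-1}\frac{r^{r-1}}{r!}\frac{(p-r)^{p-r}}{(p-r)!}$, which by Lemma \ref{lem:formula} equals $(p-1)\frac{p^{p-1}}{p!}$. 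Summing the resulting telescoping identity over $m=2,\dots,n$ and using $\sum_{m\le n}m^{p-2}=\frac{n^{p-1}}{p-1}+O(n^{p-2})$, the factor $p-1$ cancels and $a_{n,p}=\eta^{p-1}\frac{p^{p-1}}{p!}n^{p-1}+O(n^{p-2})=\frac{(\eta p)^{p-1}}{p!}n^{p-1}+O(n^{p-2})$, which is the assertion; undoing the dilation gives the general case.

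The difficulty here is organizational rather than conceptual: the induction must carry the full error term $O(m^{q-2})$ for all $q<p$ (not merely the leading asymptotics), since exactly these errors make the $O(n^{p-3})$ remainder in the recursion legitimate and, upon summation, do not spoil the $O(n^{p-2})$ bound; and the two-block non-crossing partitions must be enumerated refined by the size of the block containing a fixed point, so that the arising sum is precisely the one evaluated in Lemma \ref{lem:formula} after the change of variables. The structural point that produces the clean coefficient is that $\kappa_{\Kr(\pi)}(\A)=\eta$ for every two-block $\pi$, independently of how its two block sizes are distributed.
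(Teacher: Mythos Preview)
Your proof is correct and follows essentially the same approach as the paper: normalize to $\tau(\A)=1$, use the product formula \eqref{eqn:cumprod} to get a recursion $\kappa_p(\Pi_n)-\kappa_p(\Pi_{n-1})=\cdots$, bound the terms with three or more blocks by $O(n^{p-3})$ via the induction hypothesis, evaluate the two-block contribution using Lemma~\ref{lem:formula}, and telescope. The only cosmetic difference is the order of the factors in the product formula---the paper writes $\kappa_m(\A\,\Pi_{k-1})=\sum_\pi\kappa_\pi(\A)\kappa_{\Kr(\pi)}(\Pi_{k-1})$ and splits according to $|\Kr(\pi)|$, whereas you write $\kappa_p(\Pi_{n-1}\A)=\sum_\pi\kappa_\pi(\Pi_{n-1})\kappa_{\Kr(\pi)}(\A)$ and split according to $|\pi|$---which merely swaps the roles of $\pi$ and $\Kr(\pi)$ and leads to the same enumeration after the change of variable $r\mapsto p-r$ you already noted.
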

\begin{proof}
 For $p=1$, the assertion holds trivially, while for $p=2$ it follows directly from Corollary \ref{lem:secondCumulant}. In both cases the $O(1/n)$ term is equal to zero. Without loss of generality, we assume that $\tau(\A)=1$, implying $\eta=\mathrm{Var}(\A)=\kappa_2(\A)$.
 
 Fix $m\geq 3$ and assume that $\tau(\A^m)< +\infty$.   Suppose that \eqref{eqn:cumulantAsymp} holds for all $p=1,\ldots,m-1$; we aim to prove it for $p=m$. 
 
By traciality, $\kappa_m(\Pi_k)=\kappa_m(\A_k\Pi_{k-1})$ for all $k\in\mathbb{N}$. Applying formula \eqref{eqn:cumprod} and using the fact that all $\A_i$ have the same distribution as $\A$, we obtain for any $k\geq 2$,
\begin{align}\label{eqn:proofCumulantAsymptotics}
        \kappa_m(\Pi_k)-\kappa_m(\Pi_{k-1})=\sum_{\substack{\pi\in \NC(m)\\ |\Kr(\pi)|=2}}\kappa_{\pi}(\A)\kappa_{\Kr(\pi)}(\Pi_{k-1})+\sum_{\substack{\pi\in \NC(m)\\ |\Kr(\pi)|>2}}\kappa_{\pi}(\A)\kappa_{\Kr(\pi)}(\Pi_{k-1}).
    \end{align}
   By the induction hypothesis, for any $p<m$, there exists $C_p>0$ such that for all $k\geq 2$,
    $|\kappa_p(\Pi_{k-1})|\leq k^{p-1} C_p$.
    Consequently, for any $\pi\in \NC(m)$ with $|\Kr(\pi)|>2$, we have
\begin{align*}%\label{eq:secondsum}
    \left|\kappa_{\Kr(\pi)}(\Pi_{k-1})\right|\leq k^{m-|\Kr(\pi)|} \prod_{V\in \Kr(\pi)} C_{|V|} = O(k^{m-3}).
    \end{align*}
and therefore the second sum in \eqref{eqn:proofCumulantAsymptotics} is of order $O(k^{m-3})$.
%    \begin{align*}
%        \left|\sum_{\substack{\pi\in \NC(m)\\ |\Kr(\pi)|>2}}\kappa_{\pi}(\A)\sum_{k=2}^n\kappa_{\Kr(\pi)}(\Pi_{k-1})\right|\leq \left(\sum_{k=2}^n k^{m-3}\right)\sum_{\substack{\pi\in \NC(m)\\ |\Kr(\pi)|>2}}|\kappa_{\pi}(\A)|\prod_{V\in \Kr(\pi)} C_{|V|}  = O(n^{m-2}).
%    \end{align*}

When \(|\Kr(\pi)| = 2\), the partition \(\pi \in \NC(m)\) contains one block that is a pair and \(m-2\) singletons.
The value of $\kappa_{\Kr(\pi)}(\Pi_{k-1})$ depends on the distance between the two elements in the pair. Specifically, if $i<j$ and $\{i,j\}$ is the unique pair of $\pi$ with $j-i=r$, we have  $\kappa_{\Kr(\pi)}(\Pi_{k-1})=\kappa_{r}(\Pi_{k-1})\kappa_{m-r}(\Pi_{k-1})$. There are $m-r$ such pairs $\{i,j\}$ for $r\in\{1,\ldots,m-1\}$. Therefore,
\begin{align}\label{eq:firstsum}
        \sum_{\substack{\pi\in \NC(m)\\ |\Kr(\pi)|=2}}\kappa_{\pi}(\A)\kappa_{\Kr(\pi)}(\Pi_{k-1})=\sum_{r=1}^{m-1} (m-r) \kappa_{2}(\A) \kappa_{r}(\Pi_{k-1})\kappa_{m-r}(\Pi_{k-1}).
    \end{align}
By the induction hypothesis, for $r=1,\ldots,m-1$,
    \begin{align*}
        \kappa_{r}(\Pi_{k-1})&=(k-1)^{r-1}\frac{(\eta\, r)^{r-1}}{r!}+O(k^{r-2}),\\
        \kappa_{m-r}(\Pi_{k-1})&=(k-1)^{m-r-1}\frac{(\eta \,(m-r))^{m-r-1}}{(m-r)!}+O(k^{m-r-2}).
    \end{align*}
Substituting these into \eqref{eq:firstsum} gives
    \begin{align*}
 (k-1)^{m-2}\eta^{m-1}
        \sum_{r=1}^{m-1} \frac{r^{r-1}}{r!}\frac{(m-r)^{m-r}}{(m-r)!}+O(k^{m-3}).
    \end{align*}
By Lemma \ref{lem:formula}, this simplifies to
    \begin{align*}
\sum_{\substack{\pi\in \NC(m)\\ |\Kr(\pi)|=2}}\kappa_{\pi}(\A)\kappa_{\Kr(\pi)}(\Pi_{k-1})=(k-1)^{m-2}(m-1)\frac{(\eta\, m)^{m-1}}{m!}+O(k^{m-3}).
    \end{align*}
  Summing \eqref{eqn:proofCumulantAsymptotics} over $k=2,
    \ldots,n$, we obtain
\begin{align*}\label{eq:twosums}
        \kappa_{m}(\Pi_n)-\kappa_{m}(\A)& =\sum_{k=2}^n \left( (k-1)^{m-2}(m-1)\frac{(\eta\, m)^{m-1}}{m!}+O(k^{m-3}) \right)\\
        &=\frac{(\eta\, m)^{m-1}}{m!} (m-1) \sum_{k=2}^{n}(k-1)^{m-2}+O(n^{m-2}).
    \end{align*}
Using the asymptotic formula  $(m-1) \sum_{k=2}^{n}(k-1)^{m-2} = n^{m-1} + O(n^{m-2})$, the result follows.
\end{proof}

\begin{proof}[Proof of Theorem \ref{thm:28}]
    The assertion follows easily from Proposition \ref{prop:Kamil}. As before, without loss of generality, we assume that $m_1(\mu)=1$. Let $\A_1,\A_2\ldots$ be freely independent copies of $\A\sim \mu$. We have
    \begin{align*}
        m_p(\mu^{\boxtimes n}) & = \tau(\Pi_n^p) =\sum_{\pi\in \NC(p)}\kappa_{\pi}(\Pi_n)=\sum_{\pi\in \NC(p)} \prod_{V\in \pi} \kappa_{|V|}(\Pi_n)\\&=\sum_{\pi\in \NC(p)} \prod_{V\in \pi} \left(\frac{\left(\eta\, |V|\right)^{|V|-1}}{|V|!} n^{|V|-1}+O(n^{|V|-2})\right),
    \end{align*}
    where the last equality follows from \eqref{eqn:cumulantAsymp}.
    For each fixed $p$, this is a sum over a finite set. The dominating term in $n$ in this sum is the one corresponding to $\pi=1_p$, and is given by $(\eta\, p)^{p-1}/p! \,n^{p-1}$. All other terms are of smaller order. This completes the proof.
\end{proof}

\subsection{Fractional moments}
In this section, we find the asymptotics of $m_\gamma(\mu^{\boxtimes n})=\tau(\Pi_n^\gamma)$ for $\gamma\in(0,1)$ and $\mu\in\mathcal{M}_+$. These asymptotics differ drastically from the case of classical multiplicative convolution $\circledast$, where we have $m_\gamma(\mu^{\circledast n}) = m_\gamma(\mu)^n$. Note that if $m_1(\mu)=1$, and $\mu$ is non-Dirac, then by the strict Jensen inequality, we have $m_\gamma(\mu)<1$ for all $\gamma\in(0,1)$.

The main result in this section is the following.
\begin{thm}\label{lem:1}
Assume that $\mu\in\mathcal{M}_+$ is non-Dirac and $m_1(\mu)=1$.
\begin{enumerate}
        \item[(i)] For any $\gamma\in(0,1)$,
\[
\lim_{n\to+\infty} n^{1-\gamma }m_\gamma(\mu^{\boxtimes n})  =  \frac{(\mathrm{Var}(\mu) \gamma)^{\gamma-1}}{\Gamma(1+\gamma)}.
\]
\item[(ii)] For $\gamma_n=1/\log(n)$,
\begin{align}\label{eq:limit}
        \lim_{n\to+\infty} \frac{n}{\log(n)} m_{\gamma_n}(\mu^{\boxtimes n})  = \frac{e}{\mathrm{Var}(\mu)},
\end{align}
If $m_2(\mu)=+\infty$, then the limits in (i) and (ii) equal $0$.
\item[(iii)] If $\mu\big((t,+\infty)\big)\sim c\, t^{-\alpha}$ for $\alpha\in(1,2)$, then $\mathrm{Var}(\mu)=+\infty$ and, for any $\gamma\in(0,1)$,
\[
\lim_{n\to+\infty} n^{\frac{1-\gamma}{\alpha-1} }m_\gamma(\mu^{\boxtimes n})  = \frac{\sin(\pi\gamma)}{\pi\gamma} \Gamma\left(\frac{1-\gamma}{\alpha-1}\right) \frac{1}{\alpha-1}\left(\frac{-\sin(\pi\alpha)}{\pi\alpha \,c}\right)^{\frac{1-\gamma}{\alpha-1}}.
\]
\end{enumerate}
\end{thm}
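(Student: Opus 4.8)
The plan is to reduce all three parts to a single integral representation of $m_\gamma(\mu^{\boxtimes n})$ through the $S$-transform of $\mu$ and then extract the asymptotics by a Laplace-type argument concentrated at the origin; the mechanism is that $S_{\mu^{\boxtimes n}}=S_\mu^{\,n}$ turns the whole quantity into one integral whose large-$n$ behaviour is governed by the local behaviour of $S_\mu$ near $0$, controlled either by \eqref{eq:S-2-terms} or by Theorem~\ref{thm:taub}. Concretely, since $m_1(\mu)=1$ gives $m_\gamma(\mu^{\boxtimes n})\le1<+\infty$ by Jensen, I start from $t^\gamma=\frac{\sin(\pi\gamma)}{\pi}\int_0^{+\infty}\frac{s^{\gamma-1}t}{s+t}\,\dd s$, integrate against $\mu^{\boxtimes n}$ (Tonelli) to get $m_\gamma(\mu^{\boxtimes n})=\frac{\sin(\pi\gamma)}{\pi}\int_0^{+\infty}s^{\gamma-1}\bigl(-\psi_{\mu^{\boxtimes n}}(-1/s)\bigr)\,\dd s$, substitute $z=\psi_{\mu^{\boxtimes n}}(-1/s)$, i.e.\ $s=s_n(z):=-\frac{z+1}{z}S_\mu(z)^{-n}$ (using $\chi_{\mu^{\boxtimes n}}(z)=\frac{z}{z+1}S_{\mu^{\boxtimes n}}(z)=\frac{z}{z+1}S_\mu(z)^{n}$), note that $s_n$ is an increasing bijection of $(\mu(\{0\})-1,0)$ onto $(0,+\infty)$ because $S_\mu$ is continuous and strictly decreasing (with $S_\mu(0^-)=1/m_1(\mu)=1$), and integrate by parts (the boundary terms vanish since $s_n(z)\sim|z|^{-1}$ as $z\to0^-$ and $s_n\to0$ at the left endpoint) to obtain
\[
m_\gamma(\mu^{\boxtimes n})=\frac{\sin(\pi\gamma)}{\pi\gamma}\int_{\mu(\{0\})-1}^{0}\Bigl(-\frac{z+1}{z}\Bigr)^{\gamma}e^{-n\gamma\log S_\mu(z)}\,\dd z .
\]

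Next I localize: since $\log S_\mu>0$ on $(\mu(\{0\})-1,0)$ and vanishes only at $0$, the kernel $e^{-n\gamma\log S_\mu(z)}$ is exponentially small away from $0$, so only an arbitrarily small left neighbourhood of $0$ contributes to leading order, where $\bigl(-\frac{z+1}{z}\bigr)^\gamma=|z|^{-\gamma}(1+O(z))$. For the local expansion of $\log S_\mu$ near $0$ I split into cases: (a) if $m_2(\mu)<+\infty$, then \eqref{eq:S-2-terms} gives $\log S_\mu(z)=\mathrm{Var}(\mu)|z|(1+o(1))$; (b) if $\mu((t,+\infty))\sim c\,t^{-\alpha}$ with $\alpha\in(1,2)$ — which forces $m_2(\mu)=+\infty$ since $\int t^2\,\mu(\dd t)=2\int t\,\mu((t,+\infty))\,\dd t=+\infty$ — then Theorem~\ref{thm:taub}(ii) gives $-\psi_\mu(-1/t)=\frac1t+\frac{\pi\alpha c}{\sin(\pi\alpha)}t^{-\alpha}(1+o(1))$, and inverting this to get $\chi_\mu$, hence $S_\mu=\frac{z+1}{z}\chi_\mu$, yields $\log S_\mu(z)=K|z|^{\alpha-1}(1+o(1))$ with $K=-\frac{\pi\alpha c}{\sin(\pi\alpha)}>0$; (c) if $m_2(\mu)=+\infty$ without regular variation, the monotone-convergence identity $\frac{\psi_\mu(z)-z}{z^2}\to m_2(\mu)=+\infty$ forces $\log S_\mu(z)/|z|\to+\infty$.

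Feeding these into the localized integral (with $v=-z$) gives the three regimes. In case (a), rescaling $u=n\gamma\,\mathrm{Var}(\mu)v$ and using dominated convergence gives $\int_0^{\varepsilon}v^{-\gamma}e^{-n\gamma\mathrm{Var}(\mu)v}\,\dd v\sim(n\gamma\,\mathrm{Var}(\mu))^{\gamma-1}\Gamma(1-\gamma)$; combined with $\frac{\sin(\pi\gamma)}{\pi\gamma}\Gamma(1-\gamma)=\frac1{\Gamma(1+\gamma)}$ this is part (i). For part (ii) one runs the same computation with $\gamma=\gamma_n=1/\log n$: then $(n\gamma_n\mathrm{Var}(\mu))^{\gamma_n-1}\Gamma(1-\gamma_n)\sim\frac{\log n}{n\,\mathrm{Var}(\mu)}(n\gamma_n\mathrm{Var}(\mu))^{\gamma_n}$ and $(n\gamma_n\mathrm{Var}(\mu))^{\gamma_n}=\exp(\gamma_n(\log n-\log\log n+\log\mathrm{Var}(\mu)))\to e$, giving $e/\mathrm{Var}(\mu)$. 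Case (c) makes the localized integral $o((n\gamma)^{\gamma-1})$ — bound $\log S_\mu(-v)$ from below by $Cv$ near $0$ for every $C$ and let $C\to+\infty$ — so the limits in (i)--(ii) are $0$. Finally, in case (b), rescaling $u=n\gamma K v^{\alpha-1}$ gives $\int_0^{\varepsilon}v^{-\gamma}e^{-n\gamma K v^{\alpha-1}}\,\dd v\sim\frac{1}{\alpha-1}(n\gamma K)^{-\frac{1-\gamma}{\alpha-1}}\Gamma\!\bigl(\frac{1-\gamma}{\alpha-1}\bigr)$, and collecting the $\sin$ and $\Gamma$ factors produces part (iii).

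The main obstacle is to make the localization and the Laplace estimates rigorous simultaneously: the integrand carries the singular factor $|z|^{-\gamma}$ at $0$, so one cannot dominate before rescaling. The clean route is to rescale $z$ by the natural rate first ($n\gamma\,\mathrm{Var}(\mu)$ in (i)--(ii), $(n\gamma K)^{1/(\alpha-1)}$ in (iii)), after which the rescaled integrand converges pointwise and is dominated uniformly in $n$; this needs the one-sided bounds $\log S_\mu(-v)\ge c_1 v$ (resp.\ $\ge c_1 v^{\alpha-1}$) for small $v$ and $\log S_\mu(-v)\ge c_2>0$ on the rest, all of which follow from monotonicity of $S_\mu$ together with the leading term of its expansion. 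A secondary technicality, specific to (ii), is the joint $n$-limit of $v^{-\gamma_n}$ inside the integral and of the prefactor $(n\gamma_n\mathrm{Var}(\mu))^{\gamma_n}$, handled by writing powers as exponentials and using $\gamma_n\log n\to1$. One also needs the Lagrange-type inversion in case (b), passing from the expansion of $\psi_\mu$ near $0$ to that of $\chi_\mu$ and hence $S_\mu$; this is routine given monotonicity of $\psi_\mu$ on $(-\infty,0)$ and a standard bootstrap of the error term.
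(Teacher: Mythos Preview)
Your proposal is correct and follows essentially the same route as the paper: both start from the integral representation $m_\gamma(\mu^{\boxtimes n})=\frac{\sin(\pi\gamma)}{\pi\gamma}\int_0^{1-\delta}\bigl(\frac{1-t}{t}\bigr)^\gamma S_\mu(-t)^{-n\gamma}\,\dd t$ (your derivation reproduces Lemma~\ref{lem:fract_mom}) and extract the large-$n$ asymptotics by Laplace-type localization at $t=0$, with the same rescalings and dominated-convergence bounds. The only cosmetic difference is that the paper performs one further substitution $y=-\chi_\mu(-t)$ and packages the local expansion of the integrand via Lemma~\ref{lem:CS} on $m_1(\mu_y)$, whereas you expand $\log S_\mu$ directly from \eqref{eq:S-2-terms} and (in case~(iii)) by inverting the expansion of $\psi_\mu$ from Theorem~\ref{thm:taub}(ii); these are equivalent since $m_1(\mu_y)=1/S_\mu(\psi_\mu(-y))$, so the paper's extra substitution simply trades the inversion of $\psi_\mu$ for the monotonicity argument in Lemma~\ref{lem:CS}.
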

\begin{remark}
    Observe that the asymptotic from $(i)$ in the above theorem is very similar to the analogous asymptotic found for integer moments in Theorem \ref{thm:28}
\end{remark}
The proof of the above theorem will be based on a couple of lemmas. The following result generalizes \cite[Lemma 10]{HaM13}, which assumed that $\mu(\{0\})=0$.
\begin{lemma}\label{lem:fract_mom}
Let $\mu\in\mathcal{M}_+$ with $\delta=\mu(\{0\})<1$. If $m_\gamma(\mu)<+\infty$ for $\gamma\in(0,1)$, then
    \[
\int_{[0,+\infty)} x^\gamma \mu(\dd x) = \frac{\sin(\pi\gamma)}{\pi\gamma} \int_0^{1-\delta} \left(\frac{1-t}{t} \frac{1}{S_\mu(-t)}\right)^\gamma\dd t.
\]
\end{lemma}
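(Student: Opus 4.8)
The plan is to represent the fractional power $x^\gamma$ through its standard integral formula and then interchange the order of integration, converting an integral of the Cauchy/moment transform of $\mu$ against a kernel into the right-hand side. Concretely, I would start from the classical identity
\[
x^\gamma = \frac{\sin(\pi\gamma)}{\pi}\int_0^{+\infty} \frac{x}{x+s}\, s^{\gamma-1}\,\dd s,\qquad x\ge 0,\ \gamma\in(0,1),
\]
valid with the convention $0^\gamma=0$ (so the atom of $\mu$ at $0$ contributes nothing, which is exactly why the $\delta=0$ hypothesis in \cite[Lemma~10]{HaM13} can be dropped). Integrating against $\mu$ and using Tonelli (all integrands are nonnegative, and $m_\gamma(\mu)<+\infty$ guarantees finiteness), I get
\[
m_\gamma(\mu)=\frac{\sin(\pi\gamma)}{\pi}\int_0^{+\infty} s^{\gamma-1}\!\left(\int_{[0,+\infty)}\frac{x}{x+s}\,\mu(\dd x)\right)\dd s
=\frac{\sin(\pi\gamma)}{\pi}\int_0^{+\infty} s^{\gamma-1}\Bigl(-\psi_\mu(-1/s)\Bigr)\,\dd s,
\]
recognizing that $\int \frac{x}{x+s}\mu(\dd x) = -\psi_\mu(-1/s)$ from the definition of the moment transform in Section~\ref{sec:tr}.

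The next step is a change of variables in the $s$-integral that turns $-\psi_\mu(-1/s)$ into the variable of integration. Recall that on $(-\infty,0)$ the function $\psi_\mu$ is an increasing bijection onto $(\delta-1,0)$, so $-\psi_\mu(-1/s)$ runs over $(0,1-\delta)$ as $s$ runs over $(0,+\infty)$; set $t=-\psi_\mu(-1/s)$, equivalently $-1/s=\chi_\mu(-t)$, i.e. $s = -1/\chi_\mu(-t)$. Using the relation $S_\mu(-t)=\frac{1-t}{-t}\chi_\mu(-t)$ (from the definition $S_\mu(z)=\frac{z+1}{z}\chi_\mu(z)$ with $z=-t$), one solves $\chi_\mu(-t) = -\frac{t}{1-t}S_\mu(-t)$, hence
\[
s = \frac{1-t}{t}\,\frac{1}{S_\mu(-t)}.
\]
Then $s^{\gamma-1}\,\dd s = \frac{1}{\gamma}\,\dd(s^\gamma)$, so after substituting and integrating by parts (or, more simply, writing $s^{\gamma-1}\dd s$ directly in terms of $t$ and noting $\dd s = \dd s$), the integral becomes $\frac{1}{\gamma}\int_0^{1-\delta}\dd\bigl((s(t))^\gamma\bigr)$ — wait, that is too crude; instead, I would keep the substitution honest: with $s=s(t)$ as above, $\int_0^{+\infty}s^{\gamma-1}\,t\,\dd s$ is not quite the target, so the cleaner route is to integrate by parts \emph{first} in the original $s$-integral, writing $\int_0^{+\infty}\!\bigl(-\psi_\mu(-1/s)\bigr)\,\dd\!\bigl(\tfrac{s^\gamma}{\gamma}\bigr)$, checking the boundary terms vanish (at $s\to 0$, $-\psi_\mu(-1/s)\to 0$ at least like $m_\gamma$-integrability forces, and $s^\gamma\to0$; at $s\to\infty$, $-\psi_\mu(-1/s)\to 1-\delta$ but one needs $s^\gamma\bigl(1-\delta-(-\psi_\mu(-1/s))\bigr)\to 0$, which again follows from $m_\gamma(\mu)<+\infty$ via a tail estimate). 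This yields $-\frac{1}{\gamma}\int_0^{+\infty} s^\gamma\, \dd\bigl(-\psi_\mu(-1/s)\bigr)$, and now the substitution $t=-\psi_\mu(-1/s)$ is immediate: it gives $\frac{1}{\gamma}\int_0^{1-\delta} s(t)^\gamma\,\dd t$ with $s(t)=\frac{1-t}{t}\frac{1}{S_\mu(-t)}$, which is exactly the claimed formula after pulling the $\frac{1}{\gamma}$ into the prefactor $\frac{\sin(\pi\gamma)}{\pi\gamma}$.

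The main obstacle I anticipate is the rigorous justification of the integration by parts — specifically verifying that the boundary term at $s\to+\infty$ vanishes, since $-\psi_\mu(-1/s)\to 1-\delta\ne 0$ there. This requires showing $s^\gamma\,\mu\bigl((s,+\infty)\bigr)\to 0$ (or the analogous statement for $\int_{(s,\infty)}\frac{x}{x+s}\mu(\dd x)$ together with $\int_{[0,s]}\frac{s}{x+s}\mu(\dd x)$-type remainders), which is a standard consequence of $\int x^\gamma\,\mu(\dd x)<+\infty$ (dominated convergence applied to $s^\gamma \mathbf 1_{(s,\infty)}(x) \le x^\gamma$), but it is the one place where the finite-fractional-moment hypothesis is genuinely used and must be invoked carefully. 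A secondary, purely bookkeeping point is confirming that $t\mapsto s(t)$ is a legitimate (absolutely continuous, monotone) change of variables on $(0,1-\delta)$, which follows from analyticity and strict monotonicity of $\psi_\mu$ on the negative half-line together with $\mu$ being non-Dirac; when $\delta>0$ the upper limit $1-\delta<1$ is precisely the range of $-\psi_\mu$, so no term is lost, and this is where the improvement over \cite[Lemma~10]{HaM13} is transparent.
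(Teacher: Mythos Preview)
Your approach is correct in spirit and very close to the paper's, but there are two points worth noting.

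First, a minor slip: you have the boundary limits of $-\psi_\mu(-1/s)$ reversed. As $s\to 0^+$ one has $-1/s\to-\infty$ and hence $-\psi_\mu(-1/s)\to 1-\delta$, while as $s\to+\infty$ one has $-1/s\to 0^-$ and $-\psi_\mu(-1/s)\to 0$. So the nontrivial boundary term is at $s\to+\infty$, where one needs $s^\gamma\bigl(-\psi_\mu(-1/s)\bigr)=\int \frac{s^\gamma t}{s+t}\,\mu(\dd t)\to 0$; this follows by dominated convergence, since $\frac{s^\gamma t}{s+t}\le \gamma^\gamma(1-\gamma)^{1-\gamma}\,t^\gamma$ for all $s>0$ and $t^\gamma\in L^1(\mu)$. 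The term at $s\to 0^+$ is trivially zero because $s^\gamma\to0$ against a finite limit. With this correction your argument goes through.

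Second, the paper's proof is essentially the same computation but organized so as to avoid the integration by parts altogether. Instead of starting from $x^\gamma=\frac{\sin(\pi\gamma)}{\pi}\int_0^\infty\frac{x}{x+s}s^{\gamma-1}\,\dd s$, the paper starts from the companion Beta identity $\int_0^\infty u^{-\gamma}(1+u)^{-2}\,\dd u=\frac{\pi\gamma}{\sin(\pi\gamma)}$ and works directly with $\int_0^\infty y^{-\gamma}\psi_\mu'(-y)\,\dd y$. The payoff is that the substitution $y=-\chi_\mu(-t)$ then converts $\psi_\mu'(-y)\,\dd y$ into $\dd t$ exactly, giving $\int_0^{1-\delta}(-\chi_\mu(-t))^{-\gamma}\,\dd t$ in one step with no boundary terms to verify. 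Your route and the paper's are related by a single integration by parts (equivalently, by passing from the kernel $(1+u)^{-1}$ to $(1+u)^{-2}$), so the content is the same; the paper's packaging is just a bit cleaner.
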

\begin{proof}
    We have
    \begin{align*}
\int_0^{+\infty} y^{-\gamma} \psi'_\mu(-y)\dd y &= \int_{[0,+\infty)}\int_0^{+\infty} \frac{y^{-\gamma} x }{(1+y x)^2} \dd y\,\mu(\dd x)   \\
&= \int_{[0,+\infty)} x^\gamma \int_0^{+\infty} \frac{1}{u^\gamma(1+u)^2}\dd u\,\mu(\dd x) \\
&=\frac{\pi\gamma} {\sin(\pi\gamma)}\int_{[0,+\infty)} x^\gamma\mu(\dd x),
    \end{align*}
    where in the first equality we have used Tonelli's theorem, and in the second equality we made the substitution $u=yx$. The third equality follows from the fact that $\int_0^{+\infty} u^{-\gamma}(1+u)^{-2}\dd u = \pi\gamma/\sin(\pi\gamma)$ for $\gamma\in(-1,1)$.
   
    Next, in the initial integral, we substitute $y=-\chi_\mu(-t)$. Since $\chi_\mu\colon (\delta-1,0)\to (-\infty,0)$, we obtain
    \begin{align*}
\int_0^{+\infty} y^{-\gamma} \psi'_\mu(-y)\dd y = \int_0^{1-\delta} (-\chi_\mu(-t))^{-\gamma} \dd t.
\end{align*}
The assertion follows from the definition of the $S$-transform of $\mu$.
\end{proof}

\begin{lemma}\label{lem:CS}
For $\mu\in\mathcal{M}_+$ and $y>0$, define
$\kappa(y) = \int_{[0,+\infty)} (1+y x)^{-1}\mu(\dd x)$ and $\mu_y\in\mathcal{M}_+$ by
\begin{align}\label{eq:nuy}
\mu_y(\dd x) = \frac{1}{\kappa(y)}  \frac{1}{1+y x}\mu(\dd x).
\end{align}
\begin{enumerate}
    \item[(i)] If $\mu$ is non-Dirac, then the function $(0,+\infty)\ni y\mapsto m_1(\mu_y)$ is strictly decreasing.
    \item[(ii)] If $m_1(\mu)<+\infty$, then
    \[
    m_1(\mu_y) = m_1(\mu)-\frac{y}{1+y\, m_1(\mu)} \int_{[0,+\infty)}(x-m_1(\mu))^2\mu_y(\dd x).
    \]
    \begin{enumerate}
        \item If $m_2(\mu)<+\infty$, then
\[        \lim_{y\to 0+} \frac{m_1(\mu_y)- m_1(\mu)}{y} = -\mathrm{Var}(\mu).
        \]
        \item If $m_2(\mu)=+\infty$, then
        \[
        \lim_{y\to 0+} \frac{m_1(\mu_y)- m_1(\mu)}{y} = -\infty.
        \]
        \item If $\mu\big((t,+\infty)\big)\sim c\, t^{-\alpha}$ for $\alpha\in(1,2)$ and $c>0$, then as $y\to 0+$,
        \[
         m_1(\mu_y) = m_1(\mu)
        + \frac{\pi\alpha}{\sin(\pi\alpha)}c\, y^{\alpha-1}(1+o(1)).
        \]
    \end{enumerate}
\end{enumerate}
\end{lemma}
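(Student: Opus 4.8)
The plan is to treat parts (i) and (ii) separately, with (ii) being purely computational and (i) relying on a strict convexity/correlation argument. For part (ii) I would start from the elementary algebraic identity
\[
\frac{x}{1+yx} = \frac{1}{y} - \frac{1}{y(1+yx)},
\]
integrate against $\mu$ and divide by $\kappa(y)$ to get $m_1(\mu_y) = \tfrac{1}{y}(1 - \tfrac{1}{\kappa(y)})$. Alternatively, and more usefully for the claimed formula, I would write $x - m_1(\mu)$ inside the integral defining $m_1(\mu_y)$, use the identity $\frac{x - m_1(\mu)}{1+yx} = \frac{1}{1+y m_1(\mu)}\bigl((x-m_1(\mu)) - \frac{y(x-m_1(\mu))^2}{1+yx}\bigr)$, and integrate against $\mu$. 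The first term integrates to $0$, and after dividing by $\kappa(y)$ the second term produces exactly $-\frac{y}{1+y m_1(\mu)}\int (x-m_1(\mu))^2 \mu_y(\dd x)$, giving the stated identity.

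From that identity the three sub-cases follow by controlling $\int (x-m_1(\mu))^2 \mu_y(\dd x) = \frac{1}{\kappa(y)}\int \frac{(x-m_1(\mu))^2}{1+yx}\mu(\dd x)$ as $y \to 0+$. Since $\kappa(y) \to 1$ by dominated convergence, in case (a) ($m_2(\mu)<\infty$) dominated convergence gives the integral $\to \mathrm{Var}(\mu)$, hence the limit $-\mathrm{Var}(\mu)$; in case (b) ($m_2(\mu)=\infty$) monotone convergence forces the integral $\to +\infty$, so the difference quotient diverges to $-\infty$. Case (c) is the delicate one: with $\mu((t,\infty)) \sim c t^{-\alpha}$, $\alpha \in (1,2)$, I would relate $\int \frac{(x-m_1(\mu))^2}{1+yx}\mu(\dd x)$ to the quantity $\frac{1}{y}\int \frac{\dd U_2(x)}{1/y + x}$ appearing in the proof of Theorem \ref{thm:taub}, where $U_2(t)=\int_{[0,t]}x^2\mu(\dd x) \sim \frac{\alpha}{2-\alpha}t^{2-\alpha}$; the Tauberian estimate $\int_{[0,\infty)} \frac{\dd U_2(x)}{s+x} \sim c_{2,\alpha} s^{1-\alpha}$ with $c_{2,\alpha} = -\frac{\pi\alpha}{\sin(\pi\alpha)}$ then yields $\int \frac{(x-m_1(\mu))^2}{1+yx}\mu(\dd x) \sim -\frac{\pi\alpha}{\sin(\pi\alpha)} y^{\alpha-2}$ (the sign works out because $\sin(\pi\alpha)<0$ for $\alpha\in(1,2)$). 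Plugging this into the identity from (ii) and using $1+ym_1(\mu) = 1+o(1)$ gives $m_1(\mu_y) - m_1(\mu) \sim \frac{\pi\alpha}{\sin(\pi\alpha)} c\, y^{\alpha-1}$, as claimed. Some care is needed here to pass from the statement about $U_2$ (a one-sided truncation) to the full integral including the constant $-m_1(\mu)$ linear term and to absorb lower-order contributions, but these are $o(y^{\alpha-1})$ and do not affect the leading asymptotics.

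For part (i), the strict monotonicity of $y \mapsto m_1(\mu_y)$, I would differentiate. Writing $m_1(\mu_y) = N(y)/\kappa(y)$ with $N(y) = \int \frac{x}{1+yx}\mu(\dd x)$, a direct computation of $\frac{\dd}{\dd y}m_1(\mu_y)$ produces, up to the positive factor $\kappa(y)^{-2}$, the expression $-N'(y)\kappa(y) + N(y)\kappa'(y)$, which equals
\[
\int\int \frac{x^2}{(1+yx)^2}\frac{1}{1+yz}\,\mu(\dd x)\mu(\dd z) \;-\; \int\int \frac{x}{1+yx}\cdot\frac{x}{(1+yx)}\cdot\frac{1}{1+yz}\,\cdots
\]
— more cleanly, it is exactly $-\kappa(y)^2$ times the variance of $x \mapsto \frac{x}{1+yx}$ under $\mu_y$, i.e. $\frac{\dd}{\dd y}m_1(\mu_y) = -\mathrm{Var}_{\mu_y}\!\bigl(\tfrac{x}{1+yx}\bigr)$. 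Since $x \mapsto \frac{x}{1+yx}$ is strictly monotone on $[0,\infty)$ for $y>0$ and $\mu$ (hence $\mu_y$) is non-Dirac, this variance is strictly positive, so the derivative is strictly negative; alternatively, if one worries about differentiability under the integral when higher moments may be infinite, I would instead compare $m_1(\mu_{y_1})$ and $m_1(\mu_{y_2})$ for $y_1<y_2$ directly via an FKG-type / Chebyshev correlation inequality, using that $x\mapsto \frac{x}{1+y_1 x}$ is increasing while $x \mapsto \frac{1+y_2 x}{1+y_1 x}$ is increasing, to show the reweighting from $\mu_{y_1}$ to $\mu_{y_2}$ strictly decreases the mean. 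The main obstacle is this differentiability/integrability bookkeeping in part (i) together with the Tauberian bookkeeping in case (c); the algebra in (ii)(a),(b) and the identity itself are routine.
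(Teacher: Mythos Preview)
Your proposal is essentially correct and follows the same route as the paper. Part (ii) and sub-cases (a)--(c) are handled exactly as in the paper: the algebraic identity
\[
\frac{x-m_1(\mu)}{1+yx}=\frac{x-m_1(\mu)}{1+ym_1(\mu)}-\frac{y}{1+ym_1(\mu)}\,\frac{(x-m_1(\mu))^2}{1+yx}
\]
is precisely the one the paper uses, and your treatment of (c) via $U(t)\sim U_2(t)$ and the Tauberian estimate from Theorem~\ref{thm:taub} matches the paper's argument.

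One small correction for part (i): your claimed formula $\frac{\dd}{\dd y}m_1(\mu_y)=-\mathrm{Var}_{\mu_y}\!\bigl(\tfrac{x}{1+yx}\bigr)$ is not quite right. A careful computation of $(N/\kappa)'$ gives
\[
\frac{\dd}{\dd y}m_1(\mu_y)=-\mathrm{Cov}_{\mu_y}\!\Bigl(x,\ \tfrac{x}{1+yx}\Bigr),
\]
which is still strictly negative for non-Dirac $\mu$ (both coordinates are strictly increasing in $x$), so your conclusion and your FKG alternative stand. The paper takes a slightly cleaner path here: it writes $m_1(\mu_y)=\frac{1-\kappa(y)}{y\kappa(y)}$, differentiates, and reduces the sign to $\bigl(\int(1+yx)^{-1}\mu(\dd x)\bigr)^2<\int(1+yx)^{-2}\mu(\dd x)$, i.e.\ strict Jensen for $x\mapsto(1+yx)^{-1}$.
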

\begin{remark}
Note that we have $\kappa(y) = 1+\psi_\mu(-y)$.

Moreover, the measures defined in \eqref{eq:nuy} constitute a Cauchy-Stieltjes kernel family in the sense of \cite{CS11}. Point (i) of Lemma \ref{lem:CS} follows from \cite[Section 2]{CS11}; we include its proof for completeness.
\end{remark}
\begin{proof}[Proof of Lemma~\ref{lem:CS}]
    We have
    \[
    m_1(\mu_y) = \frac{ 1 }{\kappa(y)}\int_{[0,+\infty)} \frac{x}{1+y x}\mu(\dd x) = \frac{1-\kappa(y)}{y\,\kappa(y)}.
    \]
    Clearly, the function $(0,+\infty)\ni y\mapsto  m_1(\nu_y)$ is differentiable, and we find that
    \[
    \frac{\dd}{\dd y}m_1(\mu_y) = \frac{\kappa(y)^2-\kappa(y)-y\,\kappa'(y)}{y^2 \kappa(y)^2}.
    \]
Since $\mu$ is non-degenerate, we apply the strict Jensen inequality to obtain
    \[
    \kappa(y)^2-\kappa(y)-y\, \kappa'(y) = \left(\int_{[0,+\infty)} \frac{1}{(1+y x)}\mu(\dd x)\right)^2-\int_{[0,+\infty)} \frac{1}{(1+y x)^2}\mu(\dd x) <0,
    \]
    which proves (i).

If $m_1(\mu)<+\infty$, we have
\[
m_1(\mu_y)-m_1(\mu) = \frac{1}{\kappa(y)} \int_{[0,+\infty)} \frac{x-m_1(\mu)}{1+y x}\mu(\dd x).
\]
Moreover,
\[
\frac{x-m_1(\mu)}{1+x y} = \frac{x-m_1(\mu)}{1+y\, m_1(\mu)}-\frac{y}{1+y\, m_1(\mu)}\frac{(x-m_1(\mu))^2}{1+x y}.
\]
Points (a) and (b) are immediate consequences of the formula stated in (ii).

For (c), $y\to 0+$, we have
\[
m_1(\mu)-m_1(\mu_y) \sim \frac{y}{\kappa(y)} \int_{[0,+\infty)}(x-m_1(\mu))^2\mu(\dd x) \sim \int_0^\infty \frac{\dd U(x)}{y^{-1}+x},
\]
where $U(x) = \int_{[0,x]} (t-m_1(\mu))^2 \mu(\dd t)$. Let $U_2(x) = \int_{[0,x]} t^2 \mu(\dd t)$. As in the proof of Theorem \ref{thm:taub}, we find that as $t\to+\infty$,
\[
\mu\big((t,+\infty)\big)\sim c\, t^{-\alpha}\qquad\iff\qquad  U_2(t)\sim c \frac{\alpha}{2-\alpha} t^{2-\alpha}.
\]
Since $U_2(t)\sim U(t)$, we again apply the argument from the proof of Theorem \ref{thm:taub} to obtain, as $y\to0+$,
\[
\int_0^{+\infty} \frac{\dd U(x)}{y^{-1}+x} \sim - \frac{\pi\alpha}{\sin(\pi\alpha)}c\, y^{\alpha-1}.
\]
\end{proof}

Now, we are ready to prove Theorem \ref{lem:1}.
\begin{proof}[Proof of Theorem \ref{lem:1}]
(i) Let $\delta = \mu(\{0\})$. By \cite[Theorem 4.1]{Bel03}, we have $\mu_{\Pi_n}(\{0\})=\delta$.
Since $S_{\Pi_n}=S_\mu^n$, by Lemma \ref{lem:fract_mom},
we obtain for $\gamma\in(0,1)$,
        \begin{align}
m_\gamma(\mu^{\boxtimes n})
 &= \frac{\sin(\pi\gamma)}{\pi\gamma} \int_0^{1-\delta}  \left(\frac{1-t}{t}\frac{1}{S_{\mu}(-t)^n}\right)^{\gamma}\dd t\nonumber\\
  &= \frac{\sin(\pi\gamma)}{\pi\gamma} \int_0^{1-\delta}  \left(\frac{t}{1-t}\frac{1}{(-\chi_\mu(-t))}\right)^{(n-1)\gamma} (-\chi_{\mu}(-t))^{-\gamma}\dd t\nonumber\\
 % &= \frac{\sin(\pi\gamma)}{\pi\gamma} \int_0^{+\infty}  \left(\frac{-\psi_\mu(-y)\frac1y}{1+\psi_\mu(-y)}\right)^{(n-1)\gamma} y^{-\gamma}\psi_\mu'(-y)\dd y \nonumber \\
  &= \frac{\sin(\pi\gamma)}{\pi\gamma} \int_{[0,+\infty)}\int_0^{+\infty}  \left(\frac{-\psi_\mu(-y)\frac1y}{1+\psi_\mu(-y)}\right)^{(n-1)\gamma} y^{-\gamma}\frac{x}{(1+x y)^2}\dd y \,\mu(\dd x)
  ,\label{eq:psiint}
\end{align}
 where we have substituted $(0,1-\delta)\ni t\mapsto-\chi_\mu(-t)=y\in(0,+\infty)$ and used the fact that  $\psi_\mu'(-y) = \int_{[0,+\infty)} \frac{x}{(1+x y)^2}\mu(\dd x)$.
Let
\begin{align*}
m(y)=\frac{-\psi_\mu(-y)\frac1y}{1+\psi_\mu(-y)}.
\end{align*}
We have $m(y)=m_1(\mu_y)$, where $\mu_y$ is defined by \eqref{eq:nuy}. Consequently, the results of Lemma \ref{lem:CS} are applicable.

To prove parts (i) and (ii), we consider a more general situation: if $\gamma_n\to \gamma_\ast\in[0,1)$ with $\gamma_nn\to+\infty$ as $n\to+\infty$,  we will evaluate the limit $\lim_{n\to+\infty} \gamma_n n^{1-\gamma_n}\tau(\Pi_n^{\gamma_n})$. We consider \eqref{eq:psiint} with $\gamma=\gamma_n$.

    For $s>0$, we substitute $y=s/(\gamma_n n)$ in \eqref{eq:psiint}. This leads us to analyze the pointwise limit of the integrand function multiplied by $\gamma_n n^{1-\gamma_n}$:
    \begin{align*}
    I_n(s,x) &= \gamma_n n^{1-\gamma_n} \frac{\sin(\pi\gamma_n)}{\pi\gamma_n} m\left(\frac{s}{\gamma_n n}\right)^{(n-1)\gamma_n} \left(\frac{s}{\gamma_n n}\right)^{-\gamma_n} \frac{x}{\left(1+\frac{s}{\gamma_n n} x\right)^2}\frac{1}{\gamma_n n} \\
    &=  \frac{\sin(\pi\gamma_n)}{\pi\gamma_n^{1-\gamma_n}}  m\left(\frac{s}{\gamma_n n}\right)^{(n-1)\gamma_n}s^{-\gamma_n} \frac{x}{\left(1+\frac{s}{\gamma_n n} x\right)^2}.
    \end{align*}
     If $m_2(\mu)<+\infty$, then by Lemma \ref{lem:CS} (ii), we obtain
    \[
    m(y) = 1 - \mathrm{Var}(\mu) y (1+o(1)),\qquad y\to 0+.
    \]
  If $m_2(\mu)=+\infty$, then again by Lemma \ref{lem:CS} (ii)
    \[
    \lim_{y\to 0+}\frac{m(y)-1}{y}=-\infty.
    \]
  If $m_2(\mu)<+\infty$,  we obtain, for any $s>0$ and $x\geq 0$,
\begin{align*}
\lim_{n\to+\infty} I_n(s,x)&= d_{\gamma_\ast}x\, s^{-\gamma_\ast} \lim_{n\to+\infty}  \left(1-\mathrm{Var}(\mu)\frac{s}{\gamma_n n}(1+o(1))\right)^{(n-1)\gamma_n} \\
&=d_{\gamma_\ast}x\, s^{-\gamma_\ast} e^{-\mathrm{Var}(\mu) s}=:I_\infty(s,x),
\end{align*}
where $d_\gamma = \sin(\pi\gamma)/(\pi\gamma^{1-\gamma})$ if $\gamma\in(0,1)$, and $d_0=1$. If $m_2(\mu)=+\infty$, then the above pointwise limit is $0$.

Fix $c\in(0, \mathrm{Var}(\mu))$. Then there exists $\varepsilon>0$ such that $m(y)\leq 1- c\,y$ for all $y\in(0,\eps]$.
%Let $\eps>0$ be such that for all $y\in(0,\eps]$ one has $m(y)\leq 1- c\,y$ for $c\in(0, \mathrm{Var}(\mu))$.
Using the inequality $1-x\leq e^{-x}$ for $x\geq 0$, we obtain for $s\leq\eps \gamma_n n$,
\[
m\left(\frac{s}{\gamma_n n}\right)^{(n-1)\gamma_n} \leq \left(1-c\frac{s}{\gamma_n n}\right)^{(n-1)\gamma_n} \leq  e^{-c_1 s}
\]
for some $c_1>0$. Since $\lim_{n\to+\infty}\gamma_n=\gamma_\ast<1$, we have $\gamma_n\leq 1-\tilde\delta$ for some $\tilde\delta\in(0,1)$ for large $n$. Therefore, for large $n$, we obtain
\[
I_n(s,x) I_{(0,\eps \gamma_n n]}(s) \leq c_2 e^{-c_1 s} ( s^{-(1-\tilde\delta)}I_{(0,1]}(s)+I_{(1,+\infty)}(s) ) x \in L^1( \dd s\times\mu(\dd x))
\]
for some positive constants $c_1$ and $c_2$. The Lebesgue dominated convergence theorem implies that
\[
\lim_{n\to+\infty} \int_{[0,+\infty)}\int_0^{\eps \gamma_n n} I_n(s,x)\dd y\,\mu(\dd x) = \int_{[0,+\infty)}\int_0^{+\infty} I_\infty(s,x)\dd y\,\mu(\dd x).
\]
Since $y\mapsto m(y)$ is strictly decreasing and $m(\eps)<m(0)=1$ for any $\eps>0$, we have for $s>\eps \gamma_n n$,
\[
m\left(\frac{s}{\gamma_n n}\right)^{(n-1)\gamma} \leq m(\eps)^{(n-1)\gamma}\leq e^{-c_3 \gamma_n n}
\]
for some positive constant $c_3$.
Therefore, for $s>\eps \gamma_n n$, using the inequality $x(1+\frac{s}{\gamma_n n} x)^{-2}\leq \gamma_n n/s$, we arrive at
\[
I_n(s,x)I_{[\eps \gamma_n n,+\infty)}(s) \leq c_4 e^{-c_3 n}  s^{-1-\gamma} \gamma_n n.
\]
for positive $c_3$ and $c_4$.
Thus,
\[
\lim_{n\to+\infty} \int_{[0,+\infty)}\int_{\eps \gamma_n n}^{+\infty} I_n(s,x)\dd y\,\mu(\dd x)=0.
\]
Finally, we obtain in the case $m_2(\mu)<+\infty$,
\begin{align*}
\lim_{n\to+\infty} \gamma_n n^{1-\gamma_n} m_{\gamma_n}(\mu^{\boxtimes n}) &= d_{\gamma^\ast} \int_0^{+\infty} s^{-\gamma_\ast} e^{-\mathrm{Var}(\mu) s}\dd s\\
&=d_{\gamma^\ast} \Gamma(1-\gamma_\ast)\mathrm{Var}(\mu)^{\gamma_\ast-1}.
\end{align*}
The same limit equals $0$ if $m_2(\mu)=+\infty$.
If $\gamma_\ast=0$, then the right-hand side above equals $\mathrm{Var}(\mu)^{-1}$.
If $\gamma_n=1/\log(n)$, we have
 $n^{1-\gamma_n} \sim e^{-1} n$, thus recovering (ii).
 If $\gamma_n=\gamma\in(0,1)$ for all $n\in\mathbb{N}$, then
 \[
 \lim_{n\to+\infty} n^{1-\gamma }m_\gamma(\mu^{\boxtimes n})  =  \frac{1}{\gamma} \frac{\sin(\pi\gamma)}{\pi\gamma^{1-\gamma}} \Gamma(1-\gamma) \mathrm{Var}(\mu)^{\gamma-1}.
 \]
 By Euler's reflection formula, we obtain
\[
\frac{\sin(\pi\gamma)}{\pi\gamma}\Gamma(1-\gamma)  =  \frac{1}{\Gamma(1+\gamma)},
\]
which proves (i).

For (iii), we substitute $y=s/n^{1/(\alpha-1)}$ in \eqref{eq:psiint} and analyze the integrand function multiplied by $n^{\frac{1-\gamma}{\alpha-1}}$, i.e.,
\[
I_n(s,x) = \frac{\sin(\pi\gamma)}{\pi\gamma} m\left( \frac{s}{n^{1/(\alpha-1)}}\right)^{(n-1)\gamma} s^{-\gamma} \frac{x}{\left(1+\frac{s}{n^{1/(\alpha-1)}} x\right)^2}.
\]
Since, by Lemma \ref{lem:CS}, we have
\[
m\left( \frac{s}{n^{1/(\alpha-1)}}\right) = 1 + \frac{\pi\alpha}{\sin(\pi\alpha)} c \frac{s^{\alpha-1}}{n}(1+o(1)),
\]
we obtain for any $s>0$ and $x\geq0$,
\[
\lim_{n\to+\infty} I_n(s,x) =  \frac{\sin(\pi\gamma)}{\pi\gamma}x\,s^{-\gamma} \exp\left(  \frac{\pi\alpha}{\sin(\pi\alpha)} c s^{\alpha-1} \right)=I_\infty(s,x).
\]
Bounding as before, we have
\begin{align*}
I_n(s,x)I_{(0,\eps n^{1/(\alpha-1)}]}(s) &\leq c_2 s^{-\gamma}  e^{-c_1 s^{\alpha-1}}x
\intertext{and}
I_n(s,x)I_{(\eps n^{1/(\alpha-1)},+\infty)}(s) &\leq c_4 e^{-c_3 n} s^{-1-\gamma} n^{1/(1-\alpha)}
\end{align*}
for positive constants $c_i$, $i=1,\ldots,4$. Thus, by Lebesgue's dominated convergence theorem, we deduce that
\begin{align*}
\lim_{n\to+\infty} n^{\frac{1-\gamma}{\alpha-1} }m_\gamma(\mu^{\boxtimes n}) &=
\int_{[0,+\infty)}\int_0^{+\infty}  I_\infty(s,x) \dd s\,\mu(\dd x) \\
&=  \frac{\sin(\pi\gamma)}{\pi\gamma} \Gamma\left(\frac{1-\gamma}{\alpha-1}\right) \frac{1}{\alpha-1}\left(\frac{-\sin(\pi\alpha)}{\pi\alpha \,c}\right)^{\frac{1-\gamma}{\alpha-1}}.
\end{align*}
\end{proof}

\subsection{\texorpdfstring{Series convergence for $\tau(\A)=1$}{Series convergence for tau(A)=1}}
Recall that we denote
        \begin{align*}
                \Pi_n^{\uparrow} = \A_1^{1/2}\cdots\A_{n-1}^{1/2}\A_n \A_{n-1}^{1/2}\cdots\A_1^{1/2}
        \end{align*}
and that $\Pi_n$ is a generic element with distribution $\mu^{\boxtimes n}$.
Note that if $m_1(\mu)=1$, then $(\Pi_n^\uparrow)_{n\in\mathbb{N}}$ is a non-commutative martingale. We exploit this fact to establish bilaterall almost uniform convergence of $(\Pi_n^\uparrow)_{n\in\mathbb{N}}$.
The following theorem is the main result of this section.
\begin{thm}\label{thm:1}
Assume that $\A\geq0$ is non-Dirac and $\tau(\A)=1$. Then, the series
\[
\sum_{n=0}^{+\infty} \Pi_n^\uparrow\mbox{ converges  b.a.u.}
\]
If additionally $\mu_\A\big( (t,+\infty)\big)\sim c\,t^{-\alpha}$ for $c>0$ and $\alpha\in(1,2)$, then 
    \[
\sum_{n=0}^{+\infty} \Pi_n\mbox{ converges  a.u.}
\]
\end{thm}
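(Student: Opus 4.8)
The plan is to derive almost uniform convergence from two facts: under the regular-variation hypothesis the series is actually $L^\gamma$-summable for small $\gamma$, and for a series of \emph{free} non-negative random variables such summability forces almost uniform convergence via the free three-series theorem of \cite{Ber05} (the same tool invoked above for $\A=\alpha\cdot 1_\cA$).

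First I would record the quantitative input. Since $\mu_\A$ is non-Dirac with $m_1(\mu_\A)=\tau(\A)=1$ and $\mu_\A((t,+\infty))\sim c\,t^{-\alpha}$, Theorem~\ref{lem:1}(iii) gives $m_\gamma(\mu_\A^{\boxtimes n})\sim c_3\,n^{(\gamma-1)/(\alpha-1)}$ for every $\gamma\in(0,1)$. The exponent $(1-\gamma)/(\alpha-1)$ exceeds $1$ exactly when $\gamma<2-\alpha$, and $2-\alpha\in(0,1)$ because $\alpha\in(1,2)$; fixing any $\gamma_0\in(0,2-\alpha)$ therefore yields $\sum_{n\ge1}m_{\gamma_0}(\mu_\A^{\boxtimes n})<+\infty$.

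Next I would realize the generic elements appearing in the statement as a freely independent family: put $\Pi_0=1_\cA$ and let $\Pi_1,\Pi_2,\dots\in\tcA$ be $*$-free positive operators with $\Pi_n\sim\mu_\A^{\boxtimes n}$. Then $\sum_{n\ge0}\Pi_n$ is a series of free, non-negative random variables, so by \cite{Ber05} its almost uniform convergence is equivalent to a free three-series condition which, for non-negative summands and without re-centering, reduces to
\[
\sum_{n\ge1}\tau\big(\Pi_n\wedge 1_\cA\big)<+\infty,
\]
the free analogue of the classical two-series criterion for non-negative independent summands. Since $t\wedge 1\le t^{\gamma_0}$ for all $t\ge0$ (on $[0,1]$ because $\gamma_0<1$, on $[1,+\infty)$ because $\gamma_0>0$), functional calculus gives $\Pi_n\wedge 1_\cA\le\Pi_n^{\gamma_0}$, hence $\tau(\Pi_n\wedge 1_\cA)\le m_{\gamma_0}(\mu_\A^{\boxtimes n})$. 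Summing over $n$ and using the previous paragraph, the criterion is satisfied, so $\sum_{n\ge0}\Pi_n$ converges a.u.

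I expect the main obstacle to be the careful invocation of \cite{Ber05}: one has to pin down the exact form of the free three-series criterion and verify that for non-negative summands it collapses to $\sum_n\tau(\Pi_n\wedge 1_\cA)<+\infty$, after which the moment comparison is routine. If one instead insists on the realization $\Pi_n=\Pi_n^\uparrow$, the same $L^{\gamma_0}$-summability shows that the partial sums increase to a positive element of $L^{\gamma_0}(\cA,\tau)$ while the tails decrease to $0$ in $L^{\gamma_0}$; upgrading this monotone $L^{\gamma_0}$-convergence to almost uniform convergence---a non-commutative Egorov-type statement---would then be the delicate point.
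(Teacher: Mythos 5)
Your proposal addresses only the second claim of the theorem; the first claim --- bilateral almost uniform convergence of $\sum_n\Pi_n^\uparrow$ under the sole hypotheses $\tau(\A)=1$, $\A$ non-Dirac --- is not touched. In the paper this is in fact the harder half: it uses the martingale structure of $(\Pi_n^\uparrow)_n$, Cuculescu's maximal inequality (Lemma \ref{lem:mart}), the logarithmic-exponent fractional moment asymptotics $\tau(\Pi_n^{1/\log n})\lesssim \log(n)/n$ from Theorem \ref{lem:1}(ii), and a blocking argument along $n_k=k^2$ (Lemma \ref{lem:2}). None of that is replaced or even sketched in your proposal, so as it stands the theorem is at most half proved.

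For the second claim, your quantitative input is exactly right: Theorem \ref{lem:1}(iii) gives $m_\gamma(\mu_\A^{\boxtimes n})\asymp n^{-(1-\gamma)/(\alpha-1)}$, which is summable precisely for $\gamma\in(0,2-\alpha)$, and this is also what the paper uses. The gap is in the reduction to almost uniform convergence. The operators $\Pi_n^\uparrow$ are \emph{not} freely independent of one another (consecutive ones share the factors $\A_1,\dots,\A_{n-1}$), so replacing them by a free family with the same marginals changes the joint distribution of the partial sums; a three-series criterion from \cite{Ber05} for free summands, even if it takes the form you conjecture, would then only establish convergence of an artificial series, not of the series $\sum_n\Pi_n^\uparrow$ that feeds into Theorem \ref{thm:uniqe_free}. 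You acknowledge this and fall back on ``upgrading monotone $L^{\gamma_0}$-convergence to a.u.\ convergence'' as the delicate point --- but that upgrade \emph{is} the content of the claim, and you leave it open. The paper closes it with an elementary device that needs neither freeness nor martingales: set $p_k=e_{[0,k^{-\beta}]}(\Pi_k)$ and $p=\bigwedge_{k\ge N}p_k$; then $\|(\X_n-\X_m)p\|\le\sum_{k\ge N}k^{-\beta}$ for $\beta>1$, while $\tau(1-p)\le\sum_{k\ge N}\mu_{\Pi_k}\bigl((k^{-\beta},+\infty)\bigr)\le\sum_{k\ge N}k^{\beta\gamma}\tau(\Pi_k^{\gamma})$, which is small for $\gamma\in\bigl(0,\tfrac{2-\alpha}{1+\beta(\alpha-1)}\bigr)$ by the same fractional moment bound. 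This argument uses only the marginal laws $\mu_\A^{\boxtimes k}$ and hence applies to the actual realization. I would recommend adopting that spectral-truncation step in place of the three-series route, and then supplying a separate argument for the b.a.u.\ part.
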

Surprisingly, the proof of the latter result above is significantly simpler than the proof of b.a.u. convergence.

We start with a series of lemmas. The first one is the  analog of the maximal inequality for martingales.
\begin{lemma}[Proposition 5 in \cite{Cu71}]\label{lem:mart}
        Assume that $(X_n)_{n\geq 0}$ is a nonnegative martingale, and let $M$ be a positive number. There exists a projection $q$ such that
        \[
        \tau(1-q)\leq M^{-1}\tau(X_0)\quad\mbox{and}\quad        \|q\, X_n\, q\|\leq  M \mbox{ for all n}.
        \]
\end{lemma}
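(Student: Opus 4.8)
\emph{Plan.} The statement is Cuculescu's weak‑type $(1,1)$ maximal inequality for noncommutative martingales, and the plan is to reconstruct his argument. Write $(\cA_n)_{n\ge0}$ for the increasing filtration of von Neumann subalgebras of $\cA$ with respect to which $(X_n)$ is a martingale, so that each $X_n=X_n^*\ge0$ is affiliated with $\cA_n$ and $\tau(aX_n)=\tau(aX_{n-1})$ for every $a\in\cA_{n-1}$; in particular $\tau(X_n)=\tau(X_0)$ for all $n$ (in the present paper $\cA_n$ is generated by $\A_1,\dots,\A_n$ and $X_n=\Pi_n^\uparrow$). Fix $M>0$. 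First I would construct, by induction, a decreasing sequence of projections $p_0\ge p_1\ge\cdots$ by setting $p_0=\chi_{[0,M]}(X_0)$ and, given $p_{n-1}$, letting $p_n=\chi_{[0,M]}\bigl(p_{n-1}X_np_{n-1}\bigr)$, where the spectral projection is computed inside the reduced von Neumann algebra $p_{n-1}\cA p_{n-1}$ (with unit $p_{n-1}$). This choice of ambient algebra is exactly what forces $p_n\le p_{n-1}$, and by construction $p_nX_np_n\le M\,p_n$. Since $p_{n-1}\in\cA_{n-1}\subseteq\cA_n$ while $X_n$ is affiliated with $\cA_n$, the operator $p_{n-1}X_np_{n-1}$ is a nonnegative self‑adjoint operator affiliated with $\cA_n$, so the spectral calculus is legitimate even when the $X_n$ are unbounded.

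The heart of the proof is the uniform weak‑type bound $\tau(1-p_n)\le M^{-1}\tau(X_0)$ for every $n$. For $k\ge1$ set $e_k=p_{k-1}-p_k=\chi_{(M,\infty)}\bigl(p_{k-1}X_kp_{k-1}\bigr)$, again inside the corner. Being a spectral projection of $a_k:=p_{k-1}X_kp_{k-1}$ for $(M,\infty)$, $e_k$ commutes with $a_k$ and satisfies $e_ka_k\ge M e_k$; combined with $e_k\le p_{k-1}$, which gives $e_ka_ke_k=e_kX_ke_k$, we obtain the operator inequality $e_k\le M^{-1}e_kX_ke_k$. Applying $\tau$, using traciality and $e_k^2=e_k$, yields $\tau(e_k)\le M^{-1}\tau(e_kX_k)=M^{-1}\bigl(\tau(p_{k-1}X_k)-\tau(p_kX_k)\bigr)$, and since $p_{k-1}\in\cA_{k-1}$ the martingale identity gives $\tau(p_{k-1}X_k)=\tau(p_{k-1}X_{k-1})$, hence $\tau(e_k)\le M^{-1}\bigl(\tau(p_{k-1}X_{k-1})-\tau(p_kX_k)\bigr)$. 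The same reasoning applied to $1-p_0=\chi_{(M,\infty)}(X_0)$ gives $\tau(1-p_0)\le M^{-1}\bigl(\tau(X_0)-\tau(p_0X_0)\bigr)$. Now sum the telescoping identity $\tau(1-p_n)=\tau(1-p_0)+\sum_{k=1}^n\tau(e_k)$: the right‑hand side collapses to $M^{-1}\bigl(\tau(X_0)-\tau(p_nX_n)\bigr)\le M^{-1}\tau(X_0)$, the last step because $\tau(p_nX_n)=\tau(p_nX_np_n)\ge0$.

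To conclude, set $q=\bigwedge_{n\ge0}p_n$, the strong‑operator limit of the decreasing sequence $(p_n)$, which is again a projection with $q\le p_n$ for all $n$. Normality of $\tau$ gives $\tau(1-q)=\lim_n\tau(1-p_n)\le M^{-1}\tau(X_0)$. For each $n$, using $qp_n=p_nq=q$ and $p_nX_np_n\le Mp_n$, we get $qX_nq=q\bigl(p_nX_np_n\bigr)q\le M\,qp_nq=Mq$; since $qX_nq\ge0$ this gives $\|qX_nq\|\le M$, which is the assertion. The only delicate points are the recursive definition of $p_n$ as a spectral projection \emph{inside} the reduced algebra $p_{n-1}\cA p_{n-1}$ (this is what makes $(p_n)$ decreasing, and the naive spectral projection in $\cA$ would fail), and the bookkeeping in the weak‑type estimate, where the telescoping works precisely because the martingale identity is invoked at the ``predictable'' projection $p_{k-1}\in\cA_{k-1}$; allowing unbounded $X_n$ costs nothing beyond reading all spectral calculus as that of operators affiliated with $\cA$.
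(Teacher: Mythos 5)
Your reconstruction is correct and is precisely Cuculescu's original argument (the recursive spectral projections $p_n=\chi_{[0,M]}(p_{n-1}X_np_{n-1})$ taken in the corner $p_{n-1}\cA p_{n-1}$, the telescoping weak-type estimate via the martingale identity at the predictable projection $p_{k-1}$, and $q=\bigwedge_n p_n$); the paper itself gives no proof and simply cites Proposition 5 of Cuculescu, so there is nothing to compare beyond noting that you have faithfully reproduced the cited proof, including the two genuinely delicate points (computing the spectral projection inside the reduced algebra to force monotonicity, and the nonnegativity $\tau(p_nX_np_n)\ge0$ that closes the telescoping bound).
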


The proof of the following lemma is based on the proof of \cite[Proposition 6]{Cu71}.
\begin{lemma}\label{lem:2}
        Let $\mu\in\mathcal{M}_+$. Assume that $m_1(\mu)=1$, $m_2(\mu)<+\infty$ and that $\mu$ is non-Dirac.
        Let $(n_k)_k$ be an arbitrary monotone increasing positive integer sequence, and let $(h_k)_k$ be a sequence of nonnegative numbers such that
        \[
        \sum_{k=1}^{+\infty} e^{-h_k} <+\infty.
        \]
        For any $\eps>0$, there exists a projection $p$ and $N\in\mathbb{N}$  such that $\tau(1-p)\leq\eps$ and
        \begin{align}\label{eq:pPp}
                \| p \,\Pi_n^\uparrow\, p\|\leq \left(\frac{\log(n_k)}{n_k}\right)^{\log(n_k)} n_k^{h_k},\qquad  n_k\leq n<n_{k+1},\quad k\geq N.
        \end{align}
\end{lemma}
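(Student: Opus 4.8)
The plan is to combine the martingale maximal inequality (Lemma \ref{lem:mart}) with the moment asymptotics of Theorem \ref{lem:1}(ii), applied block-by-block along the intervals $[n_k, n_{k+1})$. First I would fix $k$ and work on the interval $n_k \le n < n_{k+1}$. The key observation is that $(\Pi_n^\uparrow)_{n\ge 1}$ is a nonnegative noncommutative martingale (since $m_1(\mu)=1$), so Lemma \ref{lem:mart} provides, for any threshold $M_k>0$, a projection $q_k$ with $\tau(1-q_k)\le M_k^{-1}\tau(X_0)$ and $\|q_k\,\Pi_n^\uparrow\,q_k\|\le M_k$ for \emph{all} $n$. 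But applying this to the full martingale wastes information; instead I would apply it to the martingale restricted to the interval, i.e.\ consider $(\Pi_n^\uparrow)_{n\ge n_k}$ started at time $n_k$, whose "initial value" $\Pi_{n_k}^\uparrow$ has small expected size in an appropriate fractional sense. The subtlety is that Lemma \ref{lem:mart} controls $\tau(X_0)$, i.e.\ the \emph{first} moment, which equals $1$ and is useless; so the real trick must be to apply the maximal inequality not to $\Pi_n^\uparrow$ itself but to $(\Pi_n^\uparrow)^{\gamma_k}$ for a suitable small exponent $\gamma_k = 1/\log(n_k)$, or to a shifted/truncated version, so that the controlling quantity becomes $\tau((\Pi_{n_k}^\uparrow)^{\gamma_k}) = m_{\gamma_k}(\mu^{\boxtimes n_k})$, which by Theorem \ref{lem:1}(ii) is of order $\frac{\log(n_k)}{n_k}\cdot\frac{e}{\mathrm{Var}(\mu)}$ — a genuinely small quantity.

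Concretely: since $t\mapsto t^{\gamma}$ is operator monotone for $\gamma\in(0,1)$, the process $(\Pi_n^\uparrow)^{\gamma_k}$ is a nonnegative \emph{supermartingale} (by operator Jensen), so Lemma \ref{lem:mart} — or its supermartingale analogue, which holds with the same proof as \cite[Proposition 5]{Cu71} — applies to $Y_n := (\Pi_n^\uparrow)^{\gamma_k}$ with $\tau(Y_{n_k}) = m_{\gamma_k}(\mu^{\boxtimes n_k})$. This yields a projection $q_k$ with
\[
\tau(1-q_k)\le M_k^{-1}\,m_{\gamma_k}(\mu^{\boxtimes n_k}), \qquad \|q_k\,(\Pi_n^\uparrow)^{\gamma_k}\,q_k\|\le M_k \text{ for all } n\ge n_k,
\]
and hence $\|q_k\,\Pi_n^\uparrow\,q_k\|\le M_k^{1/\gamma_k} = M_k^{\log(n_k)}$ for $n_k\le n<n_{k+1}$ (using again operator monotonicity of $t\mapsto t^{\gamma_k}$ together with $\|A\|=\|A^{\gamma_k}\|^{1/\gamma_k}$ for positive $A$ and $q A q \le q \|A\| q$). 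Now choose
\[
M_k = \left(\frac{\log(n_k)}{n_k}\right)\, n_k^{\gamma_k h_k} = \left(\frac{\log(n_k)}{n_k}\right)\, e^{h_k},
\]
so that $M_k^{\log(n_k)} = \left(\frac{\log(n_k)}{n_k}\right)^{\log(n_k)} n_k^{h_k}$, which is exactly the right-hand side of \eqref{eq:pPp}. With this choice, by Theorem \ref{lem:1}(ii), $\tau(1-q_k)\le M_k^{-1} m_{\gamma_k}(\mu^{\boxtimes n_k}) = \frac{n_k}{\log(n_k)}e^{-h_k}\,m_{\gamma_k}(\mu^{\boxtimes n_k})\cdot(1+o(1)) \le C\,e^{-h_k}$ for some constant $C$ and all large $k$.

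Finally I would assemble the global projection: set $p = \bigwedge_{k\ge N} q_k$ for $N$ chosen large enough that $\sum_{k\ge N} C e^{-h_k} < \eps$ (possible by hypothesis $\sum_k e^{-h_k}<+\infty$), using the standard bound $\tau\bigl(1 - \bigwedge_k q_k\bigr) \le \sum_k \tau(1-q_k)$. Then $p\le q_k$ for every $k\ge N$, so $\|p\,\Pi_n^\uparrow\,p\| \le \|q_k\,\Pi_n^\uparrow\,q_k\|$ (since $p\Pi_n^\uparrow p = p\,q_k\Pi_n^\uparrow q_k\,p$ and $\|p x p\|\le\|x\|$ for a projection $p$) is bounded by the required quantity for $n_k\le n<n_{k+1}$, $k\ge N$, and $\tau(1-p)\le\eps$. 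The main obstacle I anticipate is the first step — verifying that the maximal inequality of \cite{Cu71} transfers to the operator-convex/supermartingale setting and correctly tracks $m_{\gamma_k}(\mu^{\boxtimes n_k})$ rather than a naive first moment; once the exponent $\gamma_k = 1/\log(n_k)$ is identified as the device that converts the useless mean $1$ into the decaying quantity of Theorem \ref{lem:1}(ii), the bookkeeping with $M_k$ is routine.
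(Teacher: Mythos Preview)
Your step 3 has a genuine gap: from $\|q_k(\Pi_n^\uparrow)^{\gamma_k}q_k\|\le M_k$ you cannot deduce $\|q_k\,\Pi_n^\uparrow\,q_k\|\le M_k^{1/\gamma_k}$. Neither operator monotonicity of $t\mapsto t^{\gamma_k}$ nor the scalar identity $\|A\|=\|A^{\gamma_k}\|^{1/\gamma_k}$ yields this, because $q_k$ need not commute with $\Pi_n^\uparrow$ and $t\mapsto t^{1/\gamma_k}$ is not operator monotone for $1/\gamma_k>1$. A $2\times2$ counterexample: take $A=\begin{pmatrix}2&2\\2&2\end{pmatrix}\ge0$, so $A^{1/2}=\begin{pmatrix}1&1\\1&1\end{pmatrix}$, and $q=e_{11}$; then $\|qA^{1/2}q\|=1$ but $\|qAq\|=2>1^2$. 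In fact Hansen's inequality goes the wrong way here: $(qAq)^{\gamma}\ge qA^{\gamma}q$, so a bound on $qA^{\gamma}q$ gives no upper bound on $qAq$. (A secondary issue is that Lemma~\ref{lem:mart} is stated for martingales, not supermartingales, though that is likely repairable.)

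The paper avoids raising the martingale to a power altogether. It truncates at the starting time via the spectral projection $p_k=e_{[0,f_k]}(\Pi_{n_k}^\uparrow)$ with $f_k=\bigl(\frac{\log n_k}{n_k}\bigr)^{\log n_k}n_k^{h_k}$, observes that $X_n^{(k)}:=p_k\Pi_n^\uparrow p_k$ is itself a nonnegative \emph{martingale} for $n\ge n_k$ (since $p_k$ is $\mathcal F_{n_k}$-measurable), and applies Lemma~\ref{lem:mart} directly to $X^{(k)}$ with threshold $M_k=f_k$. The fractional exponent $\gamma_k=1/\log n_k$ then enters only in scalar estimates: both $\tau(1-p_k)=\mu_{\Pi_{n_k}}\bigl((f_k,\infty)\bigr)$ and $f_k^{-1}\tau(\Pi_{n_k}^\uparrow p_k)$ are bounded by $f_k^{-1/\log n_k}\,\tau\bigl(\Pi_{n_k}^{1/\log n_k}\bigr)$ (Markov, respectively the pointwise inequality $t\mathbf 1_{[0,f]}(t)\le f^{1-\gamma}t^{\gamma}$), and since $f_k^{1/\log n_k}=\frac{\log n_k}{n_k}e^{h_k}$, Theorem~\ref{lem:1}(ii) turns each into $\le C\,e^{-h_k}$. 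Taking $p=\bigwedge_{k\ge N}(q_k\wedge p_k)$ then gives both \eqref{eq:pPp} and $\tau(1-p)\le 2C\sum_{k\ge N}e^{-h_k}<\eps$.
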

\begin{proof}
        Denote $f_k = \left(\frac{\log(n_k)}{n_k}\right)^{\log(n_k)} n_k^{h_k}$ and define $p_k=e_{[0,f_k]}(\Pi_{n_k}^\uparrow)$.
        For $k\geq 1$, let $X^{(k)}_n:= p_{k} \,\Pi_{n}^\uparrow\, p_{k}$ for $n\geq n_k$. 
        Note that for each $k$, $(X^{(k)}_n)_{n\geq n_k}$ is a nonnegative martingale. Therefore, by Lemma \ref{lem:mart},
        for each $k$ and any $M_k>0$ there exists a projection $q_k$ such that
        \[
        \tau(1-q_k)\leq M_k^{-1} \tau(X_{n_k}^{(k)}) \quad\mbox{and}\quad\|q_k X_n^{(k)} q_k\|\leq M_k\mbox{ for }n\geq n_k.
        \]
        Let $M_k = f_k$ and define
        $p=\bigwedge_{k\geq N} (q_k\wedge p_{k})$, where $N$ is yet to be determined.
       
        For all $n\geq n_k$ and $k\geq N$, we have
        \[
        \| p\,\Pi_{n}^\uparrow\,p \| \leq \| q_k p_{k} \Pi_{n}^\uparrow p_{k} q_k \|=\|q_k X_n^{(k)} q_k\| \leq f_k.
        \]     
        Moreover,
        \begin{align*}
                \tau(1-p) &\leq \sum_{k=N}^{+\infty} \tau(1-q_k)+\sum_{k=N}^{+\infty} \tau(1-p_{k}) \\
                & \leq \sum_{k=N}^{+\infty}  f_k^{-1} \tau(\Pi_{n_k}^{\uparrow} p_{k})+\sum_{k=N}^{+\infty} \mu_{\Pi_{n_k}}\big((f_k,+\infty)\big)
        \end{align*}
        By Markov's inequality, we have
        \[
        \mu_{\Pi_n}\big((f,+\infty)\big) \leq \frac{\tau(\Pi_n^{1/\log(n)})}{ f^{1/\log(n)}}.
        \]
        Similarly, since $t\, I_{[0,f]}(t) \leq t\left(f/t\right)^{1-1/\log(n)}=t^{1/\log(n)}f^{1-1/\log(n)}$ for $s,t>0$, it follows that
        \[
        f^{-1}\tau\left(\Pi_{n} e_{[0,f]}(\Pi_n)\right) = f^{-1} \int_{[0,+\infty)} t\, I_{[0,f]}(t)\,\mu_{\Pi_n}(\dd t) \leq \frac{\tau(\Pi_n^{1/\log(n)})}{ f^{1/\log(n)}}.
        \]
        By \eqref{eq:limit}, we have
        $\tau(\Pi_n^{1/\log(n)})\leq c  \frac{\log(n)}{n}$ for $n>1$, where $c$ is a positive constant. 
        Therefore, we can conclude that
        \[
        \tau(1-p) \leq  2 \sum_{k=N}^{+\infty} c \frac{\log(n_k)}{n_k} f_k^{-1/\log(n_k)} = 2 c \sum_{k=N}^{+\infty} e^{-h_k}.
        \]
        Clearly, one can adjust $N$ so that $\tau(1-p)\leq \eps$.
\end{proof}

Now, we are ready to prove Theorem \ref{thm:1}.
\begin{proof}[Proof of Theorem \ref{thm:1}]
    Let $\X_n=\sum_{k=1}^n \Pi_k^\uparrow$.
    Let $n_k = k^2$ and $h_k=\frac32\log(k)$. Then, we have $\sum_{k=1}^{+\infty} e^{-h_k}=\sum_{k=1}^{+\infty} k^{-3/2}<+\infty$.  By applying Lemma \ref{lem:2}, there exists a projection $p$ and $N\in\mathbb{N}$ such that the inequality \eqref{eq:pPp} holds.
   
Let $K\in\mathbb{N}$ satisfy $K\geq N$. For $n>m\geq n_K$, we obtain
    \begin{align*}
    \|p(\X_n-\X_m)p\|&\leq \sum_{k=m+1}^n \| p \,\Pi_k^\uparrow\, p \| \leq \sum_{k=K}^{+\infty} \sum_{l=n_k}^{n_{k+1}-1}   \| p \,\Pi_l^\uparrow\, p \| \\
    & \leq \sum_{k=K}^{+\infty} (n_{k+1}-n_k) \left(\frac{\log(n_k)}{n_k}\right)^{\log(n_k)} n_k^{h_k} \\
    &= \sum_{k=K}^{+\infty} (2k+1) \left(\frac{2\log(k)}{k^2}\right)^{2\log(k)} k^{3\log(k)}  \\
    &=\sum_{k=K}^{+\infty} (2k+1) k^{2 \log\log(k)} \left(\frac{4}{k}\right)^{\log(k)}.
    \end{align*}
    It is straightforward to see that this series converges. Therefore, for any $\varepsilon'>0$, we can choose $K$ such that $\|p(\X_n-\X_m)p\|\leq \varepsilon'$. This implies that $(\X_n)_n$ is Cauchy bilaterally almost uniformly, which proves the first part of the assertion.

For the second part, we will not rely on martingales.
  Let $\X_n=\sum_{k=1}^n \Pi_k$.
   For $\beta>0$ define $p_k=e_{[0, k^{-\beta}]}(\Pi_k)$ and $p=\bigwedge_{k\geq N} p_k$.
Fix $\eps,\eps'>0$. Given $N\in\mathbb{N}$, we find that for $n>m>N$,
    \[
  \| (\X_n-\X_m)p\| \leq \sum_{k=N}^\infty \| \Pi_k p_k\| \leq \sum_{k=N}^\infty \frac{1}{k^\beta} < \eps
    \]
  for sufficiently large $N$, provided $\beta>1$. Moreover, for $\gamma\in(0,1)$, we have
    \[
    \tau(1-p)\leq \sum_{k=N}^{+\infty} \tau(1-p_k) = \sum_{k=N}^{+\infty} \mu_{\Pi_k}\big((k^{-\beta},+\infty)\big)\leq \sum_{k=N}^\infty k^{\beta\gamma}\tau(\Pi_k^\gamma).
    \]
    By Theorem \ref{lem:1} (iii), we know that $\tau(\Pi_k^\gamma)\leq C k^{-\frac{1-\gamma}{\alpha-1}}$ for some positive constant $C$. Setting $c:=\frac{2-\alpha}{1+\beta(\alpha-1)}\in(0,1)$, if $\gamma\in(0,c)$, then
    $\frac{1-\gamma}{\alpha-1}-\beta\gamma>1$. Thus, by increasing $N$ if necessary, we obtain $\tau(1-p)\leq \eps'$.
\end{proof}

\section{Fixed point equations}\label{sec:4}

In this section, we focus on solutions to the affine fixed-point equations of the form
\begin{align}\label{eq:aff0}
\X\stackrel{d}{=}\A\X\A^\ast+\B,\qquad (\A,\B)\mbox{ and }\X\mbox{ are *-free,}
\end{align}
where $\B=\B^\ast$.
For a given pair $(\A,\B)\in\tcA^2$, our goal is to establish the existence and uniqueness of the solution and to investigate the properties of this solution.
It is important to note that it is the distribution of $\X\in\tcA$ that serves as a solution to the above problem, rather than $\X$ itself.
However, for simplicity, we may occasionally refer to $\X$ as the solution.

We begin by examining two illustrative examples.

\subsection{Examples}\label{sec:ex1}
\subsubsection{Free Beta prime distribution}
Yoshida \cite{Yoshida} defined the free-beta prime distribution $f\mathcal{B}'_{a,b}$ as the free multiplicative convolution $\mu_a\boxtimes \mu_b^{-1}$, $a>0, b>1$, where $\mu_b^{-1}$ is the pushforward measure of $\mu_b$ by the mapping $x\mapsto x^{-1}$, and $\mu_\lambda$ is the Marchenko-Pastur distribution defined by
 \[
 \mu_\lambda(\dd x)=\frac{\sqrt{(\lambda_+-x)(x-\lambda_-)}}{2\pi x}\dd x+\max\{1-\lambda,0\}\delta_0(\dd x),
 \]
 where $\lambda_\pm=(1\pm\sqrt{\lambda})^2$, $\lambda>0$. Yoshida obtained the following expression for the free-beta prime distribution:
\[
f\mathcal{B}'_{a,b}(\dd x) = f_{a,b}(x)\dd x+\max\{1-a,0\}\delta_0(\dd x),
\]
where
\begin{align}\label{eq:fab}
f_{a,b}(x)=\frac{(b-1)\sqrt{(\gamma_+-x)(x-\gamma_-)}}{2\pi x(1+x)}I_{[\gamma_-,\gamma_+]}(x)
\end{align}
and
\[
\gamma_\pm = \left(\frac{\sqrt{ab}\pm\sqrt{a+b-1}}{b-1}\right)^2.
\]
We observe that $f_{a,b}$ has a point-wise limit as $b\to1+$, which allows us to extend the definition of $f\mathcal{B}'_{a,b}$ to the case $b=1$. We define
\[
f\mathcal{B}'_{a,1}(\dd x) = \frac{\sqrt{4a x-(a-1)^2}}{2\pi x(1+x)}I_{[(a-1)^2/(4 a),+\infty)}(x)\dd x+\max\{1-a,0\}\delta_0(\dd x).
\]
The case of $b=1$ was not covered in \cite{Yoshida}. Note that, in contrast to the case $b>1$, for $b=1$, the support of free beta-prime is unbounded.

The $S$-transform of $f\mathcal{B}'_{a,b}$ is $S_{a,b}(z)=S_{\mu_a}(z) S_{\mu_b^{-1}}(z)=\frac{b-1-z}{a+z}$.

\begin{Corollary}\label{thm:exConv}
Let $a>0$, $b\geq1$ and assume that $\A\sim f\mathcal{B}'_{a,a+b}$. Then $\X\sim f\mathcal{B}'_{a,b}$ is the unique solution to
\begin{align}\label{eq:new}
        \X \stackrel{d}{=} \A^{1/2}\X\A^{1/2}+\A,\qquad \A\mbox{ and }\X\mbox{ are freely independent}.
\end{align}
\end{Corollary}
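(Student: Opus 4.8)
The plan is to reduce \eqref{eq:new} to a single identity between $S$-transforms. First I would absorb the additive term into the multiplicative one: since $\A=\A^{1/2}1_{\cA}\A^{1/2}$, the right-hand side of \eqref{eq:new} equals
\[
\A^{1/2}\X\A^{1/2}+\A=\A^{1/2}(1_{\cA}+\X)\A^{1/2}.
\]
Because $\X\sim f\mathcal B'_{a,b}$ with $b\ge 1$ is supported on $[0,+\infty)$, we have $1_{\cA}+\X\ge 1_{\cA}>0$; since moreover $\A\ge 0$ is free from $1_{\cA}+\X$, the spectral distribution of $\A^{1/2}(1_{\cA}+\X)\A^{1/2}$ is the free multiplicative convolution $\mu_\A\boxtimes\mu_{1+\X}$, where $\mu_{1+\X}$ denotes the law of $1_{\cA}+\X$, that is, the translate of $f\mathcal B'_{a,b}$ by $1$ (when $b=1$ the operator $\X$ is unbounded, so one uses $\boxtimes$ in its extension to $\mathcal M_+$). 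Hence \eqref{eq:new} is equivalent to $f\mathcal B'_{a,b}=\mu_\A\boxtimes\mu_{1+\X}$; as both sides lie in $\mathcal M_+\setminus\{\delta_0\}$, this in turn is equivalent to the identity
\[
S_{a,b}(z)=S_{a,a+b}(z)\,S_{1+\X}(z)
\]
on the common interval of definition of the three $S$-transforms.

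The substantive step is the computation of $S_{1+\X}$. Here I would start from $G_{1+\X}(w)=G_{f\mathcal B'_{a,b}}(w-1)$, rewrite this through the relations between $G_\mu$, $\psi_\mu$ and its inverse $\chi_\mu$ so that the translation $w\mapsto w-1$ becomes a change of variable at the level of $\chi$, and then substitute the explicit formula
\[
\chi_{f\mathcal B'_{a,b}}(z)=\frac{z}{z+1}\,S_{a,b}(z)=\frac{z}{z+1}\cdot\frac{b-1-z}{a+z}.
\]
After simplification I expect this to give
\[
S_{1+\X}(z)=\frac{b-1-z}{a+b-1-z},
\]
and then
\[
S_{a,a+b}(z)\,S_{1+\X}(z)=\frac{a+b-1-z}{a+z}\cdot\frac{b-1-z}{a+b-1-z}=\frac{b-1-z}{a+z}=S_{a,b}(z),
\]
which is the required identity and shows that $f\mathcal B'_{a,b}$ solves \eqref{eq:new}. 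As a consistency check, $m_1(\mu_\A)\,m_1(\mu_{1+\X})=\frac{a}{a+b-1}\cdot\frac{a+b-1}{b-1}=\frac{a}{b-1}=m_1(f\mathcal B'_{a,b})$ for $b>1$.

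For uniqueness, note that $f\mathcal B'_{a,a+b}$ has compact support because $a+b>1$, so $(\A,\B)=(\A,\A)$ is a bounded pair, and $\tau(\A)=a/(a+b-1)$, which equals $1$ exactly when $b=1$ and is $<1$ otherwise. Thus for $b>1$ we are in the subcritical regime and for $b=1$ in the critical regime of Theorem~\ref{intro:existence}, and in either case uniqueness of the solution follows from the general existence and uniqueness results for free perpetuities established in Section~\ref{sec:4}. I expect the only real obstacle to be the bookkeeping in the computation of $S_{1+\X}$: a translation of the Cauchy transform does not act simply on the $S$-transform, so one must move carefully through $\psi$, $\chi$ and the domains on which these functions are invertible; the remaining manipulations are routine algebra.
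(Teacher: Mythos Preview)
Your proposal is correct and shares its starting point with the paper: both observe that $\A^{1/2}\X\A^{1/2}+\A=\A^{1/2}(1+\X)\A^{1/2}$, so that \eqref{eq:new} is equivalent to the $S$-transform identity $S_\X=S_\A\,S_{1+\X}$. The routes then diverge slightly. You compute $S_{1+\X}$ explicitly (your expected formula $S_{1+\X}(z)=(b-1-z)/(a+b-1-z)$ is correct; it follows from $\psi_{\X+1}(z/(1+z))=z+(1+z)\psi_\X(z)$, which for $\chi_\X(u)=\frac{u(b-1-u)}{(1+u)(a+u)}$ gives the substitution $u=ay/(a+b-1-y)$ and hence $\chi_{\X+1}(y)=\frac{y(b-1-y)}{(1+y)(a+b-1-y)}$), and then check the product directly. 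The paper instead eliminates $S_{1+\X}$ altogether, deriving from the same relation $\psi_{\X+1}(z/(1+z))=z+(1+z)\psi_\X(z)$ the general functional equation
\[
(1+S_\X(z))\,S_\X\bigl(z(1+S_\X(z))\bigr)=S_\A\bigl(z(1+S_\X(z))\bigr)\,S_\X(z),
\]
valid for any $\A$, and verifies that the candidate $S_\X$ satisfies it. Your approach is more direct for this particular example; the paper's has the advantage of producing a reusable criterion (stated later as Lemma~\ref{lem:Sperp}) applicable to perpetuities with $\B=\A$ and arbitrary $\A$. Your treatment of uniqueness via Section~\ref{sec:4} matches the paper's.
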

\begin{proof}
Eq. \eqref{eq:new} holds if and only if $S_\X = S_{\A} S_{\X+1}$. We will show that $\X$ solves \eqref{eq:new} for arbitrary $\A$ if and only if the following functional equation is satisfied:   
    \begin{align}\label{eq:funeqS}
    (1+S_\X(z))S_{\X}(z (1+S_\X(z))) = S_\A(z(1+S_\X(z))) S_\X(z).
    \end{align}
Starting with the identity
    \[
    \psi_{\X+1}\left( \frac{z}{1+z}\right) = z + (1+z) \psi_\X(z),
    \]
we can rearrange this to obtain
    \[
    \frac{\chi_\X(z)}{1+\chi_\X(z)} = \chi_{\X+1}\left(z+(1+z)\chi_\X(z)\right).
    \]
This leads to
    \[
    \frac{S_\X(z)}{1+S_\X(z)} =  S_{\X+1}(z(1+S_\X(z))).
    \]
The condition $S_\X = S_{\A} S_{\X+1}$ then implies the functional equation \eqref{eq:funeqS}.

Now, we can verify by a direct computation that if $S_\A(z) = \frac{a+b-1-z}{a+z}$, then $S_\X(z)=
\frac{b-1-z}{a+z}$ satisfies \eqref{eq:funeqS}. It will follow from other considerations in the paper,
that there are no other solutions to \eqref{eq:new}.
\end{proof}

\subsubsection{Inverse Marchenko-Pastur distribution}\label{sec:MY}

For $\lambda\in\R$, we define the free Generalized Inverse Gaussian (free GIG)  distribution by
        \begin{align*}
        f{GIG}_{\lambda}(\dd x) = \frac{\sqrt{(x-a)(b-x)}}{2\pi}\left(\frac{1}{x}+\frac{1}{\sqrt{a b}\,x^2}\right) I_{[a,b]}(x) \dd x,
        \end{align*}
where $(a,b)$, with $0<a<b$, is the unique solution to the system of equations:
\begin{align*}
                1-\lambda +\sqrt{ab}-\frac{a+b}{2 a b}=0 =
                1+\lambda +\frac{1}{\sqrt{ab}}-\frac{a+b}{2}.
\end{align*}
Typically, the free GIG distribution is defined with three parameters $(\lambda,\alpha,\beta)$, see \cite{Feral0, Hasebe}. For our considerations below, we need to restrict ourselves to the case $\alpha=\beta$. We simplify notation by setting $\alpha=\beta=1$.

In \cite[Theorem 3.3]{KSzMY}, it was shown that if $\X\sim fGIG_{-\lambda}$ and $\Y$ has the Marchenko-Pastur distribution $\mu_{\lambda}$, with $\X$ and $\Y$ being freely independent and $\lambda>1$, then
$\mathbb{U}=(\X+\Y)^{-1}$ and $\mathbb{V}=\X^{-1}-(\X+\Y)^{-1}$ are also freely independent. Moreover, it follows from consideration in \cite[page 383]{KSzMY} that $\mathbb{U}\stackrel{d}{=}\X$ and $\mathbb{V}\stackrel{d}{=}\Y$.

Furthermore, in \cite[Theorem 4.1]{Hasebe}, the authors characterized $fGIG_{-\lambda}$ through the continued fraction fixed-point equation, i.e.,
\[
\X\stackrel{d}{=}(\X+\Y)^{-1},
\]
where $\X$ and $\Y\sim\mu_\lambda$ are free and $\lambda>0$.

Using the Hua identity, we have $\mathbb{V}^{-1} = \X \Y^{-1}\X+\X$. Thus, the condition $\mathbb{V}\stackrel{d}{=}\Y$ implies that
\begin{align}\label{eq:MY}
\Y^{-1}\stackrel{d}{=} \X \Y^{-1}\X+\X,
\end{align}
which is an affine fixed-point equation.
The equality in distribution above was established only under the condition $\lambda>1$, ensuring that the support of $\mu_\lambda$ is bounded away from $0$. This guarantees that $\Y^{-1}$ is bounded. However, the inverse of $\Y$ is also well-defined in the case of $\lambda=1$, when its support is unbounded from above. Specifically, we have
\begin{align}\label{eq:IMP}
\mu_{\Y^{-1}}(\dd x) = \mu_1^{-1}(\dd x) = \frac{\sqrt{x-1/4}}{\pi x^2} I_{[1/4,+\infty)}(x)\dd x,
\end{align}
Since all operations involved in \eqref{eq:MY} are continuous in the topology of weak convergence of probability measures, we can take the limit as $\lambda\to1+$. This allows us to extend the validity of  \eqref{eq:MY} to the case $\lambda=1$.

\subsection{Existence and uniqueness of the solution}

Note that if the distribution of $\X$ satisfies \eqref{eq:aff0}, then the distribution of $\X^\ast$ also satisfies this equation. Thus, if there exists a unique solution, it must necessarily be self-adjoint. 

To simplify the form of the fixed point equation \eqref{eq:aff0}, we consider the polar decomposition of $\A$, expressed as $\A=U|\A|$, where $U$ is unitary. Substituting this into our equation yields,
\[
\X\stackrel{d}{=}\A\X\A^\ast+\B = U(|\A|\X|\A|+U^\ast\B U)U^\ast\stackrel{d}{=}|\A|\X|\A|+U^\ast\B U,
\]
where $(|\A|, U^\ast \B U)$ are *-free from $\X$.

Thus, without loss of generality, we may always assume that $\A=\A^\ast\geq0$. From now on, we will consider the following affine fixed-point equation:
\begin{align}\label{eq:affine}
\X\stackrel{d}{=}\A^{1/2}\X\A^{1/2}+\B,\qquad (\A,\B)\mbox{ and }\X\mbox{ are *-free,}
\end{align}
where $\A,\B\in\tcA$, $\A=\A^\ast\geq 0$ and $\B=\B^\ast\neq 0$.

\begin{thm}\label{thm:uniqe_free}
    If $\tau(\A)\leq 1$ and the series
    \begin{align}\label{eq:perp}
  \mathbb{S}=\sum_{n=1}^{+\infty} \A_1^{1/2}\cdots \A_{n-1}^{1/2}\B_n\A_{n-1}^{1/2}\cdots\A_1^{1/2}
    \end{align}
    is convergent in distribution, then $\mathbb{X}\stackrel{d}{=}\mathbb{S}$
    is the unique solution to \eqref{eq:affine}.
\end{thm}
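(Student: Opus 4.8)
\emph{Setup.} The whole argument hinges on the one-step ``shift'' identity for the partial sums
$\mathbb{S}_N=\sum_{n=1}^N\A_1^{1/2}\cdots\A_{n-1}^{1/2}\B_n\A_{n-1}^{1/2}\cdots\A_1^{1/2}$:
relabelling the free copies $(\A_n,\B_n)_{n\ge2}\mapsto(\A_{n-1},\B_{n-1})_{n\ge1}$ exhibits a variable
$\mathbb{S}_{N-1}^{(1)}\stackrel{d}{=}\mathbb{S}_{N-1}$ that is $*$-free from $(\A_1,\B_1)$ and satisfies
$\mathbb{S}_N=\B_1+\A_1^{1/2}\,\mathbb{S}_{N-1}^{(1)}\,\A_1^{1/2}$. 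Since $\mathbb{S}_{N-1}^{(1)}$ is $*$-free from $(\A_1,\B_1)$,
the law of the right-hand side depends only on $\mu_{\mathbb{S}_{N-1}}$ and on $\mu_{(\A,\B)}$; writing $\Phi(\cdot)$ for
``the law of $\A^{1/2}\,\mathbb{Y}\,\A^{1/2}+\B$ when $\mathbb{Y}\sim(\cdot)$ is $*$-free from $(\A,\B)$'', we obtain
$\mu_{\mathbb{S}_N}=\Phi(\mu_{\mathbb{S}_{N-1}})$. Let $\mu_{\mathbb{S}}:=\lim_N\mu_{\mathbb{S}_N}$ be the weak limit assumed to exist.

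\emph{Existence of the solution $\mu_{\mathbb{S}}$.} The plan is to show that $\Phi$ is continuous for weak convergence of its argument;
then passing to the limit in $\mu_{\mathbb{S}_N}=\Phi(\mu_{\mathbb{S}_{N-1}})$ gives $\Phi(\mu_{\mathbb{S}})=\mu_{\mathbb{S}}$, which says
precisely that if $\X\sim\mu_{\mathbb{S}}$ is $*$-free from $(\A,\B)$ then $\A^{1/2}\X\A^{1/2}+\B\stackrel{d}{=}\X$, i.e.\ $\mu_{\mathbb{S}}$
solves \eqref{eq:affine}. To establish the continuity of $\Phi$ when $\A\notin\C\cdot1_{\cA}$ I would apply the subordination theorem stated
in the Introduction (proved in Section~\ref{sec:subordination}) to the polynomial $\A^{1/2}\,\mathbb{Y}\,\A^{1/2}+\B$, obtained from
$\A\,\mathbb{Y}\,\A+\B$ by substituting $\A^{1/2}$ for $\A$: it yields $G_{\A^{1/2}\mathbb{Y}\A^{1/2}+\B}(z)=\tau\bigl((z-\B+f(z)\A)^{-1}\bigr)$,
where $(f,\mathsf f)$ is the \emph{unique attracting} fixed point of $\mathscr F_z$, and the coefficients of $\mathscr F_z$ depend on
$\mu_{\mathbb{Y}}$ only through the Cauchy transform $w\mapsto\tau((w-\mathbb{Y})^{-1})$. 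If $\mu_{\mathbb{Y}_k}\to\mu_{\mathbb{Y}}$ weakly these
Cauchy transforms converge locally uniformly on $\C^+$, hence so do the maps $\mathscr F_z$; together with the a priori bounds
$\Im f(z),\Im\mathsf f(z)\ge\Im z$ (which keep the fixed points in a locally uniform compact subset of $\C^+\times\C^+$) and uniqueness of the
attracting fixed point, a normal-families argument forces $f_k\to f$ locally uniformly, whence $G_{\A^{1/2}\mathbb{Y}_k\A^{1/2}+\B}\to
G_{\A^{1/2}\mathbb{Y}\A^{1/2}+\B}$ pointwise on $\C^+$; this is the desired continuity. In the degenerate case $\A\in\C\cdot1_{\cA}$ one necessarily
has $\tau(\A)<1$ (the case $\A=1_{\cA}$ being incompatible with convergence of the series since $\B\ne0$), and then $\Phi$ is a dilation followed
by a free additive convolution with $\mu_{\B}$, whose continuity is classical.

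\emph{Uniqueness.} Let $\X$ be any solution of \eqref{eq:affine}. Iterating the equation $N$ times --- at each step introducing a fresh free copy
$(\A_n,\B_n)$ of $(\A,\B)$ and, working in a free product, keeping the new inner variable $*$-free from all pairs used so far --- yields
\[
\X\stackrel{d}{=}\mathbb{S}_N+D_N\,\X^{(N)}\,D_N^{*},\qquad D_N=\A_1^{1/2}\A_2^{1/2}\cdots\A_N^{1/2},
\]
where $\mathbb{S}_N$ has exactly the law of the $N$-th partial sum in the statement, $\X^{(N)}\stackrel{d}{=}\X$, and $\X^{(N)}$ is $*$-free from
$\{(\A_n,\B_n)\}_{n\le N}$. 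By traciality $D_N^{*}D_N=\A_N^{1/2}\cdots\A_2^{1/2}\A_1\A_2^{1/2}\cdots\A_N^{1/2}$ has distribution
$\mu_{\A}^{\boxtimes N}$, so $\tau(D_N^{*}D_N)=\tau(\A)^N$; hence $D_N^{*}D_N\to0$ in measure --- by Markov's inequality when $\tau(\A)<1$, and because
$\mu_{\A}^{\boxtimes N}\to\delta_0$ (the Corollary to the free multiplicative law of large numbers) when $\tau(\A)=1$. Consequently
$|D_N|=(D_N^{*}D_N)^{1/2}\to0$, so $D_N\to0$ and $D_N^{*}\to0$ in measure; since all $\X^{(N)}$ share the single law $\mu_{\X}$, the family
$\{\X^{(N)}\}$ is bounded in measure, so the product $R_N:=D_N\X^{(N)}D_N^{*}\to0$ in measure. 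Finally $\mathbb{S}_N\to\mathbb{S}$ in distribution by
hypothesis and $R_N\to0$ in measure, so a non-commutative Slutsky argument --- split $R_N$ into a part of operator norm $\le\varepsilon$ and a part whose
support projection has trace $\to0$, the former moving the distribution by $\le\varepsilon$, the latter by $\to0$ (non-commutative Weyl inequality) --- gives
$\mathbb{S}_N+R_N\to\mathbb{S}$ in distribution; since $\mathbb{S}_N+R_N\stackrel{d}{=}\X$ for every $N$, we conclude $\X\stackrel{d}{=}\mathbb{S}$.
Combined with the existence step, $\mu_{\mathbb{S}}$ is the unique solution of \eqref{eq:affine}.

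\emph{Main obstacle.} The delicate point is the continuity of $\Phi$ in the existence step: the one-step map is \emph{not} a free additive
convolution --- the additive term $\B$ is not free from $\A^{1/2}\mathbb{S}^{(1)}\A^{1/2}$ --- so the Bercovici--Voiculescu continuity theorems do not
apply directly, and one genuinely needs the operator-valued subordination picture together with stability of its attracting fixed point under perturbation
of the input Cauchy transform. The Slutsky step in the uniqueness part is routine but must be run in the measure topology of $\tcA$ (equivalently via
Cauchy transforms), not with moments, since in the critical regime neither $\mathbb{S}$ nor $R_N$ need have a finite first moment --- in fact the high
moments of $R_N$ blow up even though $R_N\to0$.
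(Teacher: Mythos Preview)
Your approach is correct but proceeds differently from the paper, especially for uniqueness. The paper uses a \emph{forward coupling}: given two putative solutions $\nu,\nu'$, it realises $\X_0\sim\nu$, $\X_0'\sim\nu'$ free from a common sequence $(\A_n,\B_n)_n$ and runs the \emph{same} iteration $\X_n=\A_n^{1/2}\X_{n-1}\A_n^{1/2}+\B_n$, $\X_n'=\A_n^{1/2}\X_{n-1}'\A_n^{1/2}+\B_n$. Stationarity gives $\X_n\sim\nu$, $\X_n'\sim\nu'$ for every $n$, while the $\B$-terms cancel in the difference, so
\[
\X_n-\X_n'=\A_n^{1/2}\cdots\A_1^{1/2}(\X_0-\X_0')\A_1^{1/2}\cdots\A_n^{1/2}\to 0
\]
in distribution (since $\Pi_n\to\delta_0$ under $\tau(\A)\le1$, by Section~\ref{sec:LLN}), forcing $\nu=\nu'$. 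This is shorter than your backward decomposition $\X\stackrel{d}{=}\mathbb S_N+R_N$ and does not track $\mathbb S_N$ at all; in particular it gives uniqueness without ever invoking convergence of the series. Both arguments, however, end with the same implicit noncommutative Slutsky step.

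For existence, the paper simply writes ``it is immediate to check that $\mathbb S$ solves'' the equation; your explicit justification of the continuity of $\Phi$ via subordination is a genuine addition, and is in the spirit of how the paper itself passes to limits in Section~\ref{sec:subordination}. One small inaccuracy: the a priori bound $\Im f(z)\ge\Im z$ you invoke does not hold in general (in fact $f(z)/z\to 0$ nontangentially). What rescues the argument is not compactness away from $\R$ for $f$ but rather the stability of Denjoy--Wolff points under locally uniform perturbation of the iterated map $g_{z}$ (Heins' theorem, used in the paper in the proof of Corollary~\ref{eq}); this is the right tool to cite here.
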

\begin{proof}
It is immediate to check that $\mathbb{S}$ solves \eqref{eq:affine}. Suppose that $\nu$ and $\nu'$ are two solution to \eqref{eq:affine}. Take $\X_0\sim\nu$, $\X_0'\sim \nu'$, each free of $(\A_n,\B_n)_n$, and define two processes: for $n\geq 1$,
  \begin{align*}
\X_n = \A_n^{1/2} \X_{n-1}\A_n^{1/2}+\B_n,\qquad
\X_n' = \A_n^{1/2} \X_{n-1}'\A_n^{1/2}+\B_n.
  \end{align*}
By \eqref{eq:affine}, for each $n$, we have $\X_n\sim \nu$ and $\X_n'\sim\nu'$. However,
  \[
  \X_n-\X_{n}' = \A_n^{1/2}\cdots\A_1^{1/2}(\X_0-\X_0')\A_1^{1/2}\cdots\A_n^{1/2}.
  \]
Since under $\tau(\A)\leq 1$, $\Pi_n=\A_n^{1/2}\cdots\A_2^{1/2}\A_1\A_2^{1/2}\cdots\A_n^{1/2}$ converges to zero in distribution (recall results from Section \ref{sec:LLN}),
we see that $\X_n-\X_{n}'$ also converges to zero in distribution. Therefore, we conclude that $\nu=\nu'$. 
\end{proof}

\begin{remark}
    The condition $\tau(\A)\leq 1$ is not necessary for the existence of a solution to \eqref{eq:affine}. E.g. if $\A c+\B=c$ for some $c\in\R$, then clearly $c$ is a solution regardless of any moment condition on $(\A,\B)$. In fact, in this case, we have 
    \[
\sum_{n=1}^k \A_1^{1/2}\cdots \A_{n-1}^{1/2}\B_n\A_{n-1}^{1/2}\cdots\A_1^{1/2}
  =c(1- \Pi_k^\uparrow).  \]
\end{remark}

\begin{defin}
    We say that the model \eqref{eq:affine} is irreducible if for any $c\in\R$
    \[
   \A c+\B\neq c.
    \]
\end{defin}

By the representation of the solution in \eqref{eq:perp}, we deduce the following easy result.
\begin{Corollary}\
\begin{enumerate}
    \item[(i)] If $\B\geq 0$, then $\X\geq 0$.
    \item[(ii)] If $(\A,\B)\stackrel{d}{=}(\A,-\B)$, then $\X\stackrel{d}{=}-\X$.
\end{enumerate}
\end{Corollary}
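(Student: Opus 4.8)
The plan is to read both statements straight off the series representation. Throughout we work under the hypotheses that make Theorem~\ref{thm:uniqe_free} applicable (e.g. $\tau(\A)\le1$ together with convergence in distribution of \eqref{eq:perp}), so that $\X\stackrel{d}{=}\mathbb{S}$ with
\[
\mathbb{S}=\sum_{n=1}^{+\infty}C_n\,\B_n\,C_n^\ast,\qquad C_n:=\A_1^{1/2}\cdots\A_{n-1}^{1/2},\quad C_n^\ast=\A_{n-1}^{1/2}\cdots\A_1^{1/2},
\]
where $\{(\A_n,\B_n)\}_{n\ge1}$ is a sequence of free copies of $(\A,\B)$; write $\mathbb{S}_k=\sum_{n=1}^k C_n\B_n C_n^\ast$ for the partial sums, so $\mu_{\mathbb{S}_k}\to\mu_{\mathbb{S}}=\mu_\X$ weakly.

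For (i), I would first note that when $\B\ge0$ each summand is a positive operator: $C_n\B_n C_n^\ast=(C_n\B_n^{1/2})(C_n\B_n^{1/2})^\ast\ge0$, using that $\B_n\ge0$ and that $TT^\ast\ge0$ for any $T\in\tcA$. Hence every $\mathbb{S}_k$ is positive and each $\mu_{\mathbb{S}_k}$ is supported on $[0,+\infty)$. Since $[0,+\infty)$ is closed and $\mu_{\mathbb{S}_k}\to\mu_\X$ weakly, the limit law $\mu_\X$ is also supported on $[0,+\infty)$, i.e. $\X\ge0$.

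For (ii), I would rewrite $-\mathbb{S}=\sum_{n=1}^{+\infty}C_n(-\B_n)C_n^\ast$ and observe that $\{(\A_n,-\B_n)\}_{n\ge1}$ is a sequence of free copies of $(\A,-\B)$. Because the joint $\ast$-distribution of a sequence of free copies is completely determined by the common marginal joint distribution together with the freeness relations, the hypothesis $(\A,\B)\stackrel{d}{=}(\A,-\B)$ forces the sequences $\{(\A_n,\B_n)\}_n$ and $\{(\A_n,-\B_n)\}_n$ to have the same joint distribution; applying to both the same measurable map — sending a free-copies sequence to the distributional limit of its perpetuity partial sums — yields $\mathbb{S}\stackrel{d}{=}-\mathbb{S}$, i.e. $\X\stackrel{d}{=}-\X$. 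Alternatively, and perhaps more cleanly, one can avoid the series: if $\X$ solves \eqref{eq:affine} for $(\A,\B)$ then $-\X$ solves it for $(\A,-\B)$ — freeness of $\X$ from $(\A,\B)$ is equivalent to freeness of $-\X$ from $(\A,-\B)$ — and since $(\A,-\B)\stackrel{d}{=}(\A,\B)$ this is the same equation, so uniqueness in Theorem~\ref{thm:uniqe_free} gives $-\X\stackrel{d}{=}\X$.

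I do not expect a real obstacle. The only two points that warrant care are: that positivity of the partial sums survives passage to the distributional limit, which is immediate since a weak limit of probability measures on the closed set $[0,+\infty)$ remains supported there; and that a symmetry in the marginal law $(\A,\B)\stackrel{d}{=}(\A,-\B)$ genuinely propagates through the infinite series, which rests on the fact that sequences of free copies are jointly determined by their common marginal.
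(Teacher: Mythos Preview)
Your proof is correct and follows exactly the approach the paper indicates: the paper simply states that the corollary is deduced ``by the representation of the solution in \eqref{eq:perp}'' and gives no further details, and your argument fills in precisely those details via the partial sums of the series. Your alternative uniqueness argument for (ii) is also valid and arguably cleaner.
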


Let $\log^+(a)=\max\{\log(a),0\}$.
\begin{thm}
The series \eqref{eq:perp} is convergent in the following cases:
\begin{enumerate}
    \item[(i)] almost uniformly if  $\A=\alpha 1$ with $\alpha\in(0,1)$ and $\B\in\tcA$ with $\tau(\log^+(|\B|))<+\infty$,
    \item[(ii)] in norm $\|\cdot\|$ if $\tau(\A)<1$ and $\A,\B\in\cA$,
    \item[(iii)] in $L^1(\tcA,\tau)$ if $(\A,\B)\in\tcA$ with $\tau(\A)<1$, $\B\geq 0$ and $\tau(\B)<+\infty$,
    \item[(iv)] bilaterally almost uniformly if $\A\in\tcA$ with $\tau(\A)=1$ and $\tau(\A^2)<+\infty$ and $\B\in\cA$,
    \item[(v)] almost uniformly if $\A\in\tcA$ with $\tau(\A)=1$ and $\mu_\A\big((t,+\infty)\big)\sim c\, t^{-\alpha}$ for $\alpha\in(1,2)$ and $c>0$, and $\B\in\cA$.
\end{enumerate}
\end{thm}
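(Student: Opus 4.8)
The overall strategy is to treat each of the five convergence regimes separately, reducing each to a result already established in the excerpt, since the series \eqref{eq:perp} has been essentially studied piecemeal in Section \ref{sec:3} and in the introductory discussion. The unifying observation is that the summands of \eqref{eq:perp} are free copies conjugated by multiplicative random-walk factors, so tail and moment control of $\Pi_n^\uparrow$ (equivalently $\Pi_n \sim \mu_\A^{\boxtimes n}$) translates directly into control of the partial sums.

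For (i), when $\A = \alpha 1$ with $\alpha \in (0,1)$, the series becomes $\sum_n \alpha^{n-1}\B_n$ with $\B_n$ free copies of $\B$; this is a sum of free random variables, and the hypothesis $\tau(\log^+|\B|) < +\infty$ is precisely Bercovici's condition from \cite{Ber05}, so a.u. convergence follows by citing that result (as was already remarked in the introduction). For (ii) and (iii), when $\tau(\A) < 1$, I would use the fact (Remark \ref{rem:m<1} and the weak law of large numbers from Section \ref{sec:LLN}) that $\|\Pi_n\| \to 0$ at a geometric rate when $\A$ is bounded, respectively that the moments $\tau(\Pi_n) = \tau(\A)^n$ decay geometrically in the unbounded case; then a Cauchy-in-norm (resp. Cauchy-in-$L^1$) estimate for the tail of the series follows by the triangle inequality together with $\|\A_1^{1/2}\cdots\A_{n-1}^{1/2}\B_n\A_{n-1}^{1/2}\cdots\A_1^{1/2}\| \le \|\Pi_{n-1}^\uparrow\|\,\|\B\|$ in case (ii), and with the trace estimate $\tau(|\A_1^{1/2}\cdots\B_n\cdots\A_1^{1/2}|) \le \tau(\A)^{n-1}\tau(\B)$ (using $\B \ge 0$, traciality and freeness) in case (iii). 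Finally, (iv) and (v) are exactly Theorem \ref{thm:1}: when $\tau(\A) = 1$ and $\tau(\A^2) < +\infty$ the series $\sum \Pi_n^\uparrow$ converges b.a.u., and under the regularly-varying tail assumption $\mu_\A((t,+\infty)) \sim c\,t^{-\alpha}$ it converges a.u. To handle general bounded $\B$ (rather than $\B = \A$ itself) one notes that $\|p\,\A_1^{1/2}\cdots\B_n\cdots\A_1^{1/2}\,p\| \le \|\B\|\cdot\|p\,\Pi_{n-1}^\uparrow\,p\|$ for any projection $p$, so the projection $p$ produced in the proof of Theorem \ref{thm:1} for the sequence $\Pi_n^\uparrow$ works verbatim, with the bound $\varepsilon'$ rescaled by $\|\B\|$.

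The main subtlety — and the place where care is needed — is in passing from the statement "$\Pi_n^\uparrow$ converges b.a.u./a.u." to the statement about the series when $\B \neq \A$, since $\B_n$ and $\Pi_{n-1}^\uparrow$ are free of each other and one must check that the compression by a single projection $p$ simultaneously tames all the terms; the resolution is that the operator-norm inequality $\|p\,x\,y\,x\,p\| \le \|y\|\,\|p\,x\,x\,p\|$ (valid for $y$ selfadjoint, $x$ positive, $p$ a projection, after writing $x\,p = (p\,x)^*$ and noting $\|p\,x\,y\,x\,p\| \le \|y\|\,\|p\,x\|^2 = \|y\|\,\|p\,x^2\,p\|$) decouples $\B$ from the random-walk part, so no new estimate on the joint distribution of $\B$ and the $\A_i$ is required. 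I would also remark that in case (v), even though a.u. convergence of $\sum \Pi_n^\uparrow$ is what Theorem \ref{thm:1} literally gives for the symmetrized product $\Pi_n$, the ordered product $\Pi_n^\uparrow$ has the same distribution, and the projection argument there is distributional, so it applies equally to $\Pi_n^\uparrow$. With these reductions in place, each item is a one- or two-line consequence of an already-proved result, and the proof is essentially an assembly.
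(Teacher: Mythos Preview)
Your approach matches the paper's proof essentially line for line: the paper cites \cite{Ber05} for (i), invokes Remark \ref{rem:m<1} for (ii), calls (iii) trivial, and reduces (iv)--(v) to Theorem \ref{thm:1} via the operator bound $\B\le\|\B\|\cdot 1_{\cA}$, which is equivalent to your norm inequality.

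One small comment and one genuine gap. The small comment: in your inequality $\|p\,x\,y\,x\,p\|\le\|y\|\,\|p\,x^2\,p\|$ you call $x$ positive, but the product $C_n=\A_1^{1/2}\cdots\A_{n-1}^{1/2}$ is not self-adjoint. What you actually need (and what your derivation gives) is $\|p\,C_n\,y\,C_n^*\,p\|\le\|y\|\,\|p\,C_nC_n^*\,p\|=\|y\|\,\|p\,\Pi_{n-1}^\uparrow\,p\|$, which is fine.

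The gap concerns case (v). Your two-sided estimate $\|p\,T_n\,p\|\le\|\B\|\,\|p\,\Pi_{n-1}^\uparrow\,p\|$ yields b.a.u.\ convergence, but a.u.\ convergence requires the one-sided bound $\|T_n\,p\|$. In the proof of Theorem \ref{thm:1}, the one-sided bound $\|\Pi_k\,p\|\le\|\Pi_k\,p_k\|\le k^{-\beta}$ works only because the spectral projection $p_k=e_{[0,k^{-\beta}]}(\Pi_k)$ commutes with $\Pi_k$; it does not commute with $T_k=C_k\B_kC_k^*$, and the naive bound $\|T_k\,p_k\|\le\|C_k\|\,\|\B\|\,\|C_k^*p_k\|$ involves the possibly unbounded $\|C_k\|$. (The paper's one-line reduction has the same issue.) The fix is to rerun the argument of Theorem \ref{thm:1} with projections adapted to $T_k$ itself: split $\B=\B_+-\B_-$ to reduce to $\B\ge0$, so that $0\le T_k\le\|\B\|\,\Pi_{k-1}^\uparrow$; then by L\"owner--Heinz (operator monotonicity of $t\mapsto t^\gamma$ for $\gamma\in(0,1)$) one has $\tau(T_k^\gamma)\le\|\B\|^\gamma\,\tau(\Pi_{k-1}^\gamma)$. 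Taking $\tilde p_k=e_{[0,k^{-\beta}]}(T_k)$ and $p=\bigwedge_{k\ge N}\tilde p_k$, the bound $\|T_k\tilde p_k\|\le k^{-\beta}$ holds because $\tilde p_k$ commutes with $T_k$, and the Markov estimate $\tau(1-\tilde p_k)\le k^{\beta\gamma}\tau(T_k^\gamma)$ combined with Theorem \ref{lem:1}(iii) closes exactly as before.
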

\begin{proof}
    Case (i) follows from \cite{Ber05}.
    Case (ii) was proved in Remark \ref{rem:m<1}.
    Case (iii) is trivial.
    Cases (iv) and (v) follow from Theorem \ref{thm:1}. Indeed, if $\B\in\mathcal{A}$, then $\|\B\|<+\infty$ and we can bound $\A^{1/2}\X\A^{1/2}+\B\leq \A^{1/2}\X\A^{1/2}+\|\B\|\cdot 1_{\cA}$.
\end{proof}

\begin{remark}
    If $\B=c\,\A$ and $\X$ solves \eqref{eq:affine}, then $\Y=c^{-1}\X+1$ solves the modified equation
    \[
    \Y\stackrel{d}{=}\A^{1/2}\Y\A^{1/2}+1.
    \]
    Thus, it follows that a unique solution to \eqref{eq:affine} also exists in the case where $(\A,\B)\in\tcA^2$, $0\leq \B\leq c\,\A$, $\tau(\A)\leq 1$.
\end{remark}

The following result was obtained in the proof of Theorem \ref{thm:exConv}.
\begin{lemma}\label{lem:Sperp}
    If $\B=\A$ and the $S$-transform of $\mu\in\mathcal{M}_+$ satisfies the following functional equation
    \[
    (1+S_\mu(z))S_{\mu}\left(z (1+S_\mu(z))\right) = S_\A\left(z(1+S_\mu(z))\right) S_\mu(z),\quad z\in(-\eps,0)
    \]
    for some $\eps>0$,
    then the solution $\X$ to \eqref{eq:affine} has the distribution $\mu$.
\end{lemma}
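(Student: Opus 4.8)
The plan is to follow, almost verbatim, the chain of equivalences established in the proof of Theorem~\ref{thm:exConv}, noting that it never used the particular form of $\A$ there. Write $\mu^{(1)}$ for the pushforward of $\mu$ under $x\mapsto x+1$. First I would record that, since $\B=\A\ge 0$, one has $\A^{1/2}\X\A^{1/2}+\A=\A^{1/2}(1+\X)\A^{1/2}$; and if $\X\sim\mu$ is $*$-free from $\A$, then $1+\X\ge 1$ is free from $\A\ge 0$, so $\A^{1/2}(1+\X)\A^{1/2}$ has distribution $\mu_\A\boxtimes\mu^{(1)}$ and hence $S$-transform $S_\A\cdot S_{\mu^{(1)}}$. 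Consequently $\X\sim\mu$ solves \eqref{eq:affine} with $\B=\A$ \emph{if and only if} $S_\mu=S_\A\cdot S_{\mu^{(1)}}$ on a left neighbourhood of $0$, and it remains to match this multiplicative relation with the functional equation in the statement.

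The link is the elementary translation identity already derived in the proof of Theorem~\ref{thm:exConv}: for any $\mu\in\mathcal{M}_+$,
\[
S_{\mu^{(1)}}\bigl(z(1+S_\mu(z))\bigr)=\frac{S_\mu(z)}{1+S_\mu(z)}
\]
for $z$ in a left neighbourhood of $0$ (this follows from $\psi_{\mu^{(1)}}(z/(1+z))=z+(1+z)\psi_\mu(z)$ by passing to the functional inverses $\chi$ and then rewriting in terms of $S$). Evaluating $S_\mu=S_\A\cdot S_{\mu^{(1)}}$ at the point $w=z(1+S_\mu(z))$, substituting the identity for $S_{\mu^{(1)}}(w)$, and multiplying through by $1+S_\mu(z)$ (which is nonzero, since $S_\mu>0$ on $(\delta-1,0)$), produces precisely
\[
(1+S_\mu(z))\,S_\mu\bigl(z(1+S_\mu(z))\bigr)=S_\A\bigl(z(1+S_\mu(z))\bigr)\,S_\mu(z);
\]
conversely, dividing the functional equation by $1+S_\mu(z)$ and applying the translation identity in reverse yields $S_\mu(w)=S_\A(w)\,S_{\mu^{(1)}}(w)$ for every $w=z(1+S_\mu(z))$ with $z\in(-\eps,0)$.

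The only delicate point --- and the main, though minor, obstacle --- is to upgrade this last conclusion to a genuine identity of \emph{functions} $S_\mu=S_\A\cdot S_{\mu^{(1)}}$, since so far it is known only on the set $\{\,z(1+S_\mu(z)):z\in(-\eps,0)\,\}$. However, $z\mapsto z(1+S_\mu(z))$ is continuous and negative-valued on $(-\eps,0)$, and since $z(1+S_\mu(z))=z+(z+1)\chi_\mu(z)$ with $\chi_\mu(z)\to0$, it tends to $0$ as $z\to0^-$; its image is therefore an interval accumulating at $0^-$ and hence contains some interval $(-\delta,0)$. Since $S$-transforms are real-analytic on their domains and an $S$-transform determines the underlying measure, the identity extends to the whole common domain, giving $\mu=\mu_\A\boxtimes\mu^{(1)}$; unwinding the first paragraph, a variable $\X\sim\mu$ that is $*$-free from $(\A,\B)=(\A,\A)$ then satisfies $\X\stackrel{d}{=}\A^{1/2}\X\A^{1/2}+\A$. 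Finally, in the regimes where ``the solution'' is unambiguous (for instance $\tau(\A)\le1$ together with convergence in distribution of the series \eqref{eq:perp}), Theorem~\ref{thm:uniqe_free} furnishes uniqueness, so that solution has distribution $\mu$, as claimed.
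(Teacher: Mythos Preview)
Your argument is correct and follows essentially the same route as the paper, which simply points back to the proof of Corollary~\ref{thm:exConv}: reduce $\X\stackrel{d}{=}\A^{1/2}\X\A^{1/2}+\A$ to $S_\mu=S_\A S_{\mu^{(1)}}$, and connect this to the functional equation via the translation identity $S_{\mu^{(1)}}(z(1+S_\mu(z)))=S_\mu(z)/(1+S_\mu(z))$. You make explicit the reverse implication and the domain-extension step (the image of $z\mapsto z(1+S_\mu(z))$ contains a left neighbourhood of $0$, then use analyticity), which the paper leaves implicit.
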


\subsection{Moments}\label{sec:moments}

As we noted earlier, when $\tau(\A) < 1$ and $\tau(\B) < +\infty$, the solution to \eqref{eq:affine} is given by a series that converges in $L^1(\tilde{\cA},\tau)$. The following lemma shows that, in this regime, the existence of higher moments of $\X$ is determined by the corresponding moments of $\A$ and $\B$. In contrast, once $\tau(\A) = 1$ and $\B > 0$, the solution $\X$ fails to possess moments—even if all moments of $\A$ and $\B$ exist.

\begin{lemma} Assume $\A,\B\geq 0$ and $\B\neq 0$. The following holds 
\begin{enumerate}
    \item[(i)]  If $\tau(\A)<1$, $\tau(\A^p)<+\infty$ and $\tau(\B^p)<+\infty$, then $\tau(\X^p)<+\infty$.
    \item[(ii)] If $\tau(\A)=1$, then $\tau(\X)=+\infty$.
\end{enumerate}  
\end{lemma}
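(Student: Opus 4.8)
For part (i), the idea is to exploit the series representation $\X\stackrel{d}{=}\mathbb S=\sum_{n\ge1}\A_1^{1/2}\cdots\A_{n-1}^{1/2}\B_n\A_{n-1}^{1/2}\cdots\A_1^{1/2}$, which converges in $L^1$ under the stated hypotheses. The key point is that the $n$-th summand $T_n:=\A_1^{1/2}\cdots\A_{n-1}^{1/2}\B_n\A_{n-1}^{1/2}\cdots\A_1^{1/2}$ is a product of $*$-free pieces, so its $p$-th moment factors nicely. First I would show, by traciality and the $*$-freeness of $(\A_n,\B_n)_n$, that $\tau(T_n^p)=\tau\big(\Pi_{n-1}^{\uparrow}\big)$-type expressions combine multiplicatively; more precisely, using that $\B_n$ is free from $\{\A_1,\dots,\A_{n-1}\}$ and cyclically rearranging, one gets a bound of the form $\|T_n\|_p=\tau(T_n^p)^{1/p}\le C\,\tau(\B^p)^{1/p}\,\tau(\A^p)^{(n-1)/p}\cdot(\text{polynomial in }n)$, or even more simply $\|T_n\|_p \le \|\B\|_p\cdot\big(\text{something controlled by }m_p(\mu_\A)\big)^{n-1}$. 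Since $\tau(\A)<1$, the relevant geometric-type factor can be taken strictly less than $1$ after absorbing the polynomial growth (using $\tau(\A^p)^{1/p}\to\tau(\A)<1$ only if $p=1$; for general $p$ one must instead bound $\|T_n\|_p$ by iterating the recursion — see below). Then $\sum_n\|T_n\|_p<+\infty$, and since $L^p(\tcA,\tau)$ is complete (a Banach space for $p\ge1$), the series converges in $L^p$, whence $\tau(\X^p)<+\infty$.

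The cleaner route for (i), avoiding delicate moment asymptotics of $\mu_\A^{\boxtimes n}$, is to iterate the fixed-point equation directly at the level of $L^p$-norms. Write $\X\stackrel{d}{=}\A^{1/2}\X\A^{1/2}+\B$ with $\X$ free from $(\A,\B)$. Using the triangle inequality in $\|\cdot\|_p$ (valid since $p\ge1$) and freeness, $\|\X\|_p\le\|\A^{1/2}\X\A^{1/2}\|_p+\|\B\|_p$. The first term, by traciality and Hölder-type estimates for free variables, satisfies $\|\A^{1/2}\X\A^{1/2}\|_p=\tau\big((\A^{1/2}\X\A^{1/2})^p\big)^{1/p}$, and by repeatedly applying traciality and the freeness of $\X$ from $\A$ one extracts a factor that is dominated by $\tau(\A)\cdot\|\X\|_p$ plus lower-order corrections involving $\tau(\A^2),\dots,\tau(\A^p)$ and lower moments of $\X$. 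Proceeding by induction on $p$ (the case $p=1$ being immediate since $\tau(\X)=\tau(\A)\tau(\X)+\tau(\B)$ forces $\tau(\X)=\tau(\B)/(1-\tau(\A))<+\infty$), one obtains a linear inequality $\|\X\|_p\le \theta\,\|\X\|_p + C_p$ with $\theta=\theta(p,\mu_\A)<1$ coming from the strict inequality $\tau(\A)<1$, which bounds $\|\X\|_p$. The main obstacle here is making the expansion of $\tau\big((\A^{1/2}\X\A^{1/2})^p\big)$ precise: one must show the "leading" coefficient is exactly a power of $\tau(\A)$ (or at worst something $<1$) — this is where the combinatorial formula \eqref{eqn:cumprod} and the freeness of $\X$ from $(\A,\B)$ enter, expanding the moment over non-crossing partitions and isolating the term $\pi=1_p$.

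For part (ii), assume for contradiction that $\tau(\X)<+\infty$. Since $\X\stackrel{d}{=}\A^{1/2}\X\A^{1/2}+\B$ and all three of $\A,\B,\X$ are nonnegative with $\X$ free from $(\A,\B)$, traciality gives $\tau(\X)=\tau(\A^{1/2}\X\A^{1/2})+\tau(\B)=\tau(\A)\tau(\X)+\tau(\B)=\tau(\X)+\tau(\B)$, using $\tau(\A)=1$ and the freeness factorization $\tau(\A^{1/2}\X\A^{1/2})=\tau(\A\X)=\tau(\A)\tau(\X)$. Hence $\tau(\B)=0$, contradicting $\B\ge0$, $\B\neq0$ (so $\tau(\B)>0$ by faithfulness of $\tau$). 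The only subtlety is justifying the manipulation $\tau(\A^{1/2}\X\A^{1/2})=\tau(\A\X)=\tau(\A)\tau(\X)$ when $\X$ may be unbounded: here one uses that $\A\ge0$, $\X\ge0$, so $\A^{1/2}\X\A^{1/2}\ge0$ and the trace of this positive (possibly unbounded) operator is well-defined in $[0,+\infty]$, and the freeness identity $\tau(\A\X)=\tau(\A)\tau(\X)$ extends to affiliated operators by truncation ($\X_k=\X e_{[0,k]}(\X)\uparrow\X$, monotone convergence). The conclusion $\tau(\X)=+\infty$ follows.
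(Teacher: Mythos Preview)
Your argument for part (ii) is correct and essentially identical to the paper's: apply $\tau$ to both sides of the fixed-point equation, use freeness to factor $\tau(\A^{1/2}\X\A^{1/2})=\tau(\A)\tau(\X)$, and obtain the contradiction $\tau(\B)=0$. The truncation argument you sketch to justify the unbounded case is fine.

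For part (i), your overall strategy---series representation, Minkowski, bound $\|T_n\|_p$, sum---matches the paper's, but the decisive estimate is misidentified. You write $\|T_n\|_p\lesssim \tau(\B^p)^{1/p}\tau(\A^p)^{(n-1)/p}\cdot(\text{poly in }n)$, and you correctly notice this would only work for $p=1$ since $\tau(\A^p)^{1/p}\ge\tau(\A)$ may exceed $1$. The paper's point is that the geometric rate is $\tau(\A)^n$, \emph{not} $\tau(\A^p)^{n/p}$: by traciality, $\tau(T_n^p)=\tau\big((\B_n\Pi_{n-1})^p\big)=\sum_{\pi\in\NC(p)}\kappa_\pi(\B)\,\tau_{\Kr(\pi)}(\Pi_{n-1})$, and Theorem~\ref{thm:28} gives $\tau(\Pi_{n-1}^k)\le C_k\,n^{k-1}\tau(\A)^{nk}$ for each $k\le p$. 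Hence $\|T_n\|_p^p\le D_p\,n^{p-1}\tau(\A)^{np}$, so $\sum_n\|T_n\|_p\le D_p^{1/p}\sum_n n^{1-1/p}\tau(\A)^n<+\infty$. The input you are missing is precisely the moment asymptotics $m_p(\mu^{\boxtimes n})\asymp n^{p-1}m_1(\mu)^{np}$, which is the content of Theorem~\ref{thm:28} and requires the combinatorial work of Section~\ref{sec:3}.

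Your ``cleaner route'' via direct iteration of the fixed-point equation has a genuine gap: from $\|\X\|_p\le\theta\|\X\|_p+C_p$ with $\theta<1$ you cannot deduce $\|\X\|_p<+\infty$ unless you already know $\|\X\|_p<+\infty$---otherwise the inequality reads $+\infty\le+\infty$. To salvage this you would need to run the recursion on the partial sums $\X_N=\sum_{n=1}^N T_n$ (which are in $L^p$), derive a uniform-in-$N$ bound, and pass to the limit; but you do not mention this, and making the expansion of $\tau\big((\A^{1/2}\X_{N-1}\A^{1/2}+\B)^p\big)$ precise with the correct leading coefficient $\tau(\A)^p$ still requires essentially the same non-crossing-partition bookkeeping as the first approach.
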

\begin{proof}
As we observed earlier, the solution to  \eqref{eq:affine} has a series representation
\begin{align*}
  \X\stackrel{d}{=}\sum_{n=1}^{+\infty} \A_1^{1/2}\cdots \A_{n-1}^{1/2}\B_n\A_{n-1}^{1/2}\cdots\A_1^{1/2}.
    \end{align*}
For any $p\geq 1$, by Minkowski's inequality, we have
\begin{align*}
\|\X\|_p\leq  \sum_{n=1}^{+\infty} \|\A_1^{1/2}\cdots \A_{n-1}^{1/2}\B_n\A_{n-1}^{1/2}\cdots\A_1^{1/2}\|_p.
\end{align*}
Let $p\in \mathbb{N}$, by \eqref{eqn:cumprod},
\begin{align}\label{eq:pp}\begin{split}
    \|\A_1^{1/2}\cdots \A_{n-1}^{1/2}\B_n\A_{n-1}^{1/2}\cdots\A_1^{1/2}\|_p^p & =\tau\left(\left(\A_1^{1/2}\cdots \A_{n-1}^{1/2}\B_n\A_{n-1}^{1/2}\cdots\A_1^{1/2}\right)^p\right)\\
    &=\tau\left(\left(\B_n \Pi_{n-1}\right)^p\right)=\sum_{\pi\in \NC(p)} \kappa_\pi(\B_n)\tau_{\Kr(\pi)}\left(\Pi_{n-1}\right),
    \end{split}
\end{align}
where we used the fact that $\A_{n-1}^{1/2}\ldots \A_2^{1/2}\A_1 \A_2^{1/2}\ldots \A_{n-1}^{1/2}$ has the same distribution as $\Pi_{n-1}$ and that it is freely independent from $\B_n$.
By Theorem \ref{thm:28}, for each $k>0$, there exists $C_k$ such that for all $n\in\mathbb{N}$,
\[
\tau\left( \Pi_{n-1}^k \right)\leq C_k\,  n^{k-1}\tau(\A)^{nk}.
\]
Thus, in the sum on the right-hand side of \eqref{eq:pp}, the dominating term %as $n\to +\infty$
corresponds to $\pi=1_p$. Hence, there exists a constant $D_p$ such that for all $n$,
\begin{align*}
    \|\A_1^{1/2}\cdots \A_{n-1}^{1/2}\B_n\A_{n-1}^{1/2}\cdots\A_1^{1/2}\|_p^p\leq D_p\, n^{p-1} \tau(\A)^{np},
\end{align*}
which implies
\begin{align*}
    \|\X\|_p\leq  D_p^{1/p} \sum_{n=1}^{\infty}  n^{1-\frac{1}{p}} \tau(\A)^n.
\end{align*}
The series above is convergent if and only if $\tau(\A)<1$.

The proof of part (ii) is immediate: suppose that $\tau(\A)=1$. Then, applying $\tau$ to both sides of \eqref{eq:affine}, we obtain
$\tau(\X)=\tau(\X)+\tau(\B)$.
Since $\tau(\B)>0$, we conclude that $\tau(\X)$ cannot be finite.
\end{proof}

\begin{remark}
    By the main results of this paper (see Section \ref{sec:tails}), it follows that $\tau(\X^\gamma)<+\infty$ for $\gamma\in(0,1/2)$ provided $\tau(\B)<+\infty$.
\end{remark}

\section{\texorpdfstring{Subordination for $\A\X\A+\B$}{Subordination for AXA+B}}\label{sec:subordination}

%\begin{remark}
 %   For a complex function $f$ defined in the open unit disk $\mathbb{D}$ and $w\in\partial \mathbb{D}$ we denote by $\sphericalangle\lim_{|z| \to w}f(z)$ the limit of $f$ at $w$ along any points inside any angle inside $\mathbb{D}$ with the vertex at $w$. Such limits are called nontangential limits.

%    Observe that the mapping $z\mapsto \frac{z-i}{z+i}$ maps the upper half plane to the unit disk. For a function defined on the upper half plane, we will consider non-tangential limit at infinity if it corresponds to nontangential limit at $1$ on the disk.
%\end{remark}
In this section, we present the subordination results relevant to our analysis. Our general setting is that of a $W^*$-noncommutative probability space $(\mathcal A,\tau)$. When all operators $\mathbb{X}$, $\mathbb{A}$, and $\mathbb{B}$ belong to $\mathcal A$, the subordination formulas hold under the weaker assumption that $\tau$ is a faithful state continuous in the wo (weak operator) topology, rather than a trace. However, in the case of unbounded operators, the non-tracial setting introduces significant difficulties because the operators affiliated with an arbitrary von Neumann algebra may not form an algebra. Therefore, we assume that if the variables $\mathbb{X}$, $\mathbb{A}$, and $\mathbb{B}$ are unbounded, then $\tau$ is a trace.

The results here extend the result of \cite{LS19} in several direction. In \cite{LS19}, the distribution of $f(\A)\X f(\A) + \A$ was studied under the assumption that all operators were bounded. In this section, we do not require that
$\mathbb{X}\stackrel{d}{=}\mathbb{AXA}+\mathbb{B}$ so $\X$ in general is not a perpetuity.
For notational simplicity, we write $\mathbb{AXA}+\mathbb{B}$ rather than $\A^{1/2}\X\A^{1/2}+\mathbb{B}$ as in other sections.

%%%%%%%%%%%%%%%%%%%%%%%%%%%%%%%%%%%%%%%%%%%%%%
%%%%%%%%%%%%%%%%%%%%%%%%%%%%%%%%%%%%%%%%%%%%%%
%%%%%%%%%%%%%%%%%%%%%%%%%%%%%%%%%%%%%%%%%%%%%%
%%%%%%%%%%%%%%%%%%%%%%%%%%%%%%%%%%%%%%%%%%%%%%
%%%%%%%%%%%%%%%%%%%%%%%%%%%%%%%%%%%%%%%%%%%%%%

Let $\mathbb{X,A,B}$ be self-adjoint operators, affiliated with $\mathcal A$, such that $\mathbb X$ is
*-free from $(\mathbb{A,B})$. Since the cases $\mathbb X\in\mathbb R\cdot
1_\mathcal A,\mathbb A\in\mathbb R\cdot1_\mathcal A$ are trivial, we exclude them from the beginning (from now on, we suppress the subscript and denote the unit of our von
Neumann algebra by $1\in\mathcal A$). We prove our subordination results in several steps, in contexts with varying degrees of generality.

\begin{lemma}\label{inv}
Assume that $\mathbb{A,B,X}$ are bounded in $(\mathcal A,\tau)$, where $\tau$ is a normal faithful tracial state, and that $\mathbb X$ is *-free from $(\mathbb{A,B})$
with respect to $\tau$ (here we do not need to assume that these variables are selfadjoint). Assume also that $\mathbb A$ is invertible in $\mathcal A$. Then there exists an
analytic function $f$ defined on a neighborhood of infinity such that
$$\tau\left(\left(z-\mathbb B-\mathbb A\mathbb X\mathbb A\right)^{-1}\right)=\tau\left(\left(\mathbb Af(z)\mathbb A+z-\mathbb B\right)^{-1}\right).$$
If in addition $\mathbb{A,B,X}$ are selfadjoint, then $f$ extends to an analytic self-map of $\mathbb C^+$. If $E_{(\mathbb{A,B})}$ is the $\tau$-preserving conditional
expectation onto the von Neumann algebra generated by $(\mathbb{A,B})$, then
$$
E_{(\mathbb{A,B})}\left[\left(z-\mathbb B-\mathbb A\mathbb X\mathbb A\right)^{-1}\right]=
\left(\mathbb Af(z)\mathbb A+z-\mathbb B\right)^{-1}.
$$
\end{lemma}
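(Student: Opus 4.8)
The plan is to use Voiculescu's operator-valued subordination for the conditional expectation onto $W^*(\mathbb{A},\mathbb{B})$, after reducing the polynomial $\mathbb{A}\mathbb{X}\mathbb{A}+\mathbb{B}$ to a form where freeness can be applied directly. First I would observe that, since $\mathbb{A}$ is invertible in $\mathcal{A}$, conjugation by $\mathbb{A}^{-1}$ gives
$$
\left(z-\mathbb{B}-\mathbb{A}\mathbb{X}\mathbb{A}\right)^{-1}=\mathbb{A}^{-1}\left(\mathbb{A}^{-1}(z-\mathbb{B})\mathbb{A}^{-1}-\mathbb{X}\right)^{-1}\mathbb{A}^{-1},
$$
so that the resolvent of the full polynomial is expressed through the resolvent of $\mathbb{X}$ with an $\mathbb{A}$-dependent (operator-valued) argument $b(z):=\mathbb{A}^{-1}(z-\mathbb{B})\mathbb{A}^{-1}$, which lies in $B:=W^*(\mathbb{A},\mathbb{B})$. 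This is exactly the set-up of $B$-valued free probability: $\mathbb{X}$ is free from $B$ (amalgamated over $\mathbb{C}$, hence in particular the scalar $E_B$-valued freeness holds), and we are looking at $E_B\left[(b(z)-\mathbb{X})^{-1}\right]$.

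Next I would invoke the operator-valued subordination theorem (as in \cite{BMS17,Voi00}): there is an analytic $B$-valued map $\omega$, defined for $b$ in a suitable domain (here for $\Im z$ large, or $z\in\mathbb{C}^+$ in the selfadjoint case, $b(z)$ has positive/negative imaginary part after the conjugation is accounted for), such that
$$
E_B\left[(b(z)-\mathbb{X})^{-1}\right]=\left(\omega(z)-\mathbb{X}\right)^{-1}\big|_{\text{scalarized}}\ \text{-- more precisely}\ =E_B\!\left[(\omega(z)-E_{\mathbb{C}}[\mathbb{X}]\cdots)\right],
$$
and crucially the subordinated resolvent is \emph{$B$-valued scalar-type}, i.e. $E_B[(b(z)-\mathbb{X})^{-1}]=h(b(z)-\omega(z))$ where $h$ is the $B$-valued Cauchy transform of $\mathbb{X}$; since $\mathbb{X}$ is free from $B$ over $\mathbb{C}$, this Cauchy transform takes the form $h(\beta)=E_B[(\beta-\mathbb{X})^{-1}]$ and the subordination says $\omega(z)-b(z)$ is a \emph{scalar} multiple of $1$ plus a correction — the key structural point I would extract is that $E_B\left[(z-\mathbb{B}-\mathbb{A}\mathbb{X}\mathbb{A})^{-1}\right]$ equals $(\mathbb{A}g(z)\mathbb{A}+z-\mathbb{B})^{-1}$ for a scalar analytic function $g$. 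Undoing the conjugation by $\mathbb{A}$: $E_B[(z-\mathbb{B}-\mathbb{A}\mathbb{X}\mathbb{A})^{-1}]=\mathbb{A}^{-1}E_B[(b(z)-\mathbb{X})^{-1}]\mathbb{A}^{-1}$, and because $\mathbb{X}$ is free from $B$ over $\mathbb{C}$, the inner conditional expectation is a function of $b(z)$ alone of the form $(b(z)-f(z)1)^{-1}$ with $f$ scalar analytic (this is precisely the scalar-subordination special case of operator-valued subordination when the algebra one amalgamates over is $\mathbb{C}$ but one conditions onto the larger algebra $B$ — the subordination function is then forced to be scalar); conjugating back yields $(\mathbb{A}f(z)\mathbb{A}+z-\mathbb{B})^{-1}$. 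Applying $\tau$ gives the stated scalar identity, and the displayed conditional-expectation identity is what we proved before applying $\tau$. For the selfadjoint case, standard arguments (e.g. \cite[Section 2]{BMS17}) show $\omega$, and hence $f$, maps $\mathbb{C}^+$ into $\mathbb{C}^+$: one checks $\Im f(z)>0$ from positivity of imaginary parts of operator-valued Cauchy transforms, and analyticity on all of $\mathbb{C}^+$ follows by analytic continuation from the neighborhood of infinity together with the fixed-point characterization of $\omega$.

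The main obstacle, and the step I would be most careful about, is justifying that the operator-valued subordination function is genuinely \emph{scalar} (a multiple of $1_{\mathcal{A}}$) rather than a general element of $B$ — this is what makes the clean formula $(\mathbb{A}f(z)\mathbb{A}+z-\mathbb{B})^{-1}$ work. The point is that $\mathbb{X}$ is free from $B=W^*(\mathbb{A},\mathbb{B})$ with amalgamation over $\mathbb{C}1$, not over $B$; in Voiculescu's framework the subordination for $E_B$ of a resolvent of a variable that is $\mathbb{C}$-free from $B$ produces a subordination function valued in the scalars, because the $B$-valued distribution of $\mathbb{X}$ is that of a scalar-valued variable "inflated" to $B$. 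I would make this precise either by citing the relevant statement from \cite{BMS17} (their linearization-plus-subordination package handles exactly $E_B$ of resolvents of noncommutative polynomials, and for the monomial $\mathbb{A}\mathbb{X}\mathbb{A}$ with $\mathbb{X}$ scalar-free from the coefficients the output is scalar-subordinated), or by a direct fixed-point argument: define $f(z)$ via $\tau((z-\mathbb{B}-\mathbb{A}\mathbb{X}\mathbb{A})^{-1})$ and show the operator $E_B[(z-\mathbb{B}-\mathbb{A}\mathbb{X}\mathbb{A})^{-1}]$ solves, by freeness of $\mathbb{X}$ and $B$, the same equation characterizing $(\mathbb{A}f(z)\mathbb{A}+z-\mathbb{B})^{-1}$, then invoke uniqueness. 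The remaining details — domain of analyticity near infinity via Neumann series, extension to $\mathbb{C}^+$, and the self-map property — are routine given the machinery.
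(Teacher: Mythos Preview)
Your overall strategy is correct, but it differs from the paper's route in a meaningful way, and your middle paragraph is somewhat garbled.

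The paper does \emph{not} argue directly that $E_B[(b(z)-\mathbb X)^{-1}]^{-1}-b(z)$ is scalar. Instead it uses a $2\times2$ linearization: setting
\[
\mathbf Y=\begin{bmatrix}1&\mathbb A^{-2}\\0&\mathbb A^{-1}(z-\mathbb B)\mathbb A^{-1}\end{bmatrix},\qquad
\mathbf X=\begin{bmatrix}0&0\\0&-\mathbb X\end{bmatrix},
\]
these are $*$-free over $M_2(\mathbb C)$, so Voiculescu's operator-valued subordination \cite{Vo00} gives that
$E_{\mathbf Y}\big[(\mathbf Y+\mathbf X)^{-1}\big]^{-1}-\mathbf Y\in M_2(\mathbb C)$ automatically. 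Reading off the $(2,2)$ entry yields exactly $E_{(\mathbb A,\mathbb B)}\big[(\mathbb A^{-1}(z-\mathbb B)\mathbb A^{-1}-\mathbb X)^{-1}\big]^{-1}-\mathbb A^{-1}(z-\mathbb B)\mathbb A^{-1}=f(z)\in\mathbb C$, and conjugating by $\mathbb A^{-1}$ finishes. The scalarity of $f$ thus falls out of the $M_2(\mathbb C)$-valuedness of the matrix subordination function rather than being argued for separately.

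Your route---conjugate by $\mathbb A^{-1}$, then invoke directly that for $\mathbb X$ free from $B$ over $\mathbb C$ one has $E_B[(b-\mathbb X)^{-1}]=(b+f\cdot 1)^{-1}$ with $f$ scalar---is legitimate and is in fact what Voiculescu's \cite{FreeMarkov} (cited by the paper just after Proposition~\ref{sans}) gives. So your approach trades the explicit linearization for a sharper citation. What the paper's version buys is that it is self-contained once one accepts the general $M_2(\mathbb C)$-valued subordination; what yours buys is brevity, provided you can point to the precise statement. Your own flagged ``main obstacle'' is exactly the content of that citation, so you should not present it as something to be re-proved by a fixed-point argument---just cite it.

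One correction: your displayed attempts such as $E_B[(b(z)-\mathbb X)^{-1}]=(\omega(z)-\mathbb X)^{-1}|_{\text{scalarized}}$ and $E_B[(b(z)-\mathbb X)^{-1}]=h(b(z)-\omega(z))$ are not right as written; the correct identity is simply $E_B[(b(z)-\mathbb X)^{-1}]=(b(z)+f(z)\cdot 1)^{-1}$ for a scalar analytic $f$, with no $\mathbb X$ surviving on the right-hand side. Clean this up and your argument goes through.
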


\begin{proof}
As $\mathbb X$ is *-free from $(\mathbb{A,B})$, $\begin{bmatrix} 1 & \mathbb A^{-2} \\ 0 & \mathbb A^{-1}(z-\mathbb B)\mathbb A^{-1} \end{bmatrix}$ and
$\begin{bmatrix} 0&0\\0& \mathbb X \end{bmatrix}$ are *-free over $M_2(\mathbb C)$ with respect to $\tau\otimes{\rm id}_{M_2(\mathbb C)}$. One has
\begin{eqnarray*}
\lefteqn{\left(\begin{bmatrix} 1 &\mathbb A^{-2}\\0&\mathbb A^{-1}(z-\mathbb B)\mathbb A^{-1} \end{bmatrix}+\begin{bmatrix} 0&0\\0& -\mathbb X \end{bmatrix}\right)^{-1}}\\
& = &\begin{bmatrix} 1 & -\mathbb A^{-2}\left(\mathbb A^{-1}(z-\mathbb B)\mathbb A^{-1}-\mathbb X\right)^{-1} \\
0 & \left(\mathbb A^{-1}(z-\mathbb B)\mathbb A^{-1}-\mathbb X\right)^{-1} \end{bmatrix}.
\end{eqnarray*}
It is easy to observe that, since all operators involved are bounded, all inverses above make sense if $|z|$ is sufficiently large. Voiculescu's result \cite{Vo00} (or \cite{BBs})
guarantees that
\begin{eqnarray*}
\lefteqn{\begin{bmatrix} 0 & \!E_{(\mathbb{A,B})}\!\left[\mathbb A^{-2}\left(\mathbb A^{-1}(z-\mathbb B)\mathbb A^{-1}\!-\mathbb X\right)^{\!-1}\right]\!
E_{(\mathbb{A,B})}\!\left[\!\left(\mathbb A^{-1}(z-\mathbb B)\mathbb A^{-1}\!-\mathbb X\right)^{\!-1}\right]^{\!-1}\!-\mathbb A^{-2} \\
0 & E_{(\mathbb{A,B})}\left[\left(\mathbb A^{-1}(z-\mathbb B)\mathbb A^{-1}-\mathbb X\right)^{-1}\right]^{-1}-\mathbb A^{-1}(z-\mathbb B)\mathbb A^{-1} \end{bmatrix}}\\
& = &\begin{bmatrix} 1 & -E_{(\mathbb{A,B})}\left[\mathbb A^{-2}\left(\mathbb A^{-1}(z-\mathbb B)\mathbb A^{-1}-\mathbb X\right)^{-1}\right] \\
0 & E_{(\mathbb{A,B})}\left[\left(\mathbb A^{-1}(z-\mathbb B)\mathbb A^{-1}-\mathbb X\right)^{-1}\right] \end{bmatrix}^{-1}-\begin{bmatrix} 1 &\mathbb A^{-2}\\0&\mathbb A^{-1}(z-\mathbb B)\mathbb A^{-1} \end{bmatrix}\\
& = & \begin{bmatrix} 0& \varpi(z) \\0& f(z) \end{bmatrix}\in M_2(\mathbb C),
\end{eqnarray*}
where $f,\varpi$ are $\mathbb C$-valued analytic functions on a neighborhood of infinity in $\mathbb C$.
In particular, $E_{(\mathbb{A,B})}\left[\left(\mathbb A^{-1}(z-\mathbb B)\mathbb A^{-1}-\mathbb X\right)^{-1}\right]=
\left(f(z)+\mathbb A^{-1}(z-\mathbb B)\mathbb A^{-1}\right)^{-1}$. Multiply left and right with $\mathbb A^{-1}$ to obtain
\begin{equation}
E_{(\mathbb{A,B})}\left[\left(z-\mathbb B-\mathbb A\mathbb X\mathbb A\right)^{-1}\right]=
\left(\mathbb Af(z)\mathbb A+z-\mathbb B\right)^{-1},\quad|z|\text{ large}.
\end{equation}
If, in addition, $\mathbb{A,X,B}$ are selfadjoint, then $\Im\left\{E_{(\mathbb{A,B})}\left[\left(z-\mathbb B-\mathbb A\mathbb X\mathbb A\right)^{-1}\right]^{-1}\right\}
\ge\Im z\cdot1,$ so that $\Im f(z)\ge0$ whenever $z\in\mathbb C^+$. When $f$ is not constant, one obtains immediately that $\Im f(z)>0$ for all $z\in\mathbb C^+$.
Applying $\tau$ yields the claimed relation.

While it is not relevant for our purposes, we nevertheless observe that direct arithmetic manipulations show that, under the hypotheses of our lemma, $\varpi(z)=0$ for all $z$.
\end{proof}

Let us further extend the above lemma to eliminate the requirement of invertibility of $\mathbb A$. There exists a sequence $\{s_n\}_{n\in\mathbb N}$ such that
$s_n\searrow0$ as $n\to +\infty$ and $s_n,\frac{-s_n}{2}$ are not eigenvalues of $\mathbb A$. Define $\mathbb A_n=k_n(\mathbb A)$, where
$$
k_n(t)=\left\{\begin{array}{cc}
t & \text{ if } t\le -s_n\text{ or }t\ge s_n\\
\frac{t-3s_n}{4} & \text{ if }-s_n<t<s_n
\end{array}\right.,
$$
and $k_n(\mathbb A)$ is understood in the sense of Borel functional calculus. Then $\mathbb A_n\to\mathbb A$ in the so (strong operator) topology, but in fact also in norm, as it
follows from the definition of $k_n$ and from the lack of eigenvalues among $s_n$ that $\|\mathbb A_n-\mathbb A\|\le 2s_n\to0$ as $n\to +\infty$. For each $n$, the above lemma
guarantees the existence of an analytic $f_n\colon\mathbb C^+\to\mathbb C^+$ such that
$$
E_{(\mathbb{A}_n,\mathbb B)}\left[\left(z-\mathbb B-\mathbb A_n\mathbb X\mathbb A_n\right)^{-1}\right]^{-1}-z+\mathbb B=
f_n(z)\mathbb A_n^2.
$$
Unless $\mathbb A=0$ (case which is trivial), it follows that there exists an $\epsilon>0$ such that $\tau(\mathbb A_n^2)>\epsilon>0$ for all sufficiently large $n$.
Since $\left\|E_{(\mathbb{A}_n,\mathbb B)}\left[\left(z-\mathbb B-\mathbb A_n\mathbb X\mathbb A_n\right)^{-1}\right]^{-1}\right\|$ stays bounded as $n\to +\infty$,
applying $\tau$ in the above-displayed relation shows that $\{f_n\colon n\in\mathbb N\}$ is a normal family, so there exists some limit point $f\colon\mathbb C^+
\to\mathbb C^+\cup\mathbb R$. Pick such a limit point. Applying $\tau$ to the equality $E_{(\mathbb{A}_n,\mathbb B)}\left[\left(z-\mathbb B-\mathbb A_n\mathbb X\mathbb A_n
\right)^{-1}\right]=\left(\mathbb A_nf_n(z)\mathbb A_n+z-\mathbb B\right)^{-1}$, we obtain $\tau\left(\left(z-\mathbb B-\mathbb A_n\mathbb X\mathbb A_n
\right)^{-1}\right)=\tau\Big((\mathbb A_nf_n(z)\mathbb A_n+z-\mathbb B)^{-1}\Big)$, and by taking limit as $n\to +\infty$, we find
$\tau\left(\left(z-\mathbb B-\mathbb A\mathbb X\mathbb A\right)^{-1}\right)=\tau\Big((\mathbb Af(z)\mathbb A+z-\mathbb B)^{-1}\Big)$. Finally,
since $k_n$ is injective, it follows that $\mathbb A$ and $\mathbb A_n$ generate the same von Neumann algebras (not $C^*$-algebras!), so that
$E_{(\mathbb{A}_n,\mathbb B)}=E_{(\mathbb{A},\mathbb B)}$. This implies $E_{(\mathbb{A},\mathbb B)}\left[\left(z-\mathbb B-\mathbb A_n\mathbb X\mathbb A_n
\right)^{-1}\right]=\left(\mathbb A_nf_n(z)\mathbb A_n+z-\mathbb B\right)^{-1}$. The right-hand side was shown to converge to
$\left(\mathbb Af(z)\mathbb A+z-\mathbb B\right)^{-1}$. Under our hypothesis of bounded variables,
\begin{eqnarray}
\lefteqn{\left(z-\mathbb B-\mathbb A_n\mathbb X\mathbb A_n\right)^{-1}-\left(z-\mathbb B-\mathbb A\mathbb X\mathbb A\right)^{-1}}\nonumber\\
& = & \left(z-\mathbb B-\mathbb A_n\mathbb X\mathbb A_n\right)^{-1}(\mathbb{A}_n\mathbb{XA}_n-\mathbb{AXA})\left(z-\mathbb B-\mathbb A\mathbb X\mathbb A\right)^{-1}
\nonumber\\
& = & \left(z-\mathbb B-\mathbb A_n\mathbb X\mathbb A_n\right)^{-1}\left[\mathbb{A}_n\mathbb{X}(\mathbb{A}_n-\mathbb{A})+(\mathbb A_n-\mathbb A)\mathbb{XA}\right]
\left(z-\mathbb B-\mathbb A\mathbb X\mathbb A\right)^{-1},\label{trii}
\end{eqnarray}
so that
\begin{eqnarray}
\lefteqn{\left\|\left(z-\mathbb B-\mathbb A_n\mathbb X\mathbb A_n\right)^{-1}-\left(z-\mathbb B-\mathbb A\mathbb X\mathbb A\right)^{-1}\right\|}\nonumber\\
& \leq & \frac{1}{|\Im z|^2}\left(\|\mathbb{A}_n\mathbb{X}\|+\|\mathbb{XA}\|\right)\|(\mathbb A_n-\mathbb A)\|<\frac{4}{|\Im z|^2}\|\mathbb X\|\|\mathbb A\|s_n\to0\text{ as }
n\to +\infty.\label{bax}
\end{eqnarray}
Since $E_{\mathbb{A,B}}$ is wo, hence also norm, continuous, we have $E_{(\mathbb{A},\mathbb B)}\left[\left(z-\mathbb B-\mathbb A\mathbb X\mathbb A\right)^{-1}\right]
=\left(\mathbb Af(z)\mathbb A+z-\mathbb B\right)^{-1}$, proving the uniqueness of the limit point $f$ as well.
\begin{remark}\label{rmkoldf}
If $\mathbb{A,B,X}\in(\mathcal A,\tau)$ are bounded selfadjoint random variables in a tracial noncommutative probability space such that $\mathbb X$ is *-free
from $(\mathbb A,\mathbb B)$, then there exists an analytic function $f\colon\mathbb C^+\to\mathbb C^+\cup\mathbb R$ such that
\begin{eqnarray}
\tau\left(\left(z-\mathbb B-\mathbb A\mathbb X\mathbb A\right)^{-1}\right) & = & \tau\Big((\mathbb Af(z)\mathbb A+z-\mathbb B)^{-1}\Big),\\
E_{(\mathbb{A},\mathbb B)}\left[\left(z-\mathbb B-\mathbb A\mathbb X\mathbb A\right)^{-1}\right]
&=&\left(\mathbb Af(z)\mathbb A+z-\mathbb B\right)^{-1},\quad z\in\mathbb C^\pm.
\end{eqnarray}
\end{remark}
Note that estimate \eqref{bax} holds for any selfadjoint, not necessarily bounded, $\mathbb B.$
This suggests that we can extend the above result further. Let $\mathbb B$ be an arbitrary selfadjoint random variable such that $\mathbb X$ is *-free from $(\mathbb{A,B})$
with respect to $\tau$ ($\mathbb{A,X}$ remain bounded for now). For any $n\in\mathbb N$, $n>2$, let $\chi_n\colon\mathbb R\to(-n-\frac\pi2,n+\frac\pi2)$ given by
$\chi_n(t)=t$ if $t\in[-n,n]$, $\chi_n(t)=n+\arctan\left(t-n\right)$ if $t>n$, and $\chi_n(t)=-n+\arctan\left(t+n\right)$ if $t<-n$. This function is injective, continuously
differentiable, and $\lim_{n\to +\infty}\chi_n(t)=t$ for all $t\in\mathbb R$ (uniformly on compact subsets of $\mathbb R$, in fact). Denote $\mathbb B_n=\chi_n(\mathbb B)$,
in the sense of continuous functional calculus. It is obvious that the $C^*$-algebras generated by all $\{\mathbb{A,B}_n\}$ coincide, and coincinde with the $C^*$-algebra
generated by $\mathbb A$ and $\mathbb B$, and that the same statement holds even more for the corresponding von Neumann algebra. Clearly
$$
t-\chi_n(t)=\left\{\begin{array}{ll}
t-n-\arctan(t-n) & \text{ if }t>n\\
0 & \text{ if } -n\le t\le n\\
t+n-\arctan(t+n) & \text{ if }t<-n.
\end{array}\right.
$$
Then $\mathbb B-\mathbb B_n$ is the functional calculus of $t-\chi_n(t)$ applied to $\mathbb B$. For
completing the argument, let us formulate an auxiliary result in a lemma:

\begin{lemma}\label{rezlem}
Let $(\mathcal A,\tau)$ be a tracial $W^*$-probability space, acting on $L^2(\mathcal A,\tau)$,
$\mathbb{B,B}_n$ and $\chi_n$ be as above and $b\in\mathcal A,\Im b>c>0$.
Then $\lim_{n\to +\infty}(b-\mathbb B_n)^{-1}=(b-\mathbb B)^{-1}$ in the {\rm so} topology.
\end{lemma}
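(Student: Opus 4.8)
The plan is to combine the uniform resolvent bound coming from $\Im b\ge c>0$ with a single resolvent identity that peels off the difference $\mathbb B_n-\mathbb B$, and then to reduce matters to a routine spectral-calculus estimate for that difference. I would begin with the observation that for \emph{any} selfadjoint operator $\mathbb C$ affiliated with $\mathcal A$ (bounded or not), $b-\mathbb C$ is invertible with $\|(b-\mathbb C)^{-1}\|\le 1/c$: for $\xi\in\mathrm{dom}(\mathbb C)$ one has $\|(b-\mathbb C)\xi\|\,\|\xi\|\ge|\langle(b-\mathbb C)\xi,\xi\rangle|\ge\Im\langle(b-\mathbb C)\xi,\xi\rangle=\langle(\Im b)\xi,\xi\rangle\ge c\|\xi\|^2$ since $\langle\mathbb C\xi,\xi\rangle\in\mathbb R$, so $b-\mathbb C$ is bounded below; applying the same computation to $(b-\mathbb C)^*=b^*-\mathbb C$, whose imaginary part is $-\Im b\le-c$, shows $b-\mathbb C$ has dense range, and since it is closed and bounded below the range is also closed, hence $b-\mathbb C$ is bijective with the asserted bound. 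Applied to $\mathbb C=\mathbb B$ and to each $\mathbb C=\mathbb B_n$, this gives $\|(b-\mathbb B_n)^{-1}\|\le 1/c$ \emph{uniformly in $n$} (and incidentally $(b-\mathbb B)^{-1},(b-\mathbb B_n)^{-1}\in\mathcal A$).

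Next I would fix $\xi\in L^2(\mathcal A,\tau)$ and set $\eta=(b-\mathbb B)^{-1}\xi$. Since $b$ is bounded and $\mathbb B_n=\chi_n(\mathbb B)$ is bounded, $\eta\in\mathrm{dom}(b-\mathbb B)=\mathrm{dom}(\mathbb B)=\mathrm{dom}(\mathbb B_n-\mathbb B)$, so the elementary resolvent identity
\[
(b-\mathbb B_n)^{-1}\xi-(b-\mathbb B)^{-1}\xi=(b-\mathbb B_n)^{-1}(\mathbb B_n-\mathbb B)\,\eta
\]
is legitimate as written. Using the uniform bound from the first step,
\[
\bigl\|(b-\mathbb B_n)^{-1}\xi-(b-\mathbb B)^{-1}\xi\bigr\|\le\tfrac1c\,\bigl\|(\mathbb B_n-\mathbb B)\eta\bigr\|=\tfrac1c\,\bigl\|(\chi_n(\mathbb B)-\mathbb B)\eta\bigr\|,
\]
so the whole statement reduces to showing $\|(\chi_n(\mathbb B)-\mathbb B)\eta\|\to0$ for each $\eta\in\mathrm{dom}(\mathbb B)$.

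For this last point I would write $\mathbb B=\int_{\mathbb R}t\,dE(t)$, so that $\|(\chi_n(\mathbb B)-\mathbb B)\eta\|^2=\int_{\mathbb R}|\chi_n(t)-t|^2\,d\langle E(t)\eta,\eta\rangle$. From the definition of $\chi_n$ one reads off that $\chi_n(t)=t$ for $|t|\le n$ and $|\chi_n(t)|\le|t|$ for $|t|>n$, hence $|\chi_n(t)-t|^2\le t^2$ for all $t$ while $\chi_n(t)-t\to0$ pointwise; since $\int_{\mathbb R}t^2\,d\langle E(t)\eta,\eta\rangle=\|\mathbb B\eta\|^2<\infty$ precisely because $\eta\in\mathrm{dom}(\mathbb B)$, the dominated convergence theorem yields $\|(\chi_n(\mathbb B)-\mathbb B)\eta\|\to0$. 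Combined with the previous display and the arbitrariness of $\xi$, this gives $(b-\mathbb B_n)^{-1}\to(b-\mathbb B)^{-1}$ in the so topology.

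The only genuinely delicate point is domain bookkeeping, and it dictates the shape of the argument: one must keep the $n$-independent resolvent $(b-\mathbb B)^{-1}$ on the \emph{right} in the resolvent identity, so that the vector $\eta$ to which the unbounded operator $\mathbb B_n-\mathbb B$ is applied lies in $\mathrm{dom}(\mathbb B)$ uniformly in $n$; the reversed ordering would instead apply $\mathbb B_n-\mathbb B$ to $(b-\mathbb B_n)^{-1}\xi$, which need not belong to $\mathrm{dom}(\mathbb B)$. The other point requiring a line of care is the surjectivity of $b-\mathbb C$ in the first step — being bounded below is not enough by itself, which is why one also invokes the dense-range argument via the adjoint. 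Everything else is the routine spectral estimate above.
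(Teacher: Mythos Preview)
Your proof is correct and takes a genuinely more economical route than the paper's. The paper decomposes $L^2(\mathcal A,\tau)$ along the spectral projections $p_n=\boldsymbol{1}_{[-n,n]}(\mathbb B)$, $q_n=1-p_n$, writes both resolvents as $2\times2$ block matrices relative to this splitting, expresses the difference $(b-\mathbb B_n)^{-1}-(b-\mathbb B)^{-1}$ via Schur complements, and then estimates each of the four resulting entries separately by showing that various expressions sandwiched by $q_n$ tend to zero strongly. The underlying mechanism is the same --- $\mathbb B-\mathbb B_n=q_n(\mathbb B-\mathbb B_n)q_n$ and $q_n\to0$ in so --- but the paper carries out the estimates block by block.

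Your single global resolvent identity, with the $n$-independent factor $(b-\mathbb B)^{-1}$ deliberately kept on the right, collapses the whole computation to one dominated-convergence step on $\|(\chi_n(\mathbb B)-\mathbb B)\eta\|^2$ for $\eta\in\mathrm{dom}(\mathbb B)$. This is cleaner and avoids all block-matrix bookkeeping. Your point about the ordering of factors (placing $(b-\mathbb B)^{-1}$ on the right so that the unbounded $\mathbb B_n-\mathbb B$ acts on a vector guaranteed to lie in $\mathrm{dom}(\mathbb B)$) is exactly the substantive issue, and you handle it correctly. The paper's approach, by contrast, buys explicit entrywise control of the resolvent difference, which is not needed for the lemma as stated but may have been on the authors' minds for the later Schur-complement manipulations in Section~\ref{sec:subordination}.
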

\begin{proof}
        Let $0\neq\xi\in L^2(\mathcal A,\tau)$. Let $p_n=\boldsymbol{1}_{[-n,n]}(\mathbb B)$, $q_n=1-p_n$. Clearly (by Borel functional calculus), $p_{n+1}\ge p_n$ for any $n\in
        \mathbb N,n\ge1$, so $q_{n+1}\le q_n$. Recall that $\mathbb B$ is selfadjoint. Thus, $p_n\to1$ as $n\to +\infty$ in the so topology (indeed, otherwise
        $1\gneqq\sup_{n\in\mathbb N}p_n=\sup_{n\in\mathbb N}\boldsymbol{1}_{[-n,n]}(\mathbb B)=\boldsymbol{1}_{\mathbb R}(\mathbb B)=1,$ a contradiction). By their definitions,
        $\mathbb B-\mathbb B_n=(\mathbb B-\mathbb B_n)q_n=q_n(\mathbb B-\mathbb B_n)$. For simplicity, let $b_{1,1}^{(n)}=q_nbq_n,b_{2,2}^{(n)}=p_nbp_n,b_{1,2}^{(n)}=
        q_nbp_n,b_{2,1}^{(n)}=p_nbq_n$, so that $b=\begin{bmatrix} b_{1,1}^{(n)} & b_{1,2}^{(n)} \\ b_{2,1}^{(n)} & b_{2,2}^{(n)} \end{bmatrix}$. In order to prove the
        lemma, we need to estimate the vector $\left\|\left(\left(b-\mathbb B_n\right)^{-1}-\left(b-\mathbb B\right)^{-1}\right)\xi\right\|_2$.
        Writing
        {\tiny{\begin{eqnarray*}
                                \lefteqn{\left(b-\mathbb B_n\right)^{-1}-\left(b-\mathbb B\right)^{-1}}\\
                                & = & \begin{bmatrix}b_{1,1}^{(n)}-q_n\mathbb B_n & b_{1,2}^{(n)} \\ b_{2,1}^{(n)} & b_{2,2}^{(n)}-p_n\mathbb B_n \end{bmatrix}^{-1}-
                                \begin{bmatrix}b_{1,1}^{(n)}-q_n\mathbb B & b_{1,2}^{(n)} \\ b_{2,1}^{(n)} & b_{2,2}^{(n)}-p_n\mathbb B \end{bmatrix}^{-1}\\
                                & = & \begin{bmatrix}b_{1,1}^{(n)}-q_n\mathbb B_n & b_{1,2}^{(n)} \\ b_{2,1}^{(n)} & b_{2,2}^{(n)}-p_n\mathbb B_n \end{bmatrix}^{-1}
                                \begin{bmatrix}q_n(\mathbb B-\mathbb B_n) & 0 \\ 0 & 0 \end{bmatrix}
                                \begin{bmatrix}b_{1,1}^{(n)}-q_n\mathbb B & b_{1,2}^{(n)} \\ b_{2,1}^{(n)} & b_{2,2}^{(n)}-p_n\mathbb B \end{bmatrix}^{-1}\\
                                & = & \begin{bmatrix}[b_{11}^{(n)}\!-q_n\mathbb B_n\!-\!b_{12}^{(n)}(b_{22}^{(n)}\!-p_n\mathbb B_n)^{-1}\!b_{21}^{(n)}]^{-1} & \!\!\!\!\!
                                        [b_{11}^{(n)}\!\!-\!q_n\mathbb B_n\!-\!b_{12}^{(n)}\!(b_{22}^{(n)}\!\!-\!p_n\mathbb B_n)^{-1}\!b_{21}^{(n)}]^{-1}\!b_{12}^{(n)}\!(p_n\mathbb B_n\!\!-\!b_{22}^{(n)})^{-1}\\
                                        (p_n\mathbb B_n\!\!-\!b_{22}^{(n)})^{-1}\!b_{21}^{(n)}\![b_{11}^{(n)}\!\!-\!q_n\mathbb B_n\!\!-\!b_{12}^{(n)}(b_{22}^{(n)}\!\!-\!p_n\mathbb B_n)^{-1}\!b_{21}^{(n)}]^{-1} &
                                        [b_{22}^{(n)}-p_n\mathbb B_n\!-b_{21}^{(n)}(b_{11}^{(n)}\!-q_n\mathbb B_n)^{-1}b_{12}^{(n)}]^{-1} \end{bmatrix}\\
                                & & \mbox{}\times\begin{bmatrix}q_n(\mathbb B-\mathbb B_n) & 0 \\ 0 & 0 \end{bmatrix}\\
                                & & \mbox{}\times\begin{bmatrix}[b_{11}^{(n)}\!\!-\!q_n\mathbb B\!-\!b_{12}^{(n)}\!(b_{22}^{(n)}\!\!-\!p_n\mathbb B)^{-1}\!b_{21}^{(n)}]^{-1} & \!\!\!\!\!
                                        [b_{11}^{(n)}\!-q_n\mathbb B\!-\!b_{12}^{(n)}\!(b_{22}^{(n)}\!\!-\!p_n\mathbb B)^{-1}\!b_{21}^{(n)}]^{-1}\!b_{12}^{(n)}\!(p_n\mathbb B\!-b_{22}^{(n)})^{-1} \\
                                        (p_n\mathbb B\!-b_{22}^{(n)})^{-1}\!b_{21}^{(n)}\![b_{11}^{(n)}\!\!-\!q_n\mathbb B\!-\!b_{12}^{(n)}(b_{22}^{(n)}\!\!-\!p_n\mathbb B)^{-1}\!b_{21}^{(n)}]^{-1} &
                                        [b_{22}^{(n)}\!\!-\!p_n\mathbb B\!-b_{21}^{(n)}\!(b_{11}^{(n)}\!\!-\!q_n\mathbb B)^{-1}\!b_{12}^{(n)}]^{-1} \end{bmatrix}\\
                                & = & \begin{bmatrix}[b_{11}^{(n)}\!-q_n\mathbb B_n\!-\!b_{12}^{(n)}(b_{22}^{(n)}\!-p_n\mathbb B_n)^{-1}\!b_{21}^{(n)}]^{-1}q_n(\mathbb B-\mathbb B_n) & 0 \\
                                        (p_n\mathbb B_n\!\!-\!b_{22}^{(n)})^{-1}\!b_{21}^{(n)}\![b_{11}^{(n)}\!\!-\!q_n\mathbb B_n\!\!-\!b_{12}^{(n)}(b_{22}^{(n)}\!\!-\!p_n\mathbb B_n)^{-1}\!b_{21}^{(n)}]^{-1}
                                        q_n(\mathbb B-\mathbb B_n) & 0 \end{bmatrix}\\
                                & & \mbox{}\times\begin{bmatrix}[b_{11}^{(n)}\!\!-\!q_n\mathbb B\!-\!b_{12}^{(n)}\!(b_{22}^{(n)}\!\!-\!p_n\mathbb B)^{-1}\!b_{21}^{(n)}]^{-1} & \!
                                        [b_{11}^{(n)}\!-q_n\mathbb B\!-\!b_{12}^{(n)}\!(b_{22}^{(n)}\!\!-\!p_n\mathbb B)^{-1}\!b_{21}^{(n)}]^{-1}\!b_{12}^{(n)}\!(p_n\mathbb B\!-b_{22}^{(n)})^{-1} \\
                                        0 & 0 \end{bmatrix}\!,\\
        \end{eqnarray*}}}
        we observe that the entries (in the order $(1,1),(1,2),(2,1),(2,2)$) of the above are
        \begin{align*}
                & [b_{11}^{(n)}\!-q_n\mathbb B_n\!-\!b_{12}^{(n)}(b_{22}^{(n)}\!-p_n\mathbb B_n\!)^{-1}\!b_{21}^{(n)}]^{-1}\!q_n(\mathbb B\!-\!\mathbb B_n)
                [b_{11}^{(n)}\!\!-\!q_n\mathbb B\!-\!b_{12}^{(n)}\!(b_{22}^{(n)}\!\!-\!p_n\mathbb B)^{-1}\!b_{21}^{(n)}]^{-1},\\
                & [b_{11}^{(n)}\!-q_n\mathbb B_n\!-\!b_{12}^{(n)}(b_{22}^{(n)}\!-p_n\mathbb B_n\!)^{-1}\!b_{21}^{(n)}]^{-1}\!q_n(\mathbb B\!-\!\mathbb B_n)
                [b_{11}^{(n)}\!-q_n\mathbb B\!-\!b_{12}^{(n)}\!(b_{22}^{(n)}\!\!-\!p_n\mathbb B)^{-1}\!b_{21}^{(n)}]^{-1}\\
                &\mbox{}\times\!b_{12}^{(n)}(p_n\mathbb B\!-b_{22}^{(n)})^{-1},\\
                & (p_n\mathbb B_n\!\!-\!b_{22}^{(n)})^{-1}\!b_{21}^{(n)}\![b_{11}^{(n)}\!\!-\!q_n\mathbb B_n\!\!-\!b_{12}^{(n)}(b_{22}^{(n)}\!\!-\!p_n\mathbb B_n)^{-1}\!
                b_{21}^{(n)}]^{-1}\!q_n(\mathbb B\!-\!\mathbb B_n)\\
                & \mbox{}\times[b_{11}^{(n)}\!\!-\!q_n\mathbb B\!-\!b_{12}^{(n)}\!(b_{22}^{(n)}\!\!-\!p_n\mathbb B)^{-1}\!b_{21}^{(n)}]^{-1},\\
                & (p_n\mathbb B_n\!\!-\!b_{22}^{(n)})^{-1}\!b_{21}^{(n)}\![b_{11}^{(n)}\!\!-\!q_n\mathbb B_n\!\!-\!b_{12}^{(n)}(b_{22}^{(n)}\!\!-\!p_n\mathbb B_n)^{-1}\!
                b_{21}^{(n)}]^{-1}\!q_n(\mathbb B\!-\!\mathbb B_n)\\
                &\mbox{}\times[b_{11}^{(n)}\!-q_n\mathbb B\!-\!b_{12}^{(n)}\!(b_{22}^{(n)}\!\!-\!p_n\mathbb B)^{-1}\!b_{21}^{(n)}]^{-1}\!b_{12}^{(n)}(p_n\mathbb B\!-b_{22}^{(n)})^{-1}\!.
        \end{align*}
        The first and the third apply to $q_n\xi$,  the second and fourth to $p_n\xi$. We consider each of them separately. For\footnote{We use here the fact that if $x_n\in\mathcal A$
                satisfies $x_n^*x_n\to0$ wo, then $x_n\to0$ so. Indeed, $0=\lim_n\langle x_n^*x_n\xi,\xi\rangle=\lim_n\|x_n\xi\|_2^2$. Also, if $\|x_n\|<M$ for all $n$ and so-$x_n\to x$,
                then $x_n^*\to x^*$, $x_n^*x_n\to x^*x$ in the wo topology. Indeed, $\langle(x_n^*-x^*)\xi,\eta\rangle=\langle\xi,(x_n-x)\eta\rangle$,
                $\langle(x_n^*x_n-x^*x)\xi,\eta\rangle=\langle x_n^*(x_n-x)\xi+(x_n^*-x^*)x\xi,\eta\rangle$.} $(1,1)$:
        {\small{
                        \begin{align*}
                                & \left([b_{11}^{(n)}\!-q_n\mathbb B_n\!-\!b_{12}^{(n)}(b_{22}^{(n)}\!-p_n\mathbb B_n\!)^{-1}\!b_{21}^{(n)}]^{-1}\!q_n(\mathbb B\!-\!\mathbb B_n)
                                [b_{11}^{(n)}\!\!-\!q_n\mathbb B\!-\!b_{12}^{(n)}\!(b_{22}^{(n)}\!\!-\!p_n\mathbb B)^{-1}\!b_{21}^{(n)}]^{-1}\right)^*\\
                                &\mbox{}\times[b_{11}^{(n)}\!-q_n\mathbb B_n\!-\!b_{12}^{(n)}(b_{22}^{(n)}\!-p_n\mathbb B_n\!)^{-1}\!b_{21}^{(n)}]^{-1}\!q_n(\mathbb B\!-\!\mathbb B_n)
                                [b_{11}^{(n)}\!\!-\!q_n\mathbb B\!-\!b_{12}^{(n)}\!(b_{22}^{(n)}\!\!-\!p_n\mathbb B)^{-1}\!b_{21}^{(n)}]^{-1}\\
                                & \leq\frac{1}{c^2}\!\left([b_{11}^{(n)}\!\!-\!q_n\mathbb B\!-\!b_{12}^{(n)}\!(b_{22}^{(n)}\!\!-\!p_n\mathbb B)^{-1}\!b_{21}^{(n)}]^{-1}\right)^*\!\!
                                (\mathbb B\!-\!\mathbb B_n)^2[b_{11}^{(n)}\!\!-\!q_n\mathbb B\!-\!b_{12}^{(n)}\!(b_{22}^{(n)}\!\!-\!p_n\mathbb B)^{-1}\!b_{21}^{(n)}]^{-1}\\
                                &\leq\frac{4}{c^2}\left(1+\frac{\|b\|}{c}\right)^4q_n\to0\text{ as }n\to +\infty\text{ in the so topology;}
                        \end{align*}
        }}we have used the facts that $\|(b-\mathbb B)^{-1}\|,\|(b-\mathbb B_n)^{-1}\|\leq\frac{1}{c}$, that $
        [b_{11}^{(n)}\!\!-\!q_n\mathbb B\!-\!b_{12}^{(n)}\!(b_{22}^{(n)}\!\!-\!p_n\mathbb B)^{-1}\!b_{21}^{(n)}]^{-1}=q_n
        [b_{11}^{(n)}\!\!-\!q_n\mathbb B\!-\!b_{12}^{(n)}\!(b_{22}^{(n)}\!\!-\!p_n\mathbb B)^{-1}\!b_{21}^{(n)}]^{-1}=
        [b_{11}^{(n)}\!\!-\!q_n\mathbb B\!-\!b_{12}^{(n)}\!(b_{22}^{(n)}\!\!-\!p_n\mathbb B)^{-1}\!b_{21}^{(n)}]^{-1}q_n$, and the same for $\mathbb B_n$,
        and $\|(\mathbb B-\mathbb B_n)[b_{11}^{(n)}\!\!-\!q_n\mathbb B\!-\!b_{12}^{(n)}\!(b_{22}^{(n)}\!\!-\!p_n\mathbb B)^{-1}\!b_{21}^{(n)}]^{-1}\|<\frac{2}{c}+1$. (The
        only maybe not immediately obvious statement is the last. For this, note that if $\mathbb K$ is selfadjoint, possibly unbounded, and $\Im\beta>c>0$, then
        $\mathbb K(\beta-\mathbb K)^{-1}=\beta(\beta-\mathbb K)^{-1}-1$, so $\|\mathbb K(\beta-\mathbb K)^{-1}\|\le1+\frac{\|\beta\|}{c}$; apply this to $\mathbb K=\mathbb Bq_n$,
        $\beta=b_{11}^{(n)}\!\!-\!b_{12}^{(n)}\!(b_{22}^{(n)}\!\!-\!p_n\mathbb B)^{-1}\!b_{21}^{(n)}$, and recall that $\mathbb B_n^2\le\mathbb B^2$. One has
        $\|b_{11}^{(n)}\!\!-\!b_{12}^{(n)}\!(b_{22}^{(n)}\!\!-\!p_n\mathbb B)^{-1}\!b_{21}^{(n)}\|\le\|b\|+\frac{\|b\|^2}{c}$.)
       
        For $(2,2)$, we see that $(2,2)=(p_n\mathbb B_n-b_{22}^{(n)})^{-1}b_{21}^{(n)}(1,1)b_{12}^{(n)}(p_n\mathbb B-b_{22}^{(n)})^{-1}$, so that $(2,2)^*(2,2)
        =(p_n\mathbb B-(b_{22}^{(n)})^*)^{-1}(b_{12}^{(n)})^*(1,1)^*(b_{21}^{(n)})^*(p_n\mathbb B_n-(b_{22}^{(n)})^*)^{-1}(p_n\mathbb B_n-b_{22}^{(n)})^{-1}
        b_{21}^{(n)}(1,1)\allowbreak b_{12}^{(n)}(p_n\mathbb B-b_{22}^{(n)})^{-1}\!\le\frac{\|b\|^2}{c^2}(p_n\mathbb B-(b_{22}^{(n)})^*)^{-1}(b_{12}^{(n)})^*(1,1)^*
        (1,1)b_{12}^{(n)}(p_n\mathbb B-b_{22}^{(n)})^{-1}\!\le\frac{4\|b\|^2}{c^4}\left(\!1+\frac{\|b\|}{c}\right)^4\allowbreak(p_n\mathbb B-(b_{22}^{(n)})^*)^{-1}(b_{12}^{(n)})^*
        b_{12}^{(n)}(p_n\mathbb B-b_{22}^{(n)})^{-1}$\!. We claim that $(p_n\mathbb B-(b_{22}^{(n)})^*)^{-1}(b_{12}^{(n)})^*
        b_{12}^{(n)}(p_n\mathbb B-b_{22}^{(n)})^{-1}\to0$ as $n\to +\infty$ in the so topology. Obviously, since $b_{12}^{(n)}=q_nbp_n$, if $\xi\in L^2(\mathcal A,\tau)$ is fixed,
        $b_{12}^{(n)}(p_n\mathbb B-b_{22}^{(n)})^{-1}\xi=q_nbp_n(p_n\mathbb B-b_{22}^{(n)})^{-1}\xi$; observe
        \begin{eqnarray*}
                \lefteqn{(b-\mathbb B)^{-1}-q_n-(b_{22}^{(n)}-p_n\mathbb B)^{-1}=(b-\mathbb B)^{-1}-\begin{bmatrix}
                                q_n & 0 \\
                                0 & (b_{22}^{(n)}-p_n\mathbb B)^{-1}
                \end{bmatrix}}\\
                & = & (b-\mathbb B)^{-1}-\begin{bmatrix}
                        q_n & 0 \\
                        0 & p_nbp_n-p_n\mathbb B
                \end{bmatrix}^{-1}\\
                & = & \begin{bmatrix}
                        q_n & 0 \\
                        0 & (p_nbp_n-p_n\mathbb B)^{-1}
                \end{bmatrix}\left(\begin{bmatrix}
                        q_n & 0 \\
                        0 & p_nbp_n-p_n\mathbb B
                \end{bmatrix}-b+\mathbb B\right)(b-\mathbb B)^{-1}\\
                & = & \begin{bmatrix}
                        q_n & 0 \\
                        0 & (p_nbp_n-p_n\mathbb B)^{-1}
                \end{bmatrix}\begin{bmatrix}
                        q_n+q_n\mathbb B-q_nbq_n & -q_nbp_n \\
                        -p_nbq_n & 0
                \end{bmatrix}(b-\mathbb B)^{-1}\\
                & = & \begin{bmatrix}
                        q_n+q_n\mathbb B-q_nbq_n & -q_nbp_n \\
                        -(p_nbp_n-p_n\mathbb B)^{-1}p_nbq_n & 0
                \end{bmatrix}(b-\mathbb B)^{-1}.
        \end{eqnarray*}
(Inverses $(b_{22}^{(n)}-p_n\mathbb B)^{-1}$ are considered in the reduced algebra.)
        Apply this to an arbitrary vector $\xi$, recalling that $p_n=1-q_n$. Entry by entry, one gets $(q_n-q_nbq_n)(b-\mathbb B)^{-1}\xi\to0$ as $n\to +\infty$
        because $q_n\to0$ in the so topology, $q_n\mathbb B(b-\mathbb B)^{-1}\xi=q_nb(b-\mathbb B)^{-1}\xi-q_n\xi\to0$  as $n\to +\infty$ for the same reason, then
        $q_nbp_n(b-\mathbb B)^{-1}\xi=q_nb(b-\mathbb B)^{-1}\xi-q_nbq_n(b-\mathbb B)^{-1}\xi\to0$, again for the same reason, and finally
        $\|(p_nbp_n-p_n\mathbb B)^{-1}p_nbq_n(b-\mathbb B)^{-1}\xi\|_2\leq\|(p_nbp_n-p_n\mathbb B)^{-1}p_nb\|\|q_n(b-\mathbb B)^{-1}\xi\|_2\le
        \frac{\|b\|}{c}\|q_n(b-\mathbb B)^{-1}\xi\|_2\to0$ as $n\to +\infty.$ Thus,
        \begin{eqnarray*}
                q_nb(p_n\mathbb B-b_{22}^{(n)})^{-1}\xi & = & q_nb((p_n\mathbb B-b_{22}^{(n)})^{-1}+q_n-(b-\mathbb B)^{-1}+(b-\mathbb B)^{-1}-q_n)\xi\\
                & = & q_nb\left((p_n\mathbb B-b_{22}^{(n)})^{-1}+q_n-(b-\mathbb B)^{-1}\right)\xi\\
                & & \mbox{}+q_nb((b-\mathbb B)^{-1}-q_n)\xi,
        \end{eqnarray*}
        quantity that converges to zero in norm. As $\|(p_n\mathbb B-b_{22}^{(n)})^{-1}(b_{12}^{(n)})^*\|<\frac{\|b\|}{c}$ uniformly in $n$, our claim is proved. The convergence to
        zero in the so topology of entries $(1,2)$ and $(2,1)$ follow from the above with the same methods.
\end{proof}

From the above lemma we can conclude that Remark
\ref{rmkoldf} holds with $\mathbb{X,A}\in\mathcal A$ and $\mathbb B\in\tilde{\mathcal A}$. Next, let
us consider the case of $\mathbb A\in\mathcal A$ invertible, $\mathbb B\in\mathcal A,\mathbb X\in
\tilde{\mathcal A}$. Applying Lemma \ref{rezlem} with $b=\mathbb A^{-1}(z-\mathbb B)
\mathbb A^{-1}$ and $\mathbb B$ replaced by $\mathbb X$ tells us that Remark \ref{rmkoldf}
holds in this case as well. Finally, we consider the case when $\mathbb A\in\mathcal A$, and
$\mathbb B,\mathbb X\in\tilde{\mathcal A}$. In this case, we are satisfied with wo convergence, since
$E_{(\mathbb{A,B})}$ is wo continuous. As seen above, $\mathbb B-\mathbb B_n\to0,\mathbb X-
\mathbb X_n\to0$ as $n\to +\infty$ in the so topology, in the sense that if $\xi$ belongs to the domains
of $\mathbb B$, resp  $\mathbb X$, then $\|(\mathbb B-\mathbb B_n)\xi\|_2\to0$, resp.
$\|(\mathbb X-\mathbb X_n)\xi\|_2\to0$, as $n\to +\infty$. Since $\mathbb A$ is bounded, it follows
that $\mathbb B_n+\mathbb{AX}_n\mathbb A\to\mathbb B+\mathbb{AXA}$ in the so topology as
well, again on the domain of $\mathbb B+\mathbb{AXA}$. On the other hand, $\{(z-\mathbb B_n-
\mathbb{AX}_n\mathbb A)^{-1}\}_{n\in\mathbb N}$ is a sequence of operators in $\mathcal A$ which
is bounded (in norm) by $\frac{1}{|\Im z|}$, independently of $n$. Since $\mathcal A$ is a von
Neumann algebra, this set is precompact, so it has limit points, for example in the wo topology. If
$\mathbf x=\mathbf x(z)$ is such a limit point, then, first, $\|\mathbf x\|\le\frac{1}{|\Im z|}$
and, second, it obviously depends analytically on $z$ (simply because for any wo continuous linear
functional $\varphi$, the map $z\mapsto\varphi(\mathbf x(z))$ is a limit of analytic maps in the topology
of uniform convergence on compacts). Obviously, since $(z-\mathbb B_n+\mathbb Af_n(z)
\mathbb A)^{-1}=E_{(\mathbb{A,B}_n)}\left[(z-\mathbb B_n-\mathbb{AX}_n\mathbb A)^{-1}
\right]=E_{(\mathbb{A,B})}\left[(z-\mathbb B_n-\mathbb{AX}_n\mathbb A)^{-1}\right]$, one has
$E_{(\mathbb{A,B})}\left[\mathbf x(z)\right]=(z-\mathbb B+\mathbb Af(z)\mathbb A)^{-1}$, with
$f$ being the limit point of $\{f_n\}$ along the subsequence corresponding
to $\mathbf x(z)$. If $\xi,\eta$ are nonzero vector in the (dense) domain of of $\mathbb B+
\mathbb{AXA}$, then $\langle\mathbf x(z)\xi,(\overline{z}-\mathbb B-\mathbb{AXA})
\eta\rangle=\lim_{k\to +\infty}\langle(z-\mathbb B-\mathbb{AXA})(z-\mathbb B_{n_k}-
\mathbb{AX}_{n_k}\mathbb A)^{-1}\xi,\eta\rangle=\lim_{k\to +\infty}\langle(z-\mathbb B_{n_k}-
\mathbb{AX}_{n_k}\mathbb{A})(z-\mathbb B_{n_k}-\mathbb{AX}_{n_k}\mathbb A)^{-1}\xi,\eta
\rangle+\lim_{k\to +\infty}\langle(\mathbb B_{n_k}+\mathbb{AX}_{n_k}\mathbb{A}-\mathbb B-
\mathbb{AXA})(z-\mathbb B_{n_k}-\mathbb{AX}_{n_k}\mathbb A)^{-1}\xi,\eta\rangle
=\langle\xi,\eta\rangle+\lim_{k\to +\infty}\langle(z-\mathbb B_{n_k}-\mathbb{AX}_{n_k}
\mathbb A)^{-1}\xi,(\mathbb B_{n_k}-\mathbb B+\mathbb{A}(\mathbb{X}_{n_k}-\mathbb{X})
\mathbb{A})\eta\rangle$. Since $\eta$ is in the domain of the operator on the last term, one has
\begin{eqnarray*}
\lefteqn{\left|\langle(z-\mathbb B_{n_k}-\mathbb{AX}_{n_k}\mathbb A)^{-1}\xi,(\mathbb B_{n_k}-
\mathbb B+\mathbb{A}(\mathbb{X}_{n_k}-\mathbb{X})\mathbb{A})\eta\rangle\right|}\\
& \le & \left\|(z-\mathbb B_{n_k}-\mathbb{AX}_{n_k}\mathbb A)^{-1}\xi\right\|_2\left(\left\|(\mathbb B_{n_k}-\mathbb B)\xi\right\|_2+\|\mathbb A\|\left\|(\mathbb{X}_{n_k}-
\mathbb{X})\mathbb{A}\eta\right\|_2\right)\\
& \le & \frac{1}{|\Im z|}\left(\left\|(\mathbb B_{n_k}-\mathbb B)\xi\right\|_2+\|\mathbb A\|
\left\|(\mathbb{X}_{n_k}-\mathbb{X})\mathbb{A}\eta\right\|_2\right)\to0\text{ as }k\to +\infty.
\end{eqnarray*}
It follows that $\langle\mathbf x(z)\xi,(\overline{z}-\mathbb B-\mathbb{AXA})
\eta\rangle=\langle\xi,\eta\rangle$ on a dense set of vectors $\xi,\eta$ in the Hilbert space on which
$\mathcal A$ acts. This tells us that the apriori arbitrary cluster point $\mathbf x(z)$ is the inverse of
$z-\mathbb B-\mathbb{AXA}$. It follows that $(z-\mathbb B_n-\mathbb{AX}_n\mathbb A)^{-1}$
converges in the weak operator topology to $(z-\mathbb B-\mathbb{AXA})^{-1}$. These two facts
together guarantee

\begin{proposition}\label{rmkBig}
If $\mathbb{A,B,X}$ are selfadjoint random variables in a tracial noncommutative probability state
$(\mathcal A,\tau)$ such that $\mathbb{A}$ is bounded and $\mathbb X$ is *-free from
$(\mathbb A,\mathbb B)$, then there exists an analytic function $f\colon\mathbb C^+\to\mathbb C^+
\cup\mathbb R$ such that
\begin{eqnarray}
\tau\left(\left(z-\mathbb B-\mathbb A\mathbb X\mathbb A\right)^{-1}\right)
& = & \tau\Big((\mathbb Af(z)\mathbb A+z-\mathbb B)^{-1}\Big),\label{ab}\\
E_{(\mathbb{A},\mathbb B)}\left[\left(z-\mathbb B-\mathbb A\mathbb X\mathbb A\right)^{-1}\right]
&=&\left(\mathbb Af(z)\mathbb A+z-\mathbb B\right)^{-1},\quad z\in\mathbb C^\pm.\label{AB}
\end{eqnarray}
\end{proposition}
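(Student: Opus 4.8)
The statement packages together the successive generalizations developed in the discussion preceding it, so the plan is to trace the chain of reductions and then pass from the conditional-expectation identity \eqref{AB} to the trace identity \eqref{ab} by applying $\tau$. The base case is Lemma \ref{inv}: when $\mathbb{A,B,X}$ are bounded and $\mathbb A$ is invertible, the $M_2(\mathbb C)$-valued linearization of $\mathbb{AXA}$ turns $*$-freeness of $\mathbb X$ from $(\mathbb A,\mathbb B)$ into $M_2(\mathbb C)$-freeness, and Voiculescu's operator-valued subordination produces the scalar analytic $f$ together with the operator identity; the upgrade to $\Im f\ge 0$ on $\mathbb C^+$ comes from resolvent positivity. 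To drop invertibility of $\mathbb A$ while keeping the variables bounded, I would replace $\mathbb A$ by $\mathbb A_n=k_n(\mathbb A)$: these are invertible, converge to $\mathbb A$ in norm, and — since $k_n$ is injective — generate the same von Neumann algebra, so $E_{(\mathbb A_n,\mathbb B)}=E_{(\mathbb A,\mathbb B)}$; a normality argument gives a limit point $f$ of $\{f_n\}$, and the resolvent perturbation identity \eqref{trii} together with the estimate \eqref{bax} gives norm convergence of the resolvents, which, by wo-continuity of $E_{(\mathbb A,\mathbb B)}$, passes the identity to the limit. This is Remark \ref{rmkoldf}.

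Next I would relax $\mathbb B$, and then $\mathbb X$, to be unbounded while $\mathbb A$ stays bounded and invertible. Approximate by $\mathbb B_n=\chi_n(\mathbb B)$ (resp. $\mathbb X_n=\chi_n(\mathbb X)$); these are bounded, generate the same von Neumann algebras, and — lying in the algebra generated by $\mathbb B$ (resp. $\mathbb X$) — keep $\mathbb X$ $*$-free from $(\mathbb A,\mathbb B)$. Lemma \ref{rezlem} applied with $b=z-\mathbb{AXA}$ handles the $\mathbb B$-step, and applied with $b=\mathbb A^{-1}(z-\mathbb B)\mathbb A^{-1}$ (with the roles of $\mathbb B$ and $\mathbb X$ interchanged, noting $\Im b\ge c\cdot 1$ for some $c>0$ since $\mathbb A$ is bounded invertible) handles the $\mathbb X$-step; conjugating by $\mathbb A^{-1}$ where appropriate and again using wo-continuity of $E$ and normality of $\{f_n\}$ extends Remark \ref{rmkoldf} to $\mathbb A\in\mathcal A$ invertible, $\mathbb B,\mathbb X\in\tilde{\mathcal A}$.

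For the general case — $\mathbb A\in\mathcal A$ bounded but not invertible, $\mathbb B,\mathbb X\in\tilde{\mathcal A}$ — the $M_2$ trick and the norm estimates of the previous step are unavailable (the latter involve $\|\mathbb A_n\mathbb X\|$), so I would only aim for weak-operator convergence, which is all that $E_{(\mathbb A,\mathbb B)}$ requires. Truncate both $\mathbb B$ and $\mathbb X$; the operators $(z-\mathbb B_n-\mathbb A\mathbb X_n\mathbb A)^{-1}$ lie in the ball of radius $1/|\Im z|$ of the von Neumann algebra $\mathcal A$, hence are wo-precompact, and any cluster point $\mathbf x(z)$ depends analytically on $z$ (pair with wo-continuous functionals). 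Using the strong convergence $\mathbb B_n+\mathbb A\mathbb X_n\mathbb A\to\mathbb B+\mathbb{AXA}$ on the common dense domain, one checks $\langle\mathbf x(z)\xi,(\overline z-\mathbb B-\mathbb{AXA})\eta\rangle=\langle\xi,\eta\rangle$ for $\xi,\eta$ in that domain, which forces $\mathbf x(z)=(z-\mathbb B-\mathbb{AXA})^{-1}$; as every cluster point equals this, the net converges wo. Applying $E_{(\mathbb A,\mathbb B)}=E_{(\mathbb A,\mathbb B_n)}$ to $(z-\mathbb B_n-\mathbb A\mathbb X_n\mathbb A)^{-1}=(z-\mathbb B_n+\mathbb Af_n(z)\mathbb A)^{-1}$, then passing to the limit with $f=\lim f_n$ (normal family), yields \eqref{AB}; applying $\tau$ to \eqref{AB} yields \eqref{ab}.

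The main obstacle is this last step: in the unbounded regime one has no norm control on $(z-\mathbb B-\mathbb{AXA})^{-1}$, so one cannot argue by uniform perturbation as before and must instead use weak-$*$ compactness of the unit ball of $\mathcal A$ and then pin down every cluster point by testing against vectors in the domain of the unbounded operator $\mathbb B+\mathbb{AXA}$. The careful bookkeeping with domains — verifying that strong convergence of the truncations on that dense domain suffices to identify the resolvent, and that the truncations preserve $*$-freeness — is where the real work lies; the rest is invoking Lemmas \ref{inv} and \ref{rezlem} and the injectivity of the truncating functions.
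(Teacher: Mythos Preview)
Your outline is essentially the paper's own argument: Lemma~\ref{inv} for the bounded invertible base case, then the $k_n$-perturbation to drop invertibility (Remark~\ref{rmkoldf}), then Lemma~\ref{rezlem} for the intermediate unbounded cases, and finally the wo-compactness argument that identifies every cluster point of $(z-\mathbb B_n-\mathbb A\mathbb X_n\mathbb A)^{-1}$ with the true resolvent. One small wrinkle: in your third paragraph you phrase the $\mathbb B$-step and the $\mathbb X$-step as sequential and conclude ``$\mathbb A$ invertible, $\mathbb B,\mathbb X\in\tilde{\mathcal A}$'', but the $\mathbb X$-step via $b=\mathbb A^{-1}(z-\mathbb B)\mathbb A^{-1}$ needs $b\in\mathcal A$, hence $\mathbb B$ bounded---so the two steps are parallel, not stacked (this is exactly how the paper presents them). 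It does not matter, since your final paragraph correctly truncates \emph{both} $\mathbb B$ and $\mathbb X$ simultaneously and appeals only to Remark~\ref{rmkoldf}, which is all that the general case requires.
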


We go to the next step of our description of the subordination phenomenon in our context. In this step,
we assume that
\begin{itemize}
\item $\mathbb X=\mathbb X^*,\mathbb A=\mathbb A^*,\mathbb B=\mathbb B^*\in\tilde{\mathcal A}$;
\item $\mathbb X$ is invertible in $\tilde{\mathcal A}$, meaning that there exists an $\mathbb X^{-1}
\in\tilde{\mathcal A}$ such that $\mathbb{XX}^{-1}\xi=\xi$ for all $\xi$ in the domain of
$\mathbb X^{-1}$ and $\mathbb X^{-1}\mathbb X\eta=\eta$ for all $\eta$ in the domain of
$\mathbb X$ - both dense subspaces;
\item $\tau(\mathbb A^2)+\tau(\mathbb X^2)<+\infty$.
\end{itemize}
(The hypothesis that $\mathbb X,\mathbb A\not\in\mathbb R\cdot1\subset\mathcal A$ is maintained
throughout the section.) Define $\mathbf X=\begin{bmatrix} 0 & 0 \\ 0 & -\mathbb X^{-1}
\end{bmatrix},\mathbf A=\begin{bmatrix} \mathbb B & -\mathbb A \\ -\mathbb A & 0\end{bmatrix}
\in(\tilde{\mathcal A}\otimes M_2(\mathbb C),\tau\otimes{\rm tr}_2)$. By hypothesis, they are free
over $M_2(\mathbb C)$ with respect to $\tau\otimes{\rm tr}_2$. According to \cite{Vo00} (or
\cite{BBs} if $\tau$ is not a trace - and all operators are bounded), there exist two analytic self-maps
of the upper half-plane of $M_2(\mathbb C)$, which we call here $\alpha$ and $\omega$, such that
\begin{eqnarray}
(\tau\otimes{\rm tr}_2)\left[(b-\mathbf{A-X})^{-1}\right]
& = & (\tau\otimes{\rm tr}_2)\left[(\alpha(b)-\mathbf{A})^{-1}\right]\\
& = & (\tau\otimes{\rm tr}_2)\left[(\omega(b)-\mathbf{X})^{-1}\right]\nonumber\\
& = & (\omega(b)+\alpha(b)-b)^{-1}.\nonumber
\end{eqnarray}
This holds on any domain containing elements $b$ with positive definite imaginary part, i.e. the upper
half-plane of $M_2(\mathbb C)$, on which $b-\mathbf{A-X}$ and $(\tau\otimes{\rm tr}_2)
\left[(b-\mathbf{A-X})^{-1}\right]$ are invertible. Moreover, if $E_{\bf A}$ denotes the conditional
expectation onto the von Neumann algebra generated by $M_2(\mathbb C)$ and {\bf A}, and
$E_{\bf X}$ denotes the conditional expectation onto the von Neumann algebra
generated by $M_2(\mathbb C)$ and {\bf X}, then
\begin{eqnarray}
E_{\bf A}\left[(b-\mathbf{A-X})^{-1}\right] & = & (\alpha(b)-\mathbf{A})^{-1},\text{ and}\\
E_{\bf X}\left[(b-\mathbf{A-X})^{-1}\right] & = & (\omega(b)-\mathbf{X})^{-1}.
\end{eqnarray}
It is known (see, for instance, \cite{Takesaki}) that if $\tau$ is a normal faithful trace, then $E_{\bf A}$
and $E_{\bf X}$ are also continuous in the strong operator (so) and weak operator (wo) topologies. Let
$e_{11}=\begin{bmatrix} 1 & 0 \\ 0 & 0 \end{bmatrix}$. By Schur's complement formula (or direct
verification),
\begin{eqnarray*}
(ze_{11}-\mathbf{A-X})^{-1}& = &\begin{bmatrix} z-\mathbb B & \mathbb A \\
\mathbb A & \mathbb X^{-1}\end{bmatrix}^{-1}\\
& = & \begin{bmatrix} (z-\mathbb B-\mathbb A\mathbb X\mathbb A)^{-1} & -(z-\mathbb B-\mathbb A\mathbb X\mathbb A)^{-1}\mathbb{AX} \\
-\mathbb{X A}(z-\mathbb B-\mathbb A\mathbb X\mathbb A)^{-1} & \mathbb X+\mathbb{XA}(z-\mathbb B-\mathbb A\mathbb X\mathbb A)^{-1}\mathbb{AX}\end{bmatrix}\\
& = & \begin{bmatrix} 1 \\ -\mathbb{XA}\end{bmatrix}
\begin{array}{cc} \begin{bmatrix}(z-\mathbb B-\mathbb{AXA})^{-1}\end{bmatrix}
& \!\!\!\begin{bmatrix} 1 & -\mathbb{AX} \end{bmatrix}\\
\  & \ 
\end{array}+\begin{bmatrix} 0 & 0 \\ 0 & \mathbb X \end{bmatrix}.
\end{eqnarray*}
Of course, applying $E_{\bf A}$ to the above is equivalent to applying $E_{(\mathbb{A,B})}$, the
conditional expectation onto the von Neumann algebra generated by $(\mathbb{A,B})$ inside
$\mathcal A$, entrywise, and the same for ${\bf X}$ and $\mathbb X$. Since $\mathbb X$ is
selfadjoint, it generates an Abelian von Neumann algebra. It is obvious that $ze_{11}-\mathbf{A-X}$
is indeed invertible for all $z\not\in\sigma(\mathbb B+\mathbb {AXA})$, and in particular for all $z\in
\mathbb C^\pm$. Moreover, the expression in the right-hand side is well-defined regardless of whether
$\mathbb X$ is invertible or not. However, in order to apply $\tau$, we need the entries of this inverse
to be in the domain of $\tau$. This will ensure that  $z\mapsto(\tau\otimes{\rm tr}_2)\left[(ze_{11}-
\mathbf{A-X})^{-1}\right],z\mapsto E_{\bf X}\left[(ze_{11}-\mathbf{A-X})^{-1}\right],z\mapsto
E_{\bf A}\left[(ze_{11}-\mathbf{A-X})^{-1}\right]$ are well-defined on all of
$\mathbb C\setminus\sigma(\mathbb B+\mathbb{AXA})$. Since $\tau(\mathbb X^2)$ is finite by
hypothesis, clearly $\tau(\mathbb X)\in\mathbb R$ is well-defined. The inequality
$\|(z-\mathbb B-\mathbb{AXA})^{-1}\|\le\frac{1}{|\Im z|},z\in\mathbb C^\pm,$ implies $\|\Re(z-
\mathbb B-\mathbb{AXA})^{-1}\|\le\frac{1}{|\Im z|}$, $\|\Im(z-\mathbb B-\mathbb{AXA})^{-1}\|\le
\frac{1}{|\Im z|}$, so that
$0\le-\Im\tau(\mathbb{XA}(z-\mathbb B-\mathbb A\mathbb X\mathbb A)^{-1}\mathbb{AX})
\le\frac{\tau(\mathbb{XAAX})}{|\Im z|}=\frac{\tau(\mathbb{X}^2)\tau(\mathbb{A}^2)}{|\Im z|}<
\infty$ by hypothesis (here we have used the freeness of $\mathbb X$ and
$\mathbb A$), and $|\Re\tau(\mathbb{XA}(z-\mathbb B-\mathbb A\mathbb X\mathbb A)^{-1}\mathbb{AX})|
=|\tau(\mathbb{XA}\Re(z-\mathbb B-\mathbb A\mathbb X\mathbb A)^{-1}\mathbb{AX})|<
\frac{\tau(\mathbb{X}^2)\tau(\mathbb{A}^2)}{|\Im z|}<+\infty$.
Thus, the $(2,2)$ entry is in the domain of $\tau$. The argument for the off-diagonal entries is
similar. Next, we argue that $(\tau\otimes{\rm tr}_2)
\left[(ze_{11}-\mathbf{A-X})^{-1}\right]$ is invertible in $M_2(\mathbb C)$. From its expression, it is clear that
$$
\Im(ze_{11}-\mathbf{A-X})^{-1}
=-\Im z\begin{bmatrix} 1 \\ -\mathbb{XA}\end{bmatrix}\begin{array}{cc} \begin{bmatrix}\left((\Im z)^2+(\Re z-
\mathbb B-\mathbb{AXA})^2\right)^{-1}\end{bmatrix} & \!\!\!\begin{bmatrix} 1 & -\mathbb{AX} \end{bmatrix}\\
\  & \ 
\end{array}.
$$
Note that $\displaystyle\sphericalangle\lim_{z\to +\infty}(\Im z)^2\left((\Im z)^2+(\Re z-\mathbb B-
\mathbb{AXA})^2\right)^{-1}=1\in\mathcal A$ in the so topology.
It is well-known (and easy to deduce) that rank$\Im(\tau\otimes{\rm id}_{M_2(\mathbb C)})
\left[(ze_{11}-\mathbf{A-X})^{-1}\right]$ is constant on $\mathbb C^\pm$, equal to
one or two. Clearly ${\rm rank}\Im(\tau\otimes{\rm id}_{M_2(\mathbb C)})
\left[(ze_{11}-\mathbf{A-X})^{-1}\right]\allowbreak={\rm rank}\Im z\Im(\tau\otimes
{\rm id}_{M_2(\mathbb C)})\left[(ze_{11}-\mathbf{A-X})^{-1}\right]$ for all $z\in\mathbb C^+$.
For fixed $\Re z$,
$$
\lim_{\Im z\to +\infty}\Im z\Im(\tau\otimes{\rm id}_{M_2(\mathbb C)})
\left[(ze_{11}-\mathbf{A-X})^{-1}\right]=\begin{bmatrix} -1 & \tau(\mathbb{AX}) \\
\tau(\mathbb{XA}) & -\tau(\mathbb{AXXA})
\end{bmatrix}
$$
Typically this matrix is invertible: since the variables $\mathbb A$ and $\mathbb X$ are free, it follows that
$\tau(\mathbb{AX})=\tau(\mathbb{XA})=\tau(\mathbb A)\tau(\mathbb X)$
and $\tau(\mathbb{AXXA})=\tau(\mathbb X^2)\tau(\mathbb A^2)$. Under the assumption that
$\mathbb X\neq0\neq\mathbb A$ and that at least one of $\mathbb{X,A}$
is not a multiple of the identity, it follows that
$\begin{bmatrix} -1 & \tau(\mathbb{AX}) \\ \tau(\mathbb{XA}) & -\tau(\mathbb{AXXA})
\end{bmatrix}<0$. Since the limit of rank one two-by-two matrices cannot be invertible, one concludes
that $\Im(\tau\otimes{\rm id}_{M_2(\mathbb C)})
\left[(ze_{11}-\mathbf{A-X})^{-1}\right]<0$, and hence $(\tau\otimes{\rm id}_{M_2(\mathbb C)})
\left[(ze_{11}-\mathbf{A-X})^{-1}\right]$ is invertible in $M_2(\mathbb C)$ for all
$z\in\mathbb C^\pm$.

While not yet relevant, we take the opportunity now to record also the fact that
\begin{eqnarray*}
\lefteqn{\Re(ze_{11}-\mathbf{A-X})^{-1}=\begin{bmatrix} 0 & 0 \\ 0 & \mathbb X \end{bmatrix}}\\
& & \mbox{}+\begin{bmatrix} 1 \\ -\mathbb{XA}\end{bmatrix}\begin{array}{cc} \begin{bmatrix}
(\Re z-\mathbb B-\mathbb{AXA})((\Im z)^2+(\Re z-\mathbb B-\mathbb{AXA})^2)^{-1}\end{bmatrix}
& \!\!\!\begin{bmatrix} 1 & -\mathbb{AX} \end{bmatrix}\\
\  & \ 
\end{array}.
\end{eqnarray*}
This tells us that
$$
\sphericalangle\lim_{z\to +\infty}zE_{\bullet}\left[(ze_{11}-\mathbf{A-X})^{-1}-\begin{bmatrix} 0 & 0
\\ 0 & \mathbb X \end{bmatrix}\right]=E_\bullet
\left[\begin{bmatrix} 1 \\ -\mathbb{XA}\end{bmatrix}\begin{array}{c}
\!\!\!\begin{bmatrix} 1 & -\mathbb{AX} \end{bmatrix}\\
 \ 
\end{array}\right],
$$
where $\bullet\in\{\bf X,\bf A\}$ (here we have used the wo-continuity of $E_\bullet$).

\begin{remark}\label{condiR}
Interestingly enough,
$$
\lim_{\Im z\to +\infty}\!\Im z\Im E_{\bf X}\!\left[\!(ze_{11}\!-\mathbf{A-X})^{-1}\!\right]=
\begin{bmatrix} -1 & E_\mathbb X[\mathbb{AX}] \\ E_\mathbb X[\mathbb{XA}] &
\!-E_\mathbb X[\mathbb{XA}^2\mathbb X]\end{bmatrix}=\begin{bmatrix} -1 & \mathbb X
\tau(\mathbb{A}) \\ \tau(\mathbb{A})\mathbb X & \!-\tau(\mathbb{A}^2)
\mathbb X^2\end{bmatrix},
$$
$$
\lim_{\Im z\to +\infty}\!\Im z\Im E_{\bf A}\!\left[\!(ze_{11}\!-\mathbf{A-X})^{-1}\!\right]=
\begin{bmatrix} -1 & E_\mathbb A[\mathbb{AX}]\\ E_\mathbb A[\mathbb{XA}] & \!
-E_\mathbb A[\mathbb{XA}^2\mathbb X]\end{bmatrix}
$$
$$
=\begin{bmatrix}-1 & \tau(\mathbb{X})\mathbb A \\ \tau(\mathbb{X})\mathbb A & -
\tau(\mathbb{X})^2\mathbb A^2-\tau((\mathbb X-\tau(\mathbb X))^2)\tau(\mathbb A^2)
\end{bmatrix}.
$$
Since $\mathbb X$ is invertible in $\tilde{\mathcal A}$, the first operator is invertible in
$M_2(\tilde{\mathcal A})$, while the second operator is less than or equal to zero (the
upper bound of its spectrum is zero if and only if $\mathbb A$ is unbounded), and its
inverse makes sense, again in $M_2(\tilde{\mathcal A})$.
It follows that for any rank one projection $0\neq p\in M_2(\mathbb C)\subset M_2(\mathcal A)$,
$\Im pE_{\bf A}\!\left[(ze_{11}-\mathbf{A-X})^{-1}\right]p\neq0,
\Im pE_{\bf X}\!\left[\!(ze_{11}\!-\mathbf{A-X})^{-1}\!\right]p\neq0$.
\end{remark}

We have shown that
$b\mapsto(\tau\otimes{\rm id}_{M_2(\mathbb C)})\left[(b-\mathbf{A-X})^{-1}\right]$, well-defined
for elements $b\in M_2(\mathbb C)$ with strictly positive or strictly negative imaginary part, extends
continuously to $(\mathbb C\setminus\sigma(\mathbb B+\mathbb{AXA})) e_{11}$ as an analytic
function of $z\in\mathbb C\setminus\sigma(\mathbb B+\mathbb{AXA})$ and that $\mathbb C^\pm\ni
z\mapsto(\tau\otimes{\rm id}_{M_2(\mathbb C)})\left[(ze_{11}-\mathbf{A-X})^{-1}\right]^{-1}$ is
well-defined and analytic. This guarantees that the two subordination functions $\alpha,\omega$ are
defined (and analytic) on $\mathbb C^\pm e_{11}$ as functions of $z\in\mathbb C^\pm$ (we remind
the reader that at this moment we work under the hypotheses that $\mathbb X$ is invertible in 
$\tilde{\mathcal A}$ and $\tau(\mathbb X^2),\tau(\mathbb A^2)<+\infty$; these hypotheses
will be dropped successively in Proposition \ref{sans} and Theorem \ref{fixp}).

Since we have shown that
$(\tau\otimes{\rm id}_{M_2(\mathbb C)})\left[(ze_{11}-\mathbf{A-X})^{-1}\right]$ is invertible, let
us write also a formula for its inverse for future reference. As we have shown above that
$\Im(\tau\otimes{\rm id}_{M_2(\mathbb C)})\left[(ze_{11}-\mathbf{A-X})^{-1}\right]<0$ in
$M_2(\mathbb C)$ for all $z\in\mathbb C^+$, its inverse must have strictly positive imaginary part on
$\mathbb C^+$. This allows us to use Schur's complement formula. In order to save space, we
denote
\begin{eqnarray}
R & = & (z-\mathbb B-\mathbb{AXA})^{-1},\text{ and}\\
\boldsymbol\rho & = & \left(\tau(\mathbb X+\mathbb{XA}R\mathbb{AX})-\frac{\tau(\mathbb{XA}R)
\tau(R\mathbb{AX})}{\tau(R)}\right)^{-1}.
\end{eqnarray}
Then
\begin{equation}\label{SchurG}
(\tau\otimes{\rm tr}_2)\left[(ze_{11}-\mathbf{A-X})^{-1}\right]^{-1}=
\begin{bmatrix}
\frac{1}{\tau(R)}+\frac{\tau(R\mathbb{AX})\boldsymbol\rho\tau(\mathbb{XA}R)}{\tau(R)^2} &
\frac{\tau(R\mathbb{AX})\boldsymbol\rho}{\tau(R)} \\
\frac{\boldsymbol\rho\tau(\mathbb{XA}R)}{\tau(R)} & \boldsymbol\rho
\end{bmatrix}.
\end{equation}
The Schur complement formula coupled with $\Im(\tau\otimes{\rm id}_{M_2(\mathbb C)})
\left[(ze_{11}-\mathbf{A-X})^{-1}\right]<0$ tells us that $\Im\boldsymbol\rho>0$ and $\Im
\left(\frac{1}{\tau(R)}+\frac{\tau(R\mathbb{AX})\boldsymbol\rho\tau(\mathbb{XA}R)}{\tau(R)^2}
\right)\ge\Im z>0$ on $\mathbb C^+$. Recalling that $R=R(z)$, and hence $\boldsymbol\rho=
\boldsymbol\rho(z)$ as well, we investigate the asymptotic behavior at infinity of $\boldsymbol{\rho}$.
If $\tau(\mathbb X)=0$ (recall that $\mathbb X\in L^2(\mathcal A,\tau)$, so $\tau(\mathbb X)$ is
well-defined and real), then
\begin{eqnarray*}
\sphericalangle\lim_{z\to +\infty}\frac{\boldsymbol\rho(z)}{z}
& = & \sphericalangle\lim_{z\to +\infty}\left(z\tau(\mathbb{XA}R(z)\mathbb{AX})-\frac{z
\tau(\mathbb{XA}R(z))z\tau(R(z)\mathbb{AX})}{z\tau(R(z))}\right)^{-1}\\
& = & \frac{1}{\tau(\mathbb X^2)\tau(\mathbb A^2)-\tau(\mathbb X)^2\tau(\mathbb A)^2}
=\frac{1}{\tau(\mathbb X^2)\tau(\mathbb A^2)}.
\end{eqnarray*}
If $\tau(\mathbb X)\neq0$, then
$$
\sphericalangle\lim_{z\to +\infty}\frac{\boldsymbol\rho}{z}=\sphericalangle\lim_{z\to +\infty}
\left(z\tau(\mathbb X)+z\tau(\mathbb{XA}R\mathbb{AX})-\frac{z\tau(\mathbb{XA}R)z\tau(R
\mathbb{AX})}{z\tau(R)}\right)^{-1}=\frac{1}{+\infty}=0.
$$
It follows that
\begin{eqnarray}
\lefteqn{\sphericalangle\lim_{z\to +\infty}\frac{(\tau\otimes{\rm tr}_2)\left[(ze_{11}-\mathbf{A-X})^{-1}
\right]^{-1}}{z}}\\
& = & \sphericalangle\lim_{z\to +\infty}\frac{1}{z}\begin{bmatrix} \frac{1}{\tau(R)}+\frac{\tau(R
\mathbb{AX})\boldsymbol\rho\tau(\mathbb{XA}R)}{\tau(R)^2} & \frac{\tau(R
\mathbb{AX})\boldsymbol\rho}{\tau(R)} \\  \frac{\boldsymbol\rho\tau(\mathbb{XA}R)}{\tau(R)} &
\boldsymbol\rho \end{bmatrix}\nonumber\\
& = & \sphericalangle\lim_{z\to +\infty}\begin{bmatrix} \frac{1}{z\tau(R)}+\frac{z\tau(R\mathbb{AX})z
\tau(\mathbb{XA}R)}{z^2\tau(R)^2}\frac{\boldsymbol\rho}{z} & \frac{z\tau(R
\mathbb{AX})}{z\tau(R)}\frac{\boldsymbol\rho}{z} \\  \frac{\boldsymbol\rho}{z}\frac{z
\tau(\mathbb{XA}R)}{z\tau(R)} & \frac{\boldsymbol\rho}{z} \end{bmatrix}\nonumber\\
& = &\left\{
\begin{array}{ccc}
\begin{bmatrix} 1 & 0 \\ 0 & 0 \end{bmatrix} & \text{if} & \tau(\mathbb X)\neq0,\\
\ & \ & \ \\
\begin{bmatrix} 1 & 0 \\ 0 & \frac{1}{\tau(\mathbb A^2)\tau(\mathbb X^2)} \end{bmatrix} & \text{if}
& \tau(\mathbb X)=0.
\end{array}\right.\nonumber
\end{eqnarray}
From the relation
\begin{equation}\label{R}
(\tau\otimes{\rm tr}_2)\left[(ze_{11}-\mathbf{A-X})^{-1}\right]^{-1}+ze_{11}
=\alpha(ze_{11})+\omega(ze_{11})
\end{equation}
together with
$\Im\alpha(ze_{11})\ge\Im ze_{11},\Im\omega(ze_{11})\ge\Im ze_{11}$, we deduce that the $(1,1)$
entries of the matrices $\alpha(ze_{11}),\omega(ze_{11})$ are Nevanlinna maps whose
Julia-Carath\'eodory derivative at infinity equals one.

Let us perform some brute force computations in order to determine more explicit relations between
$\alpha,\omega,$ and the entries of $(\tau\otimes{\rm tr}_2)\left[(ze_{11}-\mathbf{A-X})^{-1}\right]$.
For simplicity, we denote the entries of the two-by-two matrices $\alpha(ze_{1,1})$ and
$\omega(ze_{1,1})$ just by $\alpha_{ij}$ and $\omega_{ij}$, respectively, suppressing the variable $z$
(for example, $\alpha(ze_{1,1})_{1,2}$ is denoted $\alpha_{12}$):
\begin{eqnarray*}
\lefteqn{E_{\bf X}\left[(ze_{1,1}-\mathbf{A-X})^{-1}\right]}\\
& \!\!= & \!\!\begin{bmatrix} E_{\mathbb X}\left[(z-\mathbb B-\mathbb{AXA})^{-1}\right] &
- E_{\mathbb X}\left[(z-\mathbb B-\mathbb{AXA})^{-1}\mathbb{AX}\right] \\
-E_{\mathbb X}\left[\mathbb{XA}(z-\mathbb B-\mathbb{AXA})^{-1}\right] &  E_{\mathbb X}
\left[\mathbb X+\mathbb{XA}(z-\mathbb B-\mathbb{AXA})^{-1}\mathbb{AX}\right]\end{bmatrix}\\
& \!\!= &\!\!
\begin{bmatrix}\omega_{11}&\omega_{12}\\\omega_{21}&\omega_{22}+\mathbb X^{-1}
\end{bmatrix}^{-1}\\
& \!\!= & \!\!
\begin{bmatrix} R_\omega & -R_\omega\omega_{12}(\omega_{22}+\mathbb X^{-1})^{-1} \\
-(\omega_{22}\!+\!\mathbb X^{-1})^{-1}\!\omega_{21}R_\omega
 & \!(\omega_{22}\!+\!\mathbb X^{-1})^{-1}\!+\!(\omega_{22}\!+\!\mathbb X^{-1})^{-1}\!
\omega_{21}R_\omega\omega_{12}(\omega_{22}\!+\!\mathbb X^{-1})^{-1}\end{bmatrix},
\end{eqnarray*}
where $R_\omega=(\omega_{11}-\omega_{12}(\omega_{22}+\mathbb X^{-1})^{-1}\omega_{21})^{-1}$.
Its expression\footnote{It is clear that $\omega_{22}+\mathbb X^{-1}$ is invertible: if
$\Im \omega_{22}(ze_{11})>0$ for some, and hence all, $z\in\mathbb C^+$, then that is obvious.
If that is not the case, then $\omega_{22}$ is a real constant.  Since $\Im\omega(ze_{11})\ge\Im
ze_{11}\ge0$, having $\omega_{22}\in\mathbb R$ would force
$\omega_{12}(ze_{11})=\overline{\omega_{21}(ze_{11})}$ for all $z\in\mathbb C^+$. This, coupled
with the analyticity of $z\mapsto\omega(ze_{11})$, forces $\omega_{12}=\overline{\omega_{21}}$
to be a constant as well. If this constant were zero, then we'd get the obvious
contradiction $0=\Im[(\omega(ze_{1,1})-\mathbf X)^{-1}]_{2,2}=
\Im E_{\mathbb X}\left[\mathbb X+\mathbb{XA}(z-\mathbb B-\mathbb{AXA})^{-1}\mathbb{AX}
\right]\neq0$. If not, using the Schur complement formula again, this time with respect to $\omega_{11}$,
the $(2,2)$ entry is $(\omega_{22}+\mathbb X^{-1}-\omega_{21}\omega_{11}(z)^{-1}\omega_{12})^{-1}$.
Since $\omega_{11}(iy)/iy\to1$ as $y\to+\infty$, it follows that (by letting $z\to +\infty$
nontangentially and using Remark \ref{condiR}) $|\omega_{12}|^2(\omega_{22}+\mathbb X^{-1})^{-2}=
\tau(\mathbb A^2)\mathbb X^2$, which yields $\omega_{22}=0$, and thus, by the invertibility
hypothesis on $\mathbb X$, that $\omega_{22}+\mathbb X^{-1}=\mathbb X^{-1}$ is invertible as
an unbounded operator in $\tilde{\mathcal A}$, as needed.} can be arranged more conveniently:
\begin{eqnarray}
R_\omega&=&(\omega_{11}-\omega_{12}(\omega_{22}+\mathbb X^{-1})^{-1}\omega_{21})^{-1}\\
& = & (\omega_{11}-\omega_{12}(\omega_{22}\mathbb X+1)^{-1}\omega_{21}\mathbb X)^{-1}\\
& = & \left((\omega_{22}\mathbb X+1)\omega_{11}-\omega_{12}\omega_{21}\mathbb X\right)^{-1}
(\omega_{22}\mathbb X+1)\\
& = &\left(\omega_{11}+\det\omega\mathbb X\right)^{-1}+\left(\omega_{11}+\det\omega\mathbb X
\right)^{-1}\omega_{22}\mathbb X\\
& = & \frac{\omega_{22}}{\det\omega}+\left(1-\frac{\omega_{11}\omega_{22}}{\det\omega}\right)
\left(\omega_{11}+\det\omega\mathbb X\right)^{-1}\\
& = & \frac{\omega_{22}}{\det\omega}-\frac{\omega_{12}\omega_{21}}{\det\omega}
\left(\omega_{11}+\det\omega\mathbb X\right)^{-1}\\
& = & \frac{\omega_{22}}{\det\omega}-\frac{\omega_{12}\omega_{21}}{(\det\omega)^2}
\left(\frac{\omega_{11}}{\det\omega}+\mathbb X\right)^{-1}.
\end{eqnarray}
In the first four equalities above, no assumption is necessary. In the fifth and later, one makes the
implicit assumption that $\det\omega\neq0$. However, that must happen: if it were that $\det\omega=
\det\omega(ze_{1,1})=0$, then $\omega_{11}\omega_{22}=\omega_{12}\omega_{21}$. Replacing in
the expression of $R_\omega$, we obtain
$$
R_\omega=(\omega_{11}-\omega_{11}(\omega_{22}+\mathbb X^{-1})^{-1}
\omega_{22})^{-1}=\frac{1}{\omega_{11}}\left(1-\left(1+(\omega_{22}\mathbb X)^{-1}\right)^{-1}\right)^{-1}
=\frac{1+\omega_{22}\mathbb X}{\omega_{11}}.
$$
The $(2,2)$ entry of $E_{\bf X}\left[(ze_{1,1}-\mathbf{A-X})^{-1}\right]$ is then
\begin{eqnarray*}
\lefteqn{E_{\mathbb X}\!
\left[\mathbb X+\mathbb{XA}(z-\mathbb B-\mathbb{AXA})^{-1}\mathbb{AX}\right]}\\
& = & (\omega_{22}\!+\!\mathbb X^{-1})^{-1}\!+\!(\omega_{22}\!+\!\mathbb X^{-1})^{-1}\!
\omega_{21}R_\omega\omega_{12}(\omega_{22}\!+\!\mathbb X^{-1})^{-1}\\
& = & (\omega_{22}\!+\!\mathbb X^{-1})^{-1}\!+\!(\omega_{22}\!+\!\mathbb X^{-1})^{-1}
\omega_{11}\frac{1+\omega_{22}\mathbb X}{\omega_{11}}\omega_{22}(1\!+\!\mathbb X\omega_{22})^{-1}\mathbb X\\
& = & (\omega_{22}\!+\!\mathbb X^{-1})^{-1}\left(1+\omega_{22}\mathbb X\right)=\mathbb X,
\end{eqnarray*}
forcing $E_{\mathbb X}\left[\mathbb{XA}(z-\mathbb B-\mathbb{AXA})^{-1}
\mathbb{AX}\right]=0$. If this happens for one arbitrary $z\in\mathbb C^+$, it happens for all $z\in
\mathbb C^+$. Multiplying by $z$ and taking limit at infinity along the imaginary axis yields
\begin{align*}
0=zE_{\mathbb X}&\left[\mathbb{XA}(z-\mathbb B-\mathbb{AXA})^{-1}\mathbb{AX}\right]=\\
&\lim_{z\to +\infty} zE_{\mathbb X}\left[\mathbb{XA}(z-\mathbb B-\mathbb{AXA})^{-1}\mathbb{AX}
\right]=E_{\mathbb X}\left[\mathbb{XA}\mathbb{AX}\right]=\tau(\mathbb A^2)
\mathbb X^2\neq0,
\end{align*}
a contradiction. So $\det\omega\neq0$. Of course,
\begin{eqnarray}
-R_\omega\omega_{12}(\omega_{22}+\mathbb X^{-1})^{-1} & = &-\left(\omega_{11}+\det\omega
\mathbb X\right)^{-1}\omega_{12}\mathbb X\\
& = & -\frac{\omega_{12}}{\det\omega}+\frac{\omega_{11}\omega_{12}}{\det\omega}
\left(\omega_{11}+\det\omega\mathbb X\right)^{-1}\\
& = & -\frac{\omega_{12}}{\det\omega}+\frac{\omega_{12}\omega_{11}}{(\det\omega)^2}
\left(\frac{\omega_{11}}{\det\omega}+\mathbb X\right)^{-1},
\end{eqnarray}
\begin{eqnarray}
-(\omega_{22}+\mathbb X^{-1})^{-1}\omega_{21}R_\omega
& = & -\frac{\omega_{21}}{\det\omega}+\frac{\omega_{11}\omega_{21}}{\det\omega}\left(\omega_{11}+\det\omega\mathbb X\right)^{-1}\\
& = & -\frac{\omega_{21}}{\det\omega}+\frac{\omega_{21}\omega_{11}}{(\det\omega)^2}\left(\frac{\omega_{11}}{\det\omega}+\mathbb X\right)^{-1},
\end{eqnarray}
\begin{eqnarray}
\lefteqn{\!(\omega_{22}\!+\!\mathbb X^{-1})^{-1}\!+\!(\omega_{22}\!+\!\mathbb X^{-1})^{-1}\!\omega_{21}
R_\omega\omega_{12}(\omega_{22}\!+\!\mathbb X^{-1})^{-1}}\quad\quad\quad\quad\\
&\quad\quad\quad\quad\quad = & \left(\omega_{22}+\mathbb X^{-1}-\frac{\omega_{12}\omega_{21}}{\omega_{11}}\right)^{-1}\\
&\quad\quad\quad\quad\quad = &  \left(1+\frac{\det\omega}{\omega_{11}}\mathbb X\right)^{-1}\mathbb X\\
&\quad\quad\quad\quad\quad = & \frac{\omega_{11}}{\det\omega}-\frac{\omega_{11}}{\det\omega}\left(1+\frac{\det\omega}{\omega_{11}}\mathbb X\right)^{-1}\\
&\quad\quad\quad\quad\quad = & \frac{\omega_{11}}{\det\omega}-\left(\frac{\omega_{11}}{\det\omega}\right)^2\left(\frac{\omega_{11}}{\det\omega}+\mathbb X\right)^{-1}.
\end{eqnarray}
Thus, in particular,
$$
\begin{bmatrix} \omega_{11} & \omega_{12} \\ \omega_{21} & \omega_{22}+\!\mathbb X^{-1} \end{bmatrix}^{-1}\!\!
=\omega(ze_{11})^{-1}\!-\begin{bmatrix} \omega_{12} \\ -\omega_{11} \end{bmatrix}
\begin{array}{cc}\begin{bmatrix}\frac{1}{(\det\omega)^{2}}\left(\frac{\omega_{11}}{\det\omega}+\mathbb X\right)^{-1}\end{bmatrix} & \!\!\begin{bmatrix} \omega_{21} & -\omega_{11} \end{bmatrix}\\
\ & \
\end{array}.
$$
Before going forward, note that since $\Im\omega(ze_{11})\ge0,$
one has $\Im(\omega(ze_{11})^{-1})\le0,$ so that $\Im\frac{\omega_{jj}(ze_{11})}{\det\omega(ze_{11})}\le0,j=1,2.$
Since $\omega(\overline{z}e_{11})=\omega(ze_{11})^*$, it follows that
$\omega_{12}(\overline{z}e_{11})=\left[\omega(ze_{11})^*\right]_{12}=\overline{\omega_{21}(ze_{11})}$, or
$\omega_{12}(ze_{11})=\overline{\omega_{21}(\overline{z}e_{11})}$. Finally (as before, the limits below involving operators are in the so topology),
\begin{eqnarray*}
\tau(\mathbb A^2)\mathbb X^2
& = & \sphericalangle\lim_{z\to +\infty}z\left(E_{\mathbb X}\left[\mathbb X+\mathbb{XA}(z-\mathbb B-\mathbb{AXA})^{-1}\mathbb{AX}\right]-\mathbb X\right)\\
& = & \sphericalangle\lim_{z\to +\infty}z\left(\left(1+\frac{\det\omega}{\omega_{11}}\mathbb X\right)^{-1}\mathbb X-\mathbb X\right)\\
& = & \sphericalangle\lim_{z\to +\infty}z\left(1+\frac{\det\omega}{\omega_{11}}\mathbb X\right)^{-1}\left(1-1-\frac{\det\omega}{\omega_{11}}\mathbb X\right)\mathbb X\\
& = & -\sphericalangle\lim_{z\to +\infty}z\left(1+\frac{\det\omega}{\omega_{11}}\mathbb X\right)^{-1}\frac{\det\omega}{\omega_{11}}\mathbb X^2,
\end{eqnarray*}
which tells us that
$$
\sphericalangle\lim_{z\to +\infty}z\left(\frac{\omega_{11}}{\det\omega}+\mathbb X\right)^{-1}=-\tau(\mathbb A^2)\cdot1,\quad
\sphericalangle\lim_{z\to +\infty}\frac{z\det\omega(ze_{11})}{\omega_{11}(ze_{11})}=-\tau(\mathbb A^2),
$$
so that (since $z\mapsto\omega_{11}(ze_{11})$ is a Nevanlinna map with derivative one at infinity)
\begin{equation}
\sphericalangle\lim_{z\to +\infty}\det\omega(ze_{11})=-\tau(\mathbb A^2).
\end{equation}
It follows that
\begin{eqnarray*}
-\tau(\mathbb A)\mathbb X
& = & \sphericalangle\lim_{z\to +\infty}z\left(-E_{\mathbb X}\left[(z-\mathbb B-\mathbb{AXA})^{-1}\mathbb{AX}\right]\right)\\
& = & \sphericalangle\lim_{z\to +\infty}z\frac{\omega_{12}}{\det\omega}\left(-1+\frac{\omega_{11}}{\det\omega}\left(\frac{\omega_{11}}{\det\omega}+\mathbb X\right)^{-1}\right)\\
& = & \sphericalangle\lim_{z\to +\infty}z\frac{\omega_{12}}{\det\omega}(-\mathbb X)\left(\frac{\omega_{11}}{\det\omega}+\mathbb X\right)^{-1},
\end{eqnarray*}
so that
\begin{equation}
\sphericalangle\lim_{z\to +\infty}\frac{\omega_{12}(ze_{11})}{\det\omega(ze_{11})}=-\frac{\tau(\mathbb A)}{\tau(\mathbb A^2)}
=\sphericalangle\lim_{z\to +\infty}\frac{\omega_{21}(ze_{11})}{\det\omega(ze_{11})}.
\end{equation}
In particular,
\begin{equation}
\sphericalangle\lim_{z\to +\infty}\omega_{12}(ze_{11})=\tau(\mathbb A).
\end{equation}
This  allows us to identify the behavior of $\omega_{22}$:
\begin{eqnarray*}
1& = & \sphericalangle\lim_{z\to +\infty}zE_{\mathbb X}\left[(z-\mathbb B-\mathbb{AXA})^{-1}\right]\\
& = & \sphericalangle\lim_{z\to +\infty}z\left(\frac{\omega_{22}}{\det\omega}-\frac{\omega_{12}\omega_{21}}{(\det\omega)^2}\left(\frac{\omega_{11}}{\det\omega}+\mathbb X\right)^{-1}\right)\\
& = & \sphericalangle\lim_{z\to +\infty}z\frac{\omega_{22}}{\det\omega}-\left(-\frac{\tau(\mathbb A)}{\tau(\mathbb A^2)}\right)^2(-\tau(\mathbb A^2))\\
& = & \frac{\tau(\mathbb A)^2}{\tau(\mathbb A^2)}+\sphericalangle\lim_{z\to +\infty}z\frac{\omega_{22}}{\det\omega},
\end{eqnarray*}
or
$$
\sphericalangle\lim_{z\to +\infty}z\frac{\omega_{22}(ze_{11})}{\det\omega(ze_{11})}=1-\frac{\tau(\mathbb A)^2}{\tau(\mathbb A^2)}
=\frac{\tau(\mathbb A^2)-\tau(\mathbb A)^2}{\tau(\mathbb A^2)}.
$$
This shows that
\begin{equation}
\sphericalangle\lim_{z\to +\infty}z\omega_{22}(ze_{11})=\tau(\mathbb A)^2-\tau(\mathbb A^2)<0,
\end{equation}
so that $z\mapsto\omega_{22}(ze_{11})$
is minus the Cauchy transform of a positive Borel measure on $\mathbb R$ of total mass $\tau(\mathbb A^2)-\tau(\mathbb A)^2$, the variance of $\mathbb A$.

A consequence of these calculations is
\begin{proposition}\label{sans}
Assume that $\mathbb{X,A,B}$ are selfadjoint elements affiliated with $\mathcal A$ such that neither of $\mathbb{X,A}$ is a multiple of the identity of $\mathcal A$ and $\tau(\mathbb
A^2),\tau(\mathbb X^2)<+\infty$. If $\mathbb X$ is *-free from $(\mathbb{A,B})$, then there exists an analytic map $\mathsf f\colon\mathbb C^+\to\mathbb C^+$ such that
\begin{equation}\label{sff}
E_\mathbb X\left[\mathbb A(z-\mathbb B-\mathbb{AXA})^{-1}\mathbb A\right]=\left(\mathsf f(z)-\mathbb X\right)^{-1},\quad z\in\mathbb C.
\end{equation}
Moreover, $\sphericalangle\lim_{z\to +\infty}\frac{\mathsf f(z)}{z}=\frac{1}{\tau(\mathbb A^2)}\in(0,+\infty)$. If $\tau$ is a normal faithful tracial state, then this
result holds with no other restrictions on the operators involved; otherwise, the result holds for bounded operators.
\end{proposition}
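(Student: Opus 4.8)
The plan is to establish the identity \eqref{sff} first under the additional hypothesis that $\mathbb X$ is invertible in $\tilde{\mathcal A}$, where it can simply be read off from the subordination functions already constructed above, and then to remove invertibility by approximating $\mathbb X$ by invertible elements of $W^*(\mathbb X)$.

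\emph{Invertible case.} Here all the preceding machinery applies: the subordination functions $\alpha,\omega$ for $\mathbf A=\begin{bmatrix}\mathbb B&-\mathbb A\\-\mathbb A&0\end{bmatrix}$ and $\mathbf X=\begin{bmatrix}0&0\\0&-\mathbb X^{-1}\end{bmatrix}$ exist, $\det\omega(ze_{11})\neq0$, and $\sphericalangle\lim_{z\to +\infty}\det\omega(ze_{11})=-\tau(\mathbb A^2)$. Writing $R=(z-\mathbb B-\mathbb{AXA})^{-1}$, the $(2,2)$ entry of $(ze_{11}-\mathbf{A-X})^{-1}$ is $\mathbb X+\mathbb{XA}R\mathbb{AX}$, so applying $E_\mathbb X$ and using $\mathbb X\in W^*(\mathbb X)$ gives $\mathbb X+\mathbb X\,E_\mathbb X[\mathbb{ARA}]\,\mathbb X$; comparing this with the value $(1+\tfrac{\det\omega}{\omega_{11}}\mathbb X)^{-1}\mathbb X$ computed above and solving for $E_\mathbb X[\mathbb{ARA}]$ yields
\[
E_\mathbb X\big[\mathbb A(z-\mathbb B-\mathbb{AXA})^{-1}\mathbb A\big]=\Big(-\tfrac{\omega_{11}(ze_{11})}{\det\omega(ze_{11})}-\mathbb X\Big)^{-1},
\]
so one sets $\mathsf f(z)=-\omega_{11}(ze_{11})/\det\omega(ze_{11})$. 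Since $\Im\omega(ze_{11})\ge0$ forces $\Im\omega(ze_{11})^{-1}\le0$ and $\omega_{11}/\det\omega=[\omega(ze_{11})^{-1}]_{22}$, we get $\Im\mathsf f(z)\ge0$ on $\mathbb C^+$; and $\mathsf f$ is non-constant because $\omega_{11}(ze_{11})/z\to1$ and $\det\omega(ze_{11})\to-\tau(\mathbb A^2)\neq0$, so $\mathsf f(z)/z\to1/\tau(\mathbb A^2)$ and $\mathsf f\colon\mathbb C^+\to\mathbb C^+$.

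\emph{Reduction of the general case.} If $\mathbb X$ is not invertible, pick injective Borel functions $g_n\colon\mathbb R\to\mathbb R$ with $\|g_n-\mathrm{id}\|_\infty\le1/n$ whose ranges stay bounded away from $0$, and set $\mathbb X_n=g_n(\mathbb X)$. Then $\mathbb X_n=\mathbb X_n^\ast\in\tilde{\mathcal A}$ is invertible in $\tilde{\mathcal A}$, lies in $W^*(\mathbb X)$ (hence is $\ast$-free from $(\mathbb A,\mathbb B)$ and $E_{\mathbb X_n}=E_{\mathbb X}$), is not a multiple of $1$, satisfies $\tau(\mathbb X_n^2)<+\infty$, and $\|\mathbb X_n-\mathbb X\|\le1/n$. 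The invertible case produces analytic self-maps $\mathsf f_n$ of $\mathbb C^+$ with $E_\mathbb X[\mathbb A(z-\mathbb B-\mathbb A\mathbb X_n\mathbb A)^{-1}\mathbb A]=(\mathsf f_n(z)-\mathbb X_n)^{-1}$ and $\mathsf f_n(z)/z\to\gamma:=1/\tau(\mathbb A^2)$. As $\gamma$ is independent of $n$, each $\mathsf f_n$ is a Nevanlinna function with Julia--Carath\'eodory derivative $\gamma$ at $+\infty$, so $\Im\mathsf f_n(z)\ge\gamma\Im z$ uniformly in $n$; thus $\{\mathsf f_n\}$ is normal and no subsequential limit is a real constant, while no subsequential limit is $\infty$ because $\tau((\mathsf f_n(z)-\mathbb X_n)^{-1})=\tau(\mathbb A^2(z-\mathbb B-\mathbb A\mathbb X_n\mathbb A)^{-1})$ converges (see below) to the nonzero number $\tau(\mathbb A^2(z-\mathbb B-\mathbb{AXA})^{-1})$, whereas tightness and a uniform second-moment bound on $\mu_{\mathbb X_n}$ together with $\Im\mathsf f_n(z)\ge\gamma\Im z$ would force that quantity to $0$ if $|\mathsf f_n(z)|\to\infty$. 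Hence, along a subsequence, $\mathsf f_n\to\mathsf f$ locally uniformly with $\mathsf f\colon\mathbb C^+\to\mathbb C^+$.

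\emph{Passage to the limit, asymptotics, and the non-tracial case.} On the right side, $\mathbb X_n\to\mathbb X$ in norm and $\Im\mathsf f_n(z)\ge\gamma\Im z$ give $(\mathsf f_n(z)-\mathbb X_n)^{-1}\to(\mathsf f(z)-\mathbb X)^{-1}$ in operator norm. On the left side, with $R_n=(z-\mathbb B-\mathbb A\mathbb X_n\mathbb A)^{-1}$ and $R=(z-\mathbb B-\mathbb{AXA})^{-1}$, the resolvent identity gives $R_n-R=(R_n\mathbb A)(\mathbb X_n-\mathbb X)(\mathbb A R)$, and $\|R_n\mathbb A\|_2,\|\mathbb A R\|_2\le\tau(\mathbb A^2)^{1/2}/|\Im z|$ yield $\|R_n-R\|_1\le\tau(\mathbb A^2)/(n|\Im z|^2)$, hence $\|R_n-R\|_2^2\le\|R_n-R\|_\infty\|R_n-R\|_1\to0$; consequently $\mathbb A R_n\to\mathbb A R$ and $R_n\mathbb A\to R\mathbb A$ weakly in $L^2(\mathcal A,\tau)$. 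Pairing the identity against any bounded $w\in W^*(\mathbb X)$ and using $\tau(wE_\mathbb X[\mathbb A R_n\mathbb A])=\tau(w\mathbb A R_n\mathbb A)=\langle R_n\mathbb A,\mathbb A w^\ast\rangle_{L^2}$ (and likewise for $R$) gives $\tau(wE_\mathbb X[\mathbb{ARA}])=\tau(w(\mathsf f(z)-\mathbb X)^{-1})$ for all such $w$, i.e.\ \eqref{sff} for $z\in\mathbb C^+$ (and, by reflection and analytic continuation, for $z\notin\sigma(\mathbb B+\mathbb{AXA})$). Multiplying \eqref{sff} by $z$ and letting $z\to +\infty$ nontangentially, $z(z-\mathbb B-\mathbb{AXA})^{-1}\to1$ strongly and $\mathbb A^2\in L^1(\mathcal A,\tau)$, so $zE_\mathbb X[\mathbb{ARA}]\to E_\mathbb X[\mathbb A^2]=\tau(\mathbb A^2)\cdot1$ by $\ast$-freeness, while $z(\mathsf f(z)-\mathbb X)^{-1}\sim z/\mathsf f(z)$ since $\mathsf f(z)\to +\infty$; thus $\mathsf f(z)/z\to1/\tau(\mathbb A^2)\in(0,+\infty)$. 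Finally, if all the operators are bounded, the operator-valued subordination of \cite{Vo00,BBs} and the wo-continuity of $E_\mathbb X$ only need $\tau$ faithful and wo-continuous, so the whole argument goes through without traciality.

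The step I expect to be the real obstacle is the non-invertible case, specifically the two convergence points: obtaining a non-degenerate limit of the approximating subordination functions $\mathsf f_n$ — for which the uniform estimate $\Im\mathsf f_n(z)\ge\gamma\Im z$, read off from the common asymptotic at $+\infty$, together with the non-vanishing of $\tau(\mathbb A^2 R)$ and the second-moment control on $\mu_{\mathbb X_n}$, is exactly what rules out collapse to a real constant or to $\infty$ — and transporting the subordination identity through $E_\mathbb X$ with $\mathbb A$ unbounded, which is why one is forced into the weak-$L^2$ argument above rather than a naive norm or $L^1$ passage.
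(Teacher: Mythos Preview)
Your proof follows the same architecture as the paper's: read off $\mathsf f=-\omega_{11}/\det\omega$ from the $(2,2)$ entry of the $2\times2$ subordination in the invertible case, then approximate a general $\mathbb X$ by invertibles $\mathbb X_n=g_n(\mathbb X)\in W^*(\mathbb X)$ and pass to the limit. Your handling of the invertible case and of the normal-families/non-degeneracy issues (uniform lower bound $\Im\mathsf f_n(z)\ge\gamma\,\Im z$, ruling out $\mathsf f_n\to\infty$ via the nonvanishing of $\tau(\mathbb A R\mathbb A)$) is equivalent to, and in places more explicit than, the paper's, which instead recovers $\mathsf f$ by inverting the Cauchy transform $g(w)=\tau((w-\mathbb X)^{-1})$ on a Stolz angle.

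The one genuine difference is in how the left-hand side is pushed through the limit in $n$. The paper introduces a second truncation $\mathbb A_M=K_M(\mathbb A)$, shows $|\tau(\mathbb A_M(R_n-R)\mathbb A_M)|\le 4M^2s_n\tau(\mathbb A^2)/|\Im z|^2$, proves $\tau(\mathbb A_M R_n\mathbb A_M)\to\tau(\mathbb A R_n\mathbb A)$ uniformly in $n$ by dominated convergence, and combines via an $\varepsilon/3$ argument. You bypass the $\mathbb A_M$ truncation entirely by proving $R_n\to R$ in $L^2$ and then testing $\tau(w\mathbb A R_n\mathbb A)=\langle R_n\mathbb A,\mathbb A w^*\rangle_{L^2}$ against bounded $w\in W^*(\mathbb X)$. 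This is shorter and buys you the full conditional-expectation identity in one stroke. The only spot that wants one more line is the word ``consequently'': weak-$L^2$ convergence of $R_n\mathbb A$ does not follow from $R_n\to R$ in $L^2$ by a bounded-operator argument (right multiplication by unbounded $\mathbb A$ is not continuous on $L^2$), but it \emph{does} follow because $\sup_n\|R_n\mathbb A\|_2\le\tau(\mathbb A^2)^{1/2}/|\Im z|<\infty$ and, by traciality, $\langle R_n\mathbb A,v\rangle=\tau(\mathbb A v^*R_n)=\langle R_n,v\mathbb A\rangle\to\langle R,v\mathbb A\rangle=\langle R\mathbb A,v\rangle$ for every bounded $v$, which is dense in $L^2$.
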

For invertible bounded $\mathbb A$ and bounded $\mathbb{B,X}$, \eqref{sff} follows from Voiculescu's \cite{FreeMarkov} applied (with the notations from Voiculescu's
article) to $a=\mathbb A^{-1}(z-\mathbb B)\mathbb A^{-1}$ and $c=-\mathbb X$.
\begin{proof}
If $\mathbb X$ is invertible, then one has $\mathsf f=-\frac{\omega_{11}}{\det\omega}$ according to the previous calculations.
For general $\mathbb X=\mathbb X^*\in L^2(\mathcal A,\tau)$, one approximates $\mathbb X$ with a sequence of invertible operators
$\mathbb X_n$, all being affiliated with the von Neumann algebra generated by $\mathbb X$. Such a sequence can be obtained as before, by, for instance, choosing
$\{s_n\}_{n\in\mathbb N}\subset(0,+\infty)$ such that $s_n\searrow0$ as $n\to +\infty$ and $s_n,-\frac{s_n}{2}$ are not eigenvalues of $\mathbb X$, and defining $\mathbb X_n=
k_n(\mathbb X)$ in the sense of Borel functional calculus with $k_n\colon\mathbb R\to\mathbb R\setminus[-s_n,s_n)$, $k_n(t)=t$ if $|t|\ge s_n$ and $k_n(t)=\frac{t-3s_n}{4}$ if
$|t|<s_n$. Then $\mathbb X-\mathbb X_n$ is a bounded positive operator, with $0\leq\mathbb X-\mathbb X_n<2s_n\cdot1\to0$ as $n\to +\infty$. Pick an arbitrary $M>0$ and let
$\mathbb A_M=K_M(\mathbb A)$, where $K_M(t)=t$ if $|t|\le M$, $K_M(t)=-M$ if $t<-M$, $K_M(t)=M$ if $t>M$. We can write
\begin{eqnarray*}
\lefteqn{\mathbb A_M(z-\mathbb B-\mathbb{AX}_n\mathbb{A})^{-1}\mathbb A_M-\mathbb A_M(z-\mathbb B-\mathbb{AX}\mathbb{A})^{-1}\mathbb A_M}\\
& = & \mathbb A_M\left[(z-\mathbb B-\mathbb{AX}_n\mathbb{A})^{-1}-(z-\mathbb B-\mathbb{AX}\mathbb{A})^{-1}\right]\mathbb A_M\\
& = & \mathbb A_M\left[(z-\mathbb B-\mathbb{AX}_n\mathbb{A})^{-1}(\mathbb{AX}\mathbb{A}-\mathbb{AX}_n\mathbb{A})(z-\mathbb B-\mathbb{AX}\mathbb{A})^{-1}\right]\mathbb A_M\\
& = & \mathbb A_M(z-\mathbb B-\mathbb{AX}_n\mathbb{A})^{-1}\mathbb{A}(\mathbb{X}-\mathbb{X}_n)\mathbb{A}(z-\mathbb B-\mathbb{AX}\mathbb{A})^{-1}\mathbb A_M.
\end{eqnarray*}
Applying $\tau$ to the above yields
\begin{eqnarray*}
\lefteqn{\tau\!\left(\!\mathbb A_M(z-\mathbb B-\mathbb{AX}_n\mathbb{A})^{-1}\mathbb{A}(\mathbb{X}-\mathbb{X}_n)\mathbb{A}(z-\mathbb B-\mathbb{AX}\mathbb{A})^{-1}
\mathbb A_M\!\right)}\\
&\!\!\!\! = &\!\!\! \tau\!\left(\![\mathbb{A}(\mathbb{X}-\mathbb{X}_n)\mathbb{A}]^\frac12(z-\mathbb B-\mathbb{AX}\mathbb{A})^{-1}\!
\mathbb A_M^2(z-\mathbb B-\mathbb{AX}_n\mathbb{A})^{-1}[\mathbb{A}(\mathbb{X}-\mathbb{X}_n)\mathbb{A}]^\frac12\!\right)\\
&\!\!\!\! = &\!\!\! \tau\!\left(\![\mathbb{A}(\mathbb{X}-\mathbb{X}_n)\mathbb{A}]^\frac12\Re\!\left(\!(z-\mathbb B-\mathbb{AX}\mathbb{A})^{-1}\!
\mathbb A_M^2(z-\mathbb B-\mathbb{AX}_n\mathbb{A})^{-1}\!\right)\![\mathbb{A}(\mathbb{X}-\mathbb{X}_n)\mathbb{A}]^\frac12\!\right)\\
&\!\!\!\! & \!\!\!\mbox{}+i\tau\!\left(\![\mathbb{A}(\mathbb{X}-\mathbb{X}_n)\mathbb{A}]^\frac12\Im\!\left(\!(z-\mathbb B-\mathbb{AX}\mathbb{A})^{-1}\!
\mathbb A_M^2(z-\mathbb B-\mathbb{AX}_n\mathbb{A})^{-1}\!\right)\![\mathbb{A}(\mathbb{X}-\mathbb{X}_n)\mathbb{A}]^\frac12\!\right)\!.
\end{eqnarray*}
Since $\left\|(z-\mathbb B-\mathbb{AX}\mathbb{A})^{-1}\mathbb A_M^2(z-\mathbb B-\mathbb{AX}_n\mathbb{A})^{-1}\right\|\le\frac{M^2}{|\Im z|^2}$,
the same holds for the real and imaginary part of this operator. Thus,
\begin{eqnarray}
\lefteqn{\left|\tau\!\left(\mathbb A_M(z-\mathbb B-\mathbb{AX}_n\mathbb{A})^{-1}\mathbb{A}(\mathbb{X}-\mathbb{X}_n)\mathbb{A}(z-\mathbb B-\mathbb{AX}\mathbb{A})^{-1}
\mathbb A_M\right)\right|}\label{em}\\
&\quad\quad\quad\leq & 2\frac{M^2}{|\Im z|^2}\tau\!\left(\mathbb{A}(\mathbb{X}-\mathbb{X}_n)\mathbb{A}\right)\leq 4\frac{M^2}{|\Im z|^2}s_n\tau\left(\mathbb A^2\right).
\nonumber
\end{eqnarray}
Now, $\mathbb A(\mathbb A-\mathbb A_M)$ is the continuous functional calculus of $\mathbb A$ with the function
$$
q_M\colon\mathbb R\ni t\mapsto\left\{\begin{array}{cc}
t(t+M) & \text{ if }t<-M\\
0 & \text{ if } |t|\le M\\
t(t-M) & \text{ if }t>M
\end{array}\right..
$$
Clearly, this is a continuous, nonnegative function, tending pointwise to zero as $M\to+\infty$. Thus, $\mathbb A(\mathbb A-\mathbb A_M)\ge0$ in $\mathcal A$.
Via the argument already used above, one may thus write $|\tau((\mathbb A-\mathbb A_M)(z-\mathbb B-\mathbb{AX}_n\mathbb{A})^{-1}\mathbb A)|=
|\tau(\mathbb A(\mathbb A-\mathbb A_M)(z-\mathbb B-\mathbb{AX}_n\mathbb{A})^{-1})|=
|\tau(\sqrt{\mathbb A(\mathbb A-\mathbb A_M)}(z-\mathbb B-\mathbb{AX}_n\mathbb{A})^{-1}\sqrt{\mathbb A(\mathbb A-\mathbb A_M)})|\le
\frac{2}{|\Im z|}\tau(\mathbb A(\mathbb A-\mathbb A_M))$. With $\mu_\mathbb A$ denoting the distribution of $\mathbb A$ with respect to $\tau$, one has
$$
\tau(\mathbb A(\mathbb A-\mathbb A_M))=\int_\mathbb Rq_M(t)\,{\rm d}\mu_\mathbb A(t).
$$
Since $\mathbb A\in L^2(\mathcal A,\tau)$ and $q_M(t)\le t^2$ for all $t\in\mathbb R$, dominated convergence tells us that $\tau(\mathbb A(\mathbb A-\mathbb A_M))\to0$
as $M\to+\infty$. Since $\mathbb A_M(\mathbb A-\mathbb A_M)$ is given by a function of the form
$$
\mathbb R\ni t\mapsto\left\{\begin{array}{cc}
-M(t+M) & \text{ if }t<-M\\
0 & \text{ if } |t|\le M\\
M(t-M) & \text{ if }t>M
\end{array}\right.,
$$
one also has that $\tau(\mathbb A_M(\mathbb A-\mathbb A_M))\to0$ as $M\to+\infty$. Writing $\tau(\mathbb A_M(z-\mathbb B-\mathbb{AX}_n\mathbb{A})^{-1}\mathbb A_M)
-\tau(\mathbb A(z-\mathbb B-\mathbb{AX}_n\mathbb{A})^{-1}\mathbb A)=\tau((\mathbb A_M-\mathbb A)(z-\mathbb B-\mathbb{AX}_n\mathbb{A})^{-1}\mathbb A_M)
+\tau(\mathbb A(z-\mathbb B-\mathbb{AX}_n\mathbb{A})^{-1}(\mathbb A_M-\mathbb A))$, we obtain
$$
\lim_{M\to+\infty}\tau(\mathbb A_M(z-\mathbb B-\mathbb{AX}_n\mathbb{A})^{-1}\mathbb A_M)=\tau(\mathbb A(z-\mathbb B-\mathbb{AX}_n\mathbb{A})^{-1}\mathbb A),
$$
uniformly in $n$. In particular,
$$
\lim_{M\to+\infty}\tau(\mathbb A_M(z-\mathbb B-\mathbb{AX}\mathbb{A})^{-1}\mathbb A_M)=\tau(\mathbb A(z-\mathbb B-\mathbb{AX}\mathbb{A})^{-1}\mathbb A)
$$
as well.

For a $z$ in a compact subset of $\mathbb C^+$, write
\begin{eqnarray}
\lefteqn{\left|\tau\left(\mathbb A(z-\mathbb B-\mathbb{AX}_n\mathbb{A})^{-1}\mathbb{A}\right)-\tau\left(\mathbb{A}(z-\mathbb B-\mathbb{AXA})^{-1}\mathbb A\right)\right|}
\nonumber\\
& \leq & \left|\tau\left(\mathbb A(z-\mathbb B-\mathbb{AX}_n\mathbb{A})^{-1}\mathbb{A}\right)-\tau\left(\mathbb A_M(z-\mathbb B-\mathbb{AX}_n\mathbb{A})^{-1}
\mathbb{A}_M\right)\right|\label{uno}\\
& & \mbox{}+\left|\tau\left(\mathbb A_M(z-\mathbb B-\mathbb{AX}_n\mathbb{A})^{-1}\mathbb{A}_M\right)-
\tau\left(\mathbb{A}_M(z-\mathbb B-\mathbb{AXA})^{-1}\mathbb A_M\right)\right|\label{altro}\\
& & \mbox{}+\left|\tau\left(\mathbb{A}_M(z-\mathbb B-\mathbb{AXA})^{-1}\mathbb A_M\right)-\tau\left(\mathbb{A}(z-\mathbb B-\mathbb{AXA})^{-1}\mathbb A\right)\right|.
\label{huh}
\end{eqnarray}
If an $\varepsilon>0$ is given then, by the dominated convergence argument above,
there exists an $M_\varepsilon>0$ such that for all $M\ge M_\varepsilon$,
expressions \eqref{uno} and \eqref{huh} are less than $\frac\varepsilon3$
for all $n\in\mathbb N$ and $z$ in the chosen compact set. For such a fixed $M\ge M_\varepsilon$,
we choose $n_0\in\mathbb N$ such that $\varepsilon>12\frac{M^2}{|\Im z|^2}s_n\tau\left(\mathbb A^2\right)$
for all $n\ge n_0$ and $z$ in the chosen compact set. Thanks to
\eqref{em}, we have proved that for any $\varepsilon>0$ there exists an $n_\varepsilon\in\mathbb N$ such that
$$
\left|\tau\left(\mathbb A(z-\mathbb B-\mathbb{AX}_n\mathbb{A})^{-1}\mathbb{A}\right)
-\tau\left(\mathbb{A}(z-\mathbb B-\mathbb{AXA})^{-1}\mathbb A\right)\right|<\varepsilon
$$
for all $n\ge n_\varepsilon$ and $z$ in the chosen compact set.

The existence of the function $\mathsf f_n$ such that the relation
$\tau\!\left(\mathbb A(z-\mathbb B-\mathbb{AX}_n\mathbb{A})^{-1}\mathbb{A}\right)\!=
\tau\left((\mathsf f_n(z)-\mathbb X_n)^{-1}\right)$ holds has been proven.
The convergence of the left hand side to $\tau\left(\mathbb{A}(z-\mathbb B-\mathbb{AXA})^{-1}\mathbb A\right)$
and the convergence of $w\mapsto\tau((w-\mathbb X_n)^{-1})$ right hand side to $g\colon
w\mapsto\tau\left((w-\mathbb X)^{-1}\right)$ as $n\to +\infty$ on compact subsets of $\mathbb C^+$
has been shown already. Then, by the arguments present, for instance,
in \cite{BV93}, there exists a truncated Stolz angle at infinity on which the
functions $g_n\colon w\mapsto\tau\left((w-\mathbb X_n)^{-1}\right)$ are all invertible, and their
inverses are defined on the image of possibly smaller truncated  Stolz angle at zero via
$z\mapsto\tau\left(\mathbb A(z-\mathbb B-\mathbb{AX}_n\mathbb{A})^{-1}\mathbb{A}\right)$
for all $n\in\mathbb N$. For all $z$ in this truncated Stolz angle, we find
$$
\mathsf f(z)=g^{\langle -1\rangle}\left(\tau\left(\mathbb{A}(z-\mathbb B-\mathbb{AXA})^{-1}\mathbb A\right)\right)
$$
$$
=\lim_{n\to +\infty}g^{\langle -1\rangle}_n
\left(\tau\!\left(\mathbb A(z-\mathbb B-\mathbb{AX}_n\mathbb{A})^{-1}\mathbb{A}\right)\right)=\lim_{n\to +\infty}{\sf f}_n(z).
$$
A normal families argument allows us to conclude that
$$
\lim_{n\to +\infty}{\sf f}_n(z)=\mathsf f(z)\text{ and }\tau\left(\mathbb{A}(z-\mathbb B-\mathbb{AXA})^{-1}\mathbb A\right)=
\tau\left((\mathsf f(z)-\mathbb X)^{-1}\right),
$$
holds for all $z\in\mathbb C^\pm$, the convergence being uniform on compact subsets of $\mathbb C^\pm$.
This is \eqref{sff} after the application of the trace $\tau$. Writing $E_{\mathbb X_n}
\left[\mathbb A(z-\mathbb B-\mathbb{AX}_n\mathbb{A})^{-1}\mathbb A\right]=\left(\mathsf f_n(z)-\mathbb X_n\right)^{-1}$,
we note that we have proved the (norm-) convergence, uniform on compact subsets of $\mathbb C^+$, of the right-hand side
to the operator $\left(\mathsf f(z)-\mathbb X\right)^{-1}$.
Since $\mathbb X_n=k_n(\mathbb X)$ and $k_n$ has a compositional inverse given by
$k_n^{\langle-1\rangle}\colon(-\infty,\frac{-s_n}{2})\cup[s_n,+\infty)\to\mathbb R$,
$k_n^{\langle-1\rangle}(s)=s$ if $|s|\ge s_n$, $k_n^{\langle-1\rangle}(s)=4s+3s_n$ if $s\in(-s_n,-\frac{s_n}{2})$,
it follows that the von Neumann algebras ({\em not} the $C^*$-algebras!) generated by $\mathbb X$ and $\mathbb X_n$
are isomorphic:  $\{\mathbb X_n\}''=\{\mathbb X\}''$. Thus, $E_{\mathbb X_n}=E_{\mathbb X}$. The above can then be
written as $E_{\mathbb X}\left[\mathbb A(z-\mathbb B-\mathbb{AX}_n\mathbb{A})^{-1}\mathbb A\right]=\left(\mathsf f_n(z)-
\mathbb X_n\right)^{-1}$. Thanks to the continuity of $E_\mathbb X$, this concludes the proof of our proposition.
\end{proof}

Let us keep in mind that the requirement $\mathbb X\in L^2(\mathcal A,\tau)$ came up in the proof of the above proposition only indirectly, through the existence of
$-\frac{\omega_{11}}{\det\omega}$.

We return to studying the matrix-valued subordination functions, and focus now on $\alpha$, with the previously introduced notations.
\begin{eqnarray*}
\lefteqn{E_{\bf A}\left[(ze_{1,1}-\mathbf{A-X})^{-1}\right]}\\
& \!\!= & \!\!\begin{bmatrix} E_{(\mathbb{A,B})}\left[(z-\mathbb B-\mathbb{AXA})^{-1}\right] &
- E_{(\mathbb{A,B})}\left[(z-\mathbb B-\mathbb{AXA})^{-1}\mathbb{AX}\right] \\
-E_{(\mathbb{A,B})}\left[\mathbb{XA}(z-\mathbb B-\mathbb{AXA})^{-1}\right] &  E_{(\mathbb{A,B})}\left[\mathbb X+
\mathbb{XA}(z-\mathbb B-\mathbb{AXA})^{-1}\mathbb{AX}\right]\end{bmatrix}\\
& \!\!= & \!\!\begin{bmatrix} E_{(\mathbb{A,B})}\left[(z-\mathbb B-\mathbb{AXA})^{-1}\right] &
- E_{(\mathbb{A,B})}\left[(z-\mathbb B-\mathbb{AXA})^{-1}\mathbb{AX}\right] \\
-E_{(\mathbb{A,B})}\left[\mathbb{XA}(z-\mathbb B-\mathbb{AXA})^{-1}\right] &  \tau(\mathbb X)+E_{(\mathbb{A,B})}
\left[\mathbb{XA}(z-\mathbb B-\mathbb{AXA})^{-1}\mathbb{AX}\right]\end{bmatrix}\\
& \!\!= &\!\! \begin{bmatrix} \alpha_{11}-\mathbb B & \alpha_{12}+\mathbb A \\
\alpha_{21}+\mathbb A & \alpha_{22} \end{bmatrix}^{-1}\\
& \!\!= & \!\!\begin{bmatrix} R_\alpha & -R_\alpha(\alpha_{12}+\mathbb A)\alpha_{22}^{-1} \\
-\alpha_{22}^{-1}\!(\alpha_{21}+\mathbb A)R_\alpha &
\alpha_{22}^{-1}+\alpha_{22}^{-1}(\alpha_{21}+\mathbb A)R_\alpha(\alpha_{12}+\mathbb A)\alpha_{22}^{-1}\end{bmatrix},
\end{eqnarray*}
where\footnote{We should probably mention here briefly why $\alpha_{22}\neq0$: otherwise, one has
$\begin{bmatrix} \alpha_{11}-\mathbb B & \alpha_{12}+\mathbb A \\ \alpha_{21}+\mathbb A & 0 \end{bmatrix}^{-1}=
\begin{bmatrix} 0 & (\alpha_{21}+\mathbb A)^{-1} \\ 
(\alpha_{12}+\mathbb A)^{-1} & (\alpha_{12}+\mathbb A)^{-1}(\mathbb B-\alpha_{11})(\alpha_{21}+\mathbb A)^{-1}
\end{bmatrix}$,
which tells us that $ E_{(\mathbb{A,B})}\left[(z-\mathbb B-\mathbb{AXA})^{-1}\right]=0,$ an obvious contradiction.}  again
$R_\alpha=\left(\alpha_{11}-\mathbb B-(\alpha_{12}+\mathbb A)\alpha_{22}^{-1}(\alpha_{21}+\mathbb A)\right)^{-1}$.
As for $\omega$, one has $\Im\alpha(z\allowbreak e_{11})\ge\Im ze_{11}$, so that $\Im \alpha_{11}(ze_{11})\ge\Im z$, $\Im
\alpha_{22}(ze_{11})\ge0$. As already proven,
$\sphericalangle\lim_{z\to +\infty}\frac{\alpha_{11}(ze_{11})}{z}=1$.

Under the more restrictive hypotheses of Proposition \ref{rmkBig} (no restriction on $\mathbb{B,X}$, but $\mathbb{A}$ bounded),
the $(1,1)$ entry tells us that
$$
(z-\mathbb B+\mathbb Af(z)\mathbb A)^{-1}=R_\alpha
=\left(\alpha_{11}-\mathbb B-(\alpha_{12}+\mathbb A)\alpha_{22}^{-1}(\alpha_{21}+\mathbb A)\right)^{-1},
$$
or
$$
z-\mathbb B+\mathbb Af(z)\mathbb A=
\alpha_{11}(z)-\mathbb B-(\alpha_{12}(z)+\mathbb A)\alpha_{22}(z)^{-1}(\alpha_{21}(z)+\mathbb A),\quad z\in\mathbb C^+.
$$
Expanding in the right hand side,
$$
z+f(z)\mathbb A^2=\alpha_{11}(z)-\frac{\alpha_{12}(z)\alpha_{21}(z)}{\alpha_{22}(z)}-\frac{\alpha_{12}(z)+\alpha_{21}(z)}{\alpha_{22}(z)}\mathbb A
-\frac{\mathbb A^2}{\alpha_{22}(z)}.
$$
If $\sigma(\mathbb A)$ contains three or more points, then, by applying the continuous functional calculus for bounded normal
operators in the above, for any $t\in\sigma(\mathbb A)\subseteq\mathbb R$, one has
$$
z-\left(\alpha_{11}(z)-\frac{\alpha_{12}(z)\alpha_{21}(z)}{\alpha_{22}(z)}\right)+
\frac{\alpha_{12}(z)+\alpha_{21}(z)}{\alpha_{22}(z)}t+\left(f(z)+\frac{1}{\alpha_{22}(z)}\right)t^2=0,
$$
for all $z\in\mathbb C^+$. This forces
\begin{eqnarray*}
\alpha_{11}(z)-\frac{\alpha_{12}(z)\alpha_{21}(z)}{\alpha_{22}(z)}& = & z,\\
\frac{\alpha_{12}(z)+\alpha_{21}(z)}{\alpha_{22}(z)}& = & 0,\\
f(z)=-\frac{1}{\alpha_{22}(z)}.
\end{eqnarray*}
If $\#\sigma(\mathbb A)=2$, i.e. $\mu_\mathbb A$ is a Bernoulli distribution, an approximation argument yields the same result
(recall that we have excluded $\#\sigma(\mathbb A)=1$ by hypothesis). Second relation simply means $\alpha_{12}=-\alpha_{21}$,
and the first together with the third tell us that $f(z)\det\alpha(ze_{11})=-z$; in particular, $\alpha(ze_{11})$ is confirmed again to be
invertible on $\mathbb C^\pm$. The second relation turns out to be superfluous:
consider the $(1,2)$ entry, multiplied to the right by $\mathbb A$.
\begin{eqnarray*}
\left(z-\mathbb B+f(z)\mathbb A^2\right)^{-1}(\alpha_{12}(z)+\mathbb A)f(z)\mathbb A
& = & -R_\alpha(\alpha_{12}(z)+\mathbb A)\alpha_{22}(z)^{-1}\mathbb A\\
& = & - E_{(\mathbb{A,B})}\left[(z-\mathbb B-\mathbb{AXA})^{-1}\mathbb{AX}\right]\mathbb A\\
& = & - E_{(\mathbb{A,B})}\left[(z-\mathbb B-\mathbb{AXA})^{-1}\mathbb{AX}\mathbb A\right]\\
& = & E_{(\mathbb{A,B})}\left[1-(z-\mathbb B-\mathbb{AXA})^{-1}(z-\mathbb B)\right]\\
& = & 1-\left(z-\mathbb B+f(z)\mathbb A^2\right)^{-1}(z-\mathbb B).
\end{eqnarray*}
We have only used Proposition \ref{rmkBig} and $E_{(\mathbb{A,B})}$ being a bimodule map. Multiply to the left in the above
by $z-\mathbb B+f(z)\mathbb A^2$ to obtain $\alpha_{12}(z)f(z)\mathbb A+f(z)\mathbb A^2=
z-\mathbb B+f(z)\mathbb A^2-z+\mathbb B$, which forces $\alpha_{12}(z)=\alpha_{21}(z)=0$ for all $z$.
Together with the first relation, this also imposes $\alpha_{11}(z)=z,z\in\mathbb C^+$. Thus,
\begin{equation}
\alpha(ze_{11})=\begin{bmatrix} z & 0 \\ 0 & -\frac{1}{f(z)} \end{bmatrix},\quad z\in\mathbb C^+,
\end{equation}
where $f$ is the map provided by Proposition \ref{rmkBig}. In particular, under the hypothesis of this proposition, we may record
the forms of all the entries of $E_{\bf A}\left[(ze_{1,1}-\mathbf{A-X})^{-1}\right]$:
\begin{eqnarray*}
E_{(\mathbb{A,B})}\left[(z-\mathbb B-\mathbb{AXA})^{-1}\right] & = & \left(z-\mathbb B+f(z)\mathbb A^2\right)^{-1},\\
E_{(\mathbb{A,B})}\left[(z-\mathbb B-\mathbb{AXA})^{-1}\mathbb{AX}\right] & = & -\left(z-\mathbb B+f(z)\mathbb A^2\right)^{-1}\mathbb Af(z),\\
E_{(\mathbb{A,B})}\left[\mathbb{XA}(z-\mathbb B-\mathbb{AXA})^{-1}\right] & = & - f(z)\mathbb A\left(z-\mathbb B+f(z)\mathbb A^2\right)^{-1}\\
E_{(\mathbb{A,B})}\left[\mathbb X+\mathbb{XA}(z-\mathbb B-\mathbb{AXA})^{-1}\mathbb{AX}\right]& = & f(z)\mathbb A\left(z-\mathbb B+f(z)\mathbb A^2\right)^{-1}\!\!
\mathbb Af(z)-f(z)\\
& = & \left(-\frac{1}{f(z)}-\mathbb A(z-\mathbb B)^{-1}\mathbb A\right)^{-1}.
\end{eqnarray*}
While the formulas above have been proven under the assumptions of Proposition \ref{rmkBig}, the existence of the analytic
functions $\alpha_{ij}$, $1\le i,j\le2$, on $\mathbb C^\pm$, has been shown under the assumption that
$\mathbb{A,B,X}$ are selfadjoint, $\tau(\mathbb A^2+\mathbb X^2)<+\infty$, and that $\mathbb X$ is free from
$(\mathbb{A,B})$ with respect to $\tau$, but also the more restrictive assumption that $\mathbb X$ is invertible.

Now we rewrite \eqref{R} to account for the entries of the matrices involved (recall that $\boldsymbol\rho=\boldsymbol\rho(z)=
\left(\tau(\mathbb X)+\tau(\mathbb{XA}R\mathbb{AX})-\frac{\tau(\mathbb{XA}R)\tau(R\mathbb{AX})}{\tau(R)}\right)^{-1}$
and the hypotheses are $\tau(\mathbb X^2)<+\infty$, $\mathbb X$ invertible in $\tilde{\mathcal A}$, and $\mathbb A$ bounded).
$$
\begin{bmatrix} \frac{1}{\tau(R)}+\frac{\tau(R\mathbb{AX})\boldsymbol\rho\tau(\mathbb{XA}R)}{\tau(R)^2} &
\frac{\tau(R\mathbb{AX})\boldsymbol\rho}{\tau(R)} \\
\frac{\boldsymbol\rho\tau(\mathbb{XA}R)}{\tau(R)} & \boldsymbol\rho \end{bmatrix}
+\begin{bmatrix} z & 0 \\ 0 & 0 \end{bmatrix}
=\begin{bmatrix}
z & 0 \\
0 & -\frac{1}{f(z)}
\end{bmatrix}
+\begin{bmatrix}
\omega_{11}(z) & \omega_{12}(z) \\
\omega_{21}(z) & \omega_{22}(z)
\end{bmatrix}.
$$
This provides the straightforward equalities
$$
\frac{\tau(R(z)\mathbb{AX})\boldsymbol\rho(z)}{\tau(R(z))}=\omega_{12}(z),\quad
\frac{\boldsymbol\rho(z)\tau(\mathbb{XA}R(z))}{\tau(R(z))}=\omega_{21}(z).
$$
Since $\tau(R(z)\mathbb{AX})=\tau\left(\left(\mathbb B-z-f(z)\mathbb A^2\right)^{-1}\mathbb Af(z)\right)
=\tau\left(f(z)\mathbb A\left(\mathbb B-z-f(z)\mathbb A^2\right)^{-1}\right)=\tau(\mathbb{XA}R(z))$ due to the
traciality of $\tau$, we obtain here also that $\omega_{12}(z)=\omega_{21}(z)$. Since $\omega_{12}(z)=
\overline{\omega_{21}(\overline{z})}$, we conclude also that $\overline{\omega_{12}(z)}=\omega_{12}(\overline{z})$.

More relations between entries can be established. It might be more convenient to write
\begin{eqnarray*}
\lefteqn{\begin{bmatrix}
\tau(R(z)) & -\tau(R(z)\mathbb{AX}) \\
-\tau(\mathbb{XA}R(z)) & \tau(\mathbb X+\mathbb{XA}R(z)\mathbb{AX})
\end{bmatrix}}\\
& = & \begin{bmatrix}
\omega_{11}(z) & \omega_{12}(z) \\
\omega_{12}(z) & \omega_{22}(z)-\frac{1}{f(z)}
\end{bmatrix}^{-1}\\
& = & \frac{1}{\omega_{11}(z)\omega_{22}(z)-\omega_{12}(z)^2-\frac{\omega_{11}(z)}{f(z)}}
\begin{bmatrix} \omega_{22}(z)-\frac{1}{f(z)} & -\omega_{12}(z) \\ -\omega_{12}(z) & \omega_{11}(z) \end{bmatrix}\\
& = & \frac{1}{\det\omega(z)-\frac{\omega_{11}(z)}{f(z)}}\begin{bmatrix} \omega_{22}(z)-\frac{1}{f(z)} & -\omega_{12}(z) \\
-\omega_{12}(z) & \omega_{11}(z) \end{bmatrix}.
\end{eqnarray*}
The lower right corner indicates a connection between $f$ and $\mathsf f$:
\begin{eqnarray*}
\mathsf f(z)^2\tau\left((\mathsf f(z)-\mathbb X)^{-1}\right)-\mathsf f(z)
& =&\frac{\omega_{11}(z)}{\det\omega(z)}-\left(\frac{\omega_{11}(z)}{\det\omega(z)}\right)^2\!
\tau\left(\left(\frac{\omega_{11}(z)}{\det\omega(z)}+\mathbb X\right)^{-1}\right)\\
& = & \tau(\mathbb X+\mathbb{XA}R(z)\mathbb{AX})\\
& = & \frac{\omega_{11}(z)}{\det\omega(z)-\frac{\omega_{11}(z)}{f(z)}}
=\frac{1}{\frac{\det\omega(z)}{\omega_{11}(z)}-\frac{1}{f(z)}}\\
& = & -\frac{1}{\frac{1}{\mathsf f(z)}+\frac{1}{f(z)}}\\
& = & f(z)^2\tau\left(\mathbb A(z-\mathbb B+f(z)\mathbb A^2)^{-1}\mathbb A\right)-f(z).
\end{eqnarray*}
(We have used here the relation through which we had found $\mathsf f$, namely
$\mathsf f(z)=-\frac{\omega_{11}(z)}{\det\omega(z)}$.)
This can also be written in a simpler form:
$$
\tau\left((\mathsf f(z)-\mathbb X)^{-1}\right)
=\frac{1}{\mathsf f(z)+f(z)}=\tau\left(\mathbb A(z-\mathbb B+f(z)\mathbb A^2)^{-1}\mathbb A\right).
$$
This allows us to obtain the pair $(f,\mathsf f)$ as a fixed point:

\begin{thm}\label{fixp}
Let $\mathbb{X,A,B}\in\tilde{\mathcal A}$ be three selfadjoint operators affiliated with the tracial $W^*$-probability
space $(\mathcal A,\tau)$, $\mathbb{A,X}\not\in\mathbb C\cdot1$.
Then there exists a pair of analytic self-maps of $\mathbb C^+$, denoted $(f,\mathsf f)$, such that
$$
\tau\left((\mathsf f(z)-\mathbb X)^{-1}\right)=\frac{1}{\mathsf f(z)+f(z)}
=\tau\left(\mathbb A(z-\mathbb B+f(z)\mathbb A^2)^{-1}\mathbb A\right)\quad z\in\mathbb C^+.
$$
In addition, the point $(f(z),\mathsf f(z))\in\mathbb C^+\times\mathbb C^+$ is the unique attracting fixed point of the map
$$
\mathscr F_z\colon\mathbb C^+\times\mathbb C^+\to\mathbb C^+\times\mathbb C^+,\quad
\mathscr F_z\begin{pmatrix} w_1\\ w_2\end{pmatrix}=
\begin{pmatrix}
\frac{1}{\tau\left((w_2-\mathbb X)^{-1}\right)}-w_2\\
\frac{1}{\tau\left(\mathbb A(z-\mathbb B+w_1\mathbb A^2)^{-1}\mathbb A\right)}-w_1
\end{pmatrix}.
$$
\end{thm}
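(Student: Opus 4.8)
The plan is to separate the statement into its formal core plus two analytic inputs. Writing $G_{\mathbb X}(w)=\tau((w-\mathbb X)^{-1})$ and $G_z(w)=\tau(\mathbb A(z-\mathbb B+w\mathbb A^2)^{-1}\mathbb A)$, one has $\mathscr F_z(w_1,w_2)=\bigl(\tfrac1{G_{\mathbb X}(w_2)}-w_2,\ \tfrac1{G_z(w_1)}-w_1\bigr)$, so a pair $(f(z),\mathsf f(z))\in\mathbb C^+\times\mathbb C^+$ is a fixed point of $\mathscr F_z$ \emph{if and only if} $\tau((\mathsf f(z)-\mathbb X)^{-1})=\tfrac1{\mathsf f(z)+f(z)}=\tau(\mathbb A(z-\mathbb B+f(z)\mathbb A^2)^{-1}\mathbb A)$; thus the theorem asserts that $\mathscr F_z$ is an analytic self-map of $\Pi:=\mathbb C^+\times\mathbb C^+$ with a unique attracting fixed point depending analytically on $z$. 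That $\mathscr F_z$ maps $\Pi$ into itself I would verify coordinatewise: the first coordinate $w_2\mapsto\tfrac1{G_{\mathbb X}(w_2)}-w_2$ is the reciprocal Cauchy transform of $\mathbb X$ minus the identity, which maps $\mathbb C^+$ into $\mathbb C^+$ because $\mu_{\mathbb X}$ is not a point mass ($\mathbb X\notin\mathbb C\cdot1$); for the second, when $\mathbb A$ is invertible in $\tcA$ one has $G_z(w)=\tau\bigl((w-\mathbb A^{-1}(\mathbb B-z)\mathbb A^{-1})^{-1}\bigr)$, the resolvent trace of an operator $C$ with $\Im C=-\Im z\,\mathbb A^{-2}<0$, and the identity $\Im\tau((w-C)^{-1})=-\Im w\,\tau(|(w-C)^{-1}|^2)-\||\Im C|^{1/2}(w-C)^{-1}\|_2^2$ together with Cauchy--Schwarz yields $\Im(1/G_z(w))\ge\Im w$; the non-invertible case is recovered by compressing to the complement of the kernel projection of $\mathbb A$ (a Schur-complement computation keeps the imaginary part of the compressed operator negative), or by approximation.

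For existence of the fixed point I would lean directly on the subordination analysis above. Since $\mathscr F_z$ depends only on the distribution of $\mathbb X$ and the joint distribution of $(\mathbb A,\mathbb B)$, one may pass to a free-product realization and assume $\mathbb X$ is $*$-free from $(\mathbb A,\mathbb B)$ (in any case the standing hypothesis of this section). Under the temporary assumptions that $\mathbb A$ is bounded, $\mathbb X$ is invertible and $\tau(\mathbb X^2)<+\infty$, Propositions \ref{rmkBig} and \ref{sans} supply $f$ and $\mathsf f$, and the explicit computations carried out above with the $M_2(\mathbb C)$-valued subordination functions $\alpha,\omega$ of the free pair $\bigl[\begin{smallmatrix}0&0\\0&-\mathbb X^{-1}\end{smallmatrix}\bigr]$, $\bigl[\begin{smallmatrix}\mathbb B&-\mathbb A\\-\mathbb A&0\end{smallmatrix}\bigr]$ give exactly the displayed relation, i.e.\ $(f(z),\mathsf f(z))$ is a fixed point of $\mathscr F_z$. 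The three temporary assumptions are then removed one at a time by the truncation scheme of the proof of Proposition \ref{sans}: replace $\mathbb X$ by the invertible $k_n(\mathbb X)$ (same von Neumann algebra, so $E_{\mathbb X}$ is unchanged) and $\mathbb A$ by the bounded $K_M(\mathbb A)$; the estimates \eqref{bax}, \eqref{em} and the dominated-convergence arguments already in place give locally uniform convergence in $z$ of the relevant traces — no second moment of $\mathbb A$ is needed because $\tau(\mathbb A(z-\mathbb B+w\mathbb A^2)^{-1}\mathbb A)=\tfrac1w\bigl(1-\tau((z-\mathbb B)(z-\mathbb B+w\mathbb A^2)^{-1})\bigr)$ is a finite quantity built from bounded operators — and a normal-families argument shows the limiting pair $(f,\mathsf f)$ still satisfies the relation and is analytic. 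Finally, $f$ and $\mathsf f$ genuinely take values in $\mathbb C^+$: were $f$ a real constant, the relation would force $\tfrac1{G_{\mathbb X}}-\mathrm{id}$ to be constant, hence $\mathbb X\in\mathbb C\cdot1$, a contradiction, and likewise for $\mathsf f$; being non-constant analytic maps $\mathbb C^+\to\overline{\mathbb C^+}$ they map $\mathbb C^+$ into $\mathbb C^+$. Thus $p_0:=(f(z),\mathsf f(z))$ is a genuine interior fixed point of $\mathscr F_z$.

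For uniqueness and attractivity, write $\mathscr F_z(w_1,w_2)=(\Phi_1(w_2),\Phi_2(z,w_1))$ with $\Phi_1=\tfrac1{G_{\mathbb X}}-\mathrm{id}$ and $\Phi_2(z,\cdot)=\tfrac1{G_z}-\mathrm{id}$. Because of this off-diagonal shape, $\mathscr F_z^{\circ2}$ is the \emph{product} map $\Psi_1\times\Psi_2$ on $\Pi$, where $\Psi_1=\Phi_1\circ\Phi_2(z,\cdot)$ and $\Psi_2=\Phi_2(z,\cdot)\circ\Phi_1$ are holomorphic self-maps of $\mathbb C^+$ whose fixed points are $f(z)$ and $\mathsf f(z)$ by the previous step. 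The crucial point is that $\Psi_1$ and $\Psi_2$ are not automorphisms of $\mathbb C^+$: if $\Psi_1$ were a (non-constant) Möbius automorphism, then $\Phi_2(z,\cdot)$ would have a holomorphic left inverse $\Psi_1^{-1}\circ\Phi_1$, hence be injective, and the identity theorem applied to $\Phi_2(z,\cdot)\circ(\Psi_1^{-1}\circ\Phi_1)=\mathrm{id}$ on the open set $\Phi_2(z,\mathbb C^+)$ would then make that left inverse two-sided — so $\Phi_2(z,\cdot)$ would be a Möbius automorphism; but the Cauchy--Schwarz estimate used in the first step shows a resolvent trace of an operator with nonzero negative imaginary part is never a Möbius function of $w$, a contradiction. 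By Wolff's theorem on $\mathbb C^+$, a holomorphic self-map of $\mathbb C^+$ which is not an automorphism and has an interior fixed point has all iterates converging, locally uniformly, to that (necessarily unique) fixed point; hence $\mathscr F_z^{\circ2n}=\Psi_1^{\circ n}\times\Psi_2^{\circ n}\to p_0$ and then $\mathscr F_z^{\circ(2n+1)}=\mathscr F_z\circ\mathscr F_z^{\circ2n}\to\mathscr F_z(p_0)=p_0$, so $\mathscr F_z^{\circ n}\to p_0$ locally uniformly on $\Pi$. Consequently $p_0$ is the unique fixed point of $\mathscr F_z$ and is attracting with basin all of $\Pi$, completing the argument.

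I expect the main obstacle to lie in the existence step: removing the temporary assumptions (bounded $\mathbb A$, invertible $\mathbb X$, finite $\tau(\mathbb X^2)$) to reach arbitrary self-adjoint operators affiliated with $\mathcal A$. The resolvents involved are only strong-operator (respectively $L^2$-) continuous under the truncations $k_n,K_M$, so this step demands essentially the same delicate bookkeeping as the proofs of Lemma \ref{rezlem} and Proposition \ref{sans}; by contrast the other two steps are comparatively routine, the only slightly subtle point there being the compression argument needed when $\mathbb A$ fails to be invertible.
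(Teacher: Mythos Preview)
Your proposal is correct but takes a genuinely different route from the paper. You obtain existence of the interior fixed point by importing it from the earlier subordination analysis (Propositions \ref{rmkBig} and \ref{sans} and the $M_2(\mathbb C)$-valued computations) under the temporary hypotheses $\mathbb A$ bounded, $\mathbb X$ invertible, $\tau(\mathbb X^2)<+\infty$, and then remove these by approximation; uniqueness and attractivity then follow from Wolff's theorem applied to each factor of $\mathscr F_z^{\circ 2}=\Psi_1\times\Psi_2$, once you have shown that $\Psi_1,\Psi_2$ are not automorphisms. The paper, by contrast, proves Theorem \ref{fixp} \emph{directly} and self-containedly: it works with the single-variable composition $g_z=\Phi_1\circ\Phi_2(z,\cdot)$ (your $\Psi_1$) and applies Denjoy--Wolff, the real work being to exclude $\infty$ as the Denjoy--Wolff point via a monotonicity argument for $v\mapsto\tau\big([\mathbb A^2+\tfrac{y}{v}+\tfrac{x-\mathbb B}{v}(\mathbb A^2+\tfrac{y}{v})^{-1}\tfrac{x-\mathbb B}{v}]^{-1}\big)$; the fact that $g_z$ extends to $\mathbb R$ with values in $\mathbb C^+$ (since $z\in\mathbb C^+$) gives the ``not an automorphism'' part for free, more cleanly than your left-inverse argument. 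The paper's approach buys independence from the preceding subordination machinery and from any approximation in $\mathbb A,\mathbb X$ (those approximations are deferred to Corollary \ref{eq}); yours buys a conceptually transparent uniqueness step but front-loads the delicate truncation bookkeeping that the paper postpones.
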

Note that indeed if $\mathscr F_z\begin{pmatrix} w_1\\ w_2\end{pmatrix}=\begin{pmatrix} w_1\\ w_2\end{pmatrix}$, then
$w_1+w_2=\frac{1}{\tau\left(\mathbb A(z-\mathbb B+w_1\mathbb A^2)^{-1}\mathbb A\right)}
=\frac{1}{\tau\left((w_2-\mathbb X)^{-1}\right)}$. Remarkably, $\mathbb X$ being *-free
from $(\mathbb{A,B})$ is not part of the hypothesis. This is a statement about a pair of Nevanlinna maps.
The theorem has already been proved when $\mathbb X$ is invertible
as an affiliated operator, $\mathbb A$ is bounded, and $\tau(\mathbb X^2)<+\infty.$ Indeed, one can
simply make $\mathbb X$ *-free from $(\mathbb{A,B})$.

\begin{proof}
Let us first note that $\Im\frac{1}{\tau((w_2-\mathbb X)^{-1})}>\Im w_2$ for all $w_2\in\mathbb C^+$,
thanks to our hypothesis that $\mathbb X$ is not a multiple of
the identity. Second,  for any fixed $z\in\mathbb C^+$,
$w_1\mapsto\frac{1}{\tau\left(\mathbb A(z-\mathbb B+w_1\mathbb A^2)^{-1}\mathbb A\right)}$
is a well-defined Nevanlinna map sending $\mathbb C^+$ into itself, while strictly increasing the imaginary
part of the argument. Indeed, first, it is quite clear that this map is well-defined: one can write
$(z-\mathbb B+w_1\mathbb A^2)^{-1}=(\Re z-\mathbb B+\Re w_1\mathbb A^2+i(\Im z+\Im w_1\mathbb A^2))^{-1}=
(\Im z+\Im w_1\mathbb A^2)^{-1/2}
((\Im z+\Im w_1\mathbb A^2)^{-1/2}(\Re z-\mathbb B+\Re w_1\mathbb A^2)(\Im z+\Im w_1\mathbb A^2)^{-1/2}+i)^{-1}
(\Im z+\Im w_1\mathbb A^2)^{-1/2}$.
Applying the trace $\tau$ yields
\begin{eqnarray*}
\lefteqn{\tau\left(\mathbb A(z-\mathbb B+w_1\mathbb A^2)^{-1}\mathbb A\right)}\\
& = & \tau\left(\mathbb A(\Im z+\Im w_1\mathbb A^2)^{-1/2}\left((\Im z+\Im w_1\mathbb A^2)^{-1/2}
(\Re z-\mathbb B+\Re w_1\mathbb A^2)\frac{}{}\right.\right.\\
& & \left.\left.\mbox{}\times(\Im z+\Im w_1\mathbb A^2)^{-1/2}+i\right)^{-1}
(\Im z+\Im w_1\mathbb A^2)^{-1/2}\mathbb A\right)\\
& = & \tau\left((\Im z+\Im w_1\mathbb A^2)^{-1/2}\mathbb A^2(\Im z+\Im w_1\mathbb A^2)^{-1/2}\frac{}{}\right.\\
& & \left.\mbox{}\times\left((\Im z+\Im w_1\mathbb A^2)^{-1/2}(\Re z-\mathbb B+\Re w_1\mathbb A^2)
(\Im z+\Im w_1\mathbb A^2)^{-1/2}+i\right)^{-1}\right).
\end{eqnarray*}
The operator $(\Im z+\Im w_1\mathbb A^2)^{-1/2}\mathbb A^2(\Im z+\Im w_1\mathbb A^2)^{-1/2}$ is positive
and bounded (norm equal to $\frac{1}{|\Im w_1|}$, as it follows from continuous functional calculus). The operator
$(\Im z+\Im w_1\mathbb A^2)^{-1/2}(\Re z-\mathbb B+\Re w_1\mathbb A^2)(\Im z+\Im w_1\mathbb A^2)^{-1/2}$
is densely defined, selfadjoint ($(\mathcal A,\tau)$ is a tracial $W^*$-probability space, so the space of affiliated operators
is a star-algebra). Thus, the operator
$\left((\Im z+\Im w_1\mathbb A^2)^{-1/2}(\Re z-\mathbb B+\Re w_1\mathbb A^2)(\Im z+\Im w_1\mathbb A^2)^{-1/2}
+i\right)^{-1}$ is normal, bounded in norm by one, so $\tau\left(\mathbb A(z-\mathbb B+w_1\mathbb A^2)^{-1}\mathbb A\right)$
is well-defined, bounded in absolute value by $\frac{1}{|\Im w_1|}$. The analyticity of the correspondence in $z$ is obvious (the
derivative in $z$ equals $-\tau\left(\mathbb A(z-\mathbb B+w_1\mathbb A^2)^{-2}\mathbb A\right)$,
obviously well-defined) and the derivative in $w_1$ is
$-\tau\left(\mathbb A(z-\mathbb B+w_1\mathbb A^2)^{-1}\mathbb A^2(z-\mathbb B+w_1\mathbb A^2)^{-1}\mathbb A\right)$,
giving an upper bound of $\frac{1}{|\Im w_1|^2}$, so the correspondence in $w_1$ is analytic as well.
Let us determine the behavior at infinity of $w_1\mapsto\tau\left(\mathbb A(z-\mathbb B+w_1\mathbb A^2)^{-1}\mathbb A\right)$:
\begin{eqnarray*}
\lefteqn{\lim_{v\to+\infty}iv\tau\left(\mathbb A(z-\mathbb B+iv\mathbb A^2)^{-1}\mathbb A\right)}\\
& = & \lim_{v\to+\infty}\tau\left(\mathbb A\left(\frac{z-\mathbb B}{iv}+\mathbb A^2\right)^{-1}\mathbb A\right)\\
& = & \lim_{v\to+\infty}\tau\left(\mathbb A\left(\frac{\Re z-\mathbb B}{iv}+\frac{\Im z}{v}+\mathbb A^2\right)^{-1}\mathbb A\right)\\
& = & \lim_{v\to+\infty}\tau\left(\mathbb A\left(\frac{\Im z}{v}+\mathbb A^2\right)^{-\frac12}\right.\\
& & \left.\mbox{}\times\left(\left(\frac{\Im z}{v}+\mathbb A^2\right)^{-\frac12}
\frac{\Re z-\mathbb B}{iv}\left(\frac{\Im z}{v}+\mathbb A^2\right)^{-\frac12}+1\right)^{-1}
\left(\frac{\Im z}{v}+\mathbb A^2\right)^{-\frac12}\mathbb A\right)\\
& = & \lim_{v\to+\infty}\!\tau\!\left(\!\mathbb A^2\left(\frac{\Im z}{v}+\mathbb A^2\right)^{-1}\!\!\left(\!\left(\frac{\Im z}{v}+
\mathbb A^2\right)^{-\frac12}\!\frac{\Re z-\mathbb B}{iv}\left(\frac{\Im z}{v}+\mathbb A^2\right)^{-\frac12}\!+1\right)^{\!-1}\right)\!.
\end{eqnarray*}
It is well-known (and easy to check, for instance via Borel functional calculus) that
$\left\|\mathbb A^2\left(\frac{\Im z}{v}+\mathbb A^2\right)^{-1}\right\|\le1$ and
$\mathbb A^2\left(\frac{\Im z}{v}+\mathbb A^2\right)^{-1}\to1-P_{\ker(\mathbb A)}$ as
$v\to+\infty$, in the so topology, where $P_{\ker(\mathbb A)}$ is the orthogonal projection onto
the kernel of $\mathbb A$. Similarly, $\|(\Im z+v\mathbb A^2)^\frac{-1}{2}\|\le\frac{1}{\sqrt{\Im z}}$ and
$(\Im z+v\mathbb A^2)^\frac{-1}{2}\to\frac{1}{\sqrt{\Im z}}P_{\ker(\mathbb A)}$ as $v\to+\infty$ in the so topology.
Then $\left(\!\left(\frac{\Im z}{v}+\mathbb A^2\right)^{-\frac12}\!\frac{\Re z-\mathbb B}{iv}
\left(\frac{\Im z}{v}+\mathbb A^2\right)^{-\frac12}\!+1\right)^{\!-1}\to
i\big(P_{\ker(\mathbb A)}\frac{\Re z-\mathbb B}{\Im z}P_{\ker(\mathbb A)}+i\big)^{-1}$ as $v\to+\infty$ (easily seen to
happen in the wo topology). Both families stay bounded in norm, uniformly in $v$ (both by one).
Thus, their product converges as well in the wo topology, and to the product of the limits. It follows that
\begin{equation}
\lim_{v\to+\infty}iv\tau\left(\mathbb A(z-\mathbb B+iv\mathbb A^2)^{-1}\mathbb A\right)=1-\tau(P_{\ker(\mathbb A)}).
\end{equation}
It follows that $\Im\frac{1}{\tau\left(\mathbb A(z-\mathbb B+w_1\mathbb A^2)^{-1}\mathbb A\right)}>
\frac{\Im w_1}{1-\tau(P_{\ker(\mathbb A)})}\ge\Im w_1$, for all $w_1\in\mathbb C^+$ (obviously, we do
use the fact that $\mathbb A\neq0$). This allows us to conclude that $\mathscr F_z$ is well-defined, sending
$\mathbb C^+\times \mathbb C^+$ into itself.

There are several ways now to argue that $\mathscr F_z$ has a unique attracting fixed point inside
$\mathbb C^+\times \mathbb C^+$ (the analyticity of the dependence of this attracting fixed point on $z$ is
a trivial normal families argument, once its existence and uniqueness for an arbitrary $z\in\mathbb C^+$ has been established).
Consider the function
$$
g_z\colon w\mapsto \frac{1}{\tau\left(\left(\frac{1}{\tau\left(\mathbb A\left(z-\mathbb B+w\mathbb A^2\right)^{-1}\mathbb A\right)}-w-\mathbb X\right)^{-1}\right)}-
\frac{1}{\tau\left(\mathbb A(z-\mathbb B+w\mathbb A^2)^{-1}\mathbb A\right)}+w,
$$
for fixed $z\in\mathbb C^+.$ We have seen that it is well-defined, and (since $\mathbb{A,X}$ are not multiples of the identity) that
it maps $\mathbb C^+$ into itself. Moreover, since  $z\in\mathbb C^+$, it extends continuously (in fact smoothly) to $\mathbb R$,
and sends $\mathbb C^+\cup\mathbb R$ into $\mathbb C^+$. This, in particular, guarantees that its Denjoy-Wolff point
(exists and) is not in $\mathbb R$. We claim it cannot be $\infty$ either. To be clear, infinity being the Denjoy-Wolff
point of $g_z$ would mean
that $\lim_{v\to+\infty}g_z(iv)=\infty$ and $\lim_{v\to+\infty}\frac{g_z(iv)}{iv}$ exists and belongs to $[1,+\infty)$.
That is a straightforward, but long, argument: if the nontangential limit at infinity of this function either does not exist or is
finite (i.e. if $\lim_{v\to+\infty}g_z(iv)\neq\infty$), then we're done. As already
seen, if $\ker\mathbb A$ is nontrivial, then $\frac{1}{\tau\left(\mathbb A\left(z-\mathbb B+w\mathbb A^2\right)^{-1}\mathbb A\right)}-w$ tends to infinity as $w\to +\infty$
nontangentially and its Julia-Carath\'eodory derivative is $\frac{\tau(P_{\ker(\mathbb A)})}{1-\tau(P_{\ker(\mathbb A)})}$, so that the Julia-Carath\'eodory derivative
of $g_z$ at infinity is zero, which shows that infinity is not attracting for it. If
$w\mapsto\frac{1}{\tau\left(\mathbb A\left(z-\mathbb B+w\mathbb A^2\right)^{-1}\mathbb A\right)}-w$ does not have a nontangential limit at
infinity or it has a nontangential limit at infinity which is in $\mathbb C^+$, then the nontangential limit at infinity of $g_z$ either is in $\mathbb C^+$
or does not exist. In both cases, $g_z(w)$ does not have infinity as an attracting fixed point. Thus, the only possibility that remains to be discarded is that
 $w\mapsto\frac{1}{\tau\left(\mathbb A\left(z-\mathbb B+w\mathbb A^2\right)^{-1}\mathbb A\right)}-w$ has a finite real nontangential limit at infinity,
$$
\lim_{v\to +\infty}\frac{1}{\tau\left(\mathbb A\left(z-\mathbb B+iv\mathbb A^2\right)^{-1}\mathbb A\right)}-iv=\ell\in\mathbb R,\text{ that }
\ \sphericalangle\lim_{u\to\ell}\tau\left((\mathbb X-u)^{-1}\right)=0,
$$
and
$$
\lim_{v\to+\infty}\frac{1}{iv\tau\left(\left(\frac{1}{\tau\left(\mathbb A\left(z-\mathbb B+iv\mathbb A^2\right)^{-1}\mathbb A\right)}-iv-\!\mathbb X\right)^{-1}\right)}-
\frac{\frac{1}{\tau\left(\mathbb A(z-\mathbb B+iv\mathbb A^2)^{-1}\mathbb A\right)}-iv}{iv}\in[1,+\infty).
$$
Remarkably, only the first condition is needed in order to obtain a contradiction. Let us first see what one can conclude from $
\lim_{v\to +\infty}\frac{1}{\tau\left(\mathbb A\left(z-\mathbb B+iv\mathbb A^2\right)^{-1}\mathbb A\right)}-iv=\ell$:
\begin{eqnarray*}
\mathbb R\ni\ell&=&\lim_{v\to +\infty}\frac{1}{\tau\left(\mathbb A\left(z-\mathbb B+iv\mathbb A^2\right)^{-1}\mathbb A\right)}-iv\\
&=&\lim_{v\to +\infty}\frac{\tau\left((z-\mathbb B)\left(z-\mathbb B+iv\mathbb A^2\right)^{-1}\right)}{\tau\left(\left(z-\mathbb B+iv\mathbb A^2\right)^{-1}\mathbb A^2\right)}\\
&=&\lim_{v\to +\infty}
\frac{\tau\left((z-\mathbb B)\left(\frac{z-\mathbb B}{iv}+\mathbb A^2\right)^{-1}\right)}{\tau\left(\left(\frac{z-\mathbb B}{iv}+\mathbb A^2\right)^{-1}\mathbb A^2\right)}.
\end{eqnarray*}
The limit of the denominator exists and equals one (recall that we have already discarded the case $\ker(\mathbb A)\neq\{0\}$).
Accounting for traciality, one has
\begin{eqnarray*}
\lefteqn{\Im\tau\left((z-\mathbb B)\left(\frac{z-\mathbb B}{iv}+\mathbb A^2\right)^{-1}\right)=\Im\tau\left((x-\mathbb B+iy)\left(\frac{y}{v}+\mathbb A^2-i\frac{x-\mathbb B}{v}
\right)^{-1}\right)}\\
&=&\Im\tau\!\left(\!(x\!-\!\mathbb B+iy)\!\left(\!\mathbb A^2\!+\frac{y}{v}\!\right)^{\!-\frac12}\!\!\left[1\!-\!i\!\left(\!\mathbb A^2\!+\frac{y}{v}\right)^{\!-\frac12}\!
\frac{x\!-\!\mathbb B}{v}\!\left(\!\mathbb A^2\!+\frac{y}{v}\!\right)^{\!-\frac12}\right]^{-1}\!\!\!\left(\!\mathbb A^2\!+\frac{y}{v}\right)^{\!-\frac12}\!\right)\\
&=&\Im\tau\left(\!(x\!-\!\mathbb B+iy)\left[1\!+\!i\!\left(\!\mathbb A^2\!+\frac{y}{v}\right)^{\!-1}\!\frac{x\!-\!\mathbb B}{v}\!\right]
\left[\mathbb A^2\!+\frac{y}{v}\!+\!\frac{x\!-\!\mathbb B}{v}\!\left(\!\mathbb A^2\!+\frac{y}{v}\!\right)^{\!-1}\!\frac{x\!-\!\mathbb B}{v}\!\right]^{-1} \right)\\
&=&\tau\left(\left[(x-\mathbb B)\!\left(\!\mathbb A^2\!+\frac{y}{v}\!\right)^{\!-1}\!\frac{x\!-\!\mathbb B}{v}+y\right]
\left[\mathbb A^2\!+\frac{y}{v}\!+\!\frac{x\!-\!\mathbb B}{v}\!\left(\!\mathbb A^2\!+\frac{y}{v}\!\right)^{\!-1}\!\frac{x\!-\!\mathbb B}{v}\!\right]^{-1}\right).
\end{eqnarray*}
We have used $\left(\mathbb A^2+\frac{y}{v}\right)^{-1}\frac{x-\mathbb B}{v}\left[\mathbb A^2+\frac{y}{v}+\frac{x-\mathbb B}{v}\left(\mathbb A^2+\frac{y}{v}\right)^{-1}
\frac{x-\mathbb B}{v}\right]^{-1}$ being selfadjoint, and the fact that $\Im\tau((a+ib)(c+id))=\tau(ad+bc)$ for $a,b,c,d$ selfadjoint and $\tau$ tracial.
Clearly $(x-\mathbb B)\left(\mathbb A^2+\frac{y}{v}\right)^{-1}\frac{x-\mathbb B}{v}\ge0$, $y>0$, so the only possibility for
the above quantity to go to zero is if
\begin{equation}\label{el}
\lim_{v\to+\infty}\tau\left(\left[\mathbb A^2\!+\frac{y}{v}\!+\!\frac{x\!-\!\mathbb B}{v}\!\left(\!\mathbb A^2\!+\frac{y}{v}
\right)^{\!-1}\!\frac{x\!-\!\mathbb B}{v}\!\right]^{-1}\right)=0,
\end{equation}
$$
\lim_{v\to+\infty}\tau\left((x-\mathbb B)\!\left(\!\mathbb A^2\!+\frac{y}{v}\!\right)^{\!-1}\!\frac{x\!-\!\mathbb B}{v}
\left[\mathbb A^2\!+\frac{y}{v}\!+\!\frac{x\!-\!\mathbb B}{v}\!
\left(\!\mathbb A^2\!+\frac{y}{v}\right)^{\!-1}\!\frac{x\!-\!\mathbb B}{v}\!\right]^{-1}\right)=0.
$$
Clearly, $\frac{y}{v}\!+\!\frac{x\!-\!\mathbb B}{v}\!\left(\!\mathbb A^2\!+\frac{y}{v}\right)^{\!-1}\!\frac{x\!-\!\mathbb B}{v}=
\frac{y}{v}\!+\!(x\!-\!\mathbb B)\!\left(\!(v\mathbb A)^2\!+vy\right)^{\!-1}\!(x\!-\!\mathbb B)$, and
$v<v'\iff(v\mathbb A)^2+vy<(v'\mathbb A)^2+v'y\iff\left(\!(v\mathbb A)^2\!+vy\right)^{\!-1}>
\left(\!(v'\mathbb A)^2\!+v'y\right)^{\!-1}\implies(x-\mathbb B)\left(\!(v\mathbb A)^2\!+vy\right)^{\!-1}(x-\mathbb B)\ge
(x-\mathbb B)\left(\!(v'\mathbb A)^2\!+v'y\right)^{\!-1}(x-\mathbb B)\implies
\frac{x-\mathbb B}{v}\!\left(\!\mathbb A^2\!+\frac{y}{v}\right)^{\!-1}\!\frac{x-\mathbb B}{v}
+\frac{y}{v}>\frac{y}{v'}+\frac{x-\mathbb B}{v'}\!\left(\!\mathbb A^2+\frac{y}{v'}\right)^{\!-1}\!\frac{x-\mathbb B}{v'}\iff
\mathbb A^2\!+\frac{y}{v}+\frac{x\!-\!\mathbb B}{v}\!\left(\!\mathbb A^2\!+\frac{y}{v}\right)^{\!-1}\!\frac{x\!-\!\mathbb B}{v}>
\frac{x\!-\!\mathbb B}{v'}\!\left(\!\mathbb A^2\!+\frac{y}{v'}\right)^{\!-1}\!\frac{x\!-\!\mathbb B}{v'}+\mathbb A^2\!+\frac{y}{v'}
\iff\left[\mathbb A^2\!+\frac{y}{v}\!+\!\frac{x\!-\!\mathbb B}{v}\!\left(\!\mathbb A^2\!+\frac{y}{v}\right)^{\!-1}
\!\frac{x\!-\!\mathbb B}{v}\!\right]^{-1}\!\!<
\left[\mathbb A^2\!+\frac{y}{v'}\!+\!\frac{x\!-\!\mathbb B}{v'}\!\left(\!\mathbb A^2\!+\frac{y}{v'}\right)^{\!-1}\!\frac{x\!-\!\mathbb B}{v'}\!\right]^{-1}$.
Thus, $(0,+\infty)\ni v\mapsto \tau\left(\left[\mathbb A^2\!+\frac{y}{v}\!+\!\frac{x\!-\!\mathbb B}{v}\!\left(\!\mathbb A^2\!+\frac{y}{v}\right)^{\!-1}\!\frac{x\!-\!
\mathbb B}{v}\!\right]^{-1}\right)$ is increasing as a function from $(0,+\infty)$ to itself. But that excludes the possibility
that the limit \eqref{el} is zero, since $\tau$ is faithful and for $v,y>0$ fixed, the operator $\left[\mathbb A^2+\frac{y}{v}+
\frac{x-\mathbb B}{v}\left(\mathbb A^2+\frac{y}{v}\right)^{\!-1}\!\frac{x-\mathbb B}{v}\right]^{-1}$ is bounded and nonzero.
This forces the function
\begin{equation}\label{gez}
g_z\colon
w\mapsto\frac{1}{\tau\left(\left(\frac{1}{\tau\left(\mathbb A\left(z-\mathbb B+w\mathbb A^2\right)^{-1}\mathbb A\right)}
-w-\mathbb X\right)^{-1}\right)}-
\frac{1}{\tau\left(\mathbb A(z-\mathbb B+w\mathbb A^2)^{-1}\mathbb A\right)}+w,
\end{equation}
($z\in\mathbb C^+$ fixed) to have a unique attracting fixed point (the Denjoy-Wolff point) inside
$\mathbb C^+$. This fixed point is simply $f(z)$, and $\mathsf f(z)=\frac{1}{\tau\left(\mathbb A
(z-\mathbb B+f(z)\mathbb A^2)^{-1}\mathbb A\right)}-f(z)$.
%$f(z)=\frac{1}{\tau((\mathsf f(z)-\mathbb X)^{-1})}-\mathsf f(z)$.
A direct verification shows that $\mathscr F_z(f(z),{\sf f}(z))=(f(z),{\sf f}(z))$. The analytic dependence
on $z$ of $f(z)$ has been explained above via a normal families argument, but we should note that it
could also be obtained via the implicit function theorem because the derivative of a function at its
Denjoy-Wolff point in $\mathbb C^+$ is necessarily of absolute value strictly less than one.
\end{proof}

Let us state the main result (and the reason for) this section.
\begin{Corollary}\label{eq}
Assume that $(\mathcal A,\tau)$ is a tracial $W^*$-noncommutative probability space,
$\mathbb{X,A,B}$ are selfadjoint random variables (possibly unbounded) affiliated with $\mathcal A$,
and $\mathbb X$ is *-free from $(\mathbb{A,B})$ with respect to $\tau$. Assume in addition that
neither of $\mathbb{A,X}$ is a scalar multiple of the unit $1\in\mathcal A$.
Then there exist two analytic self-maps $\mathsf f,f\colon\mathbb C^+\to\mathbb C^+$ of the
complex upper half-plane such that
\begin{eqnarray}
f(z) & = & \frac{1}{\tau((\mathsf f(z)-\mathbb X)^{-1})}-\mathsf f(z),\label{49}\\
\mathsf f(z) & = & \frac{1}{\tau\left(\mathbb A(z-\mathbb B+f(z)\mathbb A^2)^{-1}\mathbb A\right)}-f(z),\label{50}\\
\tau\left((\mathsf f(z)-\mathbb X)^{-1}\right) & = & \tau\left(\mathbb A(z-\mathbb B-\mathbb{AXA})^{-1}\mathbb A\right)
\label{51}\\
& = & \tau\left(\mathbb A(z-\mathbb B+f(z)\mathbb A^2)^{-1}\mathbb A\right),\label{52}\\
\tau\left((z-\mathbb B+f(z)\mathbb A^2)^{-1}\right) & = & \tau\left((z-\mathbb B-\mathbb{AXA})^{-1}\right),\label{53}
\end{eqnarray}
for all $z\in\mathbb C^+$. Moreover, if $\tau(\mathbb A^2)<+\infty$, then
$$
\lim_{v\to+\infty}\frac{\mathsf f(iv)}{iv}=\frac{1}{\tau(\mathbb A^2)},\
\lim_{v\to +\infty}\frac{f(iv)}{iv}=0,\quad
\lim_{v\to +\infty}f(iv)=\lim_{y\to+\infty}\frac{1}{\tau\left((iy-\mathbb X)^{-1}\right)}-iy
=-\tau(\mathbb X).
$$
The last equality should be understood in the following sense: if
$\int_\mathbb Rt\,{\rm d}\mu_\mathbb X(t)=m\in\mathbb R$, then $\lim_{v\to+\infty}f(iv)=-m$. If
the integral is not well-defined, then the limit does not exist in $\mathbb R$ (however, it might
exist in $\mathbb C^+$, and then the last equality fails).
\end{Corollary}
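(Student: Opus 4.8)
The plan is to deduce Corollary \ref{eq} from Theorem \ref{fixp} together with Proposition \ref{sans} (in its full generality, for selfadjoint operators affiliated with a trace) and Proposition \ref{rmkBig} (valid whenever $\mathbb A$ is bounded). Theorem \ref{fixp} already supplies a pair of analytic self-maps $(f,\mathsf f)$ of $\mathbb C^+$ which is the unique attracting fixed point of $\mathscr F_z$, hence satisfies
$$
f(z)=\frac{1}{\tau((\mathsf f(z)-\mathbb X)^{-1})}-\mathsf f(z),\qquad
\mathsf f(z)=\frac{1}{\tau(\mathbb A(z-\mathbb B+f(z)\mathbb A^2)^{-1}\mathbb A)}-f(z),
$$
which are exactly \eqref{49} and \eqref{50}; and moreover $\tau((\mathsf f(z)-\mathbb X)^{-1})=\frac{1}{\mathsf f(z)+f(z)}=\tau(\mathbb A(z-\mathbb B+f(z)\mathbb A^2)^{-1}\mathbb A)$, which is \eqref{52} and the first half of \eqref{51}. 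So the genuine new content to establish is the identification \eqref{51}, namely that these algebraic quantities coincide with the \emph{actual} moment functions $\tau(\mathbb A(z-\mathbb B-\mathbb{AXA})^{-1}\mathbb A)$ of the free model, and the companion identity \eqref{53}.

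First I would prove \eqref{51} and \eqref{53} under the additional hypotheses of Proposition \ref{sans} — i.e. $\tau(\mathbb A^2),\tau(\mathbb X^2)<+\infty$ — where $\mathbb X$ is *-free from $(\mathbb A,\mathbb B)$. In that regime Proposition \ref{sans} gives an analytic $\mathsf f$ with $E_{\mathbb X}[\mathbb A(z-\mathbb B-\mathbb{AXA})^{-1}\mathbb A]=(\mathsf f(z)-\mathbb X)^{-1}$, so applying $\tau$ yields $\tau(\mathbb A(z-\mathbb B-\mathbb{AXA})^{-1}\mathbb A)=\tau((\mathsf f(z)-\mathbb X)^{-1})$; combined with Theorem \ref{fixp} this is precisely \eqref{51}. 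For \eqref{53}, I would invoke the computation already carried out in the body of the section (the entries of $E_{\bf A}[(ze_{11}-\mathbf{A-X})^{-1}]$ under the hypotheses of Proposition \ref{rmkBig}), which gives $E_{(\mathbb A,\mathbb B)}[(z-\mathbb B-\mathbb{AXA})^{-1}]=(z-\mathbb B+f(z)\mathbb A^2)^{-1}$; applying $\tau$ gives \eqref{53}. One checks that the $f$ appearing there agrees with the $f$ of Theorem \ref{fixp} by matching it with $\mathsf f$ through the relation $\mathsf f(z)=-\omega_{11}(z)/\det\omega(z)$ and the identity $\tau((\mathsf f(z)-\mathbb X)^{-1})=1/(\mathsf f(z)+f(z))$ derived at the end of the computation, then noting the uniqueness of the attracting fixed point. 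The remaining step is to remove the second-moment and invertibility restrictions by the now-familiar approximation scheme used repeatedly in the section: cut $\mathbb A$ by $K_M$ and $\mathbb X$ by a bounded perturbation $\mathbb X_n$, apply the bounded/finite-variance case, and pass to the limit using the norm/so/wo continuity estimates analogous to \eqref{em} and \eqref{bax}, together with a normal-families argument to identify the limiting subordination functions. Finally, the asymptotic statements follow: $\lim_{v\to+\infty}\mathsf f(iv)/iv=1/\tau(\mathbb A^2)$ is already in Proposition \ref{sans}; $\lim_{v\to+\infty}f(iv)/iv=0$ then follows from \eqref{49} since $\tau((\mathsf f(iv)-\mathbb X)^{-1})\sim 1/\mathsf f(iv)$; and $\lim_{v\to+\infty}f(iv)=\lim_{y\to+\infty}\tfrac{1}{\tau((iy-\mathbb X)^{-1})}-iy=-\tau(\mathbb X)$ is the standard fact that a Nevanlinna function's Cauchy transform has the leading asymptotic $-m/z$ iff the measure has mean $m$, applied to $\mu_{\mathbb X}$, once one knows $\mathsf f(iv)\to\infty$ nontangentially — which follows from $\Im\mathsf f(iv)\ge v\to+\infty$.

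The main obstacle I anticipate is the limiting argument that removes the hypotheses $\tau(\mathbb A^2)<+\infty$, $\tau(\mathbb X^2)<+\infty$, and $\mathbb A$ bounded from \eqref{51} and \eqref{53}: one must show that $\tau(\mathbb A_M(z-\mathbb B-\mathbb A_M\mathbb X\mathbb A_M)^{-1}\mathbb A_M)\to\tau(\mathbb A(z-\mathbb B-\mathbb{AXA})^{-1}\mathbb A)$ and simultaneously control the subordination functions $f_M,\mathsf f_M$ produced at each truncation level, making sure their limit points still satisfy the fixed-point equations and hence, by uniqueness of the attracting fixed point in Theorem \ref{fixp}, coincide with $f,\mathsf f$. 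The subtlety is that after truncating $\mathbb A$ the variable $\mathbb A_M\mathbb X\mathbb A_M$ need no longer have finite variance and the relevant resolvent differences are not obviously small in trace norm; the estimate \eqref{em} shows $\tau(\mathbb A(\mathbb A-\mathbb A_M))\to 0$ whenever $\mathbb A\in L^2$, but in the genuinely unbounded-$\mathbb A$ case one only has $\mathbb A$ affiliated, so one must instead run the truncation/limit entirely at the level of the $M_2(\mathbb C)$-valued linearization $ze_{11}-\mathbf{A-X}$, exploiting that the resolvents $(ze_{11}-\mathbf{A-X})^{-1}$ are uniformly norm-bounded by $1/|\Im z|$ and that the entries are in the domain of $\tau$ once $\tau$ is applied to the whole matrix (as was verified in the text using freeness and the bound $\tau(\mathbb{XAAX})=\tau(\mathbb X^2)\tau(\mathbb A^2)$), combined with wo-continuity of $E_{\mathbb X}$ and $E_{(\mathbb A,\mathbb B)}$ and a Montel-type normal-families extraction to pass to the limit in $z$.
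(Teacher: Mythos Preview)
Your overall strategy matches the paper's: Theorem \ref{fixp} gives \eqref{49}, \eqref{50}, \eqref{52} outright; Propositions \ref{sans} and \ref{rmkBig} supply \eqref{51} and \eqref{53} in the bounded/$L^2$ regime; and one then removes the extra hypotheses by truncation and approximation. The asymptotic statements are also argued essentially as you outline (though the paper invokes the Julia--Carath\'eodory theorem applied to the functional equations of Theorem \ref{fixp} rather than asserting $\Im\mathsf f(iv)\ge v$ directly).

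Where your proposal diverges is precisely at the point you flag as the ``main obstacle,'' and the mechanism you suggest does not quite work. You propose running the limit at the level of the $M_2(\mathbb C)$-valued linearization $(ze_{11}-\mathbf{A-X})^{-1}$, relying on a uniform $1/|\Im z|$ bound. But $ze_{11}$ does \emph{not} have positive-definite imaginary part, so no such bound is automatic; the paper's earlier analysis of $(ze_{11}-\mathbf{A-X})^{-1}$ and its entries used exactly the hypotheses $\tau(\mathbb A^2),\tau(\mathbb X^2)<+\infty$ (via $\tau(\mathbb{XAAX})=\tau(\mathbb X^2)\tau(\mathbb A^2)$) that you are trying to remove. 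So that route is circular.

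The paper instead avoids the matrix linearization entirely at this stage and works with the scalar Denjoy--Wolff map $g_z$ from the proof of Theorem \ref{fixp}. Truncating $\mathbb A$ by $\mathbb A_n=\Theta_n(\mathbb A)$, it proves directly (using $q_n=\boldsymbol 1_{[n,\infty)}(\mathbb A)\to0$ in the so topology and the boundedness of $\mathbb A^2(z-\mathbb B+w\mathbb A^2)^{-1}$ via $\sigma(UV)\cup\{0\}=\sigma(VU)\cup\{0\}$) that
\[
\tau\big(\mathbb A_n(z-\mathbb B+w\mathbb A_n^2)^{-1}\mathbb A_n\big)\longrightarrow
\tau\big(\mathbb A(z-\mathbb B+w\mathbb A^2)^{-1}\mathbb A\big)
\]
uniformly on compacta in $w$. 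This yields $g_{z,n}\to g_z$, and then Heins' theorem on the continuity of Denjoy--Wolff points gives $f_n(z)\to f(z)$, identifying the limit with the fixed-point $f$ of Theorem \ref{fixp} without any normal-families ambiguity. For \eqref{53} the paper does a separate wo-convergence argument: it extracts a wo-limit $V(z)$ of $(z-\mathbb B-\mathbb A_n\mathbb{X}\mathbb A_n)^{-1}$ and identifies $V(z)=(z-\mathbb B-\mathbb{AXA})^{-1}$ by testing against vectors in a common dense domain of $\mathbb{A,X,B,AXA}+\mathbb B$. These two ingredients --- Heins' theorem and the explicit wo-limit identification of the resolvent --- are the pieces your sketch is missing.
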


\begin{proof}
The existence of the pair $(f,\mathsf f)$ satisfying \eqref{49} and \eqref{50} is Theorem \ref{fixp}. We have proved that
$\mathsf f$ satisfies \eqref{51} in Proposition \ref{sans} under the supplementary assumption that $\tau(\mathbb{A}^2+
\mathbb X^2)<+\infty$. Observe that we may assume without loss of generality that $\mathbb A\ge0$, since otherwise
(thanks to the traciality of $\tau)$ none of the above expressions changes\footnote{This might require a proof only for \eqref{53}:
if $\mathbb A=V\sqrt{\mathbb A^2}$ is the polar decomposition, then $V\sqrt{\mathbb A^2}=\mathbb A=\mathbb A^*=
\sqrt{\mathbb A^2}V^*$ (with $V$ unitary - we are in a tracial $W^*$-probability space), so $\tau\left((z-\mathbb B+
\mathbb{AXA})^{-1}\right)=\tau\left((z-\mathbb B+V\sqrt{\mathbb A^2}\mathbb{X}\sqrt{\mathbb A^2}V^*)^{-1}\right)=
\tau\left((z-V^*\mathbb BV+\sqrt{\mathbb A^2}\mathbb{X}\sqrt{\mathbb A^2})^{-1}\right)$. Replacing $\mathbb B$
with $V^*\mathbb BV$ in the left hand side of \eqref{53} changes nothing, since $\mathbb A^2=V\mathbb A^2V^*$.} when
replacing $\mathbb A$ with $\sqrt{\mathbb A^2}$. Thus, from now on, we assume $\mathbb A\ge0$.
From Proposition \ref{rmkBig}, by multiplying left and right in \eqref{AB} by $\mathbb A$, we also
have that \eqref{52}, \eqref{53} hold for bounded $\mathbb A$ and arbitrary selfadjoints $\mathbb{X,B}$. This
guarantees the existence of an $f$ under these hypotheses. We {\em define} $\mathsf f$ by \eqref{50} and obtain \eqref{49}
(thanks to an application of Theorem \ref{fixp}). Plugging \eqref{52} into \eqref{50} proves the corollary also under the hypotheses of
bounded $\mathbb A$ and arbitrary $\mathbb{B,X}$.

We extend the corollary to arbitrary $\mathbb A\ge0$ as well. To clarify, all that follows below is only necessary for
proving the parts of the corollary involving $\mathbb B+\mathbb{AXA}$, Equations \eqref{49}, \eqref{50}, and
\eqref{52} follow directly from Theorem \ref{fixp}. As before, we approximate $\mathbb A$ with
$\mathbb A_n=\Theta_n(\mathbb A)$, where $\Theta_n(t)=t$ if $0\le t\le n$ and $\Theta_n(t)=n$ if $t>n$
(we do not need $\chi_n$ from the proof of Proposition \ref{rmkBig} because we do not care to have $\{\mathbb A_n\}''
=\{\mathbb A\}''$ here). We apply the approximation in \eqref{51} and \eqref{52} first.
%Since $\mathbb A>0$, there exists an
%$\epsilon>0$ such that $\mathbb A>\epsilon1>0$. That means (at least for all sufficiently large $n\in\mathbb N$) that
%$\mathbb A_n$ is invertible and $0<\frac1n\le\mathbb A_n^{-1}<\frac{1}{\epsilon}$, $0\le\mathbb A^{-1}
%<\frac{1}{\epsilon}$.
A bound of the form $\big\|\mathbb A_n(z-\mathbb B-\mathbb{A}_n\mathbb{XA}_n)^{-1}\mathbb A_n\big\|
\le\frac{n^2}{|\Im z|}$ is obvious. As noted above, we have proved that $f_n$ exists for $\mathbb A_n$. Since
$w\mapsto(z-\mathbb B+w\mathbb A_n^2)^{-1}$ is bounded by $1/\Im z$ uniformly in $n$ and
\begin{eqnarray*}
\lefteqn{(z-\mathbb B+w\mathbb{A}_m^2)^{-1}-(z-\mathbb B+w\mathbb{A}_n^2)^{-1}}\\
& = & w\big(z-\mathbb B+w\mathbb A^{2}_n\big)^{-1}\big(\mathbb A_n^{2}-\mathbb A_m^{2})
\big(z-\mathbb B+w\mathbb A_m^2\big)^{-1},
\end{eqnarray*}
for all $m,n$, we easily obtain that
\begin{eqnarray*}
\lefteqn{0\le\big(\overline{z}-\mathbb B+\overline{w}\mathbb A^2\big)^{-1}(\mathbb A_n^{2}-\mathbb A^{2})
\big|z-\mathbb B+w\mathbb A^{2}_n\big|^{-2}(\mathbb A_n^{2}-\mathbb A^{2})\big(z-\mathbb B+w\mathbb A^2\big)^{-1}}\\
& \le & \frac{1}{|\Im z|^2}\big(\overline{z}-\mathbb B+\overline{w}\mathbb A^2\big)^{-1}
(\mathbb A_n^{2}-\mathbb A^{2})^{2}\big(z-\mathbb B+w\mathbb A^2\big)^{-1}
\quad\quad\quad\quad\quad\quad\\
& = & \frac{1}{|\Im z|^2}\big(\overline{z}-\mathbb B+\overline{w}\mathbb A^2\big)^{-1}
q_n(\mathbb A_n^{2}-\mathbb A^{2})^{2}q_n\big(z-\mathbb B+w\mathbb A^2\big)^{-1}\\
& \le & \frac{1}{|\Im z|^2}\big(\overline{z}-\mathbb B+\overline{w}\mathbb A^2\big)^{-1}q_n\mathbb A^{4}q_n
\big(z-\mathbb B+w\mathbb A^2\big)^{-1},
\end{eqnarray*}
(recall the previously introduced $q_n=\boldsymbol{1}_{[n,+\infty)}(\mathbb A)$, so that $q_n\searrow0$ in the so topology) and
\begin{eqnarray*}
\lefteqn{\frac{1}{|\Im z|^2}\left\langle\big(\overline{z}-\mathbb B+\overline{w}\mathbb A^2\big)^{-1}q_n\mathbb A^{4}q_n
\big(z-\mathbb B+w\mathbb A^2\big)^{-1}\xi,\xi\right\rangle}\\
& \!= & \frac{1}{|\Im z|^2}\left\langle q_n\mathbb A^{2}q_n\big(z-\mathbb B+w\mathbb A^2\big)^{-1}\xi,q_n\mathbb A^{2}q_n
\big(z-\mathbb B+w\mathbb A^2\big)^{-1}\xi\right\rangle\\
& \!= & \frac{1}{|\Im z|^2}\left\langle q_n\mathbb A^{2}\big(z-\mathbb B+w\mathbb A^2\big)^{-1}\xi,
q_n\mathbb A^{2}\big(z-\mathbb B+w\mathbb A^2\big)^{-1}\xi\right\rangle\to0\text{ as }n\to +\infty,
\end{eqnarray*}
since $q_n\to0$ in the so topology and $\mathbb A^{2}\big(z-\mathbb B+w\mathbb A^2\big)^{-1}$ is a bounded operator
(recall that $\sigma(UV)\cup\{0\}=\sigma(VU)\cup\{0\}$ - one applies this to $U=\mathbb A,V=\mathbb A(z-\mathbb B)^{-1}$
in $\mathbb A^2(z-\mathbb B+w\mathbb A^2)^{-1}=\frac{1}{w}\big[1-(z-\mathbb B)(z-\mathbb B+w\mathbb A^2)^{-1}\big]
=\frac{1}{w}-\frac{1}{w}\big(1+w\mathbb A^2(z-\mathbb B)^{-1}\big)^{-1}=\frac{1}{w}-\frac{1}{w^2}\big(w^{-1}+
\mathbb A^2(z-\mathbb B)^{-1}\big)^{-1}$) and independent of $n$. This shows that $\big(z-\mathbb B+w\mathbb A^2_n
\big)^{-1}\to\big(z-\mathbb B+w\mathbb A^2\big)^{-1}$ in the so topology. On the other hand, functions $f_n$ provided by
Proposition \ref{rmkBig}, coinciding with those $f_n$'s provided by Theorem \ref{fixp}, form a normal family. Picking a convergent
subsequence $\{f_{n_k}\}_k$ makes $\big(z-\mathbb B+f_{n_k}(z)\mathbb A^2_{n_k}\big)^{-1}\to
\big(z-\mathbb B+f(z)\mathbb A^2\big)^{-1}$ in the so topology. By taking limits in \eqref{53} (which, as mentioned above, is
known for $\mathbb A_{n_k}$),
$$
\lim_{k\to +\infty}\tau\left((z-\mathbb B-\mathbb{A}_{n_k}\mathbb{XA}_{n_k})^{-1}\right)
=\tau\big((z-\mathbb B+f(z)\mathbb A^2)^{-1}\big).
$$
This does not tell us yet that $f$ coincides with the function $f$ provided by Theorem \ref{fixp} for the given $\mathbb{A,B,X}$.
To establish the uniqueness of the limit (i.e. the independence of the subsequence considered), and the fact that this limit
coincides with the $f$ associated with $\mathbb{A,B,X}$ from Theorem \ref{fixp}, let us estimate the difference
\begin{eqnarray*}
\lefteqn{\tau\left(\mathbb A(z-\mathbb B+w\mathbb{A}^2)^{-1}\mathbb A\right)-\tau\left(\mathbb A_n(z-\mathbb B+w
\mathbb{A}_n^2)^{-1}\mathbb A_n\right)}\\
& = & \tau\left((z-\mathbb B+w\mathbb{A}^2)^{-1}\mathbb A^2-(z-\mathbb B+w\mathbb{A}_n^2)^{-1}\mathbb A^2_n\right)\\
& = & \tau\left((z-\mathbb B+w\mathbb{A}^2)^{-1}\!\left[\mathbb A^2-\mathbb A_n^2\right]+\!
\left[(z-\mathbb B+w\mathbb{A}^2)^{-1}\!-(z-\mathbb B+w\mathbb{A}_n^2)^{-1}\right]\mathbb A^2_n\right).
\end{eqnarray*}
As we have already noted, $\mathbb A^2\ge\mathbb A_n^2$ and $\mathbb A^2-\mathbb A_n^2=
q_n(\mathbb A^2-\mathbb A_n^2)=(\mathbb A^2-\mathbb A_n^2)q_n$, so that
$(z-\mathbb B+w\mathbb{A}^2)^{-1}\left[\mathbb A^2-\mathbb A_n^2\right]^2(\overline{z}-\mathbb B+\overline{w}\mathbb{A}^2)^{-1}
\leq 4(z-\mathbb B+w\mathbb{A}^2)^{-1}\mathbb A^2q_n\mathbb A^2(\overline{z}-\mathbb B+\overline{w}\mathbb{A}^2)^{-1}$. Precisely
as above, $\|q_n\mathbb A^2(\overline{z}-\mathbb B+\overline{w}\mathbb{A}^2)^{-1}\xi\|_2\to0$ as $n\to +\infty$ for any $\xi\in
L^2(\mathcal A,\tau).$  Thus, $\left[\mathbb A^2-\mathbb A_n^2\right]\!(z-\mathbb B+w\mathbb{A}^2)^{-1}\!\to0$ as $n\to +\infty$ in
the so topology. Since $(z-\mathbb B+w\mathbb{A}^2)^{-1}\!-(z-\mathbb B+w\mathbb{A}_n^2)^{-1}\to0$ in the so topology,
$\tau\left(\left[(z-\mathbb B+w\mathbb{A}^2)^{-1}\!-(z-\mathbb B+w\mathbb{A}_n^2)^{-1}\right]\mathbb A^2_n\right)$ also converges to zero.

Indeed,  $(z-\mathbb B+w\mathbb{A}^2)^{-1}\!-(z-\mathbb B+w\mathbb{A}_n^2)^{-1}=w(z-\mathbb B+w\mathbb{A}^2)^{-1}
(\mathbb A_n^2-\mathbb A^2)(z-\mathbb B+w\mathbb{A}_n^2)^{-1}=-w(z-\mathbb B+w\mathbb{A}^2)^{-1}
q_n(\mathbb A^2-n^2)q_n(z-\mathbb B+w\mathbb{A}_n^2)^{-1}$. Observe also that if $\mathbb A$ (and hence $\mathbb A_n$) is
invertible, then $0\le q_n\mathbb A^{-j}=\mathbb A^{-j}q_n\le\frac{q_n}{n^j}$, $q_n\mathbb A_n^{-j}=\mathbb A_n^{-j}q_n
=\frac{q_n}{n^j}$. This tells us that one can make sense of $q_n\mathbb A^{-j}=\mathbb A^{-j}q_n$ for non-invertible $\mathbb A$
as well (with the same estimate), and that $q_n\mathbb A_n^{-j}=\mathbb A_n^{-j}q_n
=\frac{q_n}{n^j}$ holds as well, regardless of the invertibility of $\mathbb A$. Just in this paragraph, for saving space, let
$R=(z-\mathbb B+w\mathbb{A}^2)^{-1},R_n=(z-\mathbb B+w\mathbb{A}_n^2)^{-1}$. One writes $[\mathbb A_n(R-R_n)\mathbb A_n]^*
[\mathbb A_n(R-R_n)\mathbb A_n]=|w|^2[\mathbb A_nRq_n(\mathbb A^2-n^2)q_nR_n\mathbb A_n]^*[\mathbb A_nRq_n(\mathbb A^2-n^2)q_nR_n\mathbb A_n]
=|w|^2\mathbb A_nR^*_nq_n(\mathbb A^2-n^2)q_nR^*\mathbb A_n^2Rq_n(\mathbb A^2-n^2)R_n\mathbb A_n$. Under the trace, this is the same as
$|w|^2\mathbb A_nRq_n(\mathbb A^2-n^2)R_n\mathbb A_n^2R_n^*q_n(\mathbb A^2-n^2)R^*\mathbb A_n$, and
$|w|^2\mathbb A_nRq_n(\mathbb A^2-n^2)R_n\mathbb A_n^2R_n^*q_n\allowbreak(\mathbb A^2-n^2)R^*\mathbb A_n
\!=|w|^2\!\mathbb A_nRq_n\mathbb A_n^{-1}(\mathbb A^2-n^2)\mathbb A_nR_n\mathbb A_n^2R_n^*\mathbb A_n(\mathbb A^2-n^2)\mathbb A_n^{-1}q_nR^*\mathbb A_n
\!={|w|^2}\!\mathbb A_nRq_n(\mathbb A^2-n^2)\mathbb A_n^{-1}(\mathbb A_nR_n\mathbb A_n)(\mathbb A_nR_n\mathbb A_n)^*\mathbb A_n^{-1}(\mathbb A^2-n^2)
q_nR^*\mathbb A_n
\le|w|^2\left|\frac{1}{|w|}+\frac{1}{|\Im w|}\right|^2\!\mathbb A_nR(\mathbb A^2-n^2)q_n\mathbb A_n^{-2}q_n(\mathbb A^2-n^2)R^*\mathbb A_n
=\left|1+\frac{|w|}{|\Im w|}\right|^2\!\frac{\mathbb A_nR(\mathbb A^2-n^2)q_n(\mathbb A^2-n^2)R^*\mathbb A_n}{n^2}$. Again, under the trace
this is the same as $\left|1+\frac{|w|}{|\Im w|}\right|^2\!\frac{q_n(\mathbb A^2-n^2)R^*\mathbb A_n^2R(\mathbb A^2-n^2)q_n}{n^2}$, and
\begin{eqnarray*}
\left|1+\frac{|w|}{|\Im w|}\right|^2\!\frac{q_n(\mathbb A^2-n^2)R^*\mathbb A_n^2R(\mathbb A^2-n^2)q_n}{n^2}\!
& \le &\left|1+\frac{|w|}{|\Im w|}\right|^2\!\!q_n(\mathbb A^2-n^2)R^*R(\mathbb A^2-n^2)q_n\\
& \le & \left|1+\frac{|w|}{|\Im w|}\right|^2\!\|R(\mathbb A^2-n^2)q_n\|^2q_n\\
& = &  \left|1+\frac{|w|}{|\Im w|}\right|^2\!\|R(\mathbb A^2\!-n^2)q_n(\mathbb A^2\!-n^2)R^*\|q_n\\
& \le & 4\left|1+\frac{|w|}{|\Im w|}\right|^2\!\|R\mathbb A^2\|^2q_n,
\end{eqnarray*}
with the (very rough) majorization $0\le(\mathbb A^2\!-n^2)q_n(\mathbb A^2\!-n^2)\le4\mathbb A^4$, easily obtainable via
continuous functional calculus. We have used in the above the fact that $\mathbb A(z-\mathbb B+w\mathbb A^2)^{-1}\mathbb A=
w^{-1}-w^{-2}\big(w^{-1}+\mathbb A(z-\mathbb B)^{-1}\mathbb A\big)^{-1}$ for any selfadjoints $\mathbb{A,B}$ (bounded or not),
and $\big\|\big(w^{-1}+\mathbb A(z-\mathbb B)^{-1}\mathbb A\big)^{-1}\big\|\le|\Im(w^{-1})|=\frac{|\Im w|}{|w|^2}$.
Similarly, $R\mathbb A^2=(z-\mathbb B+w\mathbb{A}^2)^{-1}\mathbb A^2=\frac{1}{w}[1-(z-\mathbb B+w\mathbb A^2)^{-1}(z-\mathbb B)]=
\frac{1}{w}-\frac{1}{w^2}\big(\frac{1}{w}+(z-\mathbb B)^{-1}\mathbb A^2\big)^{-1}.$ Applying the known fact
that $\sigma(UV)\cup\{0\}=\sigma(VU)\cup\{0\}$ to $U=(z-\mathbb B)\mathbb A,V=\mathbb A$, guarantees that
$\|R\mathbb A^2\|<+\infty$, uniformly in $z\in\mathbb C^+,$ $w$ running in compact subsets of $\mathbb C^+$. Finally, we
have also used the definition of $\mathbb A_n,$ which tells us that $0\le\mathbb A_n\le\|\mathbb A_n\|\le n$. All this tells us that
\begin{eqnarray*}
\lefteqn{\tau\!\left(\mathbb A_n\!\left[(z-\mathbb B+w\mathbb{A}^2)^{-1}\!-(z-\mathbb B+w\mathbb{A}_n^2)^{-1}\right]^*\!\!\mathbb A^2_n\!
\left[(z-\mathbb B+w\mathbb{A}^2)^{-1}\!-(z-\mathbb B+w\mathbb{A}_n^2)^{-1}\right]\!\mathbb A_n\right)}\\
&\quad\quad\quad\quad\quad\quad\quad\quad =  & \tau\left([\mathbb A_n(R-R_n)\mathbb A_n]^*[\mathbb A_n(R-R_n)\mathbb A_n]\right)\\
&\quad\quad\quad\quad\quad\quad\quad\quad \le & \left|1+\frac{|w|}{|\Im w|}\right|^2\tau\left(\!\frac{\mathbb A_nR(\mathbb A^2-n^2)q_n(\mathbb A^2-n^2)R^*\mathbb A_n}{n^2}\right)\\
&\quad\quad\quad\quad\quad\quad\quad\quad = & \left|1+\frac{|w|}{|\Im w|}\right|^2\tau\left(\!\frac{q_n(\mathbb A^2-n^2)R^*\mathbb A_n^2R(\mathbb A^2-n^2)q_n}{n^2}\right)\\
&\quad\quad\quad\quad\quad\quad\quad\quad \le & 4\left|1+\frac{|w|}{|\Im w|}\right|^2\!\|R\mathbb A^2\|^2\tau(q_n)\to0\text{ as }n\to +\infty.\quad\quad\quad\quad\quad\quad\quad\quad\quad\quad\quad
\end{eqnarray*}
By the Cauchy-Schwarz-Buniakowski inequality for states, we conclude that
$\tau([(z-\mathbb B+w\mathbb{A}^2)^{-1}\!-(z-\mathbb B+w\mathbb{A}_n^2)^{-1}]\mathbb A^2_n)=
\tau\left(\mathbb A_n\left[(z-\mathbb B+w\mathbb{A}^2)^{-1}\!-(z-\mathbb B+w\mathbb{A}_n^2)^{-1}\right]\mathbb A_n\right)\to0$
as $n\to +\infty$, at a rate proportional to $\tau(q_n)$, uniformly in $z\in\mathbb C^+$ and $w$ in compact subsets of $\mathbb C^+$.
This completes the argument that
$$
\lim_{n\to +\infty}\tau\left(\mathbb A_n(z-\mathbb B+w\mathbb{A}_n^2)^{-1}\mathbb A_n\right)
=\tau\left(\mathbb A(z-\mathbb B+w\mathbb{A}^2)^{-1}\mathbb A\right).
$$
Thus, if $g_{z,n}$ is obtained from the function $g_z$ in the proof of Theorem \ref{fixp} (see
\eqref{gez}) by replacing $\mathbb A$ with $\mathbb A_n$, then we have proved that
$g_{z,n}\to g_z$ uniformly on compact subsets of $\mathbb C^+$.
As shown by Heins in \cite{Heins}, this implies that the Denjoy-Wolff points $f_n(z)$ of $g_{z,n}$
converge to the Denjoy-Wolff point $f(z)$ of $g_z$. Thanks to \eqref{49}, \eqref{50} (which hold for
arbitrary $\mathbb{A,B,X}$), this shows also that $\mathsf f_n(z)\to\mathsf f(z)$, meaning that the
subsequential limit described in the first part of the proof is independent of the specific subsequence
we choose, and nontrivial (that is, $f(\mathbb C^+)\subseteq\mathbb C^+$ is not a constant). Meaning, we have proved that
$\tau\big((z-\mathbb B-\mathbb{A}_n\mathbb{X A}_n)^{-1}\big)=\tau\big((z-\mathbb B+f_n(z)\mathbb A_n^2)^{-1}\big)
\to\tau\big((z-\mathbb B+f(z)\mathbb A^2)^{-1}\big)$, as claimed.
All we still need to argue is that this limit coincides with $\tau\big((z-\mathbb B-\mathbb A
\mathbb{XA})^{-1}\big)$. That is rather straightforward: for instance, the boundedness of $\|(z-\mathbb B-\mathbb A_n
\mathbb{XA}_n)^{-1}\|$ (by $\epsilon^{-1}$) on $z\in\mathbb C^++i\epsilon$ forces the existence of a wo-convergent subsequence: for each $z$,
$\lim_{k\to +\infty}(z-\mathbb B-\mathbb A_{n_k}\mathbb{XA}_{n_k})^{-1}=V(z)$. Pick\footnote{All this
can be found in the literature, but for the sake of completeness, we give an argument here.} a countable dense subset $\{z_m\colon m\in\mathbb N\}\subset
\mathbb C^+$ and, via a diagonal extraction process, find a possibly further subsequence such that the wo limit
$\lim_{k\to +\infty}(z_m-\mathbb B-\mathbb A_{n_k}\mathbb{XA}_{n_k})^{-1}=V(z_m)$ exists for all $m\in\mathbb N$.
 Any such wo limit depends wo-analytically on $z$: for any wo-continuous positive linear functional $\varphi$ on $\mathcal A$,
the map $z\mapsto\varphi\left((z-\mathbb B-\mathbb A_{n_k}\mathbb{XA}_{n_k})^{-1}\right)$ sends $\mathbb C^+$ into $\mathbb C^-$,
and $\lim_{k\to +\infty}\varphi\left((z_m-\mathbb B-\mathbb A_{n_k}\mathbb{XA}_{n_k})^{-1}\right)=\varphi(V(z_m))$ for all $m\in\mathbb N$.
Thus, thanks to Montel's theorem, there exists a unique analytic map $\mathsf g_\varphi(z)$ on $\mathbb C^+$ with values in
$\mathbb C^-$ such that $\mathsf g_\varphi(z_m)=\varphi(V(z_m))$ on the dense subset $\{z_m\colon m\in\mathbb N\}\subset\mathbb C^+$.
The linearity of the correspondence $\varphi\mapsto\mathsf g_\varphi$ is obvious, so, by the completeness of  a von Neumann algebra in
the wo topology, $z_m\mapsto V(z_m)$ extends to all of $\mathbb C^+$ by wo-continuity, with $\Im V(z)\le0,z\in\mathbb C^+$. %Polar decomposition tells us that $a\mapsto<a\xi,\eta>$ decomposes in at most four positive vector states; thus, if z_{m_j}\to\zeta\in\mathbb C^+$, then $<V(z_{m_j}\xi,\eta>$ converges to $\mathsf g_{<.\xi,eta>}(\zeta)$ for all $L^2$ vectors $\xi,\eta$, so by completeness of the von Neumann algebra in the wo topology, there is a $V(\zeta)\in\mathcal A,\Im V(\zeta)\leq0$.
We claim that $V(z)$ is the resolvent of $\mathbb{AXA}+\mathbb B\in\tilde{\mathcal A}$.
Indeed, note that $\lim_{k\to+\infty}(z-\mathbb B-\mathbb A_{n_k}\mathbb{XA}_{n_k})^{-1}=V(z)$ in the wo topology: if $\xi$ is given,
then
\begin{eqnarray*}
\lefteqn{\left\langle\left[(z-\mathbb B-\mathbb A_{n_k}\mathbb{XA}_{n_k})^{-1}-V(z)\right]\xi,\xi\right\rangle}\\
& = & \left\langle\left[(z-\mathbb B-\mathbb A_{n_k}\mathbb{XA}_{n_k})^{-1}-
(z_j-\mathbb B-\mathbb A_{n_k}\mathbb{XA}_{n_k})^{-1}\right]\xi,\xi\right\rangle\\
& & \mbox{}+\left\langle\left[(z_j-\mathbb B-\mathbb A_{n_k}\mathbb{XA}_{n_k})^{-1}-V(z_j)\right]\xi,\xi\right\rangle+\langle[V(z_j)-V(z)]\xi,\xi\rangle.
\end{eqnarray*}
Given $\varepsilon>0$, the continuity of $V$ in the weak operator topology (shown above) guarantees that there exits a $\delta_1>0$
(depending on $\varepsilon,z,$ and $\xi$)  such that $|\langle[V(z_j)-V(z)]\xi,\xi\rangle|<\frac{\varepsilon}{3}$ for
all $j\in\mathbb N$ such that $|z-z_j|<\delta_1$. For the resolvent, one has
$\|(z-\mathbb B-\mathbb A_{n_k}\mathbb{XA}_{n_k})^{-1}-(z_j-\mathbb B-\mathbb A_{n_k}\mathbb{XA}_{n_k})^{-1}\|=
|z_j-z|\|(z-\mathbb B-\mathbb A_{n_k}\mathbb{XA}_{n_k})^{-1}(z_j-\mathbb B-\mathbb A_{n_k}\mathbb{XA}_{n_k})^{-1}\|\le
\frac{|z_j-z|}{\Im z\Im z_j}$, providing an easy $\delta_2>0$, this one only depending on $\Im z$ and $\varepsilon$, not on $\xi$.
Fix a $z_j$ that works for these two terms. Since $(z_j-\mathbb B-\mathbb A_{n_k}\mathbb{XA}_{n_k})^{-1}\to V(z_j)$
as $k\to+\infty$, there exists a $k_{\varepsilon,j,\xi}\in\mathbb N$ depending on all of ${\varepsilon,j,\xi}$, such that
for all $k\ge k_{\varepsilon,j,\xi}$, one has $\left|\left\langle\left[(z_j-\mathbb B-\mathbb A_{n_k}\mathbb{XA}_{n_k})^{-1}-V(z_j)\right]\xi,\xi\right\rangle\right|<
\frac{\varepsilon}{3}$. Thus, for all $k\ge k_{\varepsilon,j,\xi}$, one has $\left|\left\langle\left[(z-\mathbb B-\mathbb A_{n_k}\mathbb{XA}_{n_k})^{-1}-V(z)\right]\xi,\xi\right\rangle
\right|<\varepsilon$. Having established this limit to hold, let us consider the product $(z-\mathbb B-\mathbb A_{n_k}\mathbb{XA}_{n_k})V(z)$.
By its definition, for any $\xi$ in the domain of $\mathbb A$, one has $\lim_{k\to+\infty}\|(\mathbb A-\mathbb A_{n_k})\xi\|_2=
\lim_{k\to+\infty}\|q_{n_k}(\mathbb A-{n_k})q_{n_k}\xi\|_2=0.$ Since all of $\mathbb{A,X,B}\in\tilde{\mathcal A}$,
there exists a dense subspace of the Hilbert space on which $\mathcal A$ acts such that all of
$\mathbb{A,X,B,AX,XA,AXA,AXA+B}$ have it as a common domain. For a $\xi$ in this dense subspace, one has
that $\mathbb{XA}\xi$ belongs to the domain of $\mathbb A$, so that
$\lim_{k\to+\infty}\|(\mathbb A_{n_k}-\mathbb A)\mathbb{XA}\xi\|_2=0$, and
$\|\mathbb A_{n_k}\mathbb X(\mathbb A_{n_k}-\mathbb A)\xi\|_2^2=
\langle\mathbb A_{n_k}\mathbb X(\mathbb A_{n_k}-\mathbb A)\xi,\mathbb A_{n_k}\mathbb X(\mathbb A_{n_k}-\mathbb A)\xi\rangle
=\langle\mathbb A_{n_k}^2[\mathbb X(\mathbb A_{n_k}-\mathbb A)\xi],[\mathbb X(\mathbb A_{n_k}-\mathbb A)\xi]\rangle
\leq\langle\mathbb A^2[\mathbb X(\mathbb A_{n_k}-\mathbb A)\xi],[\mathbb X(\mathbb A_{n_k}-\mathbb A)\xi]\rangle
=\|(\mathbb{AX})(\mathbb A_{n_k}-\mathbb A)\xi\|_2^2$. Since $(\mathbb A_{n_k}-\mathbb A)\xi$ belongs to the domain of
$\mathbb{AX}$ for all $k$ and $\lim_{k\to +\infty}\|(\mathbb A_{n_k}-\mathbb A)\xi\|_2=0$, we also have
$0=\lim_{k\to +\infty}\|(\mathbb{AX})(\mathbb A_{n_k}-\mathbb A)\xi\|_2\ge\lim_{k\to +\infty}\|(\mathbb{A}_{n_k}\mathbb{X})(\mathbb A_{n_k}-\mathbb A)\xi\|_2\ge0$. Thus,
$$
0\le\lim_{k\to +\infty}\|(\mathbb A_{n_k}\mathbb{XA}_{n_k}-\mathbb{AXA})\xi\|_2\leq
\lim_{k\to +\infty}\|(\mathbb{A}_{n_k}\mathbb{X})(\mathbb A_{n_k}-\mathbb A)\xi\|_2+\|(\mathbb A_{n_k}-\mathbb A)\mathbb{XA}\xi\|_2=0.
$$
We have
\begin{eqnarray*}
\lefteqn{\left\langle(z-\mathbb B-\mathbb A_{n_k}\mathbb{XA}_{n_k})V(z)\xi,\xi\right\rangle}\\
& = & \left\langle(z-\mathbb B-\mathbb A_{n_k}\mathbb{XA}_{n_k})\left[V(z)-(z-\mathbb B-\mathbb A_{n_k}\mathbb{XA}_{n_k})^{-1}\right]\xi,\xi\right\rangle+\langle\xi,\xi\rangle\\
& = & \left\langle\left[V(z)-(z-\mathbb B-\mathbb A_{n_k}\mathbb{XA}_{n_k})^{-1}\right]\xi,(z-\mathbb B)\xi\right\rangle\\
& & \mbox{}-\left\langle\left[V(z)-(z-\mathbb B-\mathbb A_{n_k}\mathbb{XA}_{n_k})^{-1}\right]\xi,\mathbb A_{n_k}\mathbb{XA}_{n_k}\xi\right\rangle+\langle\xi,\xi\rangle\\
& = & \left\langle\left[V(z)-(z-\mathbb B-\mathbb A_{n_k}\mathbb{XA}_{n_k})^{-1}\right]\xi,(z-\mathbb B)\xi\right\rangle\\
& & \mbox{}-\left\langle\left[V(z)-(z-\mathbb B-\mathbb A_{n_k}\mathbb{XA}_{n_k})^{-1}\right]\xi,[\mathbb A_{n_k}\mathbb{XA}_{n_k}-\mathbb{AXA}]\xi\right\rangle\\
& & \mbox{}-\left\langle\left[V(z)-(z-\mathbb B-\mathbb A_{n_k}\mathbb{XA}_{n_k})^{-1}\right]\xi,\mathbb{AXA}\xi\right\rangle+\langle\xi,\xi\rangle.
\end{eqnarray*}
Given that $\xi$ is in the domain of $\mathbb B$, the weak operator convergence of $(z-\mathbb B-
\mathbb A_{n_k}\mathbb{XA}_{n_k})^{-1}$ to $V(z)$ yields $\lim_{k\to +\infty}\left\langle\left[V(z)-(z-
\mathbb B-\mathbb A_{n_k}\mathbb{XA}_{n_k})^{-1}\right]\xi,(z-\mathbb B)\xi\right\rangle=0$.
By the same argument, $\lim_{k\to +\infty}\left\langle\left[V(z)-(z-\mathbb B-\mathbb A_{n_k}
\mathbb{XA}_{n_k})^{-1}\right]\xi,\mathbb{AXA}\xi\right\rangle=0$ as well.
Since $\|V(z)\|\le\frac{1}{\Im z},\|(z-\mathbb B-\mathbb A_{n_k}\mathbb{XA}_{n_k})^{-1}\|\le
\frac{1}{\Im z}$, one has by the Schwartz-Cauchy-Buniakowski inequality
$0\le|\left\langle\left[V(z)-(z-\mathbb B-\mathbb A_{n_k}\mathbb{XA}_{n_k})^{-1}\right]\xi,
[\mathbb A_{n_k}\mathbb{XA}_{n_k}-\mathbb{AXA}]\xi\right\rangle|\le\frac{2}{\Im z}\|\xi\|_2
\|[\mathbb A_{n_k}\mathbb{XA}_{n_k}-\mathbb{AXA}]\xi\|_2\to0$ as $k\to +\infty$, as proven above.
Thus,
$$
\lim_{k\to +\infty}\left\langle(z-\mathbb B-\mathbb A_{n_k}\mathbb{XA}_{n_k})V(z)\xi,\xi\right\rangle=\langle\xi,\xi\rangle.
$$
This tells us that $(z-\mathbb B-\mathbb A_{n_k}\mathbb{XA}_{n_k})V(z)\to1$ in the wo topology.
Since $(z-\mathbb B-\mathbb A_{n_k}\mathbb{XA}_{n_k})\to(z-\mathbb B-\mathbb A\mathbb{XA})$
in the so topology, one has $(z-\mathbb B-\mathbb A\mathbb{XA})V(z)=1$. The proof that $V(z)(z-
\mathbb B-\mathbb A\mathbb{XA})\xi=\xi$ for all $\xi$ in the domain of $\mathbb{AXA}+\mathbb B$
is straightforward. We conclude that $V(z)=(z-\mathbb B-\mathbb A\mathbb{XA})^{-1}$.

%%%%%%%%%%%%%%%%%%%%%%%%%%%%%%%%%%%%%%%%%%%%%
%However, $\|\mathbb A_{n_k}(z-\mathbb B+w\mathbb A_{n_k}^2)^{-1}\mathbb A_{n_k}\|=
%\big\|w^{-1}-w^{-2}\big(w^{-1}+\mathbb A_{n_k}(z-\mathbb B)^{-1}\mathbb A_{n_k}\big)^{-1}\big\|\le\frac{1}{|w|}+
%\frac{1}{|w|^2}\cdot\frac{|w|^2}{|\Im w|}=\frac{1}{|w|}+\frac{1}{|\Im w|}$, independently of $\mathbb{B,A}_{n_k}$.
%Since $\mathcal A$ is a von Neumann algebra, this makes $\mathbb A_{n_k}(z-\mathbb B+w\mathbb A_{n_k}^2)^{-1}
%\mathbb A_{n_k},k\in\mathbb N,$ a precompact family in the wo topology, so there is a further subsequence which converges
%in the wo topology. Thus, passing to yet a further subsequence if necessary, the functions $g_{z,k}(w)$ corresponding to
%$\mathbb{B,X}$, and $\mathbb A_{n_k}$ from \eqref{gez}
%
%
%By \eqref{49}, known to hold from Theorem \ref{fixp}, $\{\mathsf f_{n_k}\}_k$ must also converge.
%, which tells us that $\lim_{k\to +\infty}\tau
%\big(\mathbb A_{n_k}(z-\mathbb B-\mathbb{A}_{n_k}\mathbb{XA}_{n_k})^{-1}\mathbb A_{n_k}\big)=\lim_{k\to +\infty}\tau
%\big(\mathbb A_{n_k}(z-\mathbb B+f_{n_k}(z)\mathbb{A}_{n_k}^2)^{-1}\mathbb A_{n_k}\big)=
%\tau\left((\mathsf f(z)-\mathbb X)^{-1}\right)$ along that subsequence, by using \eqref{51} and \eqref{52}.
%%%%%%%%%%%%%%%%%%%%%%%%%%%%%%%%%%%%%%%%%%%%%%

This concludes the proof of \eqref{53}, and hence of \eqref{51}--\eqref{52} as well, and hence of the existence part of the corollary.

It remains to study the asymptotics of $f,\mathsf f$ at infinity under the assumption that $\tau(\mathbb A^2)<+\infty$.
Proposition \ref{sans} shows that $\lim_{y\to+\infty}\frac{\mathsf f(iy)}{iy}=\frac{1}{\tau(\mathbb A^2)}$ under the supplementary assumption that
$\tau(\mathbb X^2)<+\infty$, an assumption that we shall easily remove next. Indeed, recall
that the hypothesys $\mathbb X\in L^2(\mathcal A,\tau)$ in Proposition \ref{sans} is used only
in the proof of the {\em existence} of $\mathsf f$, and the existence of {\sf f} has now been
established in Theorem \ref{fixp} without making this assumption. Thus, at this moment
we can argue directly from \eqref{51} (by multiplying with $z$ on both sides and letting
$z$ go to infinity nontangentially) that $\mathsf f(iv)/iv\to\tau(\mathbb A^2)^{-1}$. We can also
argue directly from Theorem \ref{fixp}: since we know, thanks to the Julia-Carath\'eodory Theorem that
$c:=\lim_{v\to+\infty}f(iv)/iv,k:=\lim_{v\to+\infty}\mathsf f(iv)/iv$ exist and belong to $[0,+\infty)$,
from the functional equations in Theorem \ref{fixp} we obtain by multiplying with the variable and
taking nontangential limit at infinity that, first, $k>0$, second, $c=0$, and, from these two,
$k=\frac{1}{\tau(\mathbb A^2)}$. These three items of information tell us that $\mathsf f(iv)$ tends
to infinity nontangentially as $v\to+\infty$, so that
$$
\lim_{v\to+\infty}f(iv)=\lim_{v\to+\infty}\frac{1}{\tau\left((\mathsf f(iv)-\mathbb X)^{-1}\right)}-
\mathsf f(iv)=\sphericalangle\lim_{w\to +\infty}\frac{1}{\tau\left((w-\mathbb X)^{-1}\right)}-w.
$$
A direct computation shows that if $\tau(|\mathbb X|)<+\infty$, then the right hand side equals
$-\tau(\mathbb X)$. If, say, the distribution of $\mathbb X$  with respect to $\tau$ is standard Cauchy,
then the right hand side equals $i\in\mathbb C$, so $f(iv)\to i$ as $v\to+\infty$,
and $\tau(\mathbb X)$ is not well-defined. This concludes our proof.
\end{proof}
%%%%%%%%%%%%%%%%%%%%%%%%%%%%%%%%%%%%%%%%%%%%%%
%%%%%%%%%%%%%%%%%%%%%%%%%%%%%%%%%%%%%%%%%%%%%%
%%%%%%%%%%%%%%%%%%%%%%%%%%%%%%%%%%%%%%%%%%%%%%
%%%%%%%%%%%%%%%%%%%%%%%%%%%%%%%%%%%%%%%%%%%%%%

We should mention that the pair $(\mathsf f,f)$ is uniquely detemined via Theorem \ref{fixp}
by the distributions of $\mathbb X$ and of $(\mathbb{A,B})$ with respect to the tracial state $\tau$.
Corollary \ref{eq} tells us then that the distribution of $\mathbb{AXA}+\mathbb B$ with respect
to $\tau$ is uniquely determined by $(\mathsf f,f)$, so that the distribution of $\mathbb{AXA}+
\mathbb B$ with $\mathbb X$ being *-free from $(\mathbb{A,B})$ is determined via a (reasonably
simple) fixed-point functional equation involving the Cauchy transform of the distribution of
$\mathbb X$ and a variation of the Cauchy transform of the affine expression $\mathbb B+
w\mathbb A^2$.

\begin{remark}\label{neg}
If all of $\mathbb{A,B,X}\in\tilde{\mathcal A}$ are positive, then of course $\mathbb B+\mathbb{AXA}
\ge0$ as well. Then all functions in Corollary \ref{eq} are well-defined, analytic, and real-valued on
$(-\infty,0)$. Indeed, this is immediately visible for $z\mapsto\tau\big((z-\mathbb B-
\mathbb{AXA})^{-1}\big)$ and for $z\mapsto\tau\big(\mathbb A(z-\mathbb B-
\mathbb{AXA})^{-1}\mathbb A\big)$. Moreover, both these functions send $(-\infty,0)$
into itself and are decreasing on $(-\infty,0)$. From \eqref{51}, it follows that $\mathsf f$
extends analytically to $(-\infty,0)$ and takes values in $\mathbb R\setminus\sigma(\mathbb X)$.
Combining \eqref{51} and \eqref{49}, it follows that $z\mapsto f(z)$ extends analytically to
$(-\infty,0)$, with real values as well. Since $f\colon\mathbb C^+\to\mathbb C^+$,
it follows (for instance thanks to the Julia-Carath\'eodory Theorem) that $f$ is strictly increasing
on $(-\infty,0)$. Corollary \ref{eq} together with Lindel\"{o}f's Theorem tells us that $\lim_{v
\to-\infty}f(v)=\lim_{w\to-\infty}\frac{1}{\tau((w-\mathbb X)^{-1})}-w\in[-\infty,0)$, meaning that
if $\tau(|\mathbb X|)<+\infty$, then $\lim_{v\to-\infty}f(v)=-\tau(\mathbb X)$ and if $\tau(|\mathbb X|)
=+\infty$, then $\lim_{v\to-\infty}f(v)=-\infty$. {Similarly as before, the corresponding statement of 
Corollary \ref{eq} together with Lindel\"{o}f's Theorem imply that $\lim_{v\to-\infty} \frac{f(v)}{v}=0$.}
\end{remark}

For the purposes of studying the moments of $\mathbb{AXA}+\mathbb B$ (those that
exist), it is convenient to use the moment generating function introduced in Section \ref{sec:tr},
and the function $\delta\colon\mathbb C^+\to\mathbb C^-$, $\delta(z)=-f(z),$ and
$h\colon\mathbb C^+\to\mathbb C^+,h(z)=\mathsf f(z)+f(z)$. In the next Lemma and Example we go back to the notation form the previous Sections and consider the element $\A^{1/2}\X\A^{1/2}+\B$.

\begin{lemma}\label{lem:subordination}

Fix a non-commutative probability space $(\tcA,\tau)$. Let $\X$, $\B$, $\A$ be self-adjoint, where $\A$
is positive. Assume that $\X$ and $(\A,\B)$ are free. Define a function $\delta\colon \mathbb{C}^+\cup
\R^-\to \mathbb{C}^-\cup\R$ as a unique analytic function satisfying a conditions
         \[
         \sphericalangle\lim_{|z| \to+\infty}\frac{\delta(z)}{z}=0.
         \]
    and
\[ 
\psi_{\X}\left(\frac{1}{h(z)+\delta(z)}\right)=\frac{\delta(z)}{h(z)},
\]
where
\[
\frac{1}{h(z)}=\tau\left(\A^{1/2}(z-\B-\delta(z)\A)^{-1}\A^{1/2}\right).
\]
Then
\begin{align}\label{eq:FSZ1}   
\psi_{\A^{1/2}\X\A^{1/2}+\B}(z^{-1})=\tau\left(\left(1-\frac{\delta(z)}{z}\A-\frac{1}{z}\B\right)^{-1}\right)-1.
\end{align}
 \end{lemma}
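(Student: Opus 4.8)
\textbf{Proof plan for Lemma \ref{lem:subordination}.}
The plan is to translate the subordination identities of Corollary \ref{eq} from the language of Cauchy transforms into the language of the moment transform $\psi$, using the elementary dictionary from Subsection \ref{sec:tr}. First I would recall that $\delta(z) = -f(z)$ and $h(z) = \mathsf f(z)+f(z)$, where $(f,\mathsf f)$ is the pair of analytic self-maps of $\C^+$ supplied by Corollary \ref{eq} applied to $\A^{1/2}\X\A^{1/2}+\B$ (writing $\A$ in that corollary as $\A^{1/2}$, so that $\A^2$ there becomes $\A$ here). With this identification, \eqref{51}--\eqref{52} give
\[
\tau\bigl((\mathsf f(z)-\X)^{-1}\bigr)=\tau\bigl(\A^{1/2}(z-\B-\delta(z)\A)^{-1}\A^{1/2}\bigr)=\frac{1}{h(z)},
\]
which is exactly the defining relation for $h$ stated in the lemma, while \eqref{49} rearranges to $\mathsf f(z)+f(z)=1/\tau((\mathsf f(z)-\X)^{-1})$, i.e. $h(z)=\mathsf f(z)+f(z)$ is consistent. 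The hypothesis $\sphericalangle\lim \delta(z)/z=0$ is precisely the asymptotic $\lim_{v\to+\infty} f(iv)/iv=0$ from Corollary \ref{eq} (valid in general, not only when $\tau(\A^2)<+\infty$, since it follows from the Julia--Carath\'eodory analysis in Theorem \ref{fixp}), and this together with \eqref{49}--\eqref{52} pins down $\delta$ uniquely; so the first task is just to check that the $\delta$, $h$ of the lemma are well-defined and coincide with $-f$, $\mathsf f+f$.

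Next I would verify the stated implicit equation for $\delta$. Starting from \eqref{49}, $f(z)=\tau((\mathsf f(z)-\X)^{-1})^{-1}-\mathsf f(z) = h(z)-\mathsf f(z)$, so $\mathsf f(z)=h(z)+f(z)=h(z)-\delta(z)$. Now $\mathsf f(z)-\X$ and $\psi_\X$ are related by the identity $\psi_\X(1/w) = \tau((I-\X/w)^{-1}) - 1 = w\,\tau((w-\X)^{-1}) - 1$ for $1/w \notin \mathrm{supp}(\mu_\X)$ (this is immediate from $\frac{zt}{1-zt} = \frac{1}{1-zt}-1$ with $z=1/w$). Applying this with $w=\mathsf f(z)$:
\[
\psi_\X\!\left(\frac{1}{\mathsf f(z)}\right)=\mathsf f(z)\,\tau\bigl((\mathsf f(z)-\X)^{-1}\bigr)-1=\frac{\mathsf f(z)}{h(z)}-1=\frac{\mathsf f(z)-h(z)}{h(z)}=\frac{-\delta(z)}{h(z)}\cdot(-1)=\frac{\delta(z)}{h(z)},
\]
wait --- more carefully, $\mathsf f(z)-h(z) = -\delta(z) \cdot(-1)$? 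Since $\mathsf f = h - \delta$, we get $\mathsf f - h = -\delta$, hence $\psi_\X(1/\mathsf f(z)) = (\mathsf f(z)-h(z))/h(z) = -\delta(z)/h(z)$. To match the lemma I use instead $1/\mathsf f(z) = 1/(h(z)-\delta(z))$ but the lemma writes $1/(h(z)+\delta(z))$; since $\delta=-f$, $h+\delta = \mathsf f + f + (-f)$? No: $h+\delta = \mathsf f+f-f = \mathsf f$. Good --- so $h(z)+\delta(z)=\mathsf f(z)$, and the argument of $\psi_\X$ in the lemma is $1/\mathsf f(z)$, and the right-hand side $\delta(z)/h(z) = (\mathsf f(z)-h(z))/h(z)$ since $\delta = \mathsf f - h$. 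Then $\psi_\X(1/\mathsf f(z)) = \mathsf f(z)\tau((\mathsf f(z)-\X)^{-1})-1 = \mathsf f(z)/h(z) - 1 = \delta(z)/h(z)$, confirming the implicit equation.

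Finally I would derive \eqref{eq:FSZ1} from \eqref{53}. By \eqref{53}, $\tau((z-\B-\A^{1/2}\X\A^{1/2})^{-1}) = \tau((z-\B+f(z)\A)^{-1}) = \tau((z-\B-\delta(z)\A)^{-1})$. Applying the same $\psi$-to-Cauchy dictionary to the left-hand side with $w=z$ gives $\psi_{\A^{1/2}\X\A^{1/2}+\B}(1/z) = z\,\tau((z-\B-\A^{1/2}\X\A^{1/2})^{-1})-1$, hence
\[
\psi_{\A^{1/2}\X\A^{1/2}+\B}(z^{-1})=z\,\tau\bigl((z-\B-\delta(z)\A)^{-1}\bigr)-1=\tau\!\left(\Bigl(1-\tfrac{\delta(z)}{z}\A-\tfrac1z\B\Bigr)^{-1}\right)-1,
\]
which is \eqref{eq:FSZ1}. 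The only genuine obstacle I anticipate is bookkeeping: making sure the substitution $\A\mapsto\A^{1/2}$ (so that ``$\mathbb A^2$'' in Corollary \ref{eq} is the operator we here call $\A$) is applied consistently everywhere, that the domains of analyticity match (extending from a Stolz angle at infinity / from $\C^+$ down to $\R^-$ as in Remark \ref{neg} when the variables are positive, and invoking the $\sphericalangle$-limit normalization to single out the correct branch of $\delta$), and that all the resolvents and traces are well-defined at the relevant $z$ --- all of which is already handled by Corollary \ref{eq} and Remark \ref{neg}. No new analytic input is needed beyond what those results provide.
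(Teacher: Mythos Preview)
Your proposal is correct and follows essentially the same approach as the paper: set $\delta=-f$, $h=\mathsf f+f$, use the elementary identity $\psi_\Y(1/w)=w\,\tau((w-\Y)^{-1})-1$ to translate \eqref{49}, the defining relation in Theorem~\ref{fixp}, and \eqref{53} into the three displayed equations of the lemma. The paper's proof says exactly this in one sentence; your write-up just unpacks the bookkeeping (including the substitution $\A\mapsto\A^{1/2}$) more explicitly.
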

\begin{proof}[Proof of Lemma \ref{lem:subordination}]
The lemma is just a reformulation of the above remark and of Corollary \ref{eq} in terms of $h,\delta$
and $\psi_\mathbb X$, by simply making use of the relation $\psi_\mathbb X(z)=
\tau\left(z\mathbb X(1-z\mathbb X)^{-1}\right)=\tau\left((1-z\mathbb X)^{-1}\right)-1=
\frac{1}{z}\tau\left(\left(\frac1z-\mathbb X\right)^{-1}\right)-1$. Indeed, the first equation is a
reformulation of \eqref{49}, the second is precisely the second defining equality in Theorem
\ref{fixp}, and the third (namely \eqref{eq:FSZ1}) is \eqref{53}.
\end{proof}

Let us illustrate the behavior of function $\delta$ in the example of free Beta prime distribution.
\begin{example}
We can calculate function $\delta$ explicitly in our running example.
Recall that we have $\X\stackrel{d}{=}\A^{1/2}\X\A^{1/2}+\A$, where $\A$ and $\X$ are freely independent.
By \eqref{eq:FSZ1} we have
\[
\psi_\X(z^{-1}) = \psi_\A\left(\frac{1+\delta(z)}{z} \right).
\]
Thus,
\begin{align*}
        \frac{1+\delta(z)}{z} = \chi_{\A}(\psi_{\X}(z^{-1})),
\end{align*}
with $\chi_\A = \chi_{f\beta'_{a,a+b}}$ and $\psi_\X = \psi_{f\beta'_{a,b}}$, where by \cite[Proof of Proposition 3.2]{Yoshida}, we have
\begin{align*}
\psi_{f\beta'_{a,b}}(z) &= \frac{(b-1)-(1+a)z-\sqrt{((b-1)-(1+a)z)^2-4 az(z+1)}}{2(1+z)},\\
\chi_{f\beta'_{a,b}}(z) &=\frac{z(b-1-z)}{(1+z)(z+a)}.
\end{align*}
We obtain for $z<0$, $a>0$, $b\geq1$,
\[
\delta(z)=\frac{a \left(a-1+2b+(b-1)z + \sqrt{(z(b-1)-(1-a))^2 - 4 a b z }\right)}{2 b (a+b-1)}.
\]
If $b>1$, then $\lim_{z\to-\infty}\delta(z) = a/(b-1)$.
If $b=1$, then
\[
\delta(z)=\frac{a+1+\sqrt{(1-a)^2 - 4 a z }}{2}\quad\mbox{and}\quad \lim_{z\to-\infty} \frac{\delta(z)}{\sqrt{-z}} = \sqrt{a}.
\]
\end{example}

\begin{lemma}\label{lem:deltaProp} \
\begin{enumerate}
    \item[(i)] If $\A, \B, \X\geq 0$, then $\delta\colon (-\infty,0)\to (0,+\infty)$ is monotonic.
    \item[(ii)] If $(\A,\B)\stackrel{d}{=}(\A,-\B)$ and $\X\stackrel{d}{=}-\X$, then $\delta\colon i(-\infty,0)\to i(0,+\infty)$ is monotonic.
\end{enumerate}
\end{lemma}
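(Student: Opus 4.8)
The plan is to read part (i) directly off Corollary \ref{eq} and Remark \ref{neg}, and to reduce part (ii) to a symmetrized version of the same argument.

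\textbf{Part (i).} Since $\A,\B,\X\ge0$ we have $\B+\mathbb{AXA}\ge0$, so by Remark \ref{neg} the functions $f,\mathsf f$ of Corollary \ref{eq} extend analytically and real-valuedly to $(-\infty,0)$, and $f$ is strictly increasing there (a Nevanlinna function is strictly monotone on any interval disjoint from the support of its representing measure). As $\delta=-f$ on $(-\infty,0)$ by Lemma \ref{lem:subordination}, $\delta$ is strictly decreasing, which is the asserted monotonicity. It remains to pin the codomain, i.e. to show $\delta>0$ on $(-\infty,0)$. First, $\delta$ cannot vanish there: if $\delta(z_0)=0$ then \eqref{52} gives $\tau((\mathsf f(z_0)-\X)^{-1})=\tau(\A(z_0-\B)^{-1}\A)<0$ (because $z_0-\B\le z_0\cdot1<0$ and $\A\neq0$), while \eqref{49} with $f(z_0)=0$ forces $\mathsf f(z_0)=1/\tau((\mathsf f(z_0)-\X)^{-1})<0$ and hence $\tau(\X(\mathsf f(z_0)-\X)^{-1})=0$; since $\mathsf f(z_0)<0$ and $\X\ge0$, $\X\neq0$, this contradicts faithfulness of $\tau$. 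Second, Corollary \ref{eq} and Remark \ref{neg} give $\lim_{v\to-\infty}\delta(v)=-\lim_{v\to-\infty}f(v)=\tau(\X)\in(0,+\infty]$ (with $\tau(\X)>0$ by faithfulness and $\X\neq0$). So $\delta$ is positive near $-\infty$, continuous and nowhere zero on $(-\infty,0)$, hence positive throughout.

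\textbf{Part (ii).} Because $\X$ is $*$-free from $(\A,\B)$ and the two symmetries are independent, $(\A,\B,\X)\stackrel{d}{=}(\A,-\B,-\X)$; consequently each of $z\mapsto\tau((z-\B-\mathbb{AXA})^{-1})$, $z\mapsto\tau(\A(z-\B-\mathbb{AXA})^{-1}\A)$ and $w\mapsto\tau((w-\X)^{-1})$ is odd. Propagating this through \eqref{49}--\eqref{53} (using that the compositional inverse of an odd map is odd) shows $\mathsf f,f$, hence $\delta=-f$, are odd: $\delta(-z)=-\delta(z)$. Together with $\delta(\bar z)=\overline{\delta(z)}$ this forces $\delta$ to carry the imaginary axis into itself, and since $f(\mathbb C^+)\subseteq\mathbb C^+$ one gets $\delta(is)\in i(0,+\infty)$ for $s<0$, so (after the reflection just used) $\delta$ is well defined and $i(0,+\infty)$-valued on $i(-\infty,0)$. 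For the monotonicity, the plan is to write $\delta(z)=z\,g(z^2)$ with $g$ analytic on $\C\setminus[0,+\infty)$ and real on $(-\infty,0)$, and to transport the argument of part (i) through the symmetric$\to$positive reduction $t\mapsto t^2$ used in the proof of Theorem \ref{thm:taub2}: rewriting the defining relations of Corollary \ref{eq} in the variable $z^2$ exhibits $g$ via a Cauchy--Stieltjes-type identity governed by a positive measure, whence $g$ has constant sign and is strictly monotone on $(-\infty,0)$; its sign (and thus the location of the image) is fixed exactly as in part (i), by the non-vanishing argument from \eqref{49}--\eqref{52} applied with purely imaginary $z$ together with $\delta(z)/z\to0$.

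\textbf{Main obstacle.} The delicate point is (ii): justifying that $\delta$ is genuinely monotone along the ray $i(-\infty,0)$. This is not a formal consequence of $f$ being an odd Nevanlinna function; one must use the finer structure of $f$ encoded in the affine relation of Corollary \ref{eq} (the Cauchy--Stieltjes kernel structure behind $h$ and $\mathsf f$), just as part (i) leans on the positivity $\B+\mathbb{AXA}\ge0$. Making the $z\mapsto z^2$ reduction rigorous — in particular identifying the positive measure controlling $g$ and checking that the monotonicity survives the change of variable — is where the real work lies.
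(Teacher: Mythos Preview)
Your treatment of part (i) is correct and coincides with the paper's: both reduce to Remark \ref{neg} (the Nevanlinna function $f$ extends real-analytically to $(-\infty,0)$ and is hence strictly increasing there). Your additional codomain argument is fine extra detail the paper omits.

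For part (ii) the approaches diverge. The paper's proof is a two-line reduction: under $(\A,\B)\stackrel{d}{=}(\A,-\B)$ and $\X\stackrel{d}{=}-\X$ one checks directly that
\[
\Re\,\tau\big(\A(iy-\B+iv\A^2)^{-1}\A\big)=0=\Re\,\tau\big((iy-\X)^{-1}\big)\qquad(y,v>0),
\]
so the Denjoy--Wolff map $\mathscr F_{iy}$ of Theorem \ref{fixp} sends $i(0,\infty)\times i(0,\infty)$ into itself; uniqueness of the fixed point then forces $(f(iy),\mathsf f(iy))\in i(0,\infty)^2$, giving the claimed image and putting one in a one-variable real situation parallel to (i). Your oddness argument ($f(-z)=-f(z)$, $f(\bar z)=\overline{f(z)}$) is equivalent for the image claim, just packaged differently.

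Where you diverge is the monotonicity step. You propose writing $\delta(z)=z\,g(z^2)$ and transporting the part-(i) argument through the square map --- a substantially heavier route that you do not carry out, and whose ``identifying the positive measure controlling $g$'' step is not obviously available from the equations in Corollary \ref{eq}. The paper bypasses this entirely: rather than squaring, it stays with the fixed-point description of Theorem \ref{fixp} on the ray $i(0,\infty)$, where (after dividing by $i$) the problem is already real-valued. So your ``main obstacle'' is genuine for your chosen route, but is simply not encountered along the paper's route; the paper's proof is correspondingly much shorter, though also much terser about why monotonicity itself follows once one is in that real one-variable picture.
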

\begin{proof}
Part (i) has been shown  as part of Remark \ref{neg}. Part (ii) follows from Theorem \ref{fixp}
by observing that $\Re\tau\left(\mathbb A(iy-\mathbb B+iv\mathbb A^2)^{-1}\mathbb A\right)=
\Re\tau\left((iy-\mathbb X)^{-1}\right)=0$ for all $y,v>0$.
\end{proof}

To conclude this section, observe that $\mathbb X\stackrel{d}{=}\mathbb B+\mathbb{AXA}$
means that
\begin{eqnarray*}
\tau\left(\mathbb A(z-\mathbb B+f(z)\mathbb{A}^2)^{-1}\mathbb A\right)
& = & \tau\left(\mathbb A(z-\mathbb B-\mathbb{AXA})^{-1}\mathbb A\right)\\
& = & \tau\left((\mathsf f(z)-\mathbb X)^{-1}\right)\\
& = & \tau\left((\mathsf f(z)-\mathbb B-\mathbb{AXA})^{-1}\right)\\
& = & \tau\left((\mathsf f(z)-\mathbb B+f(\mathsf f(z))\mathbb{A}^2)^{-1}\right),
\quad z\in\mathbb C^+.
\end{eqnarray*}
At the same time, Theorem \ref{fixp} tells us also that $\tau\left((\mathsf f(z)-\mathbb X)^{-1}\right)
=\frac{1}{\mathsf f(z)+f(z)}$. Thus, the functions $(\mathsf f,f)$ corresponding to a free perpetuity
must satisfy
$$
\tau\left(\mathbb A(z-\mathbb B+f(z)\mathbb{A}^2)^{-1}\mathbb A\right)=
\frac{1}{\mathsf f(z)+f(z)}
=\tau\left((\mathsf f(z)-\mathbb B+f(\mathsf f(z))\mathbb{A}^2)^{-1}\right).
$$
Unfortunately, there seems to be no direct way to determine the existence of such a pair
$(\mathsf f,f)$ from these two equalities. They are equivalent to the pair
\begin{eqnarray*}
\mathsf f(z)&=&\frac{1}{\tau\left(\mathbb A(z-\mathbb B+f(z)\mathbb{A}^2)^{-1}\mathbb A\right)}
-f(z),\\
f(z)&=&\frac{1}{\tau\left(({\sf f}(z)-\mathbb B+f(\mathsf f(z))\mathbb{A}^2)^{-1}\right)}-{\sf f}(z).
\end{eqnarray*}
Given a function $f_0\colon\mathbb C^+\to\mathbb C^+$ such that
$\lim_{y\to+\infty}\frac{f_0(iy)}{iy}=0$, one defines
$$
\mathsf f_1(z)=\frac{1}{\tau\left(\mathbb A(z-\mathbb B+f_0(z)\mathbb{A}^2)^{-1}\mathbb A
\right)}-f_0(z),$$
and
$$
f_1(z)=\frac{1}{\tau\left(({\sf f}_1(z)-\mathbb B+f_0(\mathsf f_1(z))\mathbb{A}^2)^{-1}\right)}-{\sf
f}_1(z).
$$
It is clear that $\lim_{y\to+\infty}\frac{\mathsf f_1(iy)}{iy}=\frac{1}{\tau(\mathbb A^2)}$ and
$\lim_{y\to+\infty}\frac{f_1(iy)}{iy}=0$. Inductively, given $(\mathsf f_j,f_j),1\le j\le n,$ satisfying
these same conditions, we let
$$
\mathsf f_{n+1}(z)=\frac{1}{\tau\left(\mathbb A(z-\mathbb B+f_n(z)\mathbb{A}^2)^{-1}\mathbb A
\right)}-f_n(z),$$
and
$$
f_{n+1}(z)=\frac{1}{\tau\left(({\sf f}_n(z)-\mathbb B+f_{n-1}(\mathsf f_n(z))\mathbb{A}^2)^{-1}
\right)}-{\sf f}_n(z).
$$
We conjecture that $(\mathsf f_n,f_n)\to(\mathsf f,f)$ as $n\to +\infty$ in the topology of uniform
convergence on compact subsets of $\mathbb C^+$, where $(\mathsf f,f)$ are the functions
provided by Corollary \ref{eq} with $\mathbb X\stackrel{d}{=}\mathbb B+\mathbb{AXA}$
if and only if $\tau(\mathbb A^2)\leq1$ and $\tau(\log^+\B)<+\infty$.

\section{Tails of the solution to the affine fixed-point equation}\label{sec:tails}

In this section using the analytic tools developed in the previous sections we are able to determine the behavior of the tails of free perpetuities.
                                               
\begin{thm}\label{thm:tails+}
        Assume $\tau(\A)=1$, where $\A\in\tcA$ is non-Dirac and $\B\in\cA$, with $\B\geq 0$ and
$\B\neq 0$. If the model \eqref{eq:affine} is irreducible, then the unique solution $\X$ to
\eqref{eq:affine} exists, and moreover:
 \begin{enumerate}
     \item[(i)] if $\tau(\A^2)<+\infty$, then
         \begin{align}\label{eq:tail1}
        \lim_{t\to+\infty}t^{1/2}\mu_{\X}\big((t,+\infty)\big) = \frac{2\sqrt{2}}{\pi}\sqrt{\frac{\tau(\B)}{\mathrm{Var}(\A)}}.
        \end{align}
     \item[(ii)] if $\mu_\A\big((t,+\infty)\big)\sim c\, t^{-\alpha}$ for $\alpha\in(1,2)$ and $c>0$, then
         \begin{align}\label{eq:tail2}
 \lim_{t\to+\infty}t^{1/\alpha}\mu_{\X}\big((t,+\infty)\big) = \frac{\sin(\pi/\alpha)}{\pi/\alpha}\left( \frac{-\sin(\pi\alpha)}{\pi\,c}\tau(\B)\right)^{1/\alpha}.
        \end{align}
 \end{enumerate}
 \end{thm}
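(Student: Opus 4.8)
The plan is to read off the tail of $\mu_\X$ from the behaviour of $-\psi_\X(-1/t)$ as $t\to+\infty$ through the Tauberian Theorem~\ref{thm:taub}(i), feeding in information about $\psi_\X$ coming from the subordination function $\delta$ of Lemma~\ref{lem:subordination}. First, since $\B\in\cA$ and \eqref{eq:affine} is irreducible we have $\A,\X\notin\C\cdot1_\cA$, so by Theorem~\ref{thm:uniqe_free} together with Theorem~\ref{thm:1} the solution $\X$ exists, is unbounded (as $\tau(\A)=1$), satisfies $\X\ge 0$, and is $*$-free from $(\A,\B)$; hence Corollary~\ref{eq}/Lemma~\ref{lem:subordination} apply with $\X$ itself, producing analytic functions $\delta,h$ on $\C^+\cup\R^-$ (write $\mathsf f:=h+\delta$) with $G_\X(z)=\tau((z-\B-\delta(z)\A)^{-1})$, $h(z)^{-1}=\tau(\A^{1/2}(z-\B-\delta(z)\A)^{-1}\A^{1/2})$, $G_\X(\mathsf f(z))=h(z)^{-1}$, and $\psi_\X(1/\mathsf f(z))=\delta(z)/h(z)$, where $G_\X$ is the Cauchy transform. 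From Lemma~\ref{lem:deltaProp}(i) the restriction of $\delta$ to $(-\infty,0)$ is monotone and positive, and from Remark~\ref{neg} (using $\tau(\X)=+\infty$) one gets $\delta(-t)\to+\infty$ and $\rho(t):=\delta(-t)/t\to0^+$. I will work with $P(t):=-\psi_\X(-1/t)=\int x(t+x)^{-1}\,\mu_\X(\dd x)$, which is smooth, strictly decreasing, completely monotone, and tends to $0$.

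The core is to pin down the asymptotics of $\delta(-t)$ and $P(t)$ from two identities. Evaluating $G_\X(z)=\tau((z-\B-\delta(z)\A)^{-1})$ at $z=-t$ and expanding $\tau((1+\B/t+\rho(t)\A)^{-1})$ by the resolvent identity (using $\tau(\A)=1$) gives
\[
P(t)=-\psi_\A(-\rho(t))+\frac{\tau(\B)}{t}(1+o(1)),
\]
where $-\psi_\A(-\rho)=\rho-\rho^2\tau(\A^2)+o(\rho^2)$ in case (i), while in case (ii) Theorem~\ref{thm:taub}(ii) (equivalently Lemma~\ref{lem:CS}(ii)(c)) gives $-\psi_\A(-\rho)=\rho+\tfrac{\pi\alpha}{\sin(\pi\alpha)}c\,\rho^\alpha(1+o(1))$; in either case $P(t)\sim\rho(t)$, hence $\delta(-t)\sim tP(t)$. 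Evaluating the remaining identities at $z=-t$ and using $G_\X(-T)=-T^{-1}(1-P(T))$ with $T:=-\mathsf f(-t)>0$, I expect to obtain $\mathsf f(-t)\to-\infty$, $\mathsf f(-t)<-t$, $-\psi_\X(1/\mathsf f(-t))=-\delta(-t)/h(-t)=-\psi_\A(-\rho(t))+(\text{lower-order correction})$, and
\[
\mathsf f(-t)+t=\big(1-\tau(\A^2)\big)\delta(-t)(1+o(1))\ \ \text{(i)},\qquad
\mathsf f(-t)+t=\tfrac{\pi\alpha}{\sin(\pi\alpha)}c\,t\,\rho(t)^{\alpha-1}(1+o(1))\ \ \text{(ii)},
\]
with the correction in $-\psi_\X(1/\mathsf f(-t))$ of order $o(\rho(t)^2)$, resp.\ $o(\rho(t)^\alpha)$ --- exactly the threshold below which the leading $\rho$-parts of the two expressions for $P$ coincide.

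To close, I would write $-\psi_\X(1/\mathsf f(-t))=P\big(t-(\mathsf f(-t)+t)\big)$ and use $P(t+a(t))-P(t)=a(t)P'(t)(1+o(1))$ for $a(t)=o(t)$, legitimate by the complete monotonicity of $P$ once a crude two-sided bound $P(t)\asymp t^{-\beta}$ is in hand (extracted from the identities above and the monotonicity of $\delta$ and $P$). Subtracting the two expressions for $P$ then cancels the $-\psi_\A(-\rho)$ terms and leaves
\[
\big(\tau(\A^2)-1\big)\delta(-t)\big(-P'(t)\big)=\frac{\tau(\B)}{t}(1+o(1))\ \ \text{(i)},\qquad
-\tfrac{\pi\alpha}{\sin(\pi\alpha)}c\,t\,\rho(t)^{\alpha-1}\big(-P'(t)\big)=\frac{\tau(\B)}{t}(1+o(1))\ \ \text{(ii)};
\]
substituting $\delta(-t)\sim tP(t)$ (resp.\ $\rho(t)\sim P(t)$), these become $-\tfrac12\big(\tau(\A^2)-1\big)t(P^2)'(t)\sim\tau(\B)t^{-1}$, resp.\ $\tfrac{\pi c}{\sin(\pi\alpha)}t(P^\alpha)'(t)\sim\tau(\B)t^{-1}$, and integrating over $[t,+\infty)$ with $P(+\infty)=0$ gives $P(t)^2\sim\tfrac{2\tau(\B)}{\mathrm{Var}(\A)\,t}$, resp.\ $P(t)^\alpha\sim\tfrac{-\sin(\pi\alpha)}{\pi c}\tfrac{\tau(\B)}{t}$. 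Thus $-\psi_\X(-1/t)\sim L\,t^{-\beta}$ with $(\beta,L)=\big(\tfrac12,\sqrt{2\tau(\B)/\mathrm{Var}(\A)}\big)$ in case (i) and $(\beta,L)=\big(\tfrac1\alpha,\big(\tfrac{-\sin(\pi\alpha)}{\pi c}\tau(\B)\big)^{1/\alpha}\big)$ in case (ii), with $\beta\in(0,1)$; Theorem~\ref{thm:taub}(i) then yields $\mu_\X\big((t,+\infty)\big)\sim\tfrac{\sin(\pi\beta)}{\pi\beta}L\,t^{-\beta}$, which is exactly \eqref{eq:tail1}, resp.\ \eqref{eq:tail2}.

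The hard part will be the asymptotic analysis in the second and third steps: pushing the expansions of $h(-t)$ and $\mathsf f(-t)$ to precisely the order at which the leading contributions to $P(t)$ cancel, keeping all error terms uniform in $t$, and --- most delicately --- justifying the differential/integral relation for $P$ without presupposing that $P$ (or $\delta$) is regularly varying, so that the bootstrap genuinely closes; the crude power bound from monotonicity, followed by its refinement, is what makes this rigorous. The resolvent expansions, the final Tauberian step, and the arithmetic producing the explicit constants should all be routine.
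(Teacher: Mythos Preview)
Your overall strategy matches the paper's: reduce via Theorem~\ref{thm:taub}(i) to the asymptotics of $-\psi_\X(-1/t)$, and feed in the subordination identities of Lemma~\ref{lem:subordination} together with the expansions you write (these are exactly Lemmas~\ref{lem:L1B>0} and~\ref{lem:L2B>0} in the paper). You correctly isolate that $P(t)\sim\rho(t)=\delta(-t)/t$, that $P(t)-P(T)\sim\tau(\B)/t$ with $T=-\mathsf f(-t)$, and that $T-t\sim\mathrm{Var}(\A)\,\delta(-t)$ (resp.\ the $\alpha$-version).

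The divergence --- and the gap --- is in how you pass from these difference relations to the asymptotic of $P$. You propose to write $P(T)-P(t)\approx(T-t)P'(t)$, obtain an ODE for $P^2$ (resp.\ $P^\alpha$), and integrate. The weak link is the mean-value step: by convexity it reduces to $P'(\xi)/P'(t)\to1$ for some $\xi\in[t,T]$ with $T/t\to1$, i.e.\ to a dilation-regularity of $-P'$. Complete monotonicity alone does not give this; your route needs a two-sided bound $P(t)\asymp t^{-\beta}$ (which then propagates to $-P'$ and $P''$ via convexity). But you do not explain how to extract even a crude such bound from the difference relations and monotonicity, and I do not see a direct way that does not already amount to iterating $t\mapsto T(t)$. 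The bootstrap you allude to is precisely the point that needs to be written out, and as stated it is circular.

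The paper avoids this by working discretely. It sets $z_n=h(z_{n-1})+\delta(z_{n-1})$ (your $T$-iteration up to sign), shows $z_n\searrow-\infty$, and uses that along this sequence the \emph{exact} identity $\beta(z_n)=\delta_{n-1}/h_{n-1}$ holds. The expansions then yield $\frac{\delta_n}{z_n}-\frac{\delta_{n-1}}{z_{n-1}}\sim-\tau(\B)/z_n$ and $z_{n-1}-z_n\sim\mathrm{Var}(\A)\,\delta_{n-1}$ (resp.\ the $\alpha$-versions), and a single application of Stolz--Ces\`aro to the quotient $\big(\tfrac{\delta_n}{z_n}\big)^2\big/\tfrac{1}{z_n}$ (resp.\ $\big(\tfrac{\delta_n}{-z_n}\big)^\alpha\big/\tfrac{1}{z_n}$) produces the limit with no smoothness hypothesis on $\delta$ or $P$ whatsoever; monotonicity of $\delta$ then interpolates from the subsequence $(z_n)$ to all $z$. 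This discrete argument is the integrated form of your ODE idea, and it is what makes the ``bootstrap'' unnecessary: replacing your Taylor step by Stolz--Ces\`aro along the $\mathsf f$-orbit closes the argument cleanly.
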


\begin{thm}\label{thm:tailsS}
        Assume $\tau(\A)=1$, where $\A\in\tcA$ is non-Dirac, and $(\A,\B)\stackrel{d}{=}(\A,-\B)$ with
$\B\in\cA$. If the model \eqref{eq:affine} is irreducible, then the unique solution $\X$ to
\eqref{eq:affine} exists, and moreover:
 \begin{enumerate}
     \item[(i)] if $\tau(\A^2)<+\infty$, then
        \[
        \lim_{t\to+\infty}t\,\mu_{|\X|}\big((t,+\infty)\big) = \frac{2}{\pi}\sqrt{\frac{\tau(\B^2)}{\mathrm{Var}(\A)}}.
        \]
 \item[(ii)] if $\mu_\A\big((t,+\infty)\big)\sim c\, t^{-\alpha}$ for $\alpha\in(1,2)$ and $c>0$, then
         \begin{align*}
 \lim_{t\to+\infty}t^{2/\alpha}\,\mu_{|\X|}\big((t,+\infty)\big) = \frac{\sin(\pi/\alpha)}{\pi/\alpha}
\left( \frac{-\sin(\pi\alpha)}{2\pi c}\tau(\B^2)\right)^{1/\alpha}.
        \end{align*}
 \end{enumerate}
 \end{thm}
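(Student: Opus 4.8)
The plan is to combine the subordination machinery of Corollary~\ref{eq} (equivalently, Lemma~\ref{lem:subordination}) with the Tauberian theorems of Section~2.3, exploiting the symmetry $(\A,\B)\stackrel{d}{=}(\A,-\B)$. First I would note that by the Corollary following Theorem~\ref{thm:uniqe_free}, the symmetry hypothesis forces $\X\stackrel{d}{=}-\X$, so $\mu_\X\in\mathcal M_S$ and the relevant transform is $\psi_\X(-1/(it))$, which by Theorem~\ref{thm:taub2} governs the tail of $\mu_{|\X|}$. Since $\X$ solves the perpetuity equation, $\psi_\X = \psi_{\A^{1/2}\X\A^{1/2}+\B}$, so I can apply \eqref{eq:FSZ1}:
\[
\psi_\X(z^{-1}) = \tau\!\left(\left(1-\tfrac{\delta(z)}{z}\A-\tfrac1z\B\right)^{-1}\right)-1,
\]
with $\delta$ and $h$ as in Lemma~\ref{lem:subordination}, and by Lemma~\ref{lem:deltaProp}(ii) the function $\delta$ maps $i(-\infty,0)$ monotonically into $i(0,+\infty)$. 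The strategy is to evaluate the right-hand side asymptotically as $z=it\to i\infty$ along the imaginary axis; because $\B$ is bounded, the $\B/z$ term contributes lower order, and the leading behavior of $\psi_\X(-1/(it))$ is dictated entirely by $-\delta(it)/(it)\cdot\A$, i.e. essentially by $\psi_\A$ evaluated at an argument proportional to $\delta(it)/(it)$.

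The key intermediate step is to pin down the rate at which $\delta(it)\to 0$. Plugging the perpetuity relation into the subordination identities and using $h(z)\sim z$ (which follows from $\tau(\A)=1$ and the $1/h(z)=\tau(\A^{1/2}(z-\B-\delta\A)^{-1}\A^{1/2})$ formula together with the asymptotics $\lim f(iv)/iv = 0$, $\lim \mathsf f(iv)/iv = 1/\tau(\A^2)$ recorded in Corollary~\ref{eq} — with the caveat that $\tau(\A^2)=1/\mathrm{Var}(\A)$-type identities must be read off carefully when $\tau(\A)=1$), one obtains a self-consistent scalar equation for $\delta(it)$. Its leading-order solution is $\delta(it)\sim c'\,(it)^{1/2}$ in case (i) when $\tau(\A^2)<\infty$, and $\delta(it)\sim c''\,(it)^{1/\alpha}$ in case (ii) when $\mu_\A$ has a regularly varying tail of index $\alpha$; the precise constants come from expanding $1/h(z)=1-\tau(\B)/(\cdot) + \dots$ using Theorem~\ref{thm:taub} applied to $\mu_\A$ (part (i) or part (ii) of that theorem respectively), together with the fact that $\tau(\B)=\tau(\B^2)$ does not appear yet — rather $\tau(\B)$ enters through $\tau((z-\B-\delta\A)^{-1})$-type expansions. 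This is essentially the computation already carried out in the one-sided Theorem~\ref{thm:tails+}; I would reuse it and then account for the extra factor of $2$ coming from symmetrization (the measure $\mu_\X\big((t,\infty)\big)\sim \tfrac12\mu_{|\X|}\big((t,\infty)\big)$) and for the replacement of $\tau(\B)$ by $\tau(\B^2)$, which arises because in the symmetric case the first-order term $\tau(\B\,(z-\delta\A)^{-1})$ vanishes by oddness and the leading correction is the second-order term $\tau(\B(z-\delta\A)^{-1}\B(z-\delta\A)^{-1})\sim \tau(\B^2)/z^2$.

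Concretely, I would proceed: (a) record that $\X\stackrel{d}{=}-\X$ and reduce to analyzing $-\psi_\X(-1/(it))$ via Theorem~\ref{thm:taub2} with $\alpha$ replaced by $1/2$ in case (i) and by $2/\alpha$ in case (ii); (b) substitute the perpetuity relation into \eqref{eq:FSZ1} and expand in powers of $1/z$ and in the (small) quantity $\delta(z)/z$, keeping the first surviving term, which by parity is of order $\delta(z)^2/z^2$ weighted by $\tau(\B^2)$ when combined with $\psi_\A$; (c) solve the resulting fixed-point relation for the leading asymptotics of $\delta(it)$ using the Tauberian input for $\mu_\A$; (d) substitute back to get $-\psi_\X(-1/(it))\sim (\text{const})\cdot t^{-1/2}$ resp.\ $t^{-2/\alpha}$ and read off the tail constant from Theorem~\ref{thm:taub2}; (e) simplify the constant into the stated closed form using Euler's reflection formula. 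The main obstacle I anticipate is step (b)--(c): carefully isolating which term dominates in the expansion of $\tau((1-(\delta/z)\A-(1/z)\B)^{-1})-1$ once the odd-in-$\B$ terms cancel, and then justifying that the naive fixed-point solution for $\delta$ is indeed the correct asymptotic (monotonicity from Lemma~\ref{lem:deltaProp}(ii) plus the normalization $\delta(z)/z\to 0$ should suffice, but one must rule out pathological oscillation, presumably via a dominated-convergence / normal-families argument exactly as in the proof of Theorem~\ref{lem:1}). The irreducibility hypothesis is used only to invoke existence and uniqueness of $\X$ from the earlier sections, so it enters at the very start and plays no further role.
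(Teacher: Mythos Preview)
Your overall strategy---subordination via Lemma~\ref{lem:subordination} followed by Theorem~\ref{thm:taub2}---matches the paper, and you correctly isolate the key parity observation that the first-order term in $\B$ vanishes and the leading correction is $\tau(\B^2)/z^2$. But you fail to propagate this observation, and your stated asymptotics for $\delta$ are wrong. You claim $\delta(it)\sim c'(it)^{1/2}$ in case~(i) and $\delta(it)\sim c''(it)^{1/\alpha}$ in case~(ii); these are the exponents from the \emph{positive} case (Theorem~\ref{thm:tails+}), not the symmetric one. Writing $\Delta(t)=\delta(it)/i$, the paper shows that in case~(i) one has $\Delta(z)\to\sqrt{\tau(\B^2)/\mathrm{Var}(\A)}$, a finite nonzero constant, and in case~(ii) one has $\Delta(z)\sim\mathrm{const}\cdot(-z)^{1-2/\alpha}\to 0$. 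The reason is precisely the $1/z\mapsto 1/z^2$ shift you identified: replacing $\tau(\B)/z$ by $\tau(\B^2)/z^2$ changes the balance in the self-consistency relation and hence the exponent. Accordingly your Tauberian index in case~(i) should be $1$, not $1/2$---consistent with the statement $t\,\mu_{|\X|}((t,+\infty))\to\mathrm{const}$. (Your aside ``$\tau(\A^2)=1/\mathrm{Var}(\A)$-type identities'' and the remark about $\tau(\B)$ vs.\ $\tau(\B^2)$ are also garbled; note $\tau(\B)=0$ under the symmetry hypothesis.)

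You are also vague on how to actually extract the asymptotics of $\delta$; the suggestion of a dominated-convergence or normal-families argument \`a la Theorem~\ref{lem:1} is not what works here. The paper's device is to define a sequence recursively by $z_n=h(iz_{n-1})/i+\Delta(z_{n-1})$, show (via Lemma~\ref{lem:L2Bs}) that it is monotone and diverges to $-\infty$, establish the increments $z_{n-1}-z_n$ and $\Delta_n/z_n-\Delta_{n-1}/z_{n-1}$ to leading order (Lemmas~\ref{lem:L1Bs} and~\ref{lem:psiA}), and then apply the Stolz--Ces\`aro theorem to obtain $\lim_n\Delta_n^2$ in case~(i), respectively $\lim_n z_n^2(\Delta_n/(-z_n))^\alpha$ in case~(ii). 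Monotonicity of $\Delta$ from Lemma~\ref{lem:deltaProp}(ii) then upgrades the limit along $(z_n)$ to the full limit as $z\to-\infty$. This iterative/Stolz--Ces\`aro step is where the correct exponents emerge, and it is the main missing idea in your sketch.
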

 
\subsection{Examples}
We continue with both examples from Section \ref{sec:ex1} to illustrate our results.
\subsubsection{Free Beta prime distribution}
Let $a>0$, $b\geq1$ and assume that $\A\sim f\mathcal{B}'_{a,a+b}$. By Corollary \ref{thm:exConv},
$\X\sim f\mathcal{B}'_{a,b}$ is a solution to
\begin{align*}
\X \stackrel{d}{=} \A^{1/2}\X\A^{1/2}+\A,\qquad \A\mbox{ and }\X\mbox{ are freely independent}.
\end{align*}

We have $\tau(\A) = a/(a+b-1)<1$ if and only if $b>1$. In this case, the support of $\X$ is compact.

If $\tau(\A)=1$, then $b=1$, and thus $\X\sim f\mathcal{B}'_{a,1}$.
Since $S_\X(-1/x) = 1/(ax-1)\sim a^{-1}x^{-1}$ as $x\to+\infty$, we can apply
\cite[Theorem 4.5]{KK22} for $\alpha=1/2$ to obtain
\begin{align}\label{eq:tailsB'}
\lim_{t\to+\infty} t^{1/2}\mu_{\X}\big((t,+\infty)\big) = \frac{2\sqrt{a}}{\pi}.
\end{align}

Notice that for $a>0$ and $b>1$, we have $m_1(f\mathcal{B}'_{a,b})= \lim_{z\to 0-} S_{a,b}(z)^{-1}
=\frac{a}{b-1}$ and (recalling \eqref{eq:S-2-terms})
\[
\mathrm{Var}(f\mathcal{B}'_{a,b}) = -m_1(f\mathcal{B}'_{a,b})^3 \lim_{z\to 0-} S_{a,b}'(z) = \frac{a(a+b-1)}{(b-1)^3}.
\]
Thus, for $b=1$, we find that $\mathrm{Var}(\A)=2/a$, which allows us to express the right-hand
side of \eqref{eq:tailsB'} as $\frac{2\sqrt{2}}{\pi} \sqrt{\frac{\tau(\A)}{\mathrm{Var}(\A)}}$.

\subsubsection{Inverse Marchenko-Pastur distribution}

In Section \ref{sec:MY}, we established that if $\lambda\geq 1$ and $\A^{1/2}\sim fGIG_{-\lambda}$,
then $\X\sim \mu_{\lambda}^{-1}$ is a solution to
\[
\X\stackrel{d}{=}\A^{1/2}\X\A^{1/2}+\A^{1/2},\qquad \A\mbox{ and }\X\mbox{ are free}.
\]
When $\lambda>1$, then $\tau(\A)<1$, resulting in a compact support of $\X$.

Conversely, if $\lambda=1$, then $\tau(\A)=1$. By direct calculation using \eqref{eq:IMP}, we
may prove that
\begin{align*}
\lim_{t\to+\infty}t^{1/2}\mu_{\X}\big((t,+\infty)\big)=\frac{2}{\pi}.
\end{align*}
Furthermore, by performing a series expansion of the Cauchy transform of $\A^{1/2}$ (see, e.g.,
\cite[page 380]{KSzMY}), we can derive formulas for $\tau(\A^2)$ and $\tau(\A^{1/2})$ in terms of
the endpoints $(a,b)$ of the support of $\A^{1/2}$. Using the system of equations that define these
endpoints, we find that $\mathrm{Var}(\A)=2 \tau(\A^{1/2})$. This leads to the following equality:
\[
\frac{2}{\pi} = \frac{2\sqrt{2}}{\pi}\sqrt{\frac{\tau(\A^{1/2})}{\mathrm{Var}(\A)}}.
\]

\subsection{Proofs}
Let
\begin{align*}
    \beta(z) &=  \tau\left(\left(1-\frac{\delta(z)}{z}\A-\frac{1}{z}\B\right)^{-1}\right)-1
\end{align*}
for $z\in(-\infty,0)$ in the case of positive $\B$ (Subsection \ref{sec:B>0}) and for $z\in i(-\infty,0)$ in
the case of symmetric $\B$ (Subsection \ref{sec:B=-B}). By Lemma \ref{lem:subordination}, we have
\begin{align*}
\psi_{\A^{1/2}\X\A^{1/2}+\B}(z^{-1}) = \beta(z)\qquad\mbox{ and }\qquad
\psi_{\X}\left(\frac{1}{h(z)+\delta(z)}\right)=\frac{\delta(z)}{h(z)},
\end{align*}
where
\[
\frac{1}{h(z)}=\tau\left(\A^{1/2}(z-\B-\delta(z)\A)^{-1}\A^{1/2}\right).
\]
If $\X\stackrel{d}{=}\A^{1/2}\X\A^{1/2}+\B$, then we additionally have $\beta(z)=\psi_\X(z^{-1})$.
Thus, we obtain
\begin{align}\label{eq:alphabeta}
\beta\left(h(z)+\delta(z)\right) = \frac{\delta(z)}{h(z)}.
\end{align}

We begin with a series of technical lemmas that will be utilized in the proofs of Theorem
\ref{thm:tails+} and Theorem \ref{thm:tailsS}.
\begin{lemma}\label{lem:psiA}
Assume that $\A\in\tcA$, $\A\geq 0$ and $\tau(\A)=1$.
\begin{enumerate}
    \item[(i)] If $\tau(\A^2)<+\infty$, Then,  as $z\to 0-$,
    \begin{align*}
    \frac{1}{\psi_\A(z)} &= \frac{1}{z}-\tau(\A^2)(1+o(1)).
    \end{align*}
    \item[(ii)] If $\mu_\A\big((t,+\infty)\big)\sim c\, t^{-\alpha}$ with $\alpha\in(1,2)$ and $c>0$, then
as  $z\to 0-$,
    \begin{align*}
\frac{1}{\psi_\A\left( z\right)} = \frac{1}{z} + c \frac{\pi\alpha}{\sin(\pi\alpha)} (-z)^{\alpha-2}(1+o(1)).
    \end{align*}
\item[(iii)]     If $a_n\to0-$ and $b_n\to 0-$, then     \[
    \psi_{\A}(a_n)-\psi_\A(b_n)=(a_n-b_n)(1+o(1)).
    \]
\end{enumerate}
\end{lemma}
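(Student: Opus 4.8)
The plan is to handle the three items separately; each is a short expansion-then-inversion argument built on the defining formula $\psi_\A(z)=\int_{[0,+\infty)}\frac{zt}{1-zt}\,\mu_\A(\dd t)$ and the normalization $m_1(\mu_\A)=\tau(\A)=1$. For (i), I would first record, for $z<0$,
\[
\psi_\A(z)-z\,m_1(\mu_\A)-z^2\,m_2(\mu_\A)=\int_{[0,+\infty)}\frac{z^3t^3}{1-zt}\,\mu_\A(\dd t),
\]
and then note that $\tfrac1{z^2}$ times this remainder is $\int_{[0,+\infty)}\frac{zt^3}{1-zt}\,\mu_\A(\dd t)$, whose integrand satisfies $\bigl|\tfrac{zt^3}{1-zt}\bigr|=t^2\cdot\tfrac{|z|t}{1+|z|t}\le t^2\in L^1(\mu_\A)$ uniformly in $z<0$ and tends to $0$ pointwise as $z\to0-$. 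Dominated convergence (valid since $m_2(\mu_\A)=\tau(\A^2)<+\infty$, even if $m_3$ is infinite) gives $\psi_\A(z)=z+\tau(\A^2)z^2+o(z^2)$, and inverting, $\frac1{\psi_\A(z)}=\frac1z\bigl(1+\tau(\A^2)z+o(z)\bigr)^{-1}=\frac1z-\tau(\A^2)+o(1)$, which is the stated expansion.

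For (ii), the hypothesis $\mu_\A\big((t,+\infty)\big)\sim c\,t^{-\alpha}$ with $\alpha\in(1,2)$ forces $m_1(\mu_\A)<+\infty$, hence $m_1(\mu_\A)=1$, so Theorem~\ref{thm:taub}(ii), applied with the (constant) slowly varying function $L\equiv c$, yields
\[
-\psi_\A\!\left(-\tfrac1t\right)=\frac1t+\frac{\pi\alpha}{\sin(\pi\alpha)}\,\frac{c}{t^{\alpha}}\,(1+o(1)),\qquad t\to+\infty.
\]
Substituting $z=-1/t\to0-$ and using $1/t=-z$, $1/t^\alpha=(-z)^\alpha$, I would rewrite this as $\psi_\A(z)=z\bigl(1+\tfrac{\pi\alpha c}{\sin(\pi\alpha)}(-z)^{\alpha-1}(1+o(1))\bigr)$ (the correction factor tends to $1$ since $\alpha-1>0$), and inverting exactly as in (i), together with the elementary identity $(-z)^{\alpha-1}/z=-(-z)^{\alpha-2}$, gives $\frac1{\psi_\A(z)}=\frac1z+c\,\frac{\pi\alpha}{\sin(\pi\alpha)}(-z)^{\alpha-2}(1+o(1))$.

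For (iii), I would use that $\psi_\A$ is analytic, hence $C^1$, on $(-\infty,0)$, with $\psi_\A'(z)=\int_{[0,+\infty)}\frac{t}{(1-zt)^2}\,\mu_\A(\dd t)$ (differentiation under the integral being legitimate since $0\le\frac{t}{(1-zt)^2}\le t\in L^1(\mu_\A)$ for $z<0$); as $z\to0-$ the integrand increases to $t$, so monotone convergence gives $\psi_\A'(z)\to m_1(\mu_\A)=1$. Then, for $n$ large both $a_n,b_n$ lie in a prescribed small left neighbourhood of $0$, and the mean value theorem gives $\psi_\A(a_n)-\psi_\A(b_n)=\psi_\A'(\xi_n)(a_n-b_n)$ with $\xi_n$ between $a_n$ and $b_n$, hence $\xi_n\to0-$ and $\psi_\A'(\xi_n)\to1$, which is the claim.

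The whole computation is routine; the only spots needing a bit of care are the dominated-convergence justification of the Taylor remainder in (i) (since $m_3(\mu_\A)$ may be infinite, one must not expand one term too far), and the sign bookkeeping in (ii) — namely that $\sin(\pi\alpha)<0$ on $(1,2)$, and the correct passage from the $t\to+\infty$ asymptotics of $-\psi_\A(-1/t)$ to the $z\to0-$ asymptotics of $1/\psi_\A(z)$ via $z=-1/t$.
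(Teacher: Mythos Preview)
Your proof is correct and follows essentially the same approach as the paper: expand $\psi_\A$ near $0$ (using dominated convergence in (i) and Theorem~\ref{thm:taub}(ii) in (ii)) and then invert. The only slight difference is in (iii), where you invoke the mean value theorem together with $\psi_\A'(z)\to1$, while the paper writes the difference quotient directly as $\int_{[0,+\infty)}\frac{x}{(1-a_nx)(1-b_nx)}\,\mu_\A(\dd x)$ and applies dominated convergence; the two arguments are equivalent.
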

\begin{proof}
\begin{enumerate}
    \item[(i)] We have $\psi_{\A}(z)=z+\tau(\A^2)z^2(1+o(1))$. Therefore,
    \[
    \frac{1}{\psi_{\A}(z)}= \frac{1}{z} \frac{1}{1+\tau(\A^2)z(1+o(1)} = \frac{1}{z}(1-\tau(\A^2)z(1+o(1))).
    \]
    \item[(ii)] By Theorem \ref{thm:taub} we obtain
    \[
    \psi_\A\left( z\right)= z-c \frac{\pi\alpha}{\sin(\pi\alpha)} (-z)^\alpha (1+o(1)).
    \]
    Similarly to (i), we arrive at
    \[
    \frac{1}{\psi_\A\left( z\right)} = \frac{1}{z} \left(1- c \frac{\pi\alpha}{\sin(\pi\alpha)} (-z)^{\alpha-1} (1+o(1))\right)
    \]
    \item[(iii)]     We have
        \begin{align*}
  \frac{  \psi_{\A}(a_n)-\psi_\A(b_n)}{a_n-b_n}=\int_{[0,+\infty)} \frac{x}{(1-a_n x)(1-b_n x)}\mu_{\A}(\dd x).
    \end{align*}
    Since both $a_n$ and $b_n$ are negative, we have $\frac{x}{(1-a_n x)(1-b_n x)}\leq x\in
L^1(\mu_\A(\dd x))$. By the Lebesgue dominated convergence theorem, we obtain the assertion.
\end{enumerate}
\end{proof}

\subsubsection{Positive $\B$}\label{sec:B>0}
Now, we formulate and prove lemmas which will be applicable in the proof of Theorem \ref{thm:tails+}.
\begin{lemma}\label{lem:L1B>0} 
Assume $\tau(\A)=1$, $\A\in\tcA$ is non-Dirac and $\B\in\cA$ is such that $\B\geq 0$ and $\B\neq 0$.
Additionally, suppose that the model \eqref{eq:affine} is irreducible. As $z\to -\infty$, we have
\begin{enumerate}
    \item[(i)] $\delta(z)\to+\infty$,
    \item[(ii)] $\beta(z)= \psi_\A\left(\frac{\delta(z)}{z}\right) + \frac{\tau(\B)}{z}(1+o(1))$,
    \item[(iii)] $ \frac{1}{h(z)}=\frac{1}{\delta(z)}\psi_{\A}\left(\frac{\delta(z)}{z}\right)+O\left(\frac{1}{z^2}\right)$.
\end{enumerate}
\end{lemma}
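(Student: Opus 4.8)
The plan is to treat the three items in order, leaning throughout on the two elementary identities $\psi_\A(w)=\tau\!\left((1-w\A)^{-1}\right)-1$ and $\tfrac1{\delta(z)}\psi_\A\!\left(\tfrac{\delta(z)}{z}\right)=\tau\!\left(\A^{1/2}(z-\delta(z)\A)^{-1}\A^{1/2}\right)$, both legitimate for $z<0$ by Borel functional calculus since $\delta(z)>0$ there (Lemma~\ref{lem:deltaProp}(i)) and $z/\delta(z)<0\notin\mathrm{supp}(\mu_\A)$, and on the asymptotic $\delta(z)/z\to0$ as $z\to-\infty$ recorded in Remark~\ref{neg}. For (i): here $\A,\X\notin\mathbb C\cdot1$ (the latter since $\X=c\cdot1$ would force $\tau(\B)=c(1-\tau(\A))=0$), so $\delta=-f$ with $f$ the subordination function of Corollary~\ref{eq}, and Remark~\ref{neg} gives $\lim_{v\to-\infty}f(v)=\lim_{w\to-\infty}\tfrac1{\tau((w-\X)^{-1})}-w$, which is $-\tau(\X)$ if $\tau(|\X|)<+\infty$ and $-\infty$ otherwise. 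Since $\B\ge0$ forces $\X\ge0$, so $\tau(|\X|)=\tau(\X)$, and since $\tau(\A)=1$ forces $\tau(\X)=+\infty$ by the moment lemma of Subsection~\ref{sec:moments}, I conclude $\delta(v)=-f(v)\to+\infty$.

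For (ii), put $C=C(z)=1-\tfrac{\delta(z)}{z}\A=1+\tfrac{\delta(z)}{|z|}\A\ge1$, so that $1-\tfrac{\delta(z)}{z}\A-\tfrac1z\B=C+\tfrac1{|z|}\B\ge C$ and both $C^{-1}$ and $(C+\tfrac1{|z|}\B)^{-1}$ are positive contractions. The resolvent identity gives $\beta(z)-\psi_\A\!\left(\tfrac{\delta(z)}{z}\right)=\tau\!\left((C+\tfrac1{|z|}\B)^{-1}-C^{-1}\right)=\tfrac1z\,\tau\!\left((C+\tfrac1{|z|}\B)^{-1}\B\,C^{-1}\right)$, so it remains to show $\tau\!\left((C+\tfrac1{|z|}\B)^{-1}\B\,C^{-1}\right)\to\tau(\B)$. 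Since $\tfrac{\delta(z)}{|z|}\to0$, one has $(1-C^{-1})\xi=C^{-1}\tfrac{\delta(z)}{|z|}\A\xi\to0$ for $\xi$ in the dense domain of $\A$ in $L^2(\cA,\tau)$; with $\|C^{-1}\|\le1$ this yields $C^{-1}\to1$ strongly, and then $(C+\tfrac1{|z|}\B)^{-1}\to1$ strongly as well because $(C+\tfrac1{|z|}\B)^{-1}-C^{-1}$ has norm at most $\|\B\|/|z|\to0$. Hence $(C+\tfrac1{|z|}\B)^{-1}\B\,C^{-1}\to\B$ strongly, and since $\tau$ is continuous along norm-bounded strongly convergent nets (being a vector state in the standard representation), the limit is $\tau(\B)$; as $\tau(\B)>0$, this is exactly (ii).

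For (iii), use $z-\|\B\|-\delta(z)\A\le z-\B-\delta(z)\A\le z-\delta(z)\A<0$ and the antitonicity of inversion on negative operators to get $(z-\delta(z)\A)^{-1}\le (z-\B-\delta(z)\A)^{-1}\le (z-\|\B\|-\delta(z)\A)^{-1}$; conjugating by $\A^{1/2}$ and applying $\tau$ gives $0\le\tfrac1{h(z)}-\tfrac1{\delta(z)}\psi_\A\!\left(\tfrac{\delta(z)}{z}\right)\le\tau\!\left(\A^{1/2}\bigl[(z-\|\B\|-\delta(z)\A)^{-1}-(z-\delta(z)\A)^{-1}\bigr]\A^{1/2}\right)$, and the right-hand side, being a function of $\A$ alone, equals the scalar integral $\|\B\|\int_0^{+\infty} \tfrac{t\,\mu_\A(\dd t)}{(|z|+\delta(z)t)(|z|+\|\B\|+\delta(z)t)}$. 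I will split this at $t=|z|/\delta(z)$: for $t\le|z|/\delta(z)$ both denominators exceed $|z|$, so that part is at most $\tfrac{\|\B\|}{|z|^2}\tau(\A)=\tfrac{\|\B\|}{|z|^2}$; for $t>|z|/\delta(z)$ both exceed $\delta(z)t$, so that part is at most $\tfrac{\|\B\|}{\delta(z)^2}\int_{|z|/\delta(z)}^{+\infty}\tfrac1t\,\mu_\A(\dd t)\le\tfrac{\|\B\|}{\delta(z)|z|}\,\mu_\A\!\big((|z|/\delta(z),+\infty)\big)$, which is $o(1/|z|^2)$ because $|z|/\delta(z)\to+\infty$ (as $\delta(z)=o(|z|)$) while $\tau(\A)<+\infty$ forces $s\,\mu_\A((s,+\infty))\to0$. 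Adding the two parts yields (iii).

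The step I expect to be delicate is precisely the $O(1/z^2)$ rate in (iii): the naive resolvent-difference estimate only delivers $O\!\big(1/(|z|\delta(z))\big)$, which is \emph{weaker} than $O(1/z^2)$ since $\delta(z)=o(|z|)$, so one genuinely needs the split above and must exploit \emph{both} consequences of $\tau(\A)<+\infty$ — finiteness of the mean near $t=0$ and the tail decay $s\,\mu_\A((s,+\infty))\to0$ — together with $\delta(z)=o(|z|)$ from Remark~\ref{neg}. A minor but recurring technical point is to keep in mind that although $\A$ and $\A^{1/2}$ are unbounded, every operator to which $\tau$ is applied is bounded: in (iii), $\A^{1/2}(z-\B-\delta(z)\A)^{-1}\A^{1/2}$ is squeezed between the bounded operators $-\A(|z|+\|\B\|+\delta(z)\A)^{-1}$ and $-\A(|z|+\delta(z)\A)^{-1}$, both of norm $\le1/\delta(z)$, so that all functional-calculus reductions are valid.
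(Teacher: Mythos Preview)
Your proof is correct, but the route differs from the paper's in interesting ways.

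For (i), the paper uses the elementary inequality $(1+t)^{-1}-1\ge -t$ to obtain $z\beta(z)\le\delta(z)+\tau(\B)$, and then the fact that $z\psi_\X(z^{-1})\to\tau(\X)=+\infty$ forces $\delta(z)\to+\infty$. You instead invoke Remark~\ref{neg} and the explicit formula $\lim_{v\to-\infty}f(v)=\lim_{w\to-\infty}\tfrac{1}{\tau((w-\X)^{-1})}-w$; this is valid and arguably more structural, though it makes the argument depend on the subordination machinery rather than being self-contained.

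For (ii) the approaches are close in spirit (both are resolvent-difference computations), the paper writing things through $f(x)=x/(1+x)$ and the two-sided bound $x-x^2\le f(x)\le x$, while you use a direct resolvent identity and strong operator convergence $C^{-1}\to1$. Both give the same conclusion.

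For (iii) the difference is more striking. The paper uses the same $f(x)=x/(1+x)$ device: with $a=1-\tfrac{\delta(z)}{z}\A\ge1$ and $b=-\tfrac1z\B\ge0$, the difference $\tfrac1{h(z)}-\tfrac{1}{\delta(z)}\psi_\A(\tfrac{\delta(z)}{z})$ equals $\tfrac{1}{-z}\tau(\A^{1/2}a^{-1/2}f(a^{-1/2}ba^{-1/2})a^{-1/2}\A^{1/2})$, and using $f(x)\le x$ together with $\B\le\|\B\|$ and $a^{-2}\le1$ this is bounded by $\tfrac{\|\B\|}{z^2}\tau(\A a^{-2})\le\tfrac{\|\B\|}{z^2}\tau(\A)=\tfrac{\|\B\|}{z^2}$ in one line. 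So the step you flagged as ``delicate'' is in fact the easiest: your sandwiching and integral-splitting argument (relying on $s\,\mu_\A((s,\infty))\to0$ and $\delta(z)=o(|z|)$) is correct but works harder than necessary, because you compared $h(z)$ to $(z-\|\B\|-\delta(z)\A)^{-1}$ rather than exploiting $a\ge1$ directly inside the expectation.
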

\begin{proof}
\begin{enumerate}
\item[(i)]
Since for $t\geq 0$, we have $(1+t)^{-1}-1\geq-t$, we get for $z<0$,
\[
\left(1-\frac{\delta(z)}{z}\A-\frac{1}{z}\B\right)^{-1}-1\geq \frac{\delta(z)}{z}\A+\frac{1}{z}\B.
\]
Applying $\tau$ to both sides and multiplying by $z<0$, we obtain $z\,\beta(z)\leq \delta(z)+\tau(\B)$.

Since $\tau(\A)=1$ and $\tau(\B)>0$, \eqref{eq:affine} leads to $\tau(\X)=\tau(\X)+\tau(\B)$, which is only possible if $\tau(\X)=+\infty$. Therefore,
\[
\lim_{z\to -\infty} z\, \psi_\X(z^{-1}) =\tau(\X) =+\infty.
\]
Using the fact that $\psi_\X(z^{-1}) =  \beta(z)$ (recall Lemma \ref{lem:subordination}), we deduce that $\delta(z)\to+\infty$ as $z\to-\infty$.

\item[(ii)]
Let $a=1-z^{-1}\delta(z) \A$ and $b=-z^{-1}\B$. Note that we have $a\geq 1$ and $b\geq 0$. Let $f(x)=x/(1+x)$.
Then,
\begin{align*}
z\left( \left(1- \frac{\delta(z)}{z}\A-\frac{1}{z}\B\right)^{-1}-\left(1-\frac{\delta(z)}{z}\A\right)^{-1}\right) = z\left( (a+b)^{-1}-a^{-1}\right)\\
= -z a^{-1/2}f(a^{-1/2}b a^{-1/2})a^{-1/2}.
\end{align*}
For $x\geq 0$, we have $x-x^2\leq f(x)\leq x$. Therefore, for $z<0$,
\begin{align*}
  -z a^{-1} b a^{-1} +z a^{-1} b a^{-1} b a^{-1} \leq -z a^{-1/2}f(a^{-1/2}b a^{-1/2})a^{-1/2} \leq -z a^{-1} b a^{-1}.
\end{align*}
Moreover,
\begin{align*}
    -z a^{-1} b a^{-1} = a^{-1}\B a^{-1}\quad\mbox{and}\quad
    z a^{-1} b a^{-1} b a^{-1} = \frac{1}{z} a^{-1} \B a^{-1} \B a^{-1}.
\end{align*}
Since $\B$ is bounded and $a\geq 1$, $a^{-1}\B a^{-1} \B a^{-1}$ is also bounded. Therefore,
\[
\lim_{z\to-\infty} z\, \tau( a^{-1} b a^{-1} b a^{-1}) = 0.
\]
Furthermore,
\[
\lim_{z\to-\infty} (-z)\tau(a^{-1} b a^{-1}) = \lim_{z\to-\infty} \tau(a^{-2}\B) = \tau(\B),
\]
because both $a^{-1}$ and $\B$ are bounded and $a\to 1$, which follows from $\delta(z)/z\to 0$ as $z\to -\infty$. Thus, (ii) follows.
\item[(iii)] With the notation from the previous part, we have
\begin{align*}
    \frac{1}{h(z)} - \frac{1}{\delta(z)}\psi_{\A}\left(\frac{\delta(z)}{z}\right) & = \tau\left(\A^{1/2}\left[ (z-\B-\delta(z)\A)^{-1} - (z-\delta(z)\A)^{-1}\right]\A^{1/2}\right) \\
%    &= \frac{1}{-z}\tau(\A^{1/2}( a^{-1}-(a+b)^{-1})\A^{1/2})\\
    &= \frac{1}{-z}\tau(\A^{1/2}a^{-1/2}f(a^{-1/2}b a^{-1/2})a^{-1/2}\A^{1/2})   \\
    &\leq \frac{1}{-z}\tau(\A^{1/2}a^{-1}ba^{-1}\A^{1/2})= \frac{1}{z^2}\tau(\A^{1/2}a^{-1}\B a^{-1}\A^{1/2})\\
    &\leq \frac{1}{z^2}\tau(\A a^{-2}) \|\B\| \leq \frac{1}{z^2}\tau(\A) \|\B\|.  
\end{align*}
The left-hand side above is nonnegative, completing the proof.
\end{enumerate}
\end{proof}

\begin{lemma}\label{lem:L2B>0}
Assume $\tau(\A)=1$, $\A\in\tcA$ is non-Dirac and $\B\in\cA$ is such that $\B\geq 0$ and $\B\neq 0$. Additionally, assume that the model \eqref{eq:affine} is irreducible.
\begin{enumerate}
    \item[(i)] If $\tau(\A^2)<+\infty$, then as $z\to -\infty$,
    \[
    h(z) = z-\tau(\A^2) \delta(z)(1+o(1)).
    \]
    \item[(ii)] If $\mu_\A\big( (t,+\infty)\big) \sim c\,t^{-\alpha}$ with $\alpha\in(1,2)$ and $c>0$, then as $z\to -\infty$,
    \[
    h(z) = z -c\frac{\pi\alpha}{\sin(\pi\alpha)} z\left(\frac{\delta(z)}{-z}\right)^{\alpha-1} (1+o(1)).
    \]
\end{enumerate}
\end{lemma}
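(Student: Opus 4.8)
The plan is to read off the asymptotics of $h$ directly from the identity in Lemma~\ref{lem:L1B>0}(iii),
\[
\frac{1}{h(z)}=\frac{1}{\delta(z)}\,\psi_{\A}\!\left(\frac{\delta(z)}{z}\right)+O\!\left(\frac{1}{z^{2}}\right),\qquad z\to-\infty,
\]
by substituting the small–argument expansions of $\psi_{\A}$ supplied by Lemma~\ref{lem:psiA}. Two qualitative facts will be used throughout: $\delta(z)\to+\infty$ as $z\to-\infty$ (Lemma~\ref{lem:L1B>0}(i)) and $\delta(z)/z\to 0$ (Remark~\ref{neg}); in particular $w:=\delta(z)/z\to0-$, so the expansions of $\psi_{\A}$ near $0-$ apply at the argument $\delta(z)/z$, and one has the convenient identity $w/\delta(z)=1/z$.

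First I would treat part (i). By Lemma~\ref{lem:psiA}(i), $\psi_{\A}(w)=w+\tau(\A^{2})w^{2}(1+o(1))$ as $w\to0-$, whence
\[
\frac{1}{\delta(z)}\,\psi_{\A}\!\left(\frac{\delta(z)}{z}\right)=\frac{1}{z}+\tau(\A^{2})\,\frac{\delta(z)}{z^{2}}\,(1+o(1)).
\]
Since $\delta(z)\to+\infty$, the remainder $O(1/z^{2})$ is $o(\delta(z)/z^{2})$ and gets absorbed into the error term, so that $1/h(z)=z^{-1}\bigl(1+\tau(\A^{2})\,\delta(z)\,z^{-1}(1+o(1))\bigr)$. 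Because $\delta(z)/z\to0$, inverting this relation is legitimate and yields $h(z)=z-\tau(\A^{2})\,\delta(z)(1+o(1))$.

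For part (ii) I would use the expansion $\psi_{\A}(w)=w-c\tfrac{\pi\alpha}{\sin(\pi\alpha)}(-w)^{\alpha}(1+o(1))$ (the form of Lemma~\ref{lem:psiA}(ii) coming from Theorem~\ref{thm:taub}). Using $w/\delta(z)=1/z$ and $(-w)^{\alpha}/\delta(z)=-z^{-1}\bigl(\delta(z)/(-z)\bigr)^{\alpha-1}$, a short computation gives
\[
\frac{1}{\delta(z)}\,\psi_{\A}\!\left(\frac{\delta(z)}{z}\right)=\frac{1}{z}\left(1+c\,\frac{\pi\alpha}{\sin(\pi\alpha)}\left(\frac{\delta(z)}{-z}\right)^{\alpha-1}(1+o(1))\right).
\]
Here checking that the $O(1/z^{2})$ term is negligible compared with the first–order correction reduces to the comparison $(-z)^{\alpha-2}/\delta(z)^{\alpha-1}\to0$, which holds since $\delta(z)\to+\infty$ and $\alpha-2<0$. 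Finally $\delta(z)/z\to0$ forces $\bigl(\delta(z)/(-z)\bigr)^{\alpha-1}\to0$, so the bracket above is $1+o(1)$ and may be inverted, producing $h(z)=z-c\tfrac{\pi\alpha}{\sin(\pi\alpha)}\,z\bigl(\delta(z)/(-z)\bigr)^{\alpha-1}(1+o(1))$, as claimed.

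The argument is essentially routine asymptotic bookkeeping once Lemma~\ref{lem:L1B>0} and Lemma~\ref{lem:psiA} are in hand; the only place demanding a moment's attention is the verification, in each of the two regimes, that the $O(1/z^{2})$ remainder in Lemma~\ref{lem:L1B>0}(iii) is dominated by the genuine leading correction — immediate from $\delta(z)\to+\infty$ in part (i), and reducing to $(-z)^{\alpha-2}=o(\delta(z)^{\alpha-1})$, again a consequence of $\delta(z)\to+\infty$, in part (ii).
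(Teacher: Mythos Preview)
Your proof is correct and follows essentially the same approach as the paper: both arguments rest on Lemma~\ref{lem:L1B>0}(iii) combined with the small-argument expansions of $\psi_{\A}$ from Lemma~\ref{lem:psiA}, and both need $\delta(z)\to+\infty$ to absorb the $O(1/z^{2})$ remainder. The only difference is organizational---the paper splits $h(z)-z$ as $\bigl(h(z)-\delta(z)/\psi_{\A}(\delta(z)/z)\bigr)+\bigl(\delta(z)/\psi_{\A}(\delta(z)/z)-z\bigr)$ and handles each piece separately, whereas you expand $1/h(z)$ directly and invert at the end; the two routes are equivalent.
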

\begin{proof}
We write
\[
h(z) -z = \left(h(z) -\frac{\delta(z)}{\psi_{\A}\left(\frac{\delta(z)}{z}\right)} \right) + \left(\frac{\delta(z)}{\psi_{\A}\left(\frac{\delta(z)}{z}\right)} -z \right).
\]

By Lemma \ref{lem:L1B>0}, we have $\delta(z)h(z)\sim \psi_{\A}\left(\frac{\delta(z)}{z}\right)$, and thus,
\[
h(z)-\frac{\delta(z)}{\psi_{\A}\left(\frac{\delta(z)}{z}\right)} =
\frac{h(z)\delta(z)}{\psi_{\A}\left(\frac{\delta(z)}{z}\right)}
\left(
\frac{1}{\delta(z)}\psi_{\A}\left(\frac{\delta(z)}{z}\right)
-\frac{1}{h(z)}\right) =O\left(\frac{1}{z^2}\right).
\]
\begin{enumerate}
    \item[(i)] Since $\tau(\A^2)<+\infty$, by Lemma \ref{lem:psiA} (i), we obtain
\begin{align*}
     \frac{\delta(z)}{\psi_{\A}\left(\frac{\delta(z)}{z}\right)} - z = -\delta(z)\tau(\A^2)(1+o(1)).
\end{align*}
    \item[(ii)] By Lemma \ref{lem:psiA} (ii), we obtain
\begin{align*}
    \frac{\delta(z)}{\psi_{\A}\left(\frac{\delta(z)}{z}\right)} -z &= c \frac{\pi\alpha}{\sin(\pi\alpha)} \delta(z) \left(\frac{\delta(z)}{-z}\right)^{\alpha-2}(1+o(1)).
\end{align*}
Since $\delta(z)=o(z)$, we conclude (ii).
\end{enumerate}
\end{proof}

\begin{proof}[Proof of Theorem \ref{thm:tails+}]
By Theorem \ref{thm:taub} (i), the expressions \eqref{eq:tail1} and \eqref{eq:tail2} are equivalent to the following, respectively, as $t\to+\infty$,
\begin{align*}
    -\psi_\X\left( -\frac{1}{t}\right) &\sim \sqrt{\frac{2\,\tau(\B)}{\mathrm{Var}(\A)} } \frac{1}{t^{1/2}},\\
    -\psi_\X\left( -\frac{1}{t}\right) &\sim \left(\frac{-\sin(\pi\alpha)}{\pi c}\tau(\B)\right)^{1/\alpha} \frac{1}{t^{1/\alpha}}.
\end{align*}
By Lemma \ref{lem:L1B>0} (iii), we have $\beta(z)\sim \psi_\A\left( \frac{\delta(z)}{z}\right)\sim \frac{\delta(z)}{z}$. Since $\psi_\X(z^{-1})=\beta(z)$, to complete the proof, it is sufficient to show that, as $t\to+\infty$,
\begin{align}
 \frac{\delta(-t)}{t} &\sim \sqrt{\frac{2\,\tau(\B)}{\mathrm{Var}(\A)} } \frac{1}{t^{1/2}},\label{eq:delta1}\\
\frac{\delta(-t)}{t}&\sim \left(\frac{-\sin(\pi\alpha)}{\pi c}\tau(\B)\right)^{1/\alpha} \frac{1}{t^{1/\alpha}},\label{eq:delta2}
\end{align}
respectively. By Lemma \ref{lem:L1B>0}, we already know that $\delta(z)\to+\infty$ as $z\to-\infty$.

For a fixed $z_0<0$, we define a sequence $(z_n)_{n\geq 1}$ by
\[
z_n = h(z_{n-1})+\delta(z_{n-1}),
\]
and denote $h_n = h(z_n)$ and $\delta_n=\delta(z_n)$ for $n=1,2,\ldots$. We will first show \eqref{eq:delta1} and \eqref{eq:delta2} hold for this sequence.

First, we prove (i).  By Lemma \ref{lem:L2B>0} (i), we have
    \[
   h(z) -z \sim -\tau(\A^2)\delta(z),
    \]
which implies that there exists $z_0<0$ such that for any $z<z_0$,
    \begin{align}\label{eq:z0}
 z-h(z)\geq  (1-\varepsilon) \tau(\A^2)\delta(z)>0.
    \end{align}
 This clearly implies that 
 \[
 z_{n-1}-z_n = z_{n-1}-h_{n-1}-\delta_{n-1}\geq ((1-\varepsilon)\tau(\A^2)-1)\delta_{n-1} = (\mathrm{Var}(\A)-\eps\tau(\A^2))\delta_{n-1}>0
 \]
for sufficiently small $\varepsilon$.  Thus, the sequence $(z_n)_{n\geq 1}$ is decreasing and has a limit, say $g\leq z_0$. We conclude that $g=-\infty$, because if $g\in(-\infty,0)$, this would imply that $\delta_n\to 0$, which is impossible. Hence, $z_n\to -\infty$ as $n\to+\infty$.

The sequences $(z_n)_{n\geq1}$ and $(\delta_n)_{n\geq1}$ satisfy the following properties:
\begin{enumerate}
 \item[(a)] $z_{n-1}-z_{n }\sim \mathrm{Var}(\A)\delta_{n-1}$; thus,  $z_{n-1}\sim z_n$,
  \item[(b)] $\frac{\delta_n}{z_n} - \frac{\delta_{n-1}}{z_{n-1}}\sim - \frac{\tau(\B)}{z_n}$; thus, $\frac{\delta_n}{z_n} \sim \frac{\delta_{n-1}}{z_{n-1}}$,
  \item[(c)] $\lim_{n\to+\infty} \frac{\delta_n^2}{z_n} = -\frac{2\tau(\B)}{\mathrm{Var}(\A)}$.
\end{enumerate}
Property (a) follows from Lemma \ref{lem:L2B>0} (i).
By \eqref{eq:alphabeta}, we have $\beta(z_n)=\delta_{n-1}/h_{n-1}$. By Lemma \ref{lem:L1B>0} and the fact that $z_{n-1}\sim z_n$, we obtain
\begin{align*}
   \frac{\delta_{n-1}}{h_{n-1}} &= \psi_\A\left( \frac{\delta_{n-1}}{z_{n-1}}\right) + O\left(\frac{\delta_{n-1}}{z_{n-1}^2} \right) = \psi_\A\left( \frac{\delta_{n-1}}{z_{n-1}}\right)+o\left(\frac{1}{z_{n}}\right), \\
    \beta(z_{n}) &= \psi_\A\left( \frac{\delta_{n}}{z_{n}}\right) + \frac{\tau(\B)}{z_{n}}(1+o(1)).
\end{align*}
Thus, by Lemma \ref{lem:psiA} (iii), we get
\begin{align*}
\frac{\delta_n}{z_n} - \frac{\delta_{n-1}}{z_{n-1}} \sim \psi_\A\left( \frac{\delta_{n}}{z_{n}}\right) - \psi_\A\left( \frac{\delta_{n-1}}{z_{n-1}}\right)
\sim - \frac{\tau(\B)}{z_{n}},
\end{align*}
which establishes property (b).

By properties (a) and (b), we have
\begin{align*}
\frac{\left(\frac{\delta_n}{z_n}\right)^2 - \left(\frac{\delta_{n-1}}{z_{n-1}}\right)^2}{\frac{1}{z_n}-\frac{1}{z_{n-1}}}
\sim  \frac{\left(\frac{\delta_n}{z_n} - \frac{\delta_{n-1}}{z_{n-1}}\right) 2 \frac{\delta_n}{z_n}}{(z_{n-1}-z_n)\frac{1}{z_n^2}} \sim  \frac{\left( -\frac{\tau(\B)}{z_n}\right) 2 \frac{\delta_n}{z_n}}{\mathrm{Var}(\A)\delta_{n-1} \frac{1}{z_n^2}} \sim  -\frac{2\tau(\B)}{\mathrm{Var}(\A)}.
\end{align*}
The Stolz–Ces\`{a}ro theorem implies (c). Indeed, since $(z_n)_{n\geq 1}$ is monotonic and diverging, we obtain
\[
 \lim_{n\to+\infty} \frac{\left(\frac{\delta_n}{z_n}\right)^2}{\frac{1}{z_n}} =\lim_{n\to+\infty} \frac{\left(\frac{\delta_n}{z_n}\right)^2 - \left(\frac{\delta_{n-1}}{z_{n-1}}\right)^2}{\frac{1}{z_n}-\frac{1}{z_{n-1}}}.
\]

Next, we show that $\lim_{z\to -\infty} \frac{\delta(z)^2}{z} =  -\frac{2\tau(\B)}{\mathrm{Var}(\A)}$.
Since the function $(-\infty,z_0)\ni z\mapsto \delta(z)$ is monotonic and diverges to $+\infty$ as $z\to -\infty$, for $z\in[z_n,z_{n-1})$, we have
$\delta_{n-1} \leq  \delta(z) \leq \delta_n$
which leads to
\[
\frac{z_n}{z_{n-1}} \frac{\delta_{n}^2}{z_{n}} \leq \frac{\delta(z)^2}{z} \leq \frac{z_{n-1}}{z_{n}} \frac{\delta_{n-1}^2}{z_{n-1}}.
\]
Since $z_n\sim z_{n-1}$, we get
\[
\limsup_{z\to-\infty} \frac{\delta(z)^2}{z}\leq \limsup_{n\to+\infty} \frac{z_{n-1}}{z_{n}} \frac{\delta_{n-1}^2}{z_{n-1}} = -\frac{2\,\tau(\B)}{\mathrm{Var}(\A)}.
\]
A similar argument can be applied to the $\liminf$.
Therefore, we conclude that as $z\to-\infty$,
\[
\delta(z)\sim \sqrt{-\frac{2\,\tau(\B)}{\mathrm{Var}(\A)} z},
\]
which is equivalent to \eqref{eq:delta1}. This completes the proof of (i).

We now turn to point (ii). Similarly as before, Lemma \ref{lem:L2B>0} (ii)  implies that there exists $z_0<0$ such that the sequence $(z_n)_{n\geq 1}$ is decreasing and $z_n\to-\infty$ as $n\to+\infty$.

We will show that the sequences $(z_n)_{n\geq 1}$ and $(\delta_n)_{n\geq 1}$ satisfy the following properties:
\begin{enumerate}
    \item[(a')]  $z_{n-1}-z_{n}\sim c \frac{\pi\alpha}{\sin(\pi\alpha)} \left(\frac{\delta_{n-1}}{-z_{n-1}}\right)^{\alpha-1} z_{n-1}$; thus, $z_n\sim z_{n-1}$,
    \item[(b')]  $\frac{\delta_n}{z_n}-\frac{\delta_{n-1}}{z_{n-1}}\sim -\frac{\tau(\B)}{z_n}$; thus $\frac{\delta_n}{z_n}\sim\frac{\delta_{n-1}}{z_{n-1}}$,
    \item[(c')]  $\lim_{n\to+\infty} z_n \left(\frac{\delta_n}{-z_n}\right)^\alpha =
\frac{\sin(\pi\alpha)}{\pi c}\tau(\B)$.
\end{enumerate}
Property (a') follows from Lemma \ref{lem:L2B>0} (ii), while property (b') mirrors our earlier findings.

Now we calculate
    \begin{align*}
    \frac{\left(\frac{\delta_n}{-z_n}\right)^\alpha-\left(\frac{\delta_{n-1}}{-z_{n-1}}\right)^\alpha}{\frac{1}{z_n}-\frac{1}{z_{n-1}}}
&\sim \frac{ \alpha \left(\frac{\delta_n}{-z_n}\right)^{\alpha-1} \left( \frac{\delta_n}{-z_n}-\frac{\delta_{n-1}}{-z_{n-1}} \right) }{ (z_{n-1}-z_n) \frac{1}{z_n^2}} \sim   \frac{  \alpha \left(\frac{\delta_n}{-z_n}\right)^{\alpha-1} \frac{\tau(\B)}{z_n}}{c \frac{\pi\alpha}{\sin(\pi\alpha)} \left(\frac{\delta_{n-1}}{-z_{n-1}}\right)^{\alpha-1} z_{n-1}\frac{1}{z_n^2}}\\
&\sim
\frac{\sin(\pi\alpha)}{\pi c}\tau(\B).
    \end{align*}
Thus, by the Stolz–Ces\`{a}ro theorem, we obtain (c').
Arguing similarly as before, we find that
\[
\lim_{z\to-\infty} z \left( \frac{\delta(z)}{-z}\right)^\alpha = \frac{\sin(\pi\alpha)}{\pi c}\tau(\B),
\]
which is \eqref{eq:delta2}. This concludes the proof.
\end{proof}

\subsubsection{Symmetric $\B$}\label{sec:B=-B}

Denote for $t\in\R$,
\[
\Delta(t) = \delta(i t)/i.
\]
Then, for $z<0$,
\[
\beta(i z) =  \tau\left(\left(1-\frac{\Delta(z)}{z}\A+\frac{i}{z}\B\right)^{-1}\right)-1.
\]
Under the assumption that $(\A,\B)\stackrel{d}{=}(\A,-\B)$ and $\X\stackrel{d}{=}-\X$, the function $(-\infty,0)\ni z\mapsto\beta(i z)$ is real. In this case, we have
\[
\beta(i z) =  \tau\left(\Re\left(\left(1-\frac{\Delta(z)}{z}\A+\frac{i}{z}\B\right)^{-1}\right)\right)-1.
\]

\begin{lemma}\label{lem:simpleineq}
    Let $t\in\mathbb{C}$ with $\Re(t)\geq 0$. Then,
    \[
\Re\left(\frac{1}{1+t}\right)\geq 1 -\Re(t)-\Im(t)^2.
    \]
\end{lemma}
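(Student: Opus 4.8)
The plan is to reduce the claimed inequality to an elementary polynomial estimate in the real and imaginary parts of $t$. Writing $t=a+ib$ with $a=\Re(t)\ge 0$ and $b=\Im(t)\in\R$, one computes directly
\[
\Re\!\left(\frac{1}{1+t}\right)=\frac{1+a}{(1+a)^2+b^2},
\]
a quantity that is strictly positive because $1+a\ge 1$. Consequently, whenever $1-a-b^2\le 0$ the claimed inequality holds trivially, and the only case that requires an argument is $1-a-b^2>0$.

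In that remaining case I would clear the (positive) denominator: the inequality $\frac{1+a}{(1+a)^2+b^2}\ge 1-a-b^2$ is equivalent to
\[
1+a\ \ge\ (1-a-b^2)\bigl((1+a)^2+b^2\bigr).
\]
Expanding the right-hand side and cancelling the common terms $1+a$, this is in turn equivalent to
\[
a^2+a^3+3ab^2+a^2b^2+b^4\ \ge\ 0,
\]
which holds because $a\ge 0$ makes every summand a product of nonnegative quantities.

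There is no genuine obstacle here: the only work is the routine expansion of the cubic, and the hypothesis $\Re(t)\ge 0$ is used exactly twice — once to ensure that $\Re\bigl((1+t)^{-1}\bigr)\ge 0$, which settles the easy case, and once to ensure the nonnegativity of the terms involving odd powers of $a$ in the final polynomial.
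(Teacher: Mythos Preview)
Your proof is correct and follows essentially the same approach as the paper: write $t=a+ib$, compute $\Re\bigl((1+t)^{-1}\bigr)=(1+a)/((1+a)^2+b^2)$, clear the positive denominator, and reduce to the polynomial inequality $a^2(1+a)+a(3+a)b^2+b^4\ge0$, which is immediate for $a\ge0$. The only cosmetic difference is that you split off the trivial case $1-a-b^2\le0$ before clearing the denominator, whereas the paper simply multiplies through (which is legitimate since the denominator is positive).
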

\begin{proof}
    Let $t=x+i y$ with $x,y\in\R$. Then,
    \[
 \Re\left(\frac{1}{1+t}\right) = \frac{1+x}{(1+x)^2+y^2}.  
    \]
 The inequality
 \[
1+x\geq(1-x-y^2)\left( (1+x)^2+y^2\right)
 \]
is equivalent to
    \[
x^2(1+x)+x(3+x)y^2+y^4\geq 0,
    \]
    which is clearly satisfied for all $x\geq 0$.
\end{proof}

\begin{lemma}\label{lem:L1Bs}\
Assume $(\A,\B)\stackrel{d}{=}(\A,-\B)$, $\tau(\A)=1$, $\A\in\tcA$ is non-Dirac and $\B\in\cA$ with $\B\neq 0$. Additionally, suppose that the model \eqref{eq:affine} is irreducible. As $z\to -\infty$, we have
\begin{enumerate}
    \item[(i)] $z\Delta(z)\to-\infty$,
    \item[(ii)] $ \beta(i z)=\psi_\A\left(\frac{\Delta(z)}{z}\right)- \frac{\tau(\B^2)}{z^2}(1+o(1))$,
    \item[(iii)] $\frac{i}{h(iz)}= \frac{1}{\Delta(z)} \psi_{\A}\left(\frac{\Delta(z)}{z}\right)+O\left(\frac{1}{z^3}\right)$.
\end{enumerate}
\end{lemma}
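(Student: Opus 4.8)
\textbf{Proof proposal for Lemma \ref{lem:L1Bs}.}

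The plan is to mirror closely the structure of the proof of Lemma \ref{lem:L1B>0}, substituting the symmetric analogues throughout: we work along $z\in i(-\infty,0)$, we use the real-valuedness of $z\mapsto\beta(iz)$, and we track that one extra power of $z$ appears in the error terms because the first-order contribution of $\B$ now cancels by symmetry, leaving only the second-order term proportional to $\tau(\B^2)$.

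For part (i), the argument is the analogue of the proof that $\delta(z)\to+\infty$. Since $\psi_\X$ is the moment transform of $\X$ and $\tau(\X)=+\infty$ (which follows by applying $\tau$ to \eqref{eq:affine} together with $\tau(\A)=1$, $\B\ne 0$, and $\X\stackrel{d}{=}-\X$ forcing $\tau(|\X|)=\infty$ once any moment blows up), we get $z\psi_\X(z^{-1})\to\infty$ as $z\to-\infty$ along the imaginary direction in the appropriate sense. Using $\beta(iz)=\psi_\X((iz)^{-1})$ and the definition $\Delta(z)=\delta(iz)/i$, together with the fact (from Lemma \ref{lem:deltaProp}(ii)) that $\delta$ maps $i(-\infty,0)$ into $i(0,+\infty)$ monotonically, we conclude $z\Delta(z)\to -\infty$. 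I would also invoke an inequality analogous to the one at the start of the proof of Lemma \ref{lem:L1B>0}(i)—here Lemma \ref{lem:simpleineq} is the right tool—to get a one-sided bound $z^2\beta(iz)\le z\Delta(z)+\tau(\B^2)$ or similar, pinning down the sign and growth.

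For parts (ii) and (iii), set $a=1-\tfrac{\Delta(z)}{z}\A$ and $b=\tfrac{i}{z}\B$, so $a\ge 1$ and $b^*=-b$ with $\|b\|=\|\B\|/|z|$. As in Lemma \ref{lem:L1B>0}, write $(a+b)^{-1}-a^{-1}=-a^{-1/2}f(a^{-1/2}ba^{-1/2})a^{-1/2}$ with $f(x)=x/(1+x)$, but now expand to \emph{second} order: $f(x)=x-x^2+x^3-\dots$, and use that $\tau(a^{-1/2}(a^{-1/2}ba^{-1/2})a^{-1/2})$ has vanishing real part because $b$ is anti-selfadjoint and $a$ is selfadjoint (so the linear term contributes nothing to $\beta(iz)$, which is real). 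The surviving real contribution comes from the quadratic term $-a^{-1/2}(a^{-1/2}ba^{-1/2})^2 a^{-1/2}$, and since $b^2=-\tfrac{1}{z^2}\B^2$, taking $\tau$ and using $a\to 1$ as $z\to-\infty$ (because $\Delta(z)/z\to 0$, which follows from $\lim\delta(z)/z=0$ in Lemma \ref{lem:subordination}) gives $\tau(-a^{-1}b^2 a^{-1})\sim \tfrac{1}{z^2}\tau(\B^2)$, with the remaining terms of the series being $O(z^{-3})$ by boundedness of $\B$ and $a^{-1}$. This yields (ii); part (iii) is the same computation with $\A^{1/2}\cdots\A^{1/2}$ inserted and an extra power of $1/(-z)$ from the $h$-normalization, giving the $O(z^{-3})$ error, exactly paralleling Lemma \ref{lem:L1B>0}(iii) but with the linear term killed by anti-selfadjointness of $b$.

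The main obstacle is bookkeeping the cancellation of the linear term cleanly: one must justify that $\Re\tau\bigl(a^{-1/2}f(a^{-1/2}ba^{-1/2})a^{-1/2}\bigr)$ truly has no $O(1/z)$ real part, i.e. that the real part is genuinely $O(1/z^2)$ and that the $O(1/z^2)$ coefficient is exactly $-\tau(\B^2)$ and not something involving $\tau(\A\B^2\A)$-type cross terms. This requires care because $a$ and $b$ do not commute, so $f(a^{-1/2}ba^{-1/2})$ is not simply a scalar series in $b$; one controls this by noting $\|a^{-1/2}ba^{-1/2}\|\le\|\B\|/|z|\to 0$, expanding $f$ as a norm-convergent series for $|z|$ large, and using traciality plus $b^*=-b$ to see that all odd-order terms have purely imaginary trace while the leading even term is $\tau(b^2 a^{-2})$ up to higher-order corrections, with $a^{-2}\to 1$ in norm. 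Everything else is a routine adaptation of the positive-$\B$ lemmas.
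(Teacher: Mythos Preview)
Your overall strategy is right and closely parallels the paper's. Two points deserve comment.

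In part (i), your claim that ``$\tau(\X)=+\infty$'' is not quite right: since $\X\stackrel{d}{=}-\X$, one has $\tau(\X)=0$ whenever it is defined. What you actually need (and what the paper uses) is $\tau(\X^2)=+\infty$; this follows from squaring \eqref{eq:affine} and using freeness together with $\tau(\A)=1$ to obtain $\tau(\X^2)=\tau(\X^2)+\tau(\B^2)$. The paper then writes $z^2\psi_\X((iz)^{-1})=z^2\psi_{\X^2}(-z^{-2})\to-\tau(\X^2)=-\infty$ and combines this with the one-sided bound coming from Lemma~\ref{lem:simpleineq}, exactly as you sketch.

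In part (ii), your power-series expansion of $f$ with anti-selfadjoint $b=\tfrac{i}{z}\B$ works, but the paper takes a slightly different and cleaner route: it sets $b=z^{-1}\B$ (selfadjoint), so that $c:=a^{-1/2}ba^{-1/2}$ is selfadjoint, and then uses the scalar identity $\Re f(ix)=\tfrac{x^2}{1+x^2}=f(x^2)$ for real $x$. By functional calculus this gives directly the two-sided operator inequalities $c^2-c^4\le a^{-1/2}\Re f(ic)\,a^{-1/2}$-type bounds (concretely, $a^{-1}ba^{-1}ba^{-1}-(a^{-1}b)^4a^{-1}$ below and $a^{-1}ba^{-1}ba^{-1}$ above), with no series-convergence bookkeeping and no need to argue that odd terms have purely imaginary trace. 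Both approaches give the same conclusion; the identity $\Re f(ix)=f(x^2)$ is simply the more economical device.
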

\begin{proof}
\begin{enumerate}
    \item[(i)]
Since $\tau(\A)=1$ and $\B\neq0$, \eqref{eq:affine} implies that $\tau(\X^2)=+\infty$. Therefore,
\[
\lim_{z\to -\infty} z^2 \psi_\X\left(\frac{1}{i z}\right) = \lim_{z\to -\infty} z^2 \psi_{\X^2}\left(\frac{1}{-z^2}\right)=-\tau(\X^2) =-\infty.
\]
Applying Lemma \ref{lem:simpleineq} to $t=-\frac{\Delta(z)}{z}\A+\frac{i}{z}\B$ for $z<0$ and taking $\tau$ of both sides, we arrive at
\[
\beta(i z)\geq  \frac{\Delta(z)}{z}+\frac{1}{z^2}\tau(\B^2).
\]
Since $\beta(iz)=\psi_\X((iz)^{-1})$, we obtain
\[
-\infty = \lim_{z\to-\infty} z^2 \beta(i z) \geq \limsup_{z\to +\infty} \left(z \Delta(z)+\tau(\B^2)\right),
\]
which establishes (i).
\item[(ii)]
 Let $a=1-z^{-1}\Delta(z) \A$ and $b=z^{-1}\B$. Note that $a\geq 1$ for $z<0$. Let $f(x)=x/(1+x)$.
Then,
\begin{align*}
 \left(1- \frac{\Delta(z)}{z}\A+\frac{i}{z}\B\right)^{-1}-\left(1-\frac{\Delta(z)}{z}\A\right)^{-1} =  (a+i b)^{-1}-a^{-1}\\
= - a^{-1/2}f(i a^{-1/2}b a^{-1/2})a^{-1/2}
\end{align*}
For $x\in\R$, we have $x^2-x^4\leq \Re(f(ix)) = f(x^2) \leq x^2$. Therefore, for $z<0$,
\begin{align*}
 a^{-1} b a^{-1} b a^{-1} - (a^{-1}b)^4 a^{-1}\leq  a^{-1/2}\Re\left(f(i a^{-1/2}b a^{-1/2})a^{-1/2} \right) \leq  a^{-1} b a^{-1} b a^{-1}.
\end{align*}
Moreover, we have
\begin{align*}
     a^{-1} b a^{-1} b a^{-1} = \frac{1}{z^2} a^{-1} \B a^{-1} \B a^{-1}\quad\mbox{and}\quad
      (a^{-1}b)^4 a^{-1} = \frac{1}{z^4}(a^{-1}\B)^4 a^{-1}.
\end{align*}
Clearly, $(a^{-1}\B)^4 a^{-1}$ is bounded because $\B$ is bounded and $a\geq 1$. 
Therefore
\[
\lim_{z\to-\infty} z^2\, \tau\left( (a^{-1}b)^4 a^{-1}\right) = 0.
\]
Moreover, since $\Delta(z)/z\to 0$, we have $a \to 1$. Therefore,
\[
\lim_{z\to-\infty} z^2 \tau\left(a^{-1} b a^{-1} b a^{-1}\right) = \lim_{z\to-\infty} \tau(a^{-1}\B a^{-1}\B a^{-1}) = \tau(\B^2)<+\infty.
\]
This gives (ii).
\item[(iii)] We leave the proof of (iii) to the reader, as it is similar to the proof of the Lemma \ref{lem:L1B>0} (iii).
\end{enumerate}
\end{proof}

\begin{lemma}\label{lem:L2Bs}
Assume $(\A,\B)\stackrel{d}{=}(\A,-\B)$, $\tau(\A)=1$, $\A\in\tcA$ is non-Dirac and $\B\in\cA$ with $\B\neq 0$. Additionally, assume that the model \eqref{eq:affine} is irreducible.
\begin{enumerate}
    \item[(i)] If $\tau(\A^2)<+\infty$, then as $z\to-\infty$,
        \[
    \frac{1}{i}h(iz) = z  -\tau(\A^2)\Delta(z)(1+o(1)).
    \]
    \item[(ii)] If $\mu_\A\big( (t,+\infty)\big) \sim c\, t^{-\alpha}$ with $\alpha\in(1,2)$ and $c>0$, then  as $z\to-\infty$,
        \[
       \frac{1}{i}h(iz) = z  - c\frac{\pi\alpha}{\sin(\pi\alpha)} z\left(\frac{\Delta(z)}{-z}\right)^{\alpha-1} (1+o(1)).
    \]
\end{enumerate}
\end{lemma}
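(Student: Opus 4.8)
\textbf{Proof proposal for Lemma~\ref{lem:L2Bs}.} The plan is to mirror exactly the argument used for Lemma~\ref{lem:L2B>0}, replacing the real variable $z$ by the imaginary variable $iz$ and the function $\delta$ by $\Delta(z)=\delta(iz)/i$. Write
\[
\frac{1}{i}h(iz) - z = \left(\frac{1}{i}h(iz) - \frac{\Delta(z)}{\psi_{\A}\left(\frac{\Delta(z)}{z}\right)}\right) + \left(\frac{\Delta(z)}{\psi_{\A}\left(\frac{\Delta(z)}{z}\right)} - z\right),
\]
and treat the two parenthesized terms separately. For the first term, Lemma~\ref{lem:L1Bs}(iii) gives $\frac{i}{h(iz)} = \frac{1}{\Delta(z)}\psi_{\A}\left(\frac{\Delta(z)}{z}\right) + O(z^{-3})$, and also (from Lemma~\ref{lem:L1Bs}(i),(iii)) that $\Delta(z) h(iz)/i \sim \psi_{\A}\left(\frac{\Delta(z)}{z}\right)$; combining these as in the proof of Lemma~\ref{lem:L2B>0},
\[
\frac{1}{i}h(iz) - \frac{\Delta(z)}{\psi_{\A}\left(\frac{\Delta(z)}{z}\right)}
= \frac{h(iz)\Delta(z)/i}{\psi_{\A}\left(\frac{\Delta(z)}{z}\right)}\left(\frac{1}{\Delta(z)}\psi_{\A}\left(\frac{\Delta(z)}{z}\right) - \frac{i}{h(iz)}\right) = O\!\left(\frac{1}{z^2}\right),
\]
which is negligible relative to $\Delta(z)$ (note $\Delta(z)\to-\infty$, indeed $z\Delta(z)\to-\infty$, so $\Delta(z)$ dominates $z^{-2}$).

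For the second term, I would apply Lemma~\ref{lem:psiA} with the (negative, tending to $0$) argument $\frac{\Delta(z)}{z}$. In case (i), $\tau(\A^2)<+\infty$, Lemma~\ref{lem:psiA}(i) gives $\frac{1}{\psi_{\A}(w)} = \frac{1}{w} - \tau(\A^2)(1+o(1))$ as $w\to0-$, so with $w = \frac{\Delta(z)}{z}$,
\[
\frac{\Delta(z)}{\psi_{\A}\left(\frac{\Delta(z)}{z}\right)} - z = \Delta(z)\left(\frac{z}{\Delta(z)} - \tau(\A^2)(1+o(1))\right) - z = -\tau(\A^2)\Delta(z)(1+o(1)),
\]
giving (i). In case (ii), Lemma~\ref{lem:psiA}(ii) gives $\frac{1}{\psi_{\A}(w)} = \frac{1}{w} + c\frac{\pi\alpha}{\sin(\pi\alpha)}(-w)^{\alpha-2}(1+o(1))$, so
\[
\frac{\Delta(z)}{\psi_{\A}\left(\frac{\Delta(z)}{z}\right)} - z = c\frac{\pi\alpha}{\sin(\pi\alpha)}\Delta(z)\left(\frac{\Delta(z)}{-z}\right)^{\alpha-2}(1+o(1)) = -c\frac{\pi\alpha}{\sin(\pi\alpha)} z\left(\frac{\Delta(z)}{-z}\right)^{\alpha-1}(1+o(1)),
\]
which yields (ii) after absorbing the $O(z^{-2})$ term from the first part; here one uses $\Delta(z) = o(z)$ (equivalently $\Delta(z)/z\to0$, which follows from the defining condition $\sphericalangle\lim \delta(z)/z = 0$) to see that the error from the first parenthesis is of smaller order than $z(\Delta(z)/(-z))^{\alpha-1}$.

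The only genuinely new point compared with the positive-$\B$ case is checking that all the asymptotic relations coming from Lemma~\ref{lem:L1Bs} and Lemma~\ref{lem:psiA} remain valid along the imaginary axis, i.e.\ that $\frac{1}{i}h(iz)$ and $\Delta(z)$ are real (which holds by the symmetry hypothesis $(\A,\B)\stackrel{d}{=}(\A,-\B)$ together with $\X\stackrel{d}{=}-\X$, as recorded before Lemma~\ref{lem:simpleineq} and in Lemma~\ref{lem:deltaProp}(ii)), negative for $z<0$, and monotone; with these facts in hand the estimates are formally identical. I expect the bookkeeping around signs and the $O$-terms — making sure $O(z^{-2})$ truly is negligible against both $\Delta(z)$ and $z(\Delta(z)/(-z))^{\alpha-1}$ given only $z\Delta(z)\to-\infty$ — to be the part that needs the most care, but no conceptual obstacle arises.
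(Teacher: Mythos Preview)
Your proposal is correct and follows essentially the same route as the paper: the same decomposition into two terms, the first controlled via Lemma~\ref{lem:L1Bs}(iii) and the second via Lemma~\ref{lem:psiA}. Two small slips worth fixing: first, the prefactor $\frac{h(iz)\Delta(z)/i}{\psi_\A(\Delta(z)/z)}$ is $\sim z^2$, so the first term is $O(1/z)$ rather than $O(1/z^2)$ (still negligible); second, $\Delta(z)$ is \emph{positive} (Lemma~\ref{lem:deltaProp}(ii)), not tending to $-\infty$ --- it is the correct fact $z\Delta(z)\to-\infty$ from Lemma~\ref{lem:L1Bs}(i) that gives $1/|z|=o(\Delta(z))$ and hence negligibility in case~(i), and the combination $(-z)\Delta(z)\to+\infty$ with $4-2\alpha>0$ that gives negligibility in case~(ii).
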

\begin{proof}
We write
\[
 \frac{1}{i}h(iz) - z = \left( \frac{1}{i}h(iz) -\frac{\Delta(z)}{\psi_{\A}\left(\frac{\Delta(z)}{z}\right)} \right) + \left(\frac{\Delta(z)}{\psi_{\A}\left(\frac{\Delta(z)}{z}\right)} - z \right).
\]
By Lemma \ref{lem:L1Bs}, we obtain
\begin{align*}
\frac{1}{i}h(iz) -\frac{\Delta(z)}{\psi_{\A}\left(\frac{\Delta(z)}{z}\right)} &=  \frac{1}{i}h(iz)\frac{\Delta(z)}{\psi_{\A}\left(\frac{\Delta(z)}{z}\right)}\left(\frac{1}{\Delta(z)} \psi_{\A}\left(\frac{\Delta(z)}{z}\right)-\frac{i}{h(iz)}\right) \\
&\sim \frac{1}{z^3}\tau(\A\B^2).
\end{align*}
Similarly as in the proof of Lemma \ref{lem:L2B>0}, by applying (i) and (ii) from Lemma \ref{lem:psiA}, we obtain the assertion.
\end{proof}

\begin{proof}[Proof of Theorem \ref{thm:tailsS}]
Define a sequence $(z_n)_{n\geq 1}$ by
\[
z_n = \frac{h(i z_{n-1})}{i} +\Delta(z_{n-1}).
\]
Note that all $z_n$ are real. Following the arguments in the proof of Theorem \ref{thm:tails+}, we can show that $(z_n)_{n\geq 1}$ is monotonic and diverges to $-\infty$. 
Denote $\Delta_n=\Delta(z_n)$ for $n\geq 1$.

First, consider (i).
We will prove that the sequences $(z_n)_{n\geq 1}$ and $(\Delta_n)_{n\geq 1}$ satisfy the following properties:
\begin{enumerate}
 \item[(a)] $z_{n-1}-z_{n}\sim \mathrm{Var}(\A)\Delta_{n-1}$; thus, $z_{n-1}\sim z_n$,
  \item[(b)] $\frac{\Delta_n}{z_n} - \frac{\Delta_{n-1}}{z_{n-1}}\sim \frac{\tau(\B^2)}{z_n^2}$; thus, $\frac{\Delta_n}{z_n} \sim \frac{\Delta_{n-1}}{z_{n-1}}$,
  \item[(c)] $\lim_{n\to+\infty} \Delta_n^2 = \frac{\tau(\B^2)}{\mathrm{Var}(\A)}$.
\end{enumerate}
By Lemma \ref{lem:L2Bs} (i), we have
\[
z_n - z_{n-1} = \frac{h(i z_{n-1})}{i}-z_{n-1}+\Delta_{n-1}\sim -\tau(\A^2)\Delta_{n-1}(1+o(1)) +\tau(\A)^2 \Delta_{n-1},
\]
which establishes (a). Using \eqref{eq:alphabeta}, we get
\[
\beta(i z_n) =  \frac{i \Delta(z_{n-1})}{h(iz_{n-1})}.
\]
From Lemma \ref{lem:L1Bs}, we obtain
\begin{align*}
    \frac{i\, \Delta(z_{n-1})}{h(iz_{n-1})} &= \psi_\A\left( \frac{\Delta_{n-1}}{z_{n-1}}\right) + O\left(\frac{1}{\Delta_{n-1}z_{n-1}^3}\right) = \psi_\A\left( \frac{\Delta_{n-1}}{z_{n-1}}\right) + o\left( \frac{1}{z_{n}^2}\right),\\
    \beta(i z_{n}) &= \psi_\A\left( \frac{\Delta_{n}}{z_{n}}\right) - \frac{\tau(\B^2)}{z_{n}^2}(1+o(1)).
\end{align*}
Thus, by Lemma \ref{lem:psiA} (iii), we get
\begin{align*}
\frac{\Delta_n}{z_n} - \frac{\Delta_{n-1}}{z_{n-1}} \sim \psi_\A\left( \frac{\Delta_{n}}{z_{n}}\right) - \psi_\A\left( \frac{\Delta_{n-1}}{z_{n-1}}\right) = \frac{\tau(\B^2)}{z_{n}^2}(1+o(1)),
\end{align*}
which establishes (b).

Moreover,
\begin{align*}
\frac{\left(\frac{\Delta_n}{z_n}\right)^2 - \left(\frac{\Delta_{n-1}}{z_{n-1}}\right)^2}{\frac{1}{z_n^2}-\frac{1}{z_{n-1}^2}}
\sim  \frac{\left(\frac{\Delta_n}{z_n} - \frac{\Delta_{n-1}}{z_{n-1}}\right) 2 \frac{\Delta_n}{z_n}}{(z_{n-1}-z_n)\frac{2}{z_n^3}}
\sim  \frac{ \frac{\tau(\B^2)}{z_n^2} 2 \frac{\Delta_n}{z_n}}{\mathrm{Var}(\A)\Delta_{n-1} \frac{2}{z_n^3}} \sim  \frac{\tau(\B^2)}{\mathrm{Var}(\A)}.
\end{align*}
The Stolz–Ces\`{a}ro theorem implies property (c).
Since the function $(-\infty,0)\ni z\mapsto\Delta(z)$ is decreasing, for $z\in [z_n,z_{n-1})$, we have $\Delta_{n-1}\leq \Delta(z)\leq \Delta_n$. Consequently, we obtain
\[
\limsup_{n\to+\infty}\Delta_{n-1} \leq \liminf_{z\to-\infty}\Delta(z)\leq \limsup_{z\to+\infty}\Delta(z)\leq \liminf_{n\to+\infty}\Delta_n.
\]
This implies that 
\[
\lim_{z\to -\infty} \Delta(z) = \sqrt{\frac{\tau(\B^2)}{\mathrm{Var}(\A)}}.
\]
Thus, we find
\[
-\psi_\X\left( -\frac{1}{it}\right) = -\beta(-it) \sim \frac{\Delta(-t)}{t} \sim \sqrt{\frac{\tau(\B^2)}{\mathrm{Var}(\A)} } \frac{1}{t}.
\]
By Theorem \ref{thm:taub2} for $\alpha=1$ and $L\equiv \frac{2}{\pi}\sqrt{\frac{\tau(\B^2)}{\mathrm{Var}(\A)} }$, we conclude the assertion.

We now prove (ii). The sequences $(z_n)_{n\geq 1}$ and $(\Delta_n)_{n\geq 1}$ satisfy the following properties
\begin{enumerate}
 \item[(a')] $z_{n-1} - z_{n} \sim c\frac{\pi\alpha}{\sin(\pi\alpha)}\left(\frac{\Delta_{n-1}}{-z_{n-1}}\right)^{\alpha-1} z_{n-1}$; thus, $z_{n-1}\sim z_n$,
  \item[(b')] $\frac{\Delta_n}{z_n} - \frac{\Delta_{n-1}}{z_{n-1}}\sim \frac{\tau(\B^2)}{z_n^2}$; thus, $\frac{\Delta_n}{z_n} \sim \frac{\Delta_{n-1}}{z_{n-1}}$,
  \item[(c')] $\lim_{n\to+\infty} z_n^2\left(\frac{\Delta_n}{-z_n}\right)^\alpha = \frac{ - \sin(\pi\alpha) }{2 \pi  c}\tau(\B^2)$.
\end{enumerate}
Property (a') follows from Lemma \ref{lem:L2B>0} (ii), while property (b') coincides with (b) from part (i). By properties (a') and (b'), we have
\begin{align*}
    \frac{\left(\frac{\Delta_n}{-z_n}\right)^\alpha-\left(\frac{\Delta_{n-1}}{-z_{n-1}}\right)^\alpha}{\frac{1}{z_n^2}-\frac{1}{z_{n-1}^2}}
&\sim \frac{ \alpha \left(\frac{\Delta_n}{-z_n}\right)^{\alpha-1} \left( \frac{\Delta_n}{-z_n}-\frac{\Delta_{n-1}}{-z_{n-1}} \right) }{ (z_{n-1}-z_n) \frac{2}{z_n^3}} \sim   \frac{ - \alpha \left(\frac{\Delta_n}{-z_n}\right)^{\alpha-1} \frac{\tau(\B^2)}{z_n^2}}{c \frac{\pi\alpha}{\sin(\pi\alpha)} \left(\frac{\Delta_{n-1}}{-z_{n-1}}\right)^{\alpha-1} z_{n-1}\frac{2}{z_n^3}},
\end{align*}
which simplifies to
\[
  \frac{ - \sin(\pi\alpha) }{2 \pi  c}\tau(\B^2).
\]
By the Stolz–Ces\`{a}ro theorem, we obtain (c').

For $z\in[z_n,z_{n-1})$, we have
\[
z_{n-1}^2\left(\frac{\Delta_{n-1}}{-z_n}\right)^\alpha \leq z^2\left(\frac{\Delta(z)}{-z}\right)^\alpha \leq z_n^2\left(\frac{\Delta_n}{-z_{n-1}}\right)^\alpha.
\]
Since $z_n\sim z_{n-1}$, using (c'), we conclude as $z\to -\infty$,
\[
\Delta(z) \sim \left( \frac{ - \sin(\pi\alpha) }{2 \pi  c}\tau(\B^2) \right)^{1/\alpha} \frac{1}{(-z)^{2/\alpha-1}}.
\]
Thus, we find that
\[
-\psi_\X\left( -\frac{1}{it}\right) \sim \frac{\Delta(-t)}{t} \sim \left( \frac{ - \sin(\pi\alpha) }{2 \pi  c}\tau(\B^2) \right)^{1/\alpha} \frac{1}{t^{2/\alpha}}.
\]
Applying Theorem \ref{thm:taub2} with index $2/\alpha\in(1,2)$ and $L\equiv\frac{\sin(\pi/\alpha)}{\pi/\alpha}\left( \frac{-\sin(\pi\alpha)}{2\pi c}\tau(\B^2)\right)^{1/\alpha}$, we conclude the result.
\end{proof}

\section*{Appendix}
In the Appendix, we provide several supplementary results related to the limit theorem introduced in \cite{SY13}. In particular, we prove that the convergence, originally established in distribution, can be strengthened to hold in $L^2$. 

\begin{thm}\label{thm:SY}
Assume that $\mu\in\mathcal{M}_+$ is non-Dirac, and let $(\Pi_{n,k})_{1\leq k\leq n}$ be freely independent copies of $\Pi_n\sim \mu^{\boxtimes n}$ for $n\in\mathbb{N}$.
    \begin{enumerate}
    \item[(i)]
  If $m_2(\mu)<+\infty$, define
    \[
    \Y_n = \frac{1}{n \,m_1(\mu)^n} \sum_{k=1}^n \Pi_{n,k}.
    \]
Then, $\Y_n$ converges in distribution to a random variable $\Y$ with $S_{\Y}(z) = \exp(-\eta z)$, $z\in(-1,0)$, where $\eta = \mathrm{Var}(\mu)/m_1(\mu)^2$.
   \item[(ii)] If $\mu\big((t,+\infty)\big)\sim c\, t^{-\alpha}$ with $\alpha\in(1,2)$, define
   \[
   \Y_n^{(\alpha)} = \frac{1}{n^\beta m_1(\mu)^n} \sum_{k=1}^{\lfloor n^\beta\rfloor} \Pi_{n,k},
   \]
   where $\beta=1/(\alpha-1)$. Then, $\Y_n^{(\alpha)}$ converges in distribution to a random variable $\Y^{(\alpha)}$ with
   $S_{\Y^{(\alpha)}}(z) = \exp(\eta_\alpha(-z)^{\alpha-1})$, $z\in(-1,0)$, where
   \[
   \eta_\alpha = c \frac{\pi\alpha}{\sin(\pi\alpha)m_1(\mu)^{\alpha}}.
   \]
    \end{enumerate}
\end{thm}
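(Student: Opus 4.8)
The plan is to reduce to the case $m_1(\mu)=1$, compute the $S$-transform of the normalized sums in closed form in terms of $S_\mu$, let $n\to+\infty$, and finally upgrade convergence of transforms to convergence in distribution by a tightness argument; as a by-product this re-establishes that the limits $\Y,\Y^{(\alpha)}$ exist. The reduction is immediate: if $\mu'=D_{1/m_1(\mu)}\mu$ then $\mu'^{\boxtimes n}=D_{1/m_1(\mu)^n}(\mu^{\boxtimes n})$, so the variable built from $\mu'$ with normalization $1$ agrees in law with the one built from $\mu$ with normalization $m_1(\mu)^n$, and the constants $\eta$, $\eta_\alpha$ are invariant. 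Hence assume $m_1(\mu)=1$, so that $\Y_n=D_{1/n}\big((\mu^{\boxtimes n})^{\boxplus n}\big)$ and $\Y^{(\alpha)}_n=D_{1/n^\beta}\big((\mu^{\boxtimes n})^{\boxplus\lfloor n^\beta\rfloor}\big)$ with $\beta=1/(\alpha-1)>1$.

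The heart of the argument is an explicit formula for $S_{\Y_n}$ and $S_{\Y^{(\alpha)}_n}$. Let $C_\sigma(z)=zR_\sigma(z)$ be the free cumulant generating function (an analytic function on a left neighbourhood of $0$, whose Taylor coefficients at $0$, when they exist, are the free cumulants $\kappa_p(\sigma)$), related to the $S$-transform by the Nica--Speicher identity $C_\sigma^{\langle -1\rangle}(z)=zS_\sigma(z)$. Using that $C$ is additive under free additive convolution ($C_{\sigma^{\boxplus m}}=mC_\sigma$), transforms as $C_{D_c\sigma}(z)=C_\sigma(cz)$ under a dilation, and that $S_{\mu^{\boxtimes n}}=S_\mu^{\,n}$ (Section \ref{sec:tr}), one computes $C_{\Y_n}(z)=n\,C_{\mu^{\boxtimes n}}(z/n)$; inverting this and dividing by $z$ gives
\[
S_{\Y_n}(z)=S_\mu\!\left(\tfrac{z}{n}\right)^{n},\qquad
S_{\Y^{(\alpha)}_n}(z)=\frac{n^{\beta}}{\lfloor n^{\beta}\rfloor}\,S_\mu\!\left(\tfrac{z}{\lfloor n^{\beta}\rfloor}\right)^{n},
\]
valid on a left neighbourhood of $0$; the discrepancy between the number $\lfloor n^\beta\rfloor$ of summands and the scaling exponent $n^\beta$ produces only the harmless prefactor $n^\beta/\lfloor n^\beta\rfloor\to1$.

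It then remains to expand $S_\mu$ at $0^-$. In case (i), $m_2(\mu)<+\infty$ gives $S_\mu(w)=1-\eta w+o(w)$ as $w\to0^-$ with $\eta=\mathrm{Var}(\mu)$ by \eqref{eq:S-2-terms}, so $S_\mu(z/n)^{n}\to e^{-\eta z}=S_\Y(z)$ locally uniformly near $0$. In case (ii) one must first pass through a Tauberian inversion: Theorem \ref{thm:taub}(ii) (with $L\equiv c$) gives $\psi_\mu(z)=z-\tfrac{\pi\alpha}{\sin(\pi\alpha)}c(-z)^{\alpha}(1+o(1))$; inverting $\psi_\mu$ to first correction yields $\chi_\mu(z)=z+\tfrac{\pi\alpha}{\sin(\pi\alpha)}c(-z)^{\alpha}(1+o(1))$, and multiplying by $(1+z)/z$ and discarding the linear term (legitimate since $\alpha-1\in(0,1)$) gives $S_\mu(w)=1-\tfrac{\pi\alpha}{\sin(\pi\alpha)}c(-w)^{\alpha-1}(1+o(1))$. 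Because $\beta(\alpha-1)=1$, substituting $w=z/\lfloor n^\beta\rfloor$ and using $\lfloor n^\beta\rfloor^{-(\alpha-1)}\sim n^{-1}$ shows that $S_{\Y^{(\alpha)}_n}(z)$ converges to the $S$-transform claimed in the statement.

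Finally, to turn this into convergence in distribution, note $\tau(\Y_n)=1$ and $\tau(\Y^{(\alpha)}_n)=\lfloor n^\beta\rfloor/n^\beta\le1$, so Markov's inequality makes both families of laws tight. Along any weakly convergent subsequence the limit is a probability measure on $[0,+\infty)$, and weak convergence in $\mathcal M_+$ (with the possible atom at $0$ also converging) forces convergence of $\psi$, $\chi$, and hence $S$, cf.\ \cite{BV93, APA09}; by the computation above this limit must have $S$-transform $e^{-\eta z}$, resp.\ the case-(ii) transform, and since the $S$-transform determines the measure uniquely, all subsequential limits coincide. Hence the full sequences converge, with limits $\Y$, $\Y^{(\alpha)}$. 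I expect the main obstacle to be making the $S$/$R$-transform calculus behind the key identity rigorous when $\mu$ has unbounded support and, in case (ii), infinite variance — every manipulation must be performed with genuine analytic functions on real intervals and their compositional inverses, tracking domains at each step — with the Tauberian inversion from the tail of $\mu$ to the expansion of $S_\mu$ at $0^-$ being the secondary technical point.
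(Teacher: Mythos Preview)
Your approach is essentially the same as the paper's: both express the $S$-transform of the normalized sum as $\frac{n^\beta}{\lfloor n^\beta\rfloor}\,m_1(\mu)^n\,S_\mu\!\big(z/\lfloor n^\beta\rfloor\big)^n$, use the local expansion of $S_\mu$ at $0^-$, and conclude by pointwise convergence of $S$-transforms. The only differences are cosmetic: you normalize to $m_1(\mu)=1$ first and derive the key transform identity via the $R$-transform relation $C_\sigma^{\langle-1\rangle}(z)=zS_\sigma(z)$, whereas the paper quotes the formulas $S_{D_a\mu}=a^{-1}S_\mu$ and $S_{\mu^{\boxplus m}}(z)=m^{-1}S_\mu(z/m)$ from \cite{BN08}; likewise, you obtain the expansion $S_\mu(w)=1-\tfrac{\pi\alpha c}{\sin(\pi\alpha)}(-w)^{\alpha-1}(1+o(1))$ by inverting Theorem~\ref{thm:taub}(ii), whereas the paper cites \cite{KK22}; and you make the final ``pointwise convergence of $S$-transforms $\Rightarrow$ weak convergence'' step explicit via tightness, which the paper simply asserts.
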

\begin{proof}
    The first part was proved in \cite[Theorem 3.3]{SY13}.
The distribution $\mu_n^{\alpha}$ of $\Y_n^{(\alpha)}$ can be expressed as
\[
\mu_n^{\alpha} = D_{\tfrac{1}{n^\beta m_1(\mu)^n}} (\mu^{\boxtimes n})^{\boxplus \lfloor n^\beta\rfloor},
\]
where $D_a$, $a>0$, is a dilation operator defined by $D_a \mu(\cdot) = \mu(a^{-1}\cdot)$.
By \cite[(3.7) and (3.11)]{BN08}, we have 
\[
S_{D_a \mu} = \frac{1}{a}S_\mu\quad\mbox{and}\quad S_{\mu^{\boxplus n}}(z) = \frac{1}{n} S_\mu\left(\frac{z}{n}\right).
\]
Moreover, by \cite[Theorem 4.9 and Remark 4.10]{KK22} applied to $L\equiv c$, we obtain, as $z\to 0^-$,
\[
S_\mu(z) = \frac{1}{m_1(\mu)}\left(1+\eta_\alpha (-z)^{\alpha-1}(1+o(1))\right).
\]
Thus,
\begin{align*}
S_{\mu_n^{\alpha}}(z) &= n^\beta m_1(\mu)^n S_{(\mu^{\boxtimes n})^{\boxplus \lfloor n^\beta\rfloor}}(z) = \frac{n^\beta}{\lfloor n^\beta\rfloor} m_1(\mu)^n S_{\mu^{\boxtimes n}}\left(\frac{z}{\lfloor n^\beta \rfloor}\right) \\
& = \frac{n^\beta}{\lfloor n^\beta\rfloor} m_1(\mu)^n S_{\mu}\left(\frac{z}{\lfloor n^\beta\rfloor}\right)^n\\
&= \frac{n^\beta}{\lfloor n^\beta\rfloor} \left(1 + \frac{\eta_\alpha(-z)^{\alpha-1}}{\lfloor n^\beta\rfloor^{\alpha-1}}(1+o(1))\right)^n.
\end{align*}
Since $\beta(\alpha-1)=1$, the right-hand side above converges as $n\to+\infty$ for any $z\in(0,1)$ to $\exp(\eta_\alpha(-z)^{\alpha-1})$. The pointwise convergence of the $S$-transforms implies the weak convergence of the underlying measures.
\end{proof}

Under the assumptions of Theorem \ref{thm:SY} (i), define
     \[
    \Y_n^\uparrow = \frac{1}{n\, m_1(\mu)^n} \sum_{k=1}^n \Pi_{n,k}^\uparrow,
    \] 
    where $\Pi_{n,k}^\uparrow = \A_{1,k}^{1/2}\ldots\A_{n-1,k}^{1/2}\A_{n,k}\A_{n-1,k}^{1/2}\ldots \A_{1,k}^{1/2}$ and $(\A_{n,k})_{1\leq k\leq n}$ is an array of freely identically distributed random variables with distribution $\mu$. Then, $(\Y_n^\uparrow)_{n\in\mathbb{N}}$ is a non-negative, non-commutative martingale with unit mean. By Corollary \ref{lem:secondCumulant}, we have
        \begin{align*}
        \mathrm{Var}(\Y_n^\uparrow)=\frac{1}{n^2m_1(\mu)^{2n}}\sum_{k=1}^n\mathrm{Var}(\Pi_{n,k}^\uparrow)=\frac{1}{n\,m_1(\mu)^{2n}} \mathrm{Var}(\mu^{\boxtimes n})=\frac{\mathrm{Var}(\mu)}{m_1(\mu)^2}.
    \end{align*}
    Thus, we obtain $\sup_n \tau((\Y_n^{\uparrow})^2)<+\infty$. Therefore, by \cite[Proposition 8]{Cu71}, $(\Y_n^\uparrow)_{n\in\mathbb{N}}$ converges almost uniformly and in $L^2(\tcA,\tau)$.

\begin{remark}
Let $\mu\in\mathcal{M}_+$ and assume that $m_p(\mu)<+\infty$ for some $p\in\mathbb{N}$.  By the definition of $\Y_n$ from Theorem \ref{thm:SY} (i), we have
    \[
  \kappa_p(\Y_n) = \frac{\kappa_p(\Pi_n)}{n^{p-1} m_1(\mu)^{p \,n}}.
    \]
In \eqref{eqn:cumulantAsymp}, we proved that
\[
\lim_{n\to+\infty} \kappa_\alpha(\Y_n)  = \frac{(\gamma \alpha)^{\alpha-1}}{\alpha!} =  \kappa_\alpha(\Y),\qquad \alpha\leq p,
\]
where the last equality follows from \cite[Theorem 4.1 (3)]{SY13}.
This implies the convergence of moments of order less than or equal to $p$ and uniform integrability of $(\Y_n^p)_{n\in\mathbb{N}}$ (see, e.g., \cite[Theorem 3.6]{Bill}).
\end{remark}

\bibliographystyle{plain}

\bibliography{Bibl}

\end{document}